\definecolor{green}{rgb}{0,0.8,0} 
\definecolor{amber}{rgb}{1.0, 0.49, 0.0}
\definecolor{cadmiumgreen}{rgb}{0.0, 0.42, 0.24}
\DeclareSymbolFontAlphabet{\amsmathbb}{AMSb}
\newtheorem{theorem}{Theorem}[section]
\newtheorem{corollary}[theorem]{Corollary}
\newtheorem{lemma}[theorem]{Lemma}
\newtheorem{proposition}[theorem]{Proposition}
\theoremstyle{definition}
\newtheorem{definition}[theorem]{Definition}
\theoremstyle{remark}
\newtheorem{remark}[theorem]{Remark}
\numberwithin{equation}{section}
\newcommand{\brk}[1]{\langle#1\rangle}
\newcommand{\dist}{\mathrm{dist}}
\newcommand{\tr}{\textrm{tr}}
\newcommand{\supp}{{\mathrm{supp}\,}}
\newcommand{\gmm}{\gamma}
\newcommand{\Gmm}{\Gamma}
\newcommand{\eps}{\epsilon}
\newcommand{\veps}{\varepsilon}
\newcommand{\omg}{\omega}
\newcommand{\Omg}{\Omega}
\newcommand{\scB}{{\mathscr{B}}}
\newcommand{\scE}{{\mathscr{E}}}
\newcommand{\bbN}{\mathbb N}
\newcommand{\bbP}{\mathbb P}
\newcommand{\bbR}{\mathbb R}
\newcommand{\R}{\mathbb R}
\newcommand{\bbS}{\mathbb S}
\newcommand{\bbZ}{\mathbb Z}
\newcommand{\calB}{\mathcal B}
\newcommand{\calC}{\mathcal C}
\newcommand{\calD}{\mathcal D}
\newcommand{\calE}{\mathcal E}
\newcommand{\calF}{\mathcal F}
\newcommand{\calG}{\mathcal G}
\newcommand{\calH}{\mathcal H}
\newcommand{\calL}{\mathcal L}
\newcommand{\calM}{\mathcal M}
\newcommand{\calO}{\mathcal O}
\newcommand{\calP}{\mathcal P}
\newcommand{\calQ}{\mathcal Q}
\newcommand{\calR}{\mathcal R}
\newcommand{\calS}{\mathcal S}
\newcommand{\calZ}{\mathcal Z}
\newcommand{\II}{{\mathrm{I\!I}}}
\newcommand{\Vol}{\,\text{Vol}}
\setlist[enumerate]{leftmargin=2em, label=(\arabic*)}
\setlist[itemize]{leftmargin=2em}
\newcommand{\p}{\partial}
\newcommand{\trap}{\textrm{trap}}
\newcommand{\ringsg}{\mathring{\slashed{g}}}
\newcommand{\sDelta}{{\slashed{\Delta}}}
\newcommand{\snabla}{{\slashed{\nabla}}}
\newcommand{\graph}{{\mathrm{graph}}}
\newcommand{\frakf}{\mathfrak f}
\newcommand{\Err}{\textnormal{Err}}
\newcommand{\barcalC}{{\underline{\calC}}}
\newcommand{\pert}{{\mathrm{pert}}}
\newcommand{\rrho}{\mathfrak{z}}
\newcommand{\ringa}{{\mathring a}}
\newcommand{\ringb}{{\mathring b}}
\newcommand{\ringc}{{\mathring c}}
\newcommand{\uscE}{\underline{\scE}}
\newcommand{\uscB}{\underline{\scB}}
\newcommand{\usigma}{\underline{\sigma}}
\newcommand{\tsigma}{\tilde{\sigma}}
\newcommand{\ebar}{{\underline e}}
\renewcommand{\hbar}{{\underline h}}
\newcommand{\rbar}{{\underline r}}
\newcommand{\Lbar}{{\underline L}}
\newcommand{\Rbar}{{\underline R}}
\newcommand{\Xbar}{{\underline X}}
\newcommand{\Zbar}{{\underline Z}}
\newcommand{\thetabar}{{\underline{\theta}}}
\newcommand{\Omegabar}{\underline{\Omega}}
\newcommand{\bsUpsigma}{{\boldsymbol{\Upsigma}}}
\newcommand{\barbsUpsigma}{{\underline{\bsUpsigma}}}
\newcommand{\frakm}{\mathfrak m}
\newcommand{\frakmax}{\mathfrak{max}}
\newcommand{\temp}{\text{temp}}
\newcommand{\parallelsum}{\mathbin{\!/\mkern-5mu/\!}}
\newcommand{\hyp}{{\mathrm{hyp}}}
\newcommand{\flatt}{{\mathrm{flat}}}
\newcommand{\tran}{\mathrm{tran}}
\newcommand{\flatfar}{\mathrm{flatf}}
\newcommand{\flatnear}{\mathrm{flatn}}
\newcommand{\Vint}{V_{\mathrm{int}}}
\newcommand{\Vext}{V_{\mathrm{ext}}}
\newcommand{\Psiint}{\Psi_{\mathrm{int}}}
\newcommand{\Psiext}{\Psi_{\mathrm{ext}}}
\newcommand{\bfOmg}{\boldsymbol{\Omega}}
\newcommand{\Upomg}{\Upomega}
\newcommand{\vecbfOmg}{{\vec{\bfOmg}}}
\newcommand{\loc}{\mathrm{loc}}
\newcommand{\elliptic}{\mathrm{ell}}
\newcommand{\ringPhi}{\mathring{\Phi}}
\newcommand{\ringeps}{\mathring{\eps}}
\newcommand{\ringpsi}{\mathring{\psi}}
\newcommand{\ringSigma}{\mathring{\Sigma}}
\newcommand{\ringN}{\mathring{N}}
\newcommand{\ringcalF}{\mathring{\calF}}
\newcommand{\ringf}{\mathring{f}}
\newcommand{\ringcalC}{\mathring{\calC}}
\newcommand{\ringbsUpsigma}{\mathring{\bsUpsigma}}
\newcommand{\rings}{\mathring{s}}
\newcommand{\ringvarphi}{\mathring{\varphi}}
\newcommand{\stat}{{\mathrm{stat}}}
\newcommand{\bfUpomg}{{\boldsymbol{\Upomega}}}
\newcommand{\bfGmm}{{\mathbf{\Gmm}}}
\renewcommand{\Bbb}{\mathbb}
\newcommand{\rightarrowhead}{\mathrel{%
  \hbox{\let\f@size\sf@size\usefont{U}{lasy}{m}{n}\symbol{41}}}}
\newcommand{\leftarrowhead}{\mathrel{%
  \hbox{\let\f@size\sf@size\usefont{U}{lasy}{m}{n}\symbol{40}}}}
\newcommand\action{\mathrel{\ooalign{$\subset$\cr%
  \hidewidth\raise-0.440ex\hbox{$\rightarrowhead\mkern1mu$}}}}
\def\XXint#1#2#3{{\setbox0=\hbox{$#1{#2#3}{\int}$ }
\vcenter{\hbox{$#2#3$ }}\kern-.6\wd0}}
\title[Stability of catenoid in $4$ spatial dimensions]{Stability of the catenoid for the hyperbolic vanishing mean curvature equation in $4$ spatial dimensions}
\author{Ning Tang}%
\address{Department of Mathematics, University of California,
Berkeley, CA 94720, USA}%
\email{ning\_tang@math.berkeley.edu}%
\pgfplotsset{compat=1.18}
\newcommand{\nocontentsline}[3]{}
\let\origcontentsline\addcontentsline
\newcommand\stoptoc{\let\addcontentsline\nocontentsline}
\newcommand\resumetoc{\let\addcontentsline\origcontentsline}
\begin{document}

\begin{abstract} 
We establish the asymptotic stability of the catenoid, as a nonflat stationary solution to the hyperbolic vanishing mean curvature (HVMC) equation in Minkowski space $\R^{1 + (n + 1)}$ for $n = 4$. Our main result is under a ``codimension-$1$'' assumption on initial perturbation, modulo suitable translation and boost (i.e. modulation), without any symmetry assumptions. In comparison to the $n \geq 5$ case addressed by L\"{u}hrmann-Oh-Shahshahani \cite{LOS22}, proving catenoid stability in $4$ dimensions shares additional difficulties with its $3$ dimensional analog, namely the slower spatial decay of the catenoid and slower temporal decay of waves. To overcome these difficulties in the $n = 3$ case, the strong Huygens principle, as well as a miraculous cancellation in the source term, plays an important role in \cite{OS24} to obtain strong late time tails. In $n = 4$ dimensions, without these special structural advantages, our novelty is to introduce an appropriate commutator vector field to derive a new hierarchy of estimates with higher $r^p$-weights so that an improved pointwise decay can be established. We expect this to be applicable for proving improved late time tails of other quasilinear wave equations in even dimensions or wave equations with inverse square potential.
\end{abstract} 

\maketitle
\setcounter{tocdepth}{2}
\titlecontents{section}
[0em]
{\vspace{0.4em}}
{\contentslabel{2em}}
{\bfseries}
{\bfseries\titlerule*[0.5pc]{$\cdot$}\contentspage}
[\vspace{0.2em}]
\titlecontents{subsection}
              [1.5em]
              {}
              {\contentslabel{2em}}%
              {}
              {\titlerule*[0.5pc]{$\cdot$}\contentspage}%
\titlecontents{subsubsection}
              [2.5em]
              {}
              {\contentslabel{2em}}%
              {}
              {\titlerule*[0.5pc]{$\cdot$}\contentspage}%

\tableofcontents


\section{Introduction}
We study the nonlinear asymptotic stability of the hyperbolic vanishing mean curvature equation, which characterizes extremal embedded hypersurfaces in Minkowski space $(\R^{1+(n+1)}, \eta)$ with $n = 4$. More precisely, we consider a $(1 + n)$ dimensional timelike orientable hypersurface $\calM$ with an embedding $\Phi : (\calM, \Phi^* \eta) \hookrightarrow (\R^{1+(n+1)}, \eta)$ with vanishing mean curvature $H \equiv 0$. Denoting its unit normal by $\hat{n}$, we recall that the mean curvature vector $\vec{H} = H \hat{n}$ can be represented in terms of the embedding as $n \vec{H} = \Box_{\Phi^*\eta} \Phi$ for a hypersurface in Minkowski space (see \cite[Section 2.5]{DHS10}). The following is then referred to as the hyperbolic vanishing mean curvature equation (HVMC equation):  
\begin{equation}\label{eqn:HVMC}
    \Box_{\Phi^*\eta} \Phi = 0.
\end{equation}
This quasilinear wave equation admits a nontrivial asymptotically flat stationary solution $\calC = \R \times \barcalC$, where $\barcalC$ is the Riemannian catenoid. Here we identify $\calC$ with its standard embedding into Minkowski space and hence it can be interpreted as a solution to \eqref{eqn:HVMC}.
We refer to Section~\ref{Section:basic_properties_of_barcalC} for a detailed discussion of this special solution. 

Our main goal in this paper is to study the nonlinear asymptotic stability of $\calC$ as a solution to the HVMC equation \eqref{eqn:HVMC}, under suitable modulations regarding the translations and boosts in the ambient spacetime, with respect to a ``codimension-$1$'' set of initial data perturbations without any symmetry assumptions when $n = 4$. Our results complete the full picture of the asymptotic stability problem for the catenoid and our method is based on those developed for the cases when $n \geq 5$ and $n = 3$ in \cite{LOS22} and \cite{OS24}, respectively. In addition to these results, there is a vast literature on \eqref{eqn:HVMC} which we postpone discussing until Section~\ref{Section:prior_works}. Compared to other dimensions settled earlier, the major difficulty arises from the slow spatial decay of the catenoid (inverse polynomial, see \eqref{eqn:Q'_tilde_r}) due to the low dimensions and the absence of strong Huygens principle in even dimensions. We introduce a novel commutator vector field to obtain an improved decay, which is applicable to quasilinear wave equations in even dimensions on non-stationary backgrounds and can be generalized to wave equations with inverse square potentials.

A rough version of our main theorem can be stated as follows.
\begin{theorem}[A rough version]\label{main_thm_rough}
    Let $\Phi_0 : \barcalC \to \{0\} \times \R^{4+1}$ be an embedding and $\Phi_1 : \barcalC \to \R^{1 + (4+1)}$ be a family of future directed timelike vectors such that $(\Phi_0, \Phi_1)$ is the Cauchy data for a catenoid solution outside a compact set, that is, $\Phi_0 = \mathrm{id}$ and $\Phi_1 = (1, 0)$ outside a compact set. In other words, $(\Phi_0, \Phi_1)$ is a compactly supported perturbation of the catenoid initial data.

    Suppose $(\Phi_0, \Phi_1)$ belongs to an appropriate ``codimension-$1$'' subset in a suitable topology and is sufficiently close to the catenoid initial data, then there exists a timelike vanishing mean curvature hypersurface $\Phi : \calM \hookrightarrow \R^{1 + (4+1)}$ satisfying \eqref{eqn:HVMC} such that $\Phi(\calM) \cap \{X^0 = 0\} = \Phi_0(\barcalC)$ and $T_{\Phi_0(p)}(\Phi(\calM))$ is spanned by $\{(d\Phi_0)_p(T_p \barcalC), \Phi_1(p)\}$ for each $p \in \barcalC$. This hypersurface $\calM$ admits a foliation $\{\Sigma_\uptau\}_{\uptau}$ into leaves which are the level sets of a suitable time function $\uptau$. In particular, each leaf is homeomorphic to $\barcalC$. Moreover, there exists $\xi_0, \ell_0$ such that some suitable spacelike distance between $\Phi(\calM) \cap \Sigma_\uptau$ and $\calC_{\ell_0, \xi_0} \cap \Sigma_\uptau$ tends to zero as $\uptau \to \infty$, where $\calC_{\ell_0, \xi_0}$ denotes the boosted and translated catenoid (see \eqref{eqn:defn_Lambda_ell_and_gmm} for notations) \[
        \calC_{\ell_0, \xi_0} := \Lambda_{-\ell_0} \calC + (0, \xi_0)^T.
    \]
\end{theorem}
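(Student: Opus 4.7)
The plan is to follow the overall modulation--bootstrap framework of \cite{LOS22,OS24}, adapted to $n=4$, with a new ingredient at the level of $r^p$-weighted estimates. First I would represent the unknown hypersurface $\Phi(\calM)$, in a neighbourhood of the modulated catenoid $\calC_{\ell(\uptau),\xi(\uptau)}$, as a normal graph of a scalar function $\phi$ over $\calC_{\ell(\uptau),\xi(\uptau)}$, using the time function $\uptau$ to foliate $\calM$ into the leaves $\Sgm_\uptau$. Inserting this ansatz into \eqref{eqn:HVMC} yields a quasilinear wave equation for $\phi$ on the catenoid background of the schematic form
\[
    \Box_{g_{\calC}}\phi + V\phi = \calN(\phi,\p\phi,\p^{2}\phi) + \calE(\dot\ell,\dot\xi,\phi),
\]
where $g_{\calC}$ is the induced Lorentzian metric on $\calC$, $V\sim r^{-2}$ is the Jacobi potential built from $|\II|^{2}$ of $\barcalC$, $\calN$ collects the genuine quasilinear terms together with higher-order semilinear corrections, and $\calE$ encodes the inhomogeneities produced by the time dependence of the modulation parameters.

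\textbf{Modulation and the codimension-1 condition.} The operator $\Box_{g_{\calC}}+V$ on $\R\times\barcalC$ carries zero modes generated by the translation and boost symmetries of $\R^{1+(4+1)}$, together with a single negative (unstable) eigenmode of the catenoid Jacobi operator on $\barcalC$. The modulation parameters $(\ell(\uptau),\xi(\uptau))$ are chosen dynamically by imposing orthogonality of $\phi(\uptau,\cdot)$ to these zero modes, which yields an ODE system for $(\dot\ell,\dot\xi)$ whose right-hand side is controlled by a weighted norm of $\phi$. The remaining unstable direction is killed by the codimension-1 hypothesis on the initial data through a Lyapunov--Perron/shooting argument in the spirit of \cite{LOS22}, producing the ``codimension-1'' manifold of data in the theorem.

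\textbf{Energy hierarchy and the new commutator vector field.} To propagate smallness of $\phi$ I would combine three layers of estimates: a degenerate multiplier estimate with $X=\p_{\uptau}$ plus a Morawetz current giving integrated local energy decay near the neck (with the singular potential $V$ controlled by Hardy-type inequalities); the Dafermos--Rodnianski $r^{p}$-hierarchy in the asymptotically flat region converting energy flux into $\uptau$-decay; and commutation with vector fields adapted to the catenoid geometry, yielding higher Sobolev norms and pointwise bounds via a Klainerman--Sobolev inequality on $\Sgm_{\uptau}$. In $n=4$ the standard range $p\in[0,2)$ combined with the slow decay $Q\sim r^{-2}$, $Q'\sim r^{-3}$ of the catenoid profile (see \eqref{eqn:Q'_tilde_r}) and the absence of the strong Huygens principle gives insufficient pointwise decay to close the nonlinear estimates. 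The novelty, emphasized in the abstract, is to introduce an additional commutator vector field of the schematic form $\frakf(r)L$ with $L=\p_{\uptau}+\p_{r_{*}}$ and a carefully tuned weight $\frakf(r)$, whose conjugation with $\Box_{g_{\calC}}+V$ extends the $r^{p}$-hierarchy past the $p=2$ threshold. This generates the extra $\uptau$-decay required to absorb the $r^{-3}$ source terms coming from $Q'$ and the Huygens tail.

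\textbf{Bootstrap and main obstacle.} With the extended $r^{p}$-hierarchy and the resulting improved pointwise decay in hand, I would close a bootstrap on the weighted norms of $\phi$ and on the modulation parameters simultaneously: the energy estimates control the quasilinear nonlinearity $\calN$ and the inhomogeneity $\calE$, while the modulation ODEs give integrability of $\dot\ell,\dot\xi$, yielding convergence $(\ell(\uptau),\xi(\uptau))\to(\ell_{0},\xi_{0})$ as $\uptau\to\infty$; the asymptotic convergence of $\Phi(\calM)\cap\Sgm_{\uptau}$ to $\calC_{\ell_{0},\xi_{0}}\cap\Sgm_{\uptau}$ then follows from the pointwise decay of $\phi$. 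The main obstacle I anticipate is the construction of the new commutator vector field: the weight $\frakf(r)$ must be chosen so that $[\frakf(r)L,\Box_{g_{\calC}}+V]$ produces a manifestly nonnegative boundary flux at null infinity while only generating lower-order bulk error terms reabsorbable into the lower-$p$ rungs of the hierarchy. The delicate points are the interaction of $\frakf(r)L$ with the inverse-square potential $V$ (where the critical Hardy threshold is approached), the requirement that the construction remain stable under the time-dependent modulated background, and the need to obtain uniform constants independent of $\uptau$. Once this commutator structure is in place, the remaining analysis is a more standard but technically involved adaptation of the schemes in \cite{LOS22,OS24} to the $n=4$ catenoid geometry.
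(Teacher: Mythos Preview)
Your overall architecture---modulation against the translation/boost zero modes, shooting for the unstable direction, ILED plus an extended $r^p$-hierarchy, bootstrap---matches the paper. But two concrete ingredients are missing, and without the first your bootstrap cannot close.

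The main gap is that you treat the perturbation as a single scalar $\phi$ and expect the new commutator to absorb the $r^{-3}$ source. In $n=4$ the source term produced by the moving profile is $\calF_0=\calO(\dot\wp\,r^{-3})$, and for this spatial decay the extended $r^p$-hierarchy simply does not apply: the contribution $\int \chi r^{p+1}|(K+2r^{1/2})\tilde\calF_0|^2$ is non-integrable in $r$ for every $p>0$, so the commutator step fails for the full perturbation. The paper therefore introduces a \emph{modified profile} splitting $\psi=p+q$: $p$ solves $\calP_h p=\chi\calF_0+g_1+\p_\tau g_2$ (with corrections $g_1,g_2$ enforcing approximate orthogonality) and only achieves $|p|\lesssim\eps\uptau^{-2+\kappa}$, while $q$ now has source $\calO(\dot\wp\,r^{-4})$ plus nonlinearity and, via the commutator, reaches $|q|\lesssim\eps\uptau^{-9/4+\kappa}$. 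Crucially, the modulation ODE is redesigned so that $p$ enters only through the combination $\p_t\vec p-M\vec p$ (which inherits the good $\uptau^{-9/4+\kappa}$ rate from the structure of the first-order system and the support of $\calF_0$), while $q$ enters linearly. This is what yields $|\dot\wp|\lesssim\eps\uptau^{-9/4+\kappa}$, hence twice-integrability and convergence $(\ell,\xi)\to(\ell_0,\xi_0)$. Without the $p$--$q$ split your modulation ODE would see $\psi$ linearly at rate $\uptau^{-2+\kappa}$, and the parameter bootstrap would not close.

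Second, the commutator is not $\frakf(r)L$ with $L$ outgoing null, and the inverse-square potential is not handled by a delicate Hardy estimate near threshold. The paper commutes with $K=r^{3/2}\p_r$ acting on the radiation variable $\tilde U=r^{3/2}U$, and the key point is the algebraic identity $(K+2r^{1/2})Q_0=Q_1K$, where $Q_1$ has \emph{no} inverse-square potential at all; the potential is cancelled exactly, not controlled. The resulting equation for $Y=K\tilde U$ then admits coercive $r^p$-energies for $p\in(0,\tfrac32)$, which corresponds to $p\in(0,\tfrac52)$ at the level of $\tilde U$. The specific power $r^{3/2}$ (rather than $r^2$ as in \cite{AAG18}) is what makes the $Y$-energy coercive without any spherical-harmonic projection, which is essential since the modulated background is neither stationary nor spherically symmetric.
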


We defer the precise statement (Theorem~\ref{main_thm}) to future sections after all necessary notions are introduced.

\subsection{Overall scheme and main difficulties compared to other dimensions}

We describe our main ideas towards the nonlinear stability results, followed by the difficulties, especially those that arise in our special dimension $n = 4$.

\subsubsection{Overall scheme of the proof}
\begin{enumerate}[ wide = 0pt, align = left, label=\arabic*. ]
    \item \textit{Decomposition of the solution. } 
    The starting point of our analysis is to decompose the solution $\calM$ into the profile $\calQ$ plus the perturbation $\psi N$ (see Section~\ref{Section:profile_plus_gauge_choice}), where the addition can be fulfilled in the ambient spacetime. Here, $N$ denotes the choice of gauge, defined as a vector field transverse to $\calQ$, as specified in \eqref{eqn:defn_for_N}. The profile $\calQ$, will be the object we wish to prove the stability of, that is, we want to show a quantitative decay for $\psi$ while $\calQ$ shall manifest a catenoid.
    \item \textit{Foliations. }
    We foliate our profile $\calQ$ into constant-$\uptau$ leaves $\Sigma_\uptau$ and one would later introduce a global coordinate $(\uptau, \uprho, \uptheta)$. The variable $\uptau$ will become the suitable notion of time in Theorem~\ref{main_thm_rough} and each leaf is spacelike and asymptotically null (hyperboloidal), adapted for the purpose of applying the $r^p$-weighted vector field method. See Section~\ref{Section:profile_plus_gauge_choice} and \ref{Section:global_coord} for the detailed discussion. We remark that our foliation is slightly off from the one used in \cite{LOS22} and refer to Appendix~\ref{Section:different_foliation_LOS22}.
    \item \textit{Linearization around $\calQ$. } 
    To study the equation of $\psi$, we linearize \eqref{eqn:HVMC} around $\calQ$. As $\calQ$ is a catenoid-like object, we expect the spectral information of the linearized operator $-\p_t^2 + L$ (with $L$ defined in \eqref{eqn:stab_op_on_C}) around an actual catenoid would be similar to that of the linearization around $\calQ$. In Section~\ref{Section:spectral_analysis_of_stab_op_L}, we reveal that $L$ has $n$ zero modes and a positive eigenvalue. These are related to the non-decaying solutions to $(-\p_t^2 + L) \psi = 0$ and more specifically, a $2n$-dimensional family of solutions growing at most linearly in $t$ and a $1$-dimensional family of exponentially growing solution, respectively. 
    \item \textit{The source term and the nonlinearity. }
    After linearization, the source term will capture how $\calQ$ differs from an actual catenoid. The spatial decay is unfavorable for producing a twice integrable decay rate (this requirement will become clear momentarily) when $n = 4$ and one can see this via a heuristic verification for the flat wave equation. This prompts a decomposition $\psi = p + q$ so that the source term of the equation for $p$ captures the worst decay. Naturally, we would then expect better decay for $q$ compared to that of $p$. 
    The nonlinearity admits a null condition though we still need to take special care for 
    the quasilinearity in order to avoid any loss of derivatives that might appear in the following two places : top order energy estimates and modulation equations. 
    \item \textit{Modulation. } Due to the spectral properties, we need to ensure that our perturbation of $\calQ$ is transversal to those growing at most linearly in $t$. To this end, we impose $2n$ orthogonality conditions \eqref{eqn:ortho_cond_for_vec_bfOmg} on $q$. To compensate these extra restrictions, we introduce $2n$-dimensional parameters $\xi(\uptau), \ell(\uptau) \in \R^n$ for each $\uptau$, which represent the position and the boost,  respectively, within the plane perpendicular to the axis of symmetry of $\barcalC$. This is due to the presence of translation and boost symmetries in the ambient spacetime, which also appears in many other soliton stability problems (see for instance \cite{Wein85, Stu01}, recent survey articles \cite{KMM16, Ger24} and the references therein).
    These two parameters are encoded in the profile $\calQ$, which is a ``boosted and translated catenoid''. In the modulation equations, we introduce parameter smoothing to avoid any loss of derivatives.

    \item \textit{Control of parameter derivatives : }
    One of the key objectives of our main theorem is to show the convergence of $\xi$ and $\ell$ as $\uptau$ approaches the final time in the bootstrap arguments so that our solution indeed tends to a boosted and translated catenoid. To ensure this convergence, twice integrability is required for $\dot\wp = (\dot\ell, \dot\xi - \ell)$. Therefore, we design the modulation equations for $\dot\wp$ so that $q$ enters linearly while $p$ only contributes in the form of $\p_t \vec p - M \vec p$ ($M$ corresponds roughly to the spatial part of the first order formulation) and hence the modulation parameters do not see the slow time decay part of $p$, where $\vec p$ is a first order formulation of $p$.
    
    \item \textit{Shooting argument. }
    We further decompose $\vec q$, the first order formulation of $q$, into $\vec q = a_+ \vec Z_+ + a_- \vec Z_- + \vec\phi$ (see \eqref{eqn:q_decomp}), where $Z_\pm$ are the first order versions of the $1$-dimensional family of exponentially growing mode. By imposing two more orthogonality conditions (projecting $\vec q$ away from the eigenfunctions), we avoid the linearly unstable direction. By restricting to a ``codimension-$1$'' perturbation, we can prove Theorem~\ref{main_thm} by a shooting argument to ensure that $a_+$ satisfies the specific decay assumption \eqref{eqn:trapping_assumption}.
    
    \item \textit{Integrated local energy decay and $r^p$-weighted vector field method. }
    Finally, we want to exploit the decay of $\phi$ under suitable bootstrap assumptions to handle the nonlinearity. A general strategy to conclude pointwise decay on hyperboloidal foliations arising in black hole stability problems (see \cite{DR10} and \cite{Mo15}) is to prove an integrated local energy decay estimate (known as ILED or Morawetz estimate, capturing a weak form of dispersive decay as an intermediate step) and a $r^p$-weighted hierarchy of estimates. The unstable trapping around the neck will cause degeneracy of local energy norms and this can be fixed by a Hardy-type argument. Running the standard $r^p$ hierarchy would give us $\tau^{-\frac{n}{2}+}$ pointwise decay when the source term has sufficient spatial and temporal decay. However, we demand a $\tau^{-2-}$ decay rate so that $\dot\wp$ is twice integrable since $q$ enters linearly into parameter derivatives as aforementioned. This issue is tackled by introducing a commutator vector field $K = r^{\frac32}\p_r$, which, heuristically, allows us to extend the standard range $p \in (0,2)$ to an improved range $p \in (0, \frac52)$. In short, the $p$-range corresponds to the decay rate of energy flux and hence an improved decay is expected to be extracted. We next present a brief schematic overview of the estimates and techniques.
\end{enumerate}

\subsubsection{The new commutator vector field adapted to $1 + 4$-dimensional wave equations}
We give a brief sketch of how our commutator vector field works to get improved decay in the case of the d'Alembertian. The notations used in this part are independent of all other places throughout this paper.

Suppose that $U$ is a solution to the linear wave equation $\Box_m U = f$ with $m$ representing the $1+4$ dimensional Minkowski metric and $f$ is a source term. By introducing the Bondi-type coordinates $(u, r, \theta)$ with $u = t - r$ and a new variable $\tilde U = r^{\frac32}U$, we could rewrite the equation as \[
    -2\p_u\p_r \tilde U + \p_r^2\tilde U  - \frac{3}{4r^2}\tilde U  + r^{-2}\sDelta_{\bbS^3}\tilde U  = \tilde f := r^{\frac32}f.
\]
Accompanied with a suitable integrated local energy decay estimate, the decay of energy current can be exploited via the Dafermos-Rodnianski $r^p$-weighted vector field method. The core is to notice that the multiplier $r^p\p_r \tilde U$ gives rise to coercive energies when $p \in (0, 2]$ : \[\begin{aligned}
    \calE^p[\tilde U](u_1) &= \int_{u = u_1, r \geq R} r^p (\p_r \tilde U)^2\, dr\, d\theta, \\
    \calB^p[\tilde U](u_1, u_2) &= \frac12\int_{u_1}^{u_2}\int_{r \geq R} p r^{p-1} (\p_r \tilde U)^2 + (2-p)r^{p-3}(|\snabla \tilde U|^2 + \frac34|\tilde U|^2) \, dr\, d\theta\, du.
\end{aligned}
\]
They satisfy a hierarchy of estimates with acceptable localized (in $\{r \simeq R\}$) errors : \[
    \calE^p[\tilde U](u_2) + \calB^p[\tilde U](u_1, u_2) \lesssim \calE^p[\tilde U](u_1) + \int_{r \geq R} r^{p+1}|\tilde f|^2\, dr\, d\theta\, du + \,\text{acceptable errors}.
\]
Heuristically, the range of $p$ will correspond to the decay rate of energy, i.e. $\calE^0[\tilde U](u) \lesssim u^{-2}$.

Motivated by \cite{AAG18}, a commutator vector field $K = r^{\frac32}\p_r$ is introduced to improve the range of $p$. Unlike the choice $r^2\p_r$ in \cite{AAG18}, we choose a different weight in $r$ so that the corresponding energy of the conjugated variable $Y := K \tilde U$ is still coercive for some $p$. In contrast, the usage of the commutator $r^2\p_r \tilde U$ in \cite{AAG18} will cause the energy to be non-coercive unless the solution is projected to certain spherical harmonics, which is anticipated to be difficult to be applied to a non-spherically symmetric and nonstationary metric (see \eqref{eqn:calP_graph_via_h_metric}). In addition, the improved decay of zeroth spherical modes in \cite{AAG18} is obtained via an observation specific to $3$ dimensions due to the absence of inverse square potentials, which is not applicable to our setting. We shall also mention that the alternative treatment of $l = 0$ modes \cite[Section 5.3]{AAG18}, which claims to be more useful in nonlinear settings, fails in our case due to the slow spatial decay of the source term.

A commutation computation yields that \[
    -2\p_u\p_r Y + \p_r^2 Y - r^{-1}\p_u Y - r^{-1}\p_r Y + r^{-2}\sDelta_{\bbS^3} Y = (K + 2r^{\frac12})\tilde f := f_1.
\]
We remark that there is no presence of inverse square potential anymore although we introduced some first order terms.
Now we derive the key identities \[\begin{aligned}
    \int_{\bbS^3} &-(\p_u - \frac12 \p_r)(r^p|\p_r Y|^2) - \frac{p}{2} r^{p-1}(\p_r Y)^2 - \frac12 \p_r(r^{p-2}|\snabla Y|^2) - \frac{2-p}{2}r^{p-3}|\snabla Y|^2 \\
    &- r^{p-1}(\p_u - \frac12 \p_r)Y \p_r Y - \frac32 r^{p-1} |\p_r Y|^2\, d\theta = \int_{\bbS^3} r^p \p_r Y f_1\, d\theta.
\end{aligned}
\]
Furthermore, \[
    \begin{aligned}
        &\ - \int_{\bbS^3} r^{p-1}(\p_u - \frac12 \p_r)Y \p_r Y\, d\theta \\
        &= \int_{\bbS^3} -\p_r \left(r^{p-1}(\p_u - \frac12 \p_r)Y\, Y\right)
        + (p-1)r^{p-2}(\p_u - \frac12 \p_r)Y\, Y \\
        &\qquad \qquad + \frac12 r^{p-1}(-r^{-1}\p_u Y - r^{-1}\p_r Y + r^{-2} \sDelta_{\bbS}Y - f_1)Y\, d\theta \\
        &=  \int_{\bbS^3} -\frac12\p_r \left(r^{p-1}(\p_u - \frac12 \p_r)Y^2\right) 
        + \frac{p-\frac32}{2}r^{p-2}(\p_u - \frac12 \p_r)Y^2 \\
        &\qquad\qquad - \frac12 r^{p-3} |\snabla Y|^2 - \frac12 r^{p-1} f_1 Y - \frac{3}{8} r^{p-2}\p_r Y^2 \, d\theta\\
        &= \int_{\bbS^3} -\frac12\p_r \left((\p_u - \frac12 \p_r)\big(r^{p-1}Y^2\big) + \frac{p-1}{2}r^{p-2} Y^2 + \frac34 r^{p-2} Y^2\right) 
        + \frac{p-\frac32}{2}(\p_u - \frac12 \p_r) (r^{p-2}Y^2) \\
        &\qquad\qquad - \frac12 r^{p-3} |\snabla Y|^2 - \frac12 r^{p-1} f_1 Y - \frac{3(2-p)}{8} r^{p-3} Y^2 + \frac{(p-2)(p-\frac32)}{4} r^{p-3}Y^2 \, d\theta \\
    \end{aligned}
\]
Combining the two identities, we obtain 
\[
\begin{aligned}
    \int_{\bbS^3} &-(\p_u - \frac12 \p_r) \left(r^p |\p_r Y|^2 + \frac{\frac32 - p}{2}r^{p-2} Y^2 \right) - \frac12 \p_r \left(r^{p-2} |\snabla Y|^2 + \frac{2p+1}4 r^{p-2}Y^2 \right) \\
    &- \frac12 \p_r (\p_u - \frac12 \p_r) (r^{p-1}Y^2) - \frac{p + 3}{2} r^{p-1} (\p_r Y)^2 - \frac{3-p}{2}r^{p-3}|\snabla Y|^2 - \frac{p(2-p)}{4} r^{p-3}Y^2\, d\theta \\
    & \qquad\qquad\qquad\qquad\qquad\qquad = \int_{\bbS^3} \frac12 r^{p-1} f_1 Y + r^p f_1 \p_r Y\, d\theta.
\end{aligned}
\]
Note that the boundary terms (on the leaves and at null infinity) all have good signs. In view of Hardy's inequality, we can afford to have error terms like $r^p(\p_r Y)^2$ in the boundary terms at null infinity as well when adapting our method to a more complicated setting.
Instead of exploiting positivity of boundary terms at null infinity, we argue like in \cite[Proposition 3.4]{AAG18} to justify that terms like \[
    \lim_{r \to \infty} \int_{\bbS^3} r^{p-1} Y^2\, d\theta = \lim_{r \to \infty} \int_{\bbS^3} r^{p-2} (r^2\p_r \tilde U)^2\, d\theta = 0
\]
for $p < 2$. Without going to the threshold $p = 2$, there is actually no need to exploit positivity as mentioned in \cite[Remark 4.2]{AAG18}. Note that the proof of this property requires suitable energy boundedness and due to our quasilinear error terms, this property does not hold for the top order derivatives. 

With these considerations, integrating in the far-away region, we achieve that \[
    \tilde\calE^p[Y](u_2) + \tilde\calB^p[Y](u_1, u_2) \lesssim \tilde\calE^p[Y](u_1) + \int_{r \geq R} r^{p+1}|(K + 2r^{\frac12})\tilde f|^2\, dr\, d\theta\, du + \,\text{acceptable errors},
\]
where \[\begin{aligned}
    \tilde\calE^p[Y](u_1) &= \int_{u = u_1, r \geq R} r^p (\p_r Y)^2 + \frac{3 - 2p}{4} r^{p-2} Y^2\, dr\, d\theta, \\
    \tilde\calB^p[Y](u_1, u_2) &= \int_{r \geq R} \frac{p+3}{2} r^{p-1} (\p_r Y)^2 + \frac{3-p}{2}r^{p-3}|\snabla Y|^2 + \frac{p(2-p)}{4}r^{p-3}|Y|^2 \, dr\, d\theta\, du.
\end{aligned}
\]
It is then obvious that this is coercive when $0 \leq p \leq \frac{3}{2}$. Due to the $r$-weight $Y$ carries itself, the $p$-energy for $Y$ essentially corresponds to the $p+1$-energy for $\tilde U$. Therefore, the range of $p$ at the level of $\tilde U$ is improved to $[0, \frac{5}{2}]$ though it seems that the range of $p$ is decreased at the level of $Y$. 
Moreover, a careful analysis via Hardy's inequality shall relax the restriction, that is, \[
    \tilde\calE^p[Y](u_1) \simeq_p \calE^p[Y](u_1), \quad p \notin \{1, 2\}.
\]
This is due to the algebraic relation $\frac{(p-1)^2}{4} > \frac{2p - 3}{4}$ if and only if $(p-2)^2 > 0$. By extending the hierarchy at the level of $\tilde U$ to $p \in [0, 3)$, we could expect to obtain a pointwise decay rate $|U| \lesssim u^{-\frac52 + \kappa}$ when both the initial data and the source term have sufficient spatial decay.

To simplify our discussion, we only carry out part of this in our specific setting in Section~\ref{section_Y_vf_K}, \ref{Section:decay_for_Phi} to obtain a $u^{-\frac94 + \kappa}$ pointwise decay, a sufficient rate to complete our proof of stability. When implementing this new commutator vector field method in the context of a foliation with a moving center, we see some additional technicalities (namely the presence of $\tilde m_1^{\tau\tau}$, see Remark~\ref{rmk_calR_12_not_error_term}).

\subsubsection{Differences compared to \cite{LOS22}, \cite{OS24} and new challenges}
For those readers who are familiar with the work in dimensions $n \geq 5$, we record a list of differences in the setup and mention some extra expositions we provide and especially,  the new challenges in dimensions $n = 4$.

\begin{enumerate}[ wide = 0pt, align = left, label=\arabic*. ]
    \item The foliation $\cup_\tau \Sigma_\tau$ in this note is slightly off from those in \cite[Section 1.5]{LOS22} in the region (denoted by $\calC_\tran$) where the foliation transitions from flat to asymptotically null.
    One can refer to Remark~\ref{rmk:slight_different_foliation} and Appendix~\ref{Section:different_foliation_LOS22}. 
    Though the original construction is more direct, our parametrization provides an explicit way to write out the metric and verify 
    the form of the operators in the transition region. Moreover, we give a quantitative classification of regions in \eqref{eqn:defn_of_4_regions} so that the gauge $N$ can be chosen more carefully to make the graph formulation in Section~\ref{Section:parametrize_far_away} valid in the transition region as well.
    \item In Section~\ref{Section:spectral_analysis_of_stab_op_L}, we provide a simple justification for the characterization of the zero modes of $L$ in weighted-Sobolev norms, where $L$ denotes the stability operator of the Riemannian catenoid $\barcalC$. The idea is to use spherical harmonic decomposition and classical ODE theory. This provides a rigorous justification for this crucial fact, which ensures that all the possible instabilities of HVMC come from the shrinking of the neck of the catenoid, translations and Lorentz boosts, in turn motivating the choice of modulation parameters.
    \item We give a comprehensive derivation of the linearized operator around the profile in the whole region in various forms (see \cite[Remark 4.2]{LOS22}). This part is contained in Section~\ref{Section:2nd_order_formulation_whole_region} and is via an almost intrinsic computation. The computation is an extension of the proof of \cite[Lemma 3.5]{LOS22}.
    \item In Section~\ref{section_rp_methods_hyperboloidal}, we work in the hyperboloidal region and use coordinate derivatives in $(\tau, r, \theta)$-coordinates to develop the weighted $r^p$-estimates instead of the frame vector fields $T, L, \Lbar$ listed in \cite{LOS22}, which is much simpler when commutator computations are required. 
    \item Due to the slow decay of the source term for $n = 4$, we introduce a modified profile $p$ such that $\psi = p + q$, where $p$ only enters the modulation equations (see \eqref{eqn:modulation_eqn}) in the form of $\p_t \vec p - M \vec p$, which has better decay than $p$ itself. This idea is also introduced in \cite{OS24}.
    \item To conclude the pointwise decay of $\psi$, we analyze in weighted Sobolev spaces instead of standard Sobolev spaces. See Section~\ref{Section:decay_for_P} and \ref{Section:decay_for_Phi}.
\end{enumerate}

To compare with the work in $3$ dimensions, we make some comments. 
\begin{enumerate}[ wide = 0pt, align = left, label=\arabic*. ]
    \item The improved decay in \cite{OS24} is achieved via exploiting the structure of fundamental solution, i.e., sharp Huygens principle in $3$ dimensions. The method is adapted from \cite{MTT12} and \cite{LO24}. As opposed to this, our method relies on the commutator vector field $K = r^{\frac32}\p_r$ to get improved late time tails, which is generally applicable to quasilinear wave equations on non-stationary backgrounds in even dimensions and can be adapted to wave equations with inverse square potentials.
    \item A Darboux transform is introduced in \cite{OS24} to resolve the issue that the projection operator onto the zero modes is unbounded on $LE^*$, which does not happen for $n = 4$. 
    \item The order of introducing the projection to unstable directions \eqref{eqn:q_decomp} is different. \cite{OS24} first defines $\phi$ by singling out these directions from $\psi$ and then considers the equation $\veps = \psi - p$, where $p$ is the modified profile. In what follows, we define $q = \psi - p$ and then enact the decomposition \eqref{eqn:q_decomp} for $q$. However, we mention that the different orders here do not cause any difference to the proof.
    \item On the technical level of bootstrap assumptions, \cite{OS24} does not pursue better decay for $\|\ddot\wp\|_{L^2_\uptau}$ compared to the trivial bound by integrating $L^\infty$ bound. Instead, both \cite{LOS22} and \cite{OS24} establish better $\uptau$-decay for $\p_\uptau^2 U$ than $\p_\uptau U$ (essentially, $U$ is $p$ or $q$) at the level of energy, which we choose not to pursue due to extra complications  coming from the coupling between $p$, $q$ in the quasilinearity and the slow decay in low dimensions in our case. Moreover, we do not experience the issue mentioned in \cite[Remark 6.1]{OS24}. 
\end{enumerate}

\subsection{Prior works}\label{Section:prior_works}
The HVMC equation has been studied for decades and appeared in physics literature \cite{Hop94} due to its relevance to string theory.
Its local existence was first established in \cite{AC79} in a geometric language. \cite{Ett13} and \cite{AIT24} proved low regularity local existence, which go below the sharp exponent for general quasilinear wave equations established in \cite{ST05} (sharpness in dimensions two and three in \cite{Lind98}). These two results are developed via a global graph formulation of timelike minimal hypersurfaces, i.e. \begin{equation}\label{eqn:HVMC_graph_formulation_global}
    -\frac{\p}{\p t}\left(\frac{u_t}{\sqrt{1-u_t^2+\left|\nabla_x u\right|^2}}\right)+\sum_{i=1}^n \frac{\p}{\p x^i}\left(\frac{u_{x_i}}{\sqrt{1-u_t^2+\left|\nabla_x u\right|^2}}\right) = 0 
\end{equation}
under the constraint $u_t^2 < 1 + |\nabla_x u|^2$.

The study of the small data global existence of \eqref{eqn:HVMC_graph_formulation_global} was initiated in \cite{Lind04} by exploiting the null form of nonlinearity (see \cite{Chr86, Kl86} and \cite{Alin01-1, Alin01-2}). \cite{Bre02} later developed the result via a more geometric formulation. See \cite{Stef11} and \cite{Wong17} for further development as well. These results can be viewed as global nonlinear stability result for the hyperplane solution under the HVMC evolution.

The stability of simple planar traveling wave solutions to the HVMC equation was proved in \cite{AW20, Ab20, LZ24} and revisited in \cite[Section 13]{AZ23} as a special case.

The stability of catenoid, as a nonflat stationary solution to the HVMC equation, has been studied in \cite{KL12} for the case $n = 1$. The codimension-$1$ stability was proved in \cite{DKSW16} for $n = 2$ under proper symmetry assumptions via a geometric gauge and it is expected that their proof applies to higher dimensions as well. As previously mentioned, \cite{LOS22}, \cite{OS24} resolved the codimension-$1$ stability of catenoid without any symmetry assumption in dimensions $n \geq 5$ and $n = 3$, respectively. 

The stability of the helicoid solution was settled in \cite{Mar19} under radial perturbation. Some blowup scenarios and singularity formation were justified in \cite{NT13, EHHS15, JNO15, Wong18, BMP21}. We also would like to mention \cite{Wong11} and \cite{Wong14} on relevant topics.

Spacelike hypersurfaces with vanishing mean curvature in spacetimes are intensively studied as well, and they are referred to as maximal hypersurfaces. The constructions of examples for maximal surfaces date back to \cite{Kob83, Bart84}. The Bernstein problem for minimal surfaces \cite{CY76} in the Lorentz setting is also of interest.

We refer to \cite{LOS22} for a comparison with black hole stability and other soliton stability problems.

\subsection{Outline of this paper and guide to the reader}

In Section~\ref{Section:preliminaries}, we describe our setup for the problem by introducing modulation parameters, foliations, profiles, parametrizations in different regions, and a global coordinate system.

We would like to note that Section~\ref{section_ell_est_on_Rm_catenoid}, combined with Section~\ref{Section:basic_properties_of_barcalC}, can be read as a self-contained part for those interested solely in the spectral properties and elliptic theory of the stability operator $L$ of the Riemannian catenoid $\barcalC$. On the other hand, the reader who is primarily interested in studying late time tails of wave equations in even dimensions like $n = 4$ can directly go to Section~\ref{section_rp_methods_hyperboloidal}, along with our bootstrap assumptions in Section~\ref{section:bootstrap_assumptions}, which can be treated as a highly self-contained part if one wishes to take the derivation of the equation, energy boundedness, ILED for granted. In this part, we introduce a new commutator $K = r^{\frac32}\p_r$ to extend the standard $p$-hierarchy of $r^p$-weighted estimates to $0 < p < \frac52$ (on the heuristic level), motivated by \cite{AAG18}.

In Section~\ref{Section:Derivation_of_op_modified_profile}, we present a detailed derivation of the equation for the perturbation $\psi$ in various forms. Tailored to the $n = 4$ case, we then design a modified profile $p$ as done in \cite{OS24}. In the last two subsections, the modulation equations for modulation parameters $\xi, \ell$ and parameters $a_\pm$ are derived by imposing suitable orthogonality conditions.

Section~\ref{section:bootstrap_assumptions} is devoted to the bootstrap assumptions and how we close the bootstrap arguments concerning parameter derivatives. The main nonlinear stability theorem and its proof are given in this part as well. 

The rest of the paper is to improve those bootstrap assumptions regarding the decay properties of our profiles. We shall give a further overview in Section~\ref{Section:overview_of_rp} once we are prepared with all the notations.

\stoptoc
\subsection*{Acknowledgments}
The author would like to express his gratitude to his advisor Sung\mbox{-}Jin Oh for suggesting this problem, for many insightful discussions, for the detailed comments on the early draft, and for his constant guidance and support. The author would also like to thank Sohrab Shahshahani for his important comments on preliminary versions of this paper. The author also thanks Yuchen Mao for pointing to useful references. The author was partially supported by Sung\mbox{-}Jin Oh's National Science Foundation CAREER Grant under NSF-DMS-1945615. The author also acknowledges G-Research's travel support, which enabled him to attend the British Isles Graduate Workshop V and present this work.
\resumetoc

\section{Setup of the problem}\label{Section:preliminaries}
In this section, we review various notions introduced in \cite{LOS22} including the basic knowledge of a Riemannian catenoid, choice of foliations, profile and a global coordinate. Besides, we give a detailed derivation of the equation in the far-away region in Section~\ref{Section:eqn_derivation_for_varphi_in_Chyp}.

\subsection{Notations}\label{Section:notation}
To better prepare ourselves for the subsequent discussions, we introduce some universal notations that are used repeatedly in this paper.

\begin{enumerate}[ wide = 0pt, align = left, label=\arabic*. ]
    \item \textit{The main parameters. } Two modulation parameters $\xi, \ell \in \R^n$ are kept track of throughout the paper with smallness assumption $|\ell|, |\dot\xi| \ll 1$. Here, and below, the dot over a parameter denotes the time derivative. We will use the succinct notation $\dot\wp = (\dot\ell, \dot\xi - \ell)$ to denote the parameter derivatives. Sometimes, we abuse notation to denote $\wp = (\ell, \xi - \uptau\ell)$, where $\sigma$ is some proper time parameter. The notation $\dot\wp^{(k)}$ denotes a total of $k$ derivatives of the parameters, so for instance $\dot\wp^{(2)}$ could be any of $\ddot\ell$, $|\dot\xi-\ell|^2$, $\ddot\xi-\dot\ell$, etc.
    Moreover, there are two parameters $a_\pm$ capturing the projection of the perturbation to the unstable directions in the first order formulation. The difference is that without derivatives, we do expect decay for $a_\pm$ but not for $\wp$.

    \item \textit{Constants. } We use $R_f$ to denote a large spatial scale distinguishing different regions in our foliations. We refer to \eqref{eqn:defn_of_4_regions} for details. Another small parameter $\delta_1$ is used here as well. The scale of the cutoffs for approximate eigenfunctions $\vec Z_j, \vec Y_j, \vec W_j$'s also involves $R_f$. See \eqref{eqn:proxy_eftn} and \eqref{eqn:defn_of_Z_i_s}. The size of initial perturbation is assumed to be less than $\eps$, which appears in Section~\ref{Section:BA} and Appendix~\ref{Section:LWP} especially. We assume $|\ell| \lesssim \eps$ and quantities like $R_f^j |\ell|$ are considered small since we will finally take $R_f$ to be sufficiently large first and take $\eps$ small correspondingly. Among bootstrap assumptions, $\delta_\wp$ characterizes some extra smallness, which is of the form \eqref{eqn:delta_wp_defn}. Throughout the paper, we will specify the dependence of implicit constants on bootstrap constants by inserting $C_k$ explicitly into these estimates.

    \item \textit{Coordinates. }
    We will introduce and work in different coordinates depending our demands. In the flat region, $(t,\rho,\omega)$ is an analog of the coordinate on $\calC$, while $(\tau,r,\theta)$ in the far-away or hyperboloidal region is introduced such that $\tau$ is an analog of the retarded time.  In addition to these, a global non-geometric coordinates $(\uptau,\uprho,\uptheta)$ is defined in Section~\ref{Section:global_coord}. We mention that in the overlapping region $\uptau = t = \tau$ and we will be slightly sloppy on this sometimes. 
    In general, $\partial$ denotes arbitrary derivatives that have size of order one, i.e. $\{\p_\uptau, \p_\uprho, \brk{\uprho}^{-1}\p_\uptheta\}$.  $\p_\Sigma$ denotes the ones that are tangential to the leaves of the foliations. 
    \item \textit{Big-$\calO$ notation. }
    We will use the notation $\calO$ for some terms that appear in the computation in local coordinates.
    This notation, in contrast to the standard big-$O$ notation, captures the decay properties for all coordinate derivatives, which verifies the heuristics that $\p_r$ will obtain a gain of $r^{-1}$ (spatial decay only matters in the hyperboloidal region) and $\p_\uptau \calO(\wp r^{-k}) = \calO( \dot\wp r^{-k})$. In other words, an error term of the form $\calO(f)$ is still bounded by $|f|$ after any number of differentiations by $r\p_r$ or $\p_\uptheta$. An error that is denoted by $\calO(\dot\wp)$ will still be bounded by $\calO(\dot\wp)$ after application of $\p_\uptau$. More specifically, it becomes $\calO(\dot\wp^{(k+1)})$ after $k$ applications. We also point out that for some specific $\calO(1)$ terms, it will become $\calO(\dot\wp)$ after applying $\p_\uptau$.

    \item \textit{Norms and energies. }
    We will use $E$ and $LE$ to denote standard energy and integrated local energy respectively. These two notations will be introduced in Section~\ref{Section:ILED} but also appears occasionally in previous chapters due to the consideration of bootstrap assumptions, for instance, in \eqref{eqn:L2_est_of_c_im} and Section~\ref{Section:BA}. In Section~\ref{section_rp_methods_hyperboloidal}, we warn the reader that the following $r^p$-weighted energy notations $\calE^p, \calB^p, \scE^p, \scB^p, \tilde\calE^p, \uscE^p, \uscB^p$ need to be distinguished in this part.
\end{enumerate}

\subsection{Basic properties of a Riemannian catenoid}\label{Section:basic_properties_of_barcalC}
In this subsection, we recall some basic properties for a Riemannian catenoid which are listed in \cite[Section 1]{LOS22} and the references therein.
A Riemannian catenoid $\barcalC$ is a surface of revolution obtained by rotating the graph of \[
X^{n+1} \mapsto X' =(\frakf(X^{n+1}),0,\dots,0)
\]
about the $X^{n+1}$-axis, where $\frakf$ satisfies the ODE \begin{equation}\label{eqn:ODE_for_frakf_rrho}
    \frac{\frakf''}{(1+(\frakf')^2)^{\frac{3}{2}}}-\frac{n-1}{\frakf(1+(\frakf')^2)^{\frac{1}{2}}}=0 \iff \frac{d^2 \rrho}{d \frakf^2}+\frac{n-1}{\frakf}\frac{d\rrho}{d \frakf}+\frac{n-1}{\frakf}\big(\frac{d \rrho}{d \frakf}\big)^3=0
\end{equation}
with initial condition $\frakf(0)=1$, $\frakf'(0)=0$.
Hence, $\barcalC$ can be parameterized as \begin{equation}\label{eqn:defn_F_for_Rm_catenoid}
    F : I \times \bbS^{n-1}\to \R^{n+1}, \quad F(\rrho,\omg)=(\frakf(\rrho)\Theta(\omg),\rrho),
\end{equation}
where $\Theta:\bbS^{n-1}\to \bbR^{n}$ is the standard embedding of the unit sphere. On the other hand, $\barcalC$ can be viewed as a level set $\{G(X) = 0\}$ of the function $G(X) = |X'|^2 - \frakf(X^{n+1})^2 = 0$.
Note that $\frac12 \nabla_X G = (X', - \frakf(X^{n+1}) \frakf'(X^{n+1}))$ and $X' = \frakf(X^{n+1})\Theta(\omg)$, the unit outward normal is given by \begin{equation}\label{eqn:nu_outer_normal_Rm_catenoid}
    \nu = \frac{\nabla G}{|\nabla G|} = \frac{1}{(1 + \frakf'(\rrho)^2)^{\frac12}} (\Theta(\omg), - \frakf'(\rrho)).
\end{equation}
From this, one can derive that $I = \R$ when $n = 2$ and $I = (-S, S)$ when $n \geq 3$, where 
\begin{equation}\label{eqn:S_bound_X_n+1}
    S:=\int_{1}^{\infty}\frac{d \frakf}{\sqrt{\frakf^{2(n-1)}-1}} < \infty.
\end{equation}

Now we give the induced metric on $\barcalC$. Using cylindrical coordinates $\Xbar = (X', X^{n+1}) = (\Rbar\, \Omegabar, \Zbar)$,
the ambient Euclidean metric becomes
\[
d \Zbar^2 + d \Rbar^2 + \Rbar^2\, d \Omegabar^2,
\]
where $d \Omegabar^2$ denotes the standard metric on $\bbS^{n-1}$.
On $\barcalC\cap\{\Zbar>0\}$ we introduce radial coordinates $(\rbar,\thetabar)\in[1,\infty)\times \bbS^{n-1}$ by 
\begin{equation}\label{eqn:dZbar_drbar}
    (\rbar,\thetabar)\mapsto (\Zbar= \Zbar(\rbar), \Rbar = \rbar,\Omegabar = \Theta(\thetabar)),\qquad \frac{d \Zbar}{d \rbar} = \frac{1}{\sqrt{\rbar^{2(n-1)}-1}},
\end{equation} 
where $\Zbar, \rbar$ correspond to $\rrho, \frakf$, respectively, in the preceding context.
The induced Riemannian metric on $\barcalC$ in these coordinates becomes
\begin{equation}\label{eqn:metric_C_in_r}
    \left(1+\frac{1}{\rbar^{2(n-1)}-1}\right)\, d \rbar\otimes d \rbar + \rbar^2  \ringsg_{ab}\, d\thetabar^a \otimes d\thetabar^b.
\end{equation}

Instead of the geometric radial coordinate function $\rbar$, it is more convenient to use the global coordinates $\rho \in(-\infty,\infty)$ with $\rbar(\rho) = \brk{\rho} = \sqrt{1+\rho^2}$, $\omg = \theta$ with metric 
\begin{equation}\label{eqn:metric_C_in_rho}
    \frac{\rho^2\brk{\rho}^{2(n-2)}}{\brk{\rho}^{2(n-1)}-1}\, d \rho\otimes d \rho+\brk{\rho}^2\ringsg_{ab}\, d\omega^a \otimes d\omega^b.
\end{equation}
Note that in the variables $(\rho, \omg)$ we introduced, the expression of $F$ \eqref{eqn:defn_F_for_Rm_catenoid} becomes \begin{equation}\label{eqn:F_for_Rm_catenoid_rho_omg_coord}
    F(\rho, \omg) = (\brk{\rho}\Theta(\omg), \Zbar(\rho)).
\end{equation}

\subsection{Modulation parameters, foliations, profile and gauge choice}

\subsubsection{Modulation parameters $\xi$ and $\ell$}
Let $\xi = \xi(\sigma)$ and $\ell = \ell(\sigma)$ be two functions on some subinterval of $[0,\infty) \subset \R$ with values in $\R^n$ and $|\ell|, |\dot\ell|, |\dot\xi| \ll 1$. (This assumption will become clear in Section~\ref{section:bootstrap_assumptions} after we introduce the bootstrap assumptions.) We impose $\xi(0) = \ell(0) = 0$ as our initial conditions for the modulation parameters. (The vanishing of $\xi(0)$ is just for simplicity but not important.)
By a slight abuse of notation, we will view $\xi, \ell$ as vectors in $\R^{n+1}$ as well with the natural embedding $\R^n \hookrightarrow \R^{n+1}$, $X' \mapsto (X', 0)^T$, where the last component is zero. Denoting the Riemannian orthogonal projection in $\R^{n+1}$ to the direction $\ell$ by $P_\ell$ and the orthogonal projection by $P_\ell^\perp$, that is, \[
    P_\ell X = \frac{X \cdot \ell}{|\ell|} \frac{\ell}{|\ell|}, \quad P_\ell^\perp X = X - P_\ell X, \quad X \in \R^{n+1}.
\]
Then the linear boost in $\R^{1+(n+1)}$ in the direction $\ell$ is given by \begin{equation}\label{eqn:defn_Lambda_ell_and_gmm}
    \Lambda_\ell = \begin{pmatrix}
        \gmm & -\gmm\ell^T \\ 
        -\gmm\ell & A_\ell
    \end{pmatrix}, \quad 
    A_\ell = \gmm P_\ell + P_\ell^\perp, \quad \gmm = \frac{1}{\sqrt{1 - |\ell|^2}}.
\end{equation}

Recall that $\barcalC$ denotes the Riemannian catenoid in $\R^{n+1}$ and $\calC = \R \times \barcalC$ the product Lorentzian catenoid in $\R^{1+(n+1)}$. For any $\sigma$, one can define \begin{equation}\label{eqn:calC_sigma}
    \calC_\sigma := \{\Lambda_{-\ell(\sigma)}X : X \in \calC\} + \begin{pmatrix}
        0 \\ \xi(\sigma) - \sigma\ell(\sigma)
    \end{pmatrix},
\end{equation}
obtained by applying a boost $\Lambda_{-\ell}$ on $\calC$ then translating the object by $\xi - \sigma\ell$ for any fixed $\sigma$. In other words, $\ell$ characterizes a boost and $\xi$ will denote the moving center of our foliation (see \eqref{eqn:hyperboloid_tilde_bsUpsigma_sigma}), which corresponds to the boost and translational symmetry of catenoid as a minimal surface, respectively. These finally will be related to the zero modes of the linearized operator as well and we refer to Remark~\ref{rmk_geometric_meaning_of_zero_modes}. One might also refer to \cite{Stu01} to compare our choice of modulation parameters with the ones therein.
We end by a remark that these parameters haven't been fully determined yet at this point and the governing equations for $\xi$ and $\ell$ will be introduced in Section~\ref{Section:modulation_equations}.

\subsubsection{Foliation $\calD$ of the ambient spacetime with a coordinate chart $(\tau, r, \theta, X^{n+1})$}\label{Section:foliations}

Under the same assumptions on $\xi, \ell$ as in preceding subsections, we are able to give a hyperboloidal foliation $\calD = \cup_{\tau \in [0, \infty)} \bsUpsigma_\tau$ 
of the region \begin{equation}\label{eqn:region_of_ambient_space}
    \calR := \left\{X = (X^0, X', X^{n+1}) \in \R^{1 + (n+1)} : X^0 + \gmm(0) R_f \geq \sqrt{|X' - \eta(0)|^2 + 1}\right\}
\end{equation}
in the ambient spacetime, where $\eta(\sigma) := \xi(\sigma) - \gmm(\sigma)R_f\ell(\sigma)$.

In order to define a foliation whose leaves are hyperboloidal away from the moving center $\xi(\sigma)$, we select a reference hyperboloid : \[
    \calH_\sigma := \{y = (y^0, y', y^{n+1}) \in \R^{1+(n+1)} : y^0 - \gmm(\sigma)^{-1}\sigma + R_f = \sqrt{|y'|^2 + 1}\},
\] 
where $\gmm$ is defined in \eqref{eqn:defn_Lambda_ell_and_gmm} and $R_f$ is some large constant to be determined. Finally, $R_f$ would be taken sufficiently large and then the smallness magnitude $\eps$ of $\ell$ will be taken sufficiently small afterwards so that $\calO(|\ell| R_f)$ should be small. See Section~\ref{Section:notation}.

For any fixed $\sigma$, we perform the same transformation on $\calH_\sigma$ as in \eqref{eqn:calC_sigma} to obtain \begin{equation}\label{eqn:hyperboloid_tilde_bsUpsigma_sigma}
    \tilde\bsUpsigma_\sigma :=  \Lambda_{-\ell(\sigma)}\calH_\sigma + \begin{pmatrix}
        0 \\ \xi(\sigma) - \sigma\ell(\sigma)
    \end{pmatrix} 
    = \left\{X : X^0 - \sigma + \gmm R_f = \sqrt{|X' - \xi + \gmm R_f\ell|^2 + 1}\right\},
\end{equation}
where the computation is easy thanks to the fact that $\calH_\sigma - (\gmm^{-1}\sigma - R_f, 0)^T$ is part of the hyperboloid $\{|y^0|^2 - |y'|^2 = 1\}$, which shall be preserved by Lorentz boost $\Lambda_{-\ell}$.

\begin{remark}
    We still need to justify $\calD = \cup_{\tau \in [0, \infty)} \bsUpsigma_\tau$ indeed foliates $\calR$. Since all the submanifolds defined just now are translational invariant along $X^{n+1}$, it suffices to show that $\cup_\sigma \tilde\barbsUpsigma_\sigma$ forms a foliation of $\{X^{n+1} = c\}$ by setting \begin{equation}\label{eqn:barbsUpsigma_tilde}
        \tilde\barbsUpsigma_\sigma := \tilde\bsUpsigma_\sigma \cap \{X^{n+1} = c\}.
    \end{equation}

    Suppose by contradiction that some $(X^0, X') \in \calR \cap \{X^{n+1} = c\}$ belongs to $\tilde\barbsUpsigma_{\sigma_1}$ and $\tilde\barbsUpsigma_{\sigma_2}$ with $\sigma_1 < \sigma_2$. Then \begin{equation}\label{eqn:foliation_proof}
        \begin{aligned}
        |\sigma_1 - \sigma_2| &\leq |\gmm(\sigma_1) - \gmm(\sigma_2)| + \left|\sqrt{|X' - \eta(\sigma_1)|^2 + 1} - \sqrt{|X' - \eta(\sigma_2)|^2 + 1}\right| \\ 
        &\lesssim |\gmm(\sigma_1) - \gmm(\sigma_2)| + |\eta(\sigma_1) - \eta(\sigma_2)| < |\sigma_1 - \sigma_2|,
    \end{aligned}
    \end{equation}
    where the last inequality follows from the smallness $|\ell| + |\dot\ell| + |\dot\xi - \ell| = O(\eps)$ for all $\sigma \geq 0$ (see \eqref{BA-dotwp}). 
    On the other hand, the surjective simply follows from the continuity of 
    $\sigma - \gmm(\sigma)R_f + \sqrt{|X' - \eta(\sigma)|^2 + 1}$ in $(\sigma, X')$.
\end{remark}

Now we can parametrize $\cup_\sigma \tilde\bsUpsigma_\sigma$ (defined in \eqref{eqn:hyperboloid_tilde_bsUpsigma_sigma}) in terms of new variables $(\tau, r, \theta, X^{n+1})$ as follows.
Set 
\begin{equation}\label{eqn:relation_tau_sigma}
    \tau = \sigma - \gmm(\sigma)R_f,  \quad X' - \eta(\sigma) = r \Theta(\theta).
\end{equation}
From the fact that \[
    \frac{d\tau}{d\sigma} = 1 - \dot\gmm R_f \simeq 1,
\] 
we could write $\tau := \tau(\sigma)$ as there is a one-to-one correspondence between $\sigma$ and $\tau$. 
Therefore, one has the following parametrization for $\cup_\sigma \tilde\bsUpsigma_\sigma$ : \begin{equation}\label{eqn:parametrize_tilde_bsUpsigma}
    \begin{cases}
        X^0 = \tau + \brk{r}, \\
        X' = \eta(\tau) + r\Theta(\theta), \\
        X^{n+1} = X^{n+1}.
    \end{cases}
\end{equation}
Going forward, we sometimes abuse notations between $\bsUpsigma_\sigma$ and $\bsUpsigma_\tau$.

The motivation is to set up a foliation on which the $r^p$-weighted vector field method can be carried out. This only requires our foliation to be spacelike and asymptotically null (more precisely, in our case, hyperboloidal) in the far-away region. This suggests that we could freely modify $\cup_\sigma \tilde\bsUpsigma_\sigma$ to become a foliation within a compact region such that it is easy to work on. The easiest option is definitely constant $X^0$-foliation. In the following, we smoothly patch the two different foliations together.

In order to obtain a $C^\infty$-foliation, we fix $\frakmax \in C^\infty(\R \times \R \to \R)$ to be a smoothed out version of the maximum function such that for some small $\delta_1 > 0$, \begin{equation}\label{eqn:maximum_ftn_w_delta_1}
    \frakmax(t_1, t_2) = \max(t_1, t_2), \quad \text{when } |t_1 - t_2| > \delta_1.
\end{equation}

Now we define $\cup_\tau \bsUpsigma_\tau$ by the following parametrization : 
\begin{equation}\label{eqn:parametrize_bsupsigma_in_tau_r_theta}
    \begin{cases}
        X^0 = \tau + \frakmax\{\gmm R_f, \brk{r}\}, \\
        X' = \eta(\tau) + r\Theta(\theta), \\
        X^{n+1} = X^{n+1},
    \end{cases}
\end{equation}
where the relation between $\tau$ and $\sigma$ is given by \eqref{eqn:relation_tau_sigma}.
Note that for each fixed $\tau$, the transition between the hyperboloidal region and the flat region happens when $|\brk{r} - \gmm R_f| < \delta_1$.
Note that $\cup_\tau \bsUpsigma_\tau$ is indeed a foliation of $\calR \cap \{X^0 \geq 0\}$ by examining like in \eqref{eqn:foliation_proof} : \[
    |\tau_1 - \tau_2| = \left|\left.\frakmax\{\gmm R_f, \sqrt{|X' - \eta|^2 + 1}\}\right|_{\tau_1} - \left.\frakmax\{\gmm R_f, \sqrt{|X' - \eta|^2 + 1}\}\right|_{\tau_2}\right| < |\tau_1 - \tau_2|
\]
if there were $X \in \calR \cap \{X^0 \geq 0\} \cap \bsUpsigma_{\tau_1} \cap \bsUpsigma_{\tau_2}$. 
Similarly, one could define $\cup_\tau \barbsUpsigma_\tau$ like in \eqref{eqn:barbsUpsigma_tilde} as \begin{equation}\label{eqn:barbsUpsigma_tau}
    \barbsUpsigma_\tau := \bsUpsigma_\tau \cap \{X^{n+1} = S\}.
\end{equation}
Here, $S$ is the value in \eqref{eqn:S_bound_X_n+1} and the reason why to choose this specific constant if for the purpose of parametrizing our profile as a graph over this foliation in the far-away region. 

\begin{remark}
    We will see soon that our further analysis of the far-away region highly relies on $(\tau, r, \theta)$ coordinates. Though $(\tau, r, \theta)$ gives an explicit global coordinate for $\cup_\tau \barbsUpsigma_\tau$, we would not use this to parametrize the whole profile $\calQ = \cup_\tau \Sigma_\tau$ of $\cup_\tau \bsUpsigma_\tau$ (see  \eqref{eqn:defn_calQ} for definition). The underlying reason is that in the flat region (near the neck of the catenoid), one cannot simply parametrize $\calQ$ as a function over a hyperplane $\{X^{n+1} = S\}$. Instead, we will define another global charts $(\uptau, \uprho, \uptheta)$ on $Q$ in Section~\ref{Section:global_coord}, where it manifests $(\tau, r, \theta)$ in the far-away region while it is originated from the coordinate of catenoid $\calC$ in the near region.
\end{remark}

\begin{remark}\label{rmk:slight_different_foliation}
    Note that our definition for $\bsUpsigma_\tau$ is slightly different from the $\bsUpsigma_\sigma$ defined in \cite[Section 1.5]{LOS22}. These two are exactly identical (for corresponding $\tau = \sigma - \gmm R_f$) in the flat region and hyperboloidal region except in the transition region, as one can see from the parametrizations introduced in \cite[Section 4.1]{LOS22}. Using the functions $\sigma, \sigma_\temp$ and the smoothed out minimum function defined in \cite[Section 1.5]{LOS22}, one can show that $\bsUpsigma_\sigma$ in \cite{LOS22} has the implicit parametrization \[
        X^0 = \tau + \frakmax\{\gmm(\sigma)R_f, \brk{r} + (\gmm(\sigma(X)) - \gmm(\sigma_\temp(X)))R_f\},
    \]
    where $\frakmax(t_1, t_2) = -\frakm(-t_1, -t_2)$. These two parametrizations are identical if $\gmm \equiv 1$.
\end{remark}

In view of the parametrization \eqref{eqn:parametrize_bsupsigma_in_tau_r_theta}, we decompose $\calD = \cup \bsUpsigma_\tau$ into four regions \[
    \bsUpsigma_\tau = \bsUpsigma_{\tau, \hyp} \cup \bsUpsigma_{\tau, \tran} \cup \bsUpsigma_{\tau, \flatfar} \cup \bsUpsigma_{\tau, \flatnear}, \quad \bsUpsigma_{\bullet} := \cup_\tau \bsUpsigma_{\tau, \bullet} \text{ with } \bullet \in \{\hyp, \tran, \flatfar, \flatnear\}
\]
which are defined as follows \begin{equation}\label{eqn:defn_of_4_regions}
    \begin{cases}
    \bsUpsigma_{\tau, \hyp} & \text{ is the region on $\bsUpsigma_\tau$ where } \gmm R_f < \brk{r} - \delta_1, \\ 
    \bsUpsigma_{\tau, \tran} & \text{ is the region on $\bsUpsigma_\tau$ where } |\brk{r} - \gmm R_f| \leq \delta_1, \\ 
    \bsUpsigma_{\tau, \flatfar} & \text{ is the region on $\bsUpsigma_\tau$ where }  \frac12 R_f < \brk{r} < \gmm R_f - \frac12\delta_1, \\ 
    \bsUpsigma_{\tau, \flatnear} & \text{ is the region on $\bsUpsigma_\tau$ where }  \brk{r} < \frac12 R_f. \\ 
\end{cases}
\end{equation}
\begin{figure}[htbp]
\begin{center}
\begin{tikzpicture}
    \draw[->] (-6,0) -- (6,0) node[right] {$X'$};
    \draw[->] (-2,-0.5) -- (-2,3) node[right] {$X^0$};
    \draw[thick, black, domain=-6:-3, smooth, variable=\x] 
        plot ({\x}, {sqrt((\x + 3)^2 + 1)});
    \draw[thick, black, domain=3:6, smooth, variable=\x] 
        plot ({\x}, {sqrt((\x - 3)^2 + 1)}) node[right]{$\bsUpsigma_{\tau}$};
    \draw[thick, black] (-3,1) -- (3,1);

    \draw[thick, blue, <->] (-1.5, 0.7) -- (1.5, 0.7) node[midway, below]{$\bsUpsigma_{\tau, \flatnear}$};
    \draw[thick, blue, <->] (1.5, 0.7) -- (2.5, 0.7) node[midway, below]{$\bsUpsigma_{\tau, \flatfar}$};
    \draw[thick, blue, <->] (2.5, 0.7) -- (4.5, 0.7) node[midway, below]{$\bsUpsigma_{\tau, \tran}$};
    \draw[thick, blue, <-] (4.5, 0.7) -- (6, 0.7) node[midway, below]{$\bsUpsigma_{\tau, \hyp}$};

    \draw[thick, blue, <->] (-1.5, -0.2) -- (1.5, -0.2) node[midway, below]{$\chi_g \equiv 1$};
    \draw[thick, blue, <-] (2.5, -0.2) -- (6, -0.2) node[midway, below]{$\chi_g \equiv 0$};
\end{tikzpicture}

\caption{An illustration of the four regions.}
\end{center}
\end{figure}
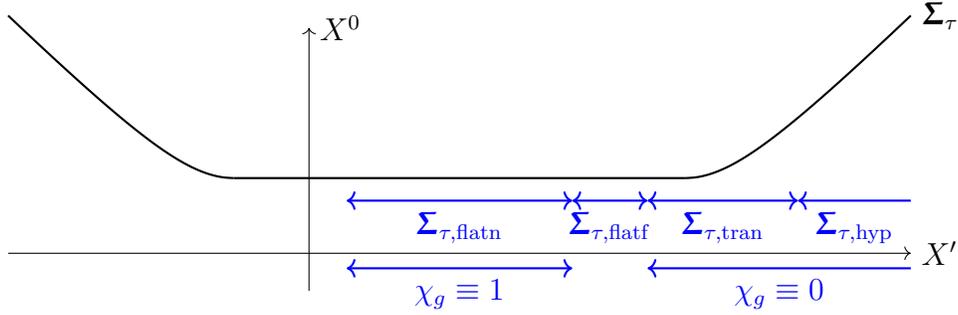
Recall that $\delta_1$ is a small constant given in \eqref{eqn:maximum_ftn_w_delta_1}. This classification is valid due to \eqref{BA-dotwp} ensuring $\gmm \sim 1$ globally. In particular, one has the inclusion $\bsUpsigma_{\tau, \flatfar} \cup \bsUpsigma_{\tau, \flatnear} \subset \bsUpsigma_{\tau, \flatt}$, where $\bsUpsigma_{\tau, \flatt}$ denotes the part on $\bsUpsigma_\tau$ such that $X^0 = \sigma(\tau) = \tau + \gmm(\tau) R_f$. In other words, constant $\tau$ is equivalent to constant $X^0$ on $\bsUpsigma_{\tau, \flatt}$.

Since $R_f$ will be taken to be large eventually, we usually refer the region $\bsUpsigma_{\tau, \tran} \cup \bsUpsigma_{\tau, \hyp}$ as the far-away region. Later, different region will be treated separately, where we employ the $r^p$-weighted energy estimates on $\calC_\hyp$. Also, the gauge we shall choose will vary from one to the other and we refer to Remark~\ref{rmk_gauge_in_diff_region}.

One needs to note that $\bsUpsigma_{\tau, \flatfar}$ overlaps with $\bsUpsigma_{\tau, \tran}$ in the flat region $\bsUpsigma_{\tau, \flatt}$ and this will be important in Section~\ref{Section:global_coord} when constructing global coordinates. 

\subsubsection{Profile $\calQ$ and gauge choice $N$}\label{Section:profile_plus_gauge_choice}
The basic idea is to decompose a solution $\calM$ into a profile part and perturbation part, that is, $\calM = \calQ + \psi N$, where $\calQ$ is the profile to be defined in the following paragraph, $N$ is an almost normal vector field $N : \calQ \to \R^{1 + (n+1)}$ and $\psi$ is a scalar function $\psi : \calQ \to \R$ on the profile with the property that $p + \psi(p) N(p) \in \calM$ for all $p \in \calQ$. 

Now we define our profile $\calQ$ as the following \begin{equation}\label{eqn:defn_calQ}
    \calQ := \cup_\sigma \Sigma_\sigma, \quad \Sigma_\sigma := \calC_\sigma \cap \bsUpsigma_\sigma.
\end{equation}
Note that it can be seen from our parametrization later that $\calQ$ is a hypersurface.
Similar to the classification \eqref{eqn:defn_of_4_regions}, we define $\Sigma_{\tau, \bullet} := \calC_\tau \cap \bsUpsigma_{\tau, \bullet}$ for $\bullet \in \{\hyp, \tran, \flatfar, \flatnear\}$.
Moreover, we define $\calC_\bullet := \cup_\tau \Sigma_{\tau, \bullet}$ so that 
\[
    \calQ = \calC_\hyp \cup \calC_\tran \cup \calC_\flatfar \cup \calC_\flatnear.
\]

\begin{remark}
    It is easy to see that for each $\sigma$, $\Sigma_\sigma$ is homeomorphic to the Riemannian catenoid $\barcalC$ since for any fixed $\sigma$, it is homeomorphic to $\calC \cap \calH_\sigma$
    and in turn homeomorphic to $\barcalC$.
\end{remark}

Next, we fix a gauge $N$, which allows us to measure the deviation from the profile $\calQ$ to our solution $\calM$. In order to ease the computation of the equation which is satisfied by $\psi$, we decide to work with a less geometric $N$ defined as follows. 
First, we define a vector field $n_\wp$ on $\Sigma_\sigma$ by considering the normal vector of $\calC_\sigma$, that is, \begin{equation}\label{eqn:n_wp}
    n_\wp(p) := \Lambda_{-\ell(\sigma)}n(q), \quad \Sigma_\sigma \ni p = \Lambda_{-l} q + (0, \xi - \sigma\ell)^T, \quad q \in \calC,
\end{equation}
where $n(q)$ is the normal to $\calC$ at $q$.
In the interior of flat region, one can define $\tilde N_{\mathrm{int}}$ to be the normal to $\Sigma_\sigma$ as a subspace of $\bsUpsigma_\sigma$, which amounts to ignoring the first component of $n_\wp$ since we are in the flat region.
Then we define an intermediate vector field $\tilde N$ given by interpolation :\begin{equation}\label{eqn:tilde_N_defn}
    \tilde N = \chi_g \tilde N_{\mathrm{int}} + (1-\chi_g) \p_{X^{n+1}},
\end{equation}
where $\chi_g \in C^\infty_0(\calQ)$ is chosen to be \begin{equation}\label{eqn:chi_g_used_in_global_coord}
    \chi_g = \begin{cases}
        1, \quad & \brk{r} < \frac58 R_f, \\
        0, \quad & \brk{r} > \frac78 R_f.
    \end{cases}
\end{equation} 
In particular, $\chi_g \equiv 1$ in $\calC_\flatnear$ and $\chi_g \equiv 0$ on 
$\calC_\tran \cup \calC_\hyp$.
This cutoff function $\chi_g$ will also assist the definition the global polar coordinates.

Finally, we renormalize $\tilde N$ to obtain $N$ by imposing \begin{equation}\label{eqn:defn_for_N}
    \eta(n_\wp, N) = 1, \quad \tilde N \parallelsum N.
\end{equation}
As a remark, the vector field $N$ is well-defined as $n_\wp$, $\tilde N_{\mathrm{int}}$, $\p_{X^{n+1}}$ are all spacelike in their domains, respectively.
\begin{remark}\label{rmk_gauge_in_diff_region}
    According to this construction process, $N \parallelsum \tilde N_{\mathrm{int}}$ in $\calC_\flatnear$ while $N \parallelsum \p_{X^{n+1}}$ in $\calC_\tran \cup \calC_\hyp$.
\end{remark}

Then we are ready to introduce the scalar perturbation $\psi$ via the decomposition \begin{equation}\label{eqn:defn_psi}
    \Phi = \Psi_\wp + \psi N,
\end{equation}
where $\Phi$ and $\Psi_\wp$ denote $\calM$ and $\calQ$, respectively. Here, $\wp$ is the way we suppress the dependence on $\xi$ and $\ell$. In particular, $\dot\wp := (\dot\xi - \ell, \dot\ell)$. To represent $\psi : \calQ \to \R$, we need to parametrize $\calQ$ and this is the main task in the following subsection.

\subsection{Parametrization of the profile \texorpdfstring{$\calQ$}{Q}}

\subsubsection{Parametrization in the flat region \texorpdfstring{$\calC_\flatt = \calC_\flatnear \cup \calC_\flatfar$}{}}
\label{Section:parametrize_interior}

We parametrize the profile $\calQ$ in the flat region $\calC_\flatt$, where $\calC_\flatt$ consists of two parts $\calC_\flatnear$ and $\calC_\flatfar$ with $N \parallelsum \tilde N_{\mathrm{int}}$ in the near region.
From \eqref{eqn:defn_calQ}, $\Sigma_\sigma \cap \calC_\flatt = \calC_\sigma \cap \{X^0 = \sigma\}$. Thanks to \eqref{eqn:calC_sigma}, we start to compute \[
    \left(\Lambda_{-\ell(\sigma)}\calC \cap \{X^0 = \sigma\}\right) + \begin{pmatrix}
        0 \\ \xi - \sigma \ell
    \end{pmatrix}
\]
As $\calC$ is parametrized by $(t, F(\rho, \omg))^T$ with $F$ given by \eqref{eqn:F_for_Rm_catenoid_rho_omg_coord}, we compute \[
    \Lambda_{-\ell(\sigma)}\begin{pmatrix}
        t \\ F(\rho, \omg)
    \end{pmatrix}
    = \begin{pmatrix}
        \gmm & \gmm\ell^T \\ 
        \gmm\ell & A_{-\ell}
    \end{pmatrix} \begin{pmatrix}
        t \\ F
    \end{pmatrix}
    = \begin{pmatrix}
        \gmm t + \gmm\ell^T F \\ 
        \gmm\ell t + A_{-\ell} F.
    \end{pmatrix}
\] 
In particular, $\ell \in \R^n \subset \R^{n+1}$ allows us to ignore the last entry of $F$. Hence, we might abuse notation later, viewing $F$ as a vector in $\R^n$ by taking the first $n$ entries.
We solve $t$ from $\sigma = \gmm t + \gmm\ell^T F$ and plug this into \[
    (\xi - \sigma\ell) + (\gmm\ell t + A_{-\ell} F) = \xi - \gmm (\ell\cdot F)\ell + A_{-\ell} F = \xi - \gmm |\ell|^2 P_\ell F + \gmm P_\ell F + P_\ell^\perp F
    = \xi + \gmm^{-1}P_\ell F + P_\ell^\perp F.
\]
In order to differentiate the parametrizations in different regions, replacing $\sigma$ by $t$, with a slight abuse of notation, $\calQ \cap \calC_\flatt$ can be parametrized by \begin{equation}\label{eqn:parametrize_calQ_in_flatt}
    \Psi_\wp(t, \rho, \omg) = (t, \xi + \gmm^{-1}P_\ell F + P_\ell^\perp F)
\end{equation}
in the flat region, where $\xi = \xi(t), \ell = \ell(t)$ are our time-dependent modulation parameters. We note that the image of $\Psi_\wp$ is represented using the natural coordinate in the ambient spacetime $\R^{1+(n+1)}$.
Then we compute the expression for $N$ in the flat region using the definition \eqref{eqn:defn_for_N}. First, we find \[
    n_\wp := \Lambda_{-\ell} \begin{pmatrix}
        0 \\ \nu
    \end{pmatrix} = \begin{pmatrix}
        \gmm\ell \cdot \nu \\ A_{-\ell}\nu
    \end{pmatrix},
\]
where $\nu = \nu(\rho, \omg)$ is the geometric normal to the Riemannian catenoid $\barcalC$ given in \eqref{eqn:nu_outer_normal_Rm_catenoid}. 
Moreover, $\tilde N_{\mathrm{int}} = (0, A_{-\ell}\nu)^T$ and it follows from \eqref{eqn:tilde_N_defn} that \[
    \tilde N = \chi_g \begin{pmatrix}
        0 \\ A_{-\ell}\nu \\ 0
    \end{pmatrix}
    + (1 - \chi_g)\begin{pmatrix}
        0 \\ 0 \\ 1
    \end{pmatrix}.
\]
Finally, we could derive the exact formula of $N$ from \eqref{eqn:defn_for_N}. In particular, $N$ is given by \begin{equation}\label{eqn:N_in_C_flatn}
    N = \begin{pmatrix}
        0 \\ |A_{-\ell}\nu|^{-2}A_{-\ell}\nu
    \end{pmatrix} = \nu + \calO(|\ell|^2) \quad \text{ in } \calC_\flatnear.
\end{equation}

\subsubsection{Parametrization of the solution $\calM$ as a graph in the far-away region $\calC_\tran \cup \calC_\hyp$}\label{Section:parametrize_far_away}

Recall that $\barcalC$ has two ends and they asymptote to $\{X^{n+1} = \pm S\}$. Since $N \parallelsum \p_{X^{n+1}}$ in the far-away region $\calC_\tran \cup \calC_\hyp$, there is a natural projection along $N$ from $\calQ \cap (\calC_\tran \cup \calC_\hyp)$ to $\barbsUpsigma_\tran \cup \barbsUpsigma_\hyp \subset \{X^{n+1} = \pm S\}$, respectively, at each end. Therefore, a parametrization of  $\barbsUpsigma_\tran \cup \barbsUpsigma_\hyp$ would become one for $\calQ$ in the far-away region. In light of this, it is possible to parametrize both the solution and the profile as a graph over $\cup_\tau \barbsUpsigma_\tau$ (defined in \eqref{eqn:barbsUpsigma_tau}). The relation between such a graph formulation $u = Q_\wp + \varphi$ is linked with the geometric formulation $\calM = \calQ + \psi N$ directly due to the special form of $N$ in this region.

Recall that we have a nice coordinate $(\tau, r, \theta, X^{n+1})$ of the ambient spacetime in the far-away region. To define $\Sigma_\sigma$ as a graph over $\barbsUpsigma_\sigma$, we need to write $X^{n+1}$ in terms of $(\tau, r, \theta)$.
To obtain a parametrization, we recall the transformation we used in both \eqref{eqn:calC_sigma} and \eqref{eqn:hyperboloid_tilde_bsUpsigma_sigma}. We apply the inverse to find \begin{equation}\label{eqn:y0_y'_in_faraway_region}
\begin{aligned}
    \begin{pmatrix}
        y^0 \\ y'
    \end{pmatrix}
    &= \Lambda_\ell \begin{pmatrix}
        X^0 \\
        X' - \xi + \sigma\ell
    \end{pmatrix}
    = \Lambda_\ell \begin{pmatrix}
        \frakmax(\gmm R_f, \brk{r}) \\
        r \Theta(\theta)
    \end{pmatrix} - \Lambda_\ell \begin{pmatrix}
        -\sigma + \gmm R_f \\ \gmm R_f \ell - \sigma \ell
    \end{pmatrix}\\
    &= \begin{pmatrix}
        \gmm^{-1}(\sigma - \gmm R_f) + \gmm \frakmax(\gmm R_f, \brk{r}) - \gmm r\ell\cdot\Theta  \\
        -\gmm\frakmax(\gmm R_f, \brk{r}) \ell + r A_\ell \Theta 
    \end{pmatrix} 
\end{aligned}
\end{equation}
where we use $\gmm(-\sigma + \gmm R_f) - \gmm(\gmm R_f - \sigma)|\ell|^2 = -\gmm^{-1}(\sigma - \gmm R_f)$ in the last step.
Therefore, our solution can be written as a graph over the plane $\{X^{n+1} = 0\}$ as follows : \[
    \begin{cases}
        X^0 = \tau + \frakmax(\gmm R_f, \brk{r}), \\
        X' = \eta(\tau) + r \Theta(\theta), \\
        X^{n+1} = Q(r A_\ell \Theta -\gmm\frakmax(\gmm R_f, \brk{r}) \ell),
    \end{cases}
\]
where $Q$ satisfies an ODE derived from \eqref{eqn:ODE_for_frakf_rrho} \begin{equation}\label{eqn:Q_tilde_r_equation}
    Q''(\tilde r) + \frac{n-1}{\tilde r} Q'(\tilde r) + \frac{n-1}{\tilde r} (Q')^3 = 0 \iff Q''(\tilde r) + \frac{n-1}{\tilde r} Q'(\tilde r) - \frac{Q'(\tilde r)^2 Q''(\tilde r)}{1 + Q'(\tilde r)^2} = 0
\end{equation}
and it satisfies \begin{equation}\label{eqn:Q'_tilde_r}
     Q'(\tilde r) = \frac{1}{\sqrt{\tilde r^{2(n-1)} - 1}} 
\end{equation}
thanks to \eqref{eqn:dZbar_drbar}.
Going forward, when we try to work in the far-away region, we just purely discuss on one end as the discussion for both ends will be exactly the same.

Thanks to \eqref{eqn:y0_y'_in_faraway_region}, one can easily compute $n_\wp$ (defined in \eqref{eqn:n_wp}) \[
    n_\wp = \Lambda_{-\ell(\sigma)}\left(\frac1{\sqrt{1 + |Q'(r A_\ell \Theta -\gmm\frakmax(\gmm R_f, \brk{r}) \ell)|^2}}\big(0, Q'(r A_\ell \Theta -\gmm\frakmax(\gmm R_f, \brk{r}) \ell), 1\big)^T \right).
\]

Therefore, due to the renormalization in \eqref{eqn:defn_for_N} for the almost normal vector field $N$, it is given by \[
    N = s \p_{X^{n+1}}, \quad s = \left(1 + |Q'(r A_\ell \Theta -\gmm\frakmax(\gmm R_f, \brk{r}) \ell)|^2\right)^{\frac12}
\]
in $\calC_\hyp \cup \calC_\tran$. 
By representing our solution $\calM$ as a graph over $\cup_\tau \barbsUpsigma_\tau$. Hence, 
$u = Q_\wp + \varphi$ is the graph formulation corresponding to $\calM = \calQ + \psi N$, where \[
    Q_\wp := Q(r A_\ell \Theta -\gmm\frakmax(\gmm R_f, \brk{r}) \ell).
\]
The two formulations are connected via the relation \begin{equation}\label{eqn:relation_varphi_psi}
    \varphi = \brk{\psi N, \p_{X^{n+1}}} = s\psi.
\end{equation}

\begin{remark}
    It will become clear soon that there is no big difference between $\varphi$ and $\psi$ for our purpose since all the $L^2$-based energy norms in Section~\ref{section_rp_methods_hyperboloidal} of $\varphi$ are equivalent to those of $\psi$ provided the decay property of $Q'$ in \eqref{eqn:Q'_tilde_r} and bootstrap assumptions.
\end{remark}
\begin{remark}
    In Section~\ref{Section:2nd_order_formulation_whole_region}, the following alternative formula for $s$ will be used : \begin{equation}\label{eqn:alternative_formula_for_s}
        s = (1 + m^{\mu\nu}Q_\mu Q_\nu)^{\frac12},
    \end{equation}
    which is an easy consequence of
    \eqref{eqn:step_2_in_calculating_F_0}.
    It is important to note that $\mu, \nu$ are taken over $X^0, \cdots, X^n$ coordinates with respect to
    $m = -(d\,X^0)^2 + \sum_{j = 1}^n (d\,X^j)^2$. We remark that the notation \[
        Q_\mu = Q_{\wp, \mu} := \p_\mu Q_\wp  \Big|_{\substack{\dot{\ell} = 0 \\\dot{\xi} = \ell}}
    \]
    is defined in the same spirit of \eqref{eqn:defn_h_mu_nu}.
\end{remark}

\subsection{Derivation of an equation for \texorpdfstring{$\varphi$}{varphi} in the far-away region with source term \texorpdfstring{$\calF_0$}{F\_0}}\label{Section:eqn_derivation_for_varphi_in_Chyp}
Now we present a derivation of the equation satisfied by $\varphi$.
By using the mean curvature formula for a manifold represented as a graph, we know that $u$ satisfies \[
    \nabla_\mu\left( \frac{\nabla^\mu u}{\sqrt{1 + \nabla^\alpha u \nabla_\alpha u}}\right) 
    = \frac1{\sqrt{|m|}} \p_\mu \left( \frac{\sqrt{|m|}m^{\mu\nu}\p_\nu u}{\sqrt{1 + m^{\alpha\beta}\p_\alpha u \p_\beta u}} \right) = 0,
\]
where $m = -(dX^0)^2 + (dX')^2$ denotes the standard Minkowski metric on $\cup_\tau \barbsUpsigma_\tau$ viewed as a subspace of $\{(X^0, X', X^{n+1}) : X^{n+1} = 0\}$ and $\nabla$ is the corresponding covariant derivative.
The equation for $u$ can be expanded as \[
    \Box_m u - \frac{\nabla^\nu u \nabla^\mu u \nabla_{\mu\nu}u}{1 + \nabla^\alpha u \nabla_\alpha u} = 0.\]

Plugging in the decomposition $u = \calQ_\wp + \varphi$, we arrive at the following equation for $\varphi$ : \[
    \calP_\graph \varphi = \calF_0 + \calF_2 + \calF_3,
\]
where the linear operator $\calP_\graph$ is given by \begin{align}\label{eqn:calP_graph}
\begin{split}
\calP_\graph &= \Box_m-(1+\nabla^\alpha Q \nabla_\alpha Q)^{-1}(\nabla^\mu Q)(\nabla^\nu Q)\nabla_{\mu}\nabla_\nu \\
&\quad-2(1+\nabla^\alpha Q \nabla_\alpha Q)^{-1}(\nabla^{\mu}\nabla^\nu Q)(\nabla_\mu Q)\nabla_\nu\\
&\quad+2(1+\nabla^\alpha Q\nabla_\alpha Q)^{-2}(\nabla^\nu Q)(\nabla^\mu Q)(\nabla^\lambda Q)(\nabla_\lambda \nabla_\mu Q)\nabla_\nu.
\end{split}
\end{align}

One should keep in mind that there is a slight abuse of notation $Q = Q_\wp$ (we might keep using this unless otherwise stated) in \eqref{eqn:calP_graph}, where \[
    Q_\wp := Q(\tilde r), \quad \tilde r =|r A_\ell \Theta - \gmm \frakmax(\gmm R_f, \brk{r}) \ell|.
\]
When $\brk{r} > \gmm R_f + \delta_1$, one can reach the expansion of $\tilde r$ \begin{equation}\label{eqn:length_tilde_r}
    \tilde r = \gmm(1 - \Theta\cdot\ell) r + \calO(r^{-1}).
\end{equation}
Here $\calF_i$ ($i = 2, 3$) denotes inhomogeneous terms of order $i$ (quadratic and cubic, respectively) in $\varphi$ with the following closed form \begin{align}\label{eq:calF2_1}
\begin{split}
\calF_2 &= -\frac{2\nabla^\mu Q\nabla^\nu \varphi\nabla^2_{\mu\nu}\varphi}{1+\nabla^\alpha u \nabla_\alpha u} -\frac{\nabla^2_{\mu\nu}Q\nabla^\mu\varphi\nabla^\nu\varphi}{1+\nabla_\alpha u \nabla^\alpha u} + \frac{\nabla^\mu Q \nabla^\nu Q \nabla^2_{\mu\nu}Q \nabla^\beta \varphi\nabla_\beta \varphi}{(1+\nabla^\alpha u \nabla_\alpha u)(1+\nabla^\alpha Q \nabla_\alpha Q)}\\
&\quad +\frac{2\nabla^\beta Q\nabla_\beta \varphi(\nabla^\mu Q\nabla^\nu Q \nabla^2_{\mu\nu} \varphi+2\nabla^2_{\mu\nu}Q\nabla^\mu Q \nabla^\nu \varphi)}{(1+\nabla^\alpha u \nabla_\alpha u)(1+\nabla^\alpha Q\nabla_\alpha Q)}-\frac{4(\nabla^\mu Q\nabla^\nu Q\nabla^2_{\mu\nu}Q)(\nabla^\beta Q\nabla_\beta \varphi)^2}{(1+\nabla^\alpha u\nabla_\alpha u)(1+\nabla^\alpha Q\nabla_\alpha Q)^2},
\end{split}
\end{align}
and 
\begin{align}\label{eq:calF3_1}
\begin{split}
\calF_3 &= -\frac{\nabla^\mu\varphi\nabla^\nu\varphi\nabla^2_{\mu\nu}\varphi}{1+\nabla^\alpha u \nabla_\alpha u}-\frac{\nabla^\beta\varphi\nabla_\beta\varphi(\nabla^\mu Q\nabla^\nu Q \nabla^2_{\mu\nu} \varphi+2\nabla^2_{\mu\nu}Q\nabla^\mu Q \nabla^\nu \varphi)}{(1+\nabla^\alpha u \nabla_\alpha u)(1+\nabla^\alpha Q\nabla_\alpha Q)}\\
&\quad+\frac{2(\nabla^\mu Q \nabla^\nu Q\nabla^2_{\mu\nu}Q)(\nabla^\beta Q \nabla_\beta\varphi)(\nabla^\gamma\varphi\nabla_\gamma\varphi)}{(1+\nabla^\alpha u \nabla_\alpha u)(1+\nabla^\alpha Q\nabla_\alpha Q)^2},
\end{split}
\end{align}
where the decay rate of $Q$ will be exploited in \eqref{eqn:derivatives_of_Q_decay} after computing the metric $m$ in $(\tau, r, \theta)$. 
Moreover, the source term $\calF_0$ is given by \begin{equation}\label{eqn:calF_0}
    \calF_0 = \Box_m Q - (1+\nabla^\alpha Q \nabla_\alpha Q)^{-1}(\nabla^\mu Q)(\nabla^\nu Q)\nabla_{\mu}\nabla_\nu Q,
\end{equation}
where we abuse notation $Q = Q_\wp$ throughout this subsection.

\begin{remark}
    In particular, we shall be interested in writing $m$ in terms of $(y^0, y')$ when analyzing the source term \eqref{eqn:calF_0}. On the other hand, when we apply the $r^p$-weighted estimates in the hyperboloidal region $\calC_\hyp$, we would like to write $\calP_\graph$ in terms of $(\tau, r, \theta)$.
\end{remark}

Now we want to derive the asymptotics for $\calF_0$ via the exact cancellation for $Q$ (not $Q_\wp$) in \eqref{eqn:Q_tilde_r_equation}. 

\begin{lemma}
    The transformation between the two coordinate systems $(X^0, X')$ and $(\tau, r, \theta)$ \eqref{eqn:parametrize_bsupsigma_in_tau_r_theta} satisfies the property that $\frac{\p \tau}{\p X^\mu}$ is bounded for $\mu = 0, 1, \cdots, n$.
\end{lemma}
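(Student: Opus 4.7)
The plan is to apply the implicit function theorem to the scalar equation defining $\tau$ in terms of $(X^0, X')$, then verify that the relevant Jacobian is uniformly bounded away from zero using the smallness of the modulation parameters. From \eqref{eqn:parametrize_bsupsigma_in_tau_r_theta}, eliminating $r = |X' - \eta(\tau)|$ and $\theta$ yields a single scalar equation
\[
F(X^0, X', \tau) := X^0 - \tau - \frakmax\!\bigl(\gmm(\tau) R_f,\ \sqrt{1 + |X' - \eta(\tau)|^2}\,\bigr) = 0,
\]
which implicitly determines $\tau$ as a smooth function of $(X^0, X')$ (the bijectivity having already been established in \eqref{eqn:foliation_proof}). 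Hence $\p_{X^\mu}\tau = -\p_{X^\mu}F / \p_\tau F$, and it suffices to bound $|\p_{X^\mu}F|$ from above and $|\p_\tau F|$ from below, both uniformly in $(X^0, X')$.

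First I would compute
\[
\p_\tau F = -1 - (\p_1\frakmax)\,\dot\gmm R_f + (\p_2\frakmax)\,\frac{(X'-\eta)\cdot\dot\eta}{\sqrt{1 + |X'-\eta|^2}}
\]
and show $|\p_\tau F| \geq 1/2$. Since $\frakmax$ is a smoothed maximum, $\p_1\frakmax,\ \p_2\frakmax \in [0,1]$, and the Cauchy--Schwarz factor $|X'-\eta|/\sqrt{1+|X'-\eta|^2}$ is at most $1$. The identities $\dot\gmm = \gmm^3\,\ell\cdot\dot\ell$ and $\dot\eta = \dot\xi - \dot\gmm R_f\,\ell - \gmm R_f\,\dot\ell$, combined with the smallness $|\ell|, |\dot\ell|, |\dot\xi - \ell| \lesssim \eps$ from \eqref{BA-dotwp}, give $|\dot\gmm R_f| \lesssim R_f \eps^2$ and $|\dot\eta| \lesssim R_f \eps$. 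The overall scheme of choosing $R_f$ first and $\eps$ small afterwards so that $R_f \eps \dless 1$ then forces both error terms in $\p_\tau F$ to be small, yielding the desired lower bound.

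For the numerator, $\p_{X^0}F = 1$ is trivially bounded, and $\p_{X^j}F = -(\p_2\frakmax)\,(X^j - \eta^j)/\sqrt{1 + |X'-\eta|^2}$ is controlled by $1$ in absolute value since $\p_2\frakmax \in [0,1]$ and the remaining ratio is at most $1$. Dividing then yields the claimed uniform boundedness of $\p_{X^\mu}\tau$ on the entire foliated region $\calR$. The only mildly delicate point is the transition zone where $\frakmax$ differs from the hard maximum, but since $\frakmax$ is $C^\infty$ with $\p_1\frakmax + \p_2\frakmax \equiv 1$, the identical argument applies uniformly across the flat, transition, and hyperboloidal regions; this is the main (but routine) obstacle in the proof.
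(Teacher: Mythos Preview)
Your proof is correct and takes a somewhat different route from the paper. The paper computes the full $(n+1)\times(n+1)$ Jacobian matrix $\frac{\p(X^0,X')}{\p(\tau,r,\theta)}$, shows its determinant equals $r^{n-1}(1+\calO(\wp))$ using the spherical-coordinate Jacobian, and then applies the adjugate formula to read off $\frac{\p\tau}{\p X^\mu}=\calO(1)$. Your approach instead isolates the single scalar relation $F(X^0,X',\tau)=0$ defining $\tau$ and applies the implicit function theorem directly, which is more elementary and avoids the matrix algebra entirely. One small caveat: the claims $\p_1\frakmax,\p_2\frakmax\in[0,1]$ and $\p_1\frakmax+\p_2\frakmax\equiv 1$ depend on the particular smoothing of the maximum, which the paper does not fully specify; however, mere boundedness of these partials (automatic from $\frakmax\in C^\infty$) is all your argument actually needs. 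Note also that specializing your formulas to the hyperboloidal region (where $\p_1\frakmax=0$, $\p_2\frakmax=1$) immediately recovers the exact expressions \eqref{eqn:exact_formula_for_p_mu_tau}, which the paper records separately and uses in the subsequent lemma; so nothing is lost by your shortcut.
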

\begin{proof}
    From \eqref{eqn:parametrize_bsupsigma_in_tau_r_theta}, the Jacobian matrix $\left(\frac{\p(X^0, X')}{\p (\tau, r, \theta)}\right)$ is given by \[
    \left(\frac{\p(X^0, X')}{\p (\tau, r, \theta)}\right) = \begin{pmatrix}
        1 + \p_1\frakmax \cdot \gmm' R_f & \p_2\frakmax \cdot \frac{r}{\brk{r}} & 0 & \cdots & 0 \\
        \eta' & \Theta & r\p_{\theta^1}\Theta & \cdots & r\p_{\theta^{n-1}}\Theta
    \end{pmatrix}.
\]
    Thanks to the observation that \[
    r^{n-1}\det \begin{pmatrix}
        \Theta & \p_{\theta^1}\Theta & \cdots & \p_{\theta^{n-1}}\Theta
    \end{pmatrix} 
    \]
    is the usual Jacobian of the spherical coordinates in $n$-dimensional Cartesian coordinates, we know that $\frac{\p(X^0, X')}{\p (\tau, r, \theta)} = r^{n-1}(1 + \calO(\wp))$ provided the bootstrap assumptions.

    Therefore, we can find the inverse of the Jacobian by applying the adjoint matrix method, which allows us to conclude that \[
        \frac{\p\tau}{\p X^\mu} = \calO(1), \quad \mu = 0, 1, \cdots, n, 
    \]
    where the big-$\calO$ notation is introduced in Section~\ref{Section:notation}.

    Moreover, in the hyperboloidal region $\brk{r} > \gmm R$, we have the following exact formula \begin{equation}\label{eqn:exact_formula_for_p_mu_tau}
        \frac{\p \tau}{\p X^0} = \frac{1}{1 - \frac{r}{\brk{r}}\Theta\cdot\eta'}, \quad \frac{\p \tau}{\p X^j} = -\frac{\frac{r}{\brk{r}}\Theta^j(\theta)}{1 - \frac{r}{\brk{r}}\Theta\cdot\eta'},
    \end{equation}
    which follows from \cite[(4.6)]{LOS22}.
\end{proof}

\begin{lemma}\label{lemma_F_0_decay}
    For $\calF_0$ defined in \eqref{eqn:calF_0}, the asymptote is given by 
    $\calF_0 = \calO(\dot\wp r^{-3})$.
\end{lemma}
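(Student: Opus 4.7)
The plan is to exploit a structural cancellation. When the modulation parameters satisfy $\dot\wp\equiv 0$ (i.e., $\dot\ell=0$ and $\dot\xi=\ell$ constant in $\sigma$), the set $\calC_\sigma=\Lambda_{-\ell}\calC+(0,\xi-\sigma\ell)^T$ becomes $\sigma$-independent, so the profile $\calQ$ reduces to a fixed boosted-translated catenoid, which is an exact HVMC solution. Its graph $u=Q_\wp$ then satisfies the graph HVMC equation exactly, so $\calF_0\equiv 0$ in this frozen case. Consequently every nonvanishing contribution to $\calF_0$ must carry at least one factor of $\dot\wp$, and the real task reduces to tracking the spatial weight attached to that factor.

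Concretely, I would first expand \eqref{eqn:calF_0} via the chain rule $Q_\wp=Q(\tilde r)$ to obtain
\[
(1+a)\,\calF_0 \;=\; Q''(\tilde r)\,|\nabla\tilde r|_m^2 \;+\; (1+a)\,Q'(\tilde r)\,\Box_m\tilde r \;-\; (Q')^3\,\nabla^\mu\tilde r\,\nabla^\nu\tilde r\,\nabla_{\mu}\nabla_\nu\tilde r,
\]
where $|\nabla\tilde r|_m^2:=m^{\mu\nu}\tilde r_\mu\tilde r_\nu$ and $a:=(Q')^2|\nabla\tilde r|_m^2$. Next I would verify the cancellation in the frozen case by passing to the boosted Minkowski coordinates $(y^0,y')$ from \eqref{eqn:y0_y'_in_faraway_region}: with $\ell$ constant and $\dot\xi=\ell$, this map is a genuine Minkowski isometry and $\tilde r=|y'|$ is a Euclidean radial coordinate, so $|\nabla\tilde r|_m^2=1$, $\Box_m\tilde r=(n-1)/\tilde r$, and $\nabla^\mu\tilde r\,\nabla^\nu\tilde r\,\nabla_{\mu}\nabla_\nu\tilde r=0$; substituting into the display and invoking \eqref{eqn:Q_tilde_r_equation} gives $\calF_0=0$ identically.

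Then I would quantify the deviations from the frozen values directly in $(\tau,r,\theta)$-coordinates, using the explicit formulas \eqref{eqn:exact_formula_for_p_mu_tau} for $\partial\tau/\partial X^\mu$ and the identity $\dot\eta=\ell+\calO(\dot\wp R_f)$. A short computation yields $|\nabla\tilde r|_m^2-1=\calO(\dot\wp)$, $\Box_m\tilde r-(n-1)/\tilde r=\calO(\dot\wp)$, and $\nabla^\mu\tilde r\,\nabla^\nu\tilde r\,\nabla_{\mu}\nabla_\nu\tilde r=\calO(\dot\wp)$; the $\ddot\wp$-pieces enter only through the scalar $m^{\mu\nu}\tau_\mu\tau_\nu=-\brk{r}^{-2}(1+\calO(\dot\wp))$ and are therefore subleading. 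Combining these with $Q'(\tilde r)\sim\tilde r^{-3}$, $Q''(\tilde r)\sim\tilde r^{-4}$ from \eqref{eqn:Q'_tilde_r} with $n=4$, and $\tilde r\sim r$ from \eqref{eqn:length_tilde_r}, the dominant term is $Q'(\tilde r)\cdot\calO(\dot\wp)=\calO(\dot\wp\,r^{-3})$; all other pieces (the $Q''$-weighted one, those quadratic or cubic in $Q'$) inherit additional $r^{-1}$-factors and decay faster.

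The main technical nuisance will be the third step: the metric $m$ in the $(\tau,r,\theta)$-chart contains cross terms with $\ell$-sized (not $\dot\wp$-sized) coefficients, since $\dot\eta$ has a nonzero $\ell$-piece already in the frozen configuration, so the structural cancellation of the second step must be carried out in full \emph{before} one isolates the $\dot\wp$-deviations, otherwise the $\ell$-order remnants would spoil the claimed gain. Passing to the $(y^0,y')$-chart whenever possible provides a clean organizing principle. The transition region $\calC_\tran$ (where $\frakmax$ differs from the usual max) requires only a localized modification: the discrepancy is smooth and supported in a bounded $r$-strip near $R_f$, so it is easily absorbed into the error without damaging the $\calO(\dot\wp\,r^{-3})$ conclusion.
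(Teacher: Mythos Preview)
Your proposal is correct and follows essentially the same approach as the paper: both exploit that the frozen configuration ($\dot\wp\equiv 0$) makes $\calF_0$ vanish exactly via the boosted $(y^0,y')$ coordinates, then track the $\dot\wp$-deviations of $|\nabla\tilde r|_m^2$, $\Box_m\tilde r$, and the Hessian contraction, pairing them with the decay of $Q',Q''$. One small correction to be aware of when you carry out the details: the paper's computation actually gives $|\nabla\tilde r|_m^2-1=\calO(r\dot\wp)$ (not $\calO(\dot\wp)$), because the parameter-derivative terms in \eqref{eqn:last_term_in_p_mu_y'} carry $r$-growth from $|X'-\xi|,|X^0-\sigma|\lesssim r$; this is harmless since it multiplies $Q''=\calO(r^{-4})$ and so still yields $\calO(\dot\wp\,r^{-3})$.
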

\begin{proof}
    Instead of plugging \eqref{eqn:y0_y'_in_faraway_region} into $Q_\wp = Q(y')$ to obtain a form with full dependence on $\tau, r, \theta$, i.e., $Q_\wp = Q(rA_\ell\Theta - \gmm\frakmax(\gmm R_f, \brk{r})\ell)$, we write in terms of an intermediate form via the relation \[
        \begin{pmatrix}
            X^0 \\ X'
        \end{pmatrix} = \Lambda_{-\ell} \begin{pmatrix}
            y^0 \\ y'
        \end{pmatrix} + \begin{pmatrix}
            0 \\ \xi - \sigma\ell
        \end{pmatrix}.
    \]
    For simplicity, we introduce the following notations : \[
        \tilde\xi = \xi - \sigma\ell, \quad T_{\tilde\xi} X = X - \begin{pmatrix}
            0 \\ \tilde\xi
        \end{pmatrix}.
    \]
    Moreover, we use $\p_\mu, \p^\mu$ to denote the corresponding derivatives with respect to $X^\mu$ and we keep track of the dependence of $\xi, \ell$ on $\tau$.
    We write \begin{equation}\label{eqn:y'_in_x_tildexi_ell}
    \begin{aligned}
        y' &= A_l(X' - (\xi - \sigma\ell)) - \gmm\ell X^0
    = \gmm P_\ell (X'-\tilde\xi) + P_\ell^\perp (X'-\tilde\xi) - \gmm\ell X^0\\
    &= (\gmm - 1) \frac{(X' - \tilde\xi)\cdot\ell}{|\ell|^2}\ell + (X' - \tilde\xi) - \gmm\ell X^0,
    \end{aligned}
    \end{equation}
    where $|y'| = \calO(r)$ in the hyperboloidal region thanks to \eqref{eqn:length_tilde_r}.  Besides, the first term in \eqref{eqn:calF_0} is \begin{equation}\label{eqn:Box_m_Q_in_y'}
        \Box_m Q_\wp 
        = \p^\mu \p_\mu Q(y') 
        = \p^\mu \left(Q'(y') \p_\mu |y'|\right)
        = Q''(y') \p^\mu |y'| \p_\mu |y'| + Q'(y') \p^\mu\p_\mu |y'| .
    \end{equation}

    \textit{Step 1 : Compute \eqref{eqn:Box_m_Q_in_y'} with fixed $\ell, \tilde\xi$. }
    First, we assume $\ell, \tilde\xi$ is fixed and we verify the fact that $[\Box_m, \Lambda_\ell] = [\Box_m, T_{\tilde\xi}] = 0$ by an explicit computation, which will help us to understand the case when $\ell, \tilde\xi$ vary with $\tau$. By using \eqref{eqn:y'_in_x_tildexi_ell}, we compute \begin{equation}\label{eqn:p_j_|y'|_fixed_l}
        \p_j |y'| = |y'|^{-1} \sum_{k = 1}^n y^k \p_j y^k = |y'|^{-1} \sum_{k = 1}^n y^k \left((\gmm - 1) \frac{\ell^j\ell^k}{|\ell|^2} + \delta_j^k \right)
    \end{equation}
    for any $1 \leq j \leq n$.
    On the other hand, \begin{equation}\label{eqn:p_0_|y'|_fixed_l}
        \p_0 |y'| = -\gmm|y'|^{-1} \sum_{k=1}^n y^k\ell^k = -\gmm |y'|^{-1} y' \cdot \ell. 
    \end{equation}
    Therefore, $\p^\mu |y'| \p_\mu |y'|$ is equal to \begin{equation}\label{eqn:p_mu_|y'|_prod_w_l_fixed}
        \begin{aligned}
        &|y'|^{-2} \sum_{j,k,m = 1}^n y^k y^m \left((\gmm - 1)^2 \frac{(\ell^j)^2\ell^k\ell^m}{|\ell|^4} + \delta_j^k \delta_j^m y^k y^m + 2\delta^k_j (\gmm - 1) \frac{\ell^j\ell^m}{|\ell|^2} \right) - \gmm^2|y'|^{-2} (y' \cdot \ell)^2\\
        =& |y'|^{-2}\sum_{k,m = 1}^n \left((\gmm-1)^2 \frac{\ell^k\ell^m}{|\ell|^2} y^k y^m + 2(\gmm - 1) \frac{\ell^k\ell^m}{|\ell|^2} y^k y^m \right) + 1 - \gmm^2|y'|^{-2} (y' \cdot \ell)^2
        = 1.
    \end{aligned}
    \end{equation}
    Then we compute \[\begin{aligned}
        \p^\mu \p_\mu |y'| 
        =& \sum_{j=1}^n \p_j \left(|y'|^{-1} \sum_{k = 1}^n y^k \left((\gmm - 1) \frac{\ell^j\ell^k}{|\ell|^2} + \delta_j^k \right)\right)
        + \p_0 \left( \gmm|y'|^{-1} (y' \cdot \ell) \right) \\
        =& -|y'|^{-1} + |y'|^{-1} \sum_{j, k=1}^n (\p_j y^k) \left((\gmm - 1) \frac{\ell^j\ell^k}{|\ell|^2} + \delta_j^k \right) + \gmm |y'|^{-1} \p_0 (y'\cdot\ell) \\
        =& -|y'|^{-1} + |y'|^{-1} \sum_{j, k = 1}^n\left((\gmm - 1) \frac{\ell^j\ell^k}{|\ell|^2} + \delta_j^k \right)^2 - \gmm |y'|^{-1} (\gmm |\ell|^2) = (n-1)|y'|^{-1}.
    \end{aligned}
    \]

    \textit{Step 2 : Compute the first term in \eqref{eqn:Box_m_Q_in_y'} with $\ell = \ell(\tau), \tilde\xi = \tilde\xi(\tau)$. }
    We start to consider the dependence of $\ell, \tilde\xi$ on $\tau$, and in turn on $(X^0, X')$. 
    We record some important relations first \begin{equation}\label{eqn:quantity_w_r_bdd}
        \dot{\tilde\xi} = (\dot\xi - \ell) - \sigma\dot\ell, \quad |X' - \tilde\xi| = |\tau\ell + r\Theta| \lesssim |r|, \quad |X^0 - \sigma| \lesssim \brk{r}.
    \end{equation}
    
    From \eqref{eqn:p_j_|y'|_fixed_l} and \eqref{eqn:p_0_|y'|_fixed_l}, we have \begin{equation}\label{eqn:p_mu_|y'|}
        \begin{aligned}
        \p_j |y'| &= |y'|^{-1} \sum_{k = 1}^n y^k \left((\gmm - 1) \frac{\ell^j\ell^k}{|\ell|^2} + \delta_j^k\right) + \frac{\p_j \sigma}{|y'|} \sum_{k=1}^n y^k \left(\dot{(A_{\ell})}(X' - \tilde\xi) - A_\ell \dot{\tilde \xi} - \dot{(\gmm\ell)}X^0\right)^k, \\
        \p_0 |y'| &= -\gmm |y'|^{-1} y' \cdot \ell + |y'|^{-1} \p_0 \sigma \sum_{k=1}^n y^k \left(\dot{(A_{\ell})}(X' - \tilde\xi) - A_\ell \dot{\tilde \xi} - \dot{(\gmm\ell)}X^0\right)^k.
    \end{aligned}
    \end{equation}
    Note that the summands in the last term are the same for both, we compute it separately \begin{equation}\label{eqn:last_term_in_p_mu_y'}
        \begin{aligned}
        &\left(\dot{(A_{\ell})}(X' - \tilde\xi) - A_\ell \dot{\tilde \xi} - \dot{(\gmm\ell)}X^0\right)^k 
        = \left(\dot{(A_{\ell})}(X' - \xi)
        + \dot{(A_{\ell})}(\sigma\ell) - A_\ell \dot\xi + A_\ell (\dot {\sigma\ell}) - \dot{(\gmm\ell)}X^0\right)^k \\
        =& \left(\dot{(A_{\ell})}(X' - \xi)\right)^k 
        + \left(\frac{d}{d\sigma}(A_\ell(\sigma\ell)) - A_\ell \dot\xi - \dot{(\gmm\ell)}X^0\right)^k\\ 
        =& \left(\dot{(A_{\ell})}(X' - \xi)\right)^k 
        + \left(\dot{(\gmm\ell)} (\sigma - X^0)\right)^k + (\ell - \dot\xi)^k + (\gmm - 1)  \frac{(\ell - \dot\xi)\cdot\ell}{|\ell|^2}l^k
        = \calO(r\dot\wp),
    \end{aligned}
    \end{equation}
    where we use $A_\ell(\sigma \ell) = \gmm\sigma\ell$ in the third step and \eqref{eqn:quantity_w_r_bdd} in the last step.
    Furthermore, in $r$-bounded region, we don't need to worry about $r$ growth while in $r$-large (hyperboloidal) region, we take advantage of the explicit formula for $\p_\mu \tau$ in \eqref{eqn:exact_formula_for_p_mu_tau}, which gives the extra cancellation $\sum_j(\Theta^j)^2 - 1 = 0$ for leading order terms in $r$. Therefore, combining with \eqref{eqn:p_mu_|y'|_prod_w_l_fixed}, we achieve \begin{equation}\label{eqn:step_2_in_calculating_F_0}
        Q''(y') \p_\mu |y'| \p^\mu |y'| = (1 + \calO(r\dot\wp)) Q''(y').
    \end{equation}
    
    \textit{Step 3 : Compute the second term in \eqref{eqn:Box_m_Q_in_y'} with $\ell = \ell(\tau), \tilde\xi = \tilde\xi(\tau)$. }

    We claim \[
        Q'(y') \p^\mu \p_\mu |y'| = \left(\frac{n-1}{|y'|} + \calO(\dot\wp)\right) Q'(y').
    \]
    We rewrite \eqref{eqn:p_mu_|y'|} using \eqref{eqn:last_term_in_p_mu_y'} to make further computation neater \begin{equation}\label{eqn:p_mu_|y'|_neater}
    \begin{aligned}
        \p_j y^k &= \left((\gmm - 1) \frac{\ell^j\ell^k}{|\ell|^2} + \delta_j^k \right) + \p_j \sigma \left( (\dot{A_l})(X' - \xi) - (\dot{\gmm\ell})(X^0 - \sigma)\right)^k + \calO(\dot\wp), \\
        \p_0 y^k &= -\gmm \ell^k + \p_0 \sigma \left( (\dot{A_l})(X' - \xi) - (\dot{\gmm\ell})(X^0 - \sigma)\right)^k + \calO(\dot\wp), \quad 
        \p_\mu |y'| = |y'|^{-1} \sum_{k = 1}^n y^k \p_\mu y^k, 
    \end{aligned}
    \end{equation}
    where $1 \leq j \leq n$, $0 \leq \mu \leq n$. 
    In view of \eqref{eqn:p_mu_|y'|_neater}, we need to ensure the terms with $r$ growth in $\p^\mu\p_\mu$ exhibits cancellation to prove the claim. 
    From \eqref{eqn:p_mu_|y'|_neater}, 
    we compute \[\begin{aligned}
        &\p^\mu \p_\mu |y'|
        = \sum_j \p_j \left(|y'|^{-1} \sum_k y^k \p_j y^k \right)
        - \p_0 \left(|y'|^{-1} \sum_k y^k \p_0 y^k\right) \\
        =& - |y'|^{-3} \big(\sum_j |y' \cdot \p_j y'|^2 - |y' \cdot \p_0 y'|^2\big)+ |y'|^{-1} \big(\sum_j |\p_j y'|^2 - |\p_0 y'|^2\big) + |y'|^{-1} \big(\sum_j y' \cdot \p_j^2 y' - y' \cdot\p_0^2 y'\big).
    \end{aligned}
    \]
    The bad terms in this formula all come with factor \[
        \sum_j (\p_j \sigma)^2 - (\p_0 \sigma)^2,
    \]
    which is in fact of order $\calO(r^{-1})$ instead of $\calO(1)$ due to the same cancellation $\sum_j (\Theta^j)^2 = 1$.
    Therefore, we prove the claim.

    \textit{Step 4 : Compute the last term in \eqref{eqn:calF_0}. }
    Thanks to the ODE \eqref{eqn:Q_tilde_r_equation} that governs $Q$, it suffices to establish \[
        \frac{\nabla^\mu Q \nabla^\nu Q \nabla_{\mu\nu}^2 Q}{1 + \nabla^\alpha Q \nabla_\alpha Q} = \frac{(Q'(y'))^2 Q''(y')}{1 + (Q'(y'))^2} + \calO(r^{-3}\dot\wp).
    \]
    It follows from \eqref{eqn:Box_m_Q_in_y'} and  \eqref{eqn:p_mu_|y'|_prod_w_l_fixed} that \[
        \frac{\nabla^\mu Q \nabla^\nu Q \nabla_{\mu\nu}^2 Q}{1 + \nabla^\alpha Q \nabla_\alpha Q}
        = \frac{(Q'(y'))^2 Q''(y') (\p^\mu |y'|\p_\mu |y'|)(\p^\nu |y'| \p_\nu |y'|) + (Q'(y'))^3 (\p^\mu |y'|)(\p^\nu |y'|) (\p^2_{\mu, \nu} |y'|)}{1 + (1 + \calO(r \dot\wp)) (Q'(y'))^2}.
    \]
    Due to Step 2 \eqref{eqn:step_2_in_calculating_F_0}, we know the first term is \[
        (Q'(y'))^2 Q''(y') (\p^\mu |y'|\p_\mu |y'|)(\p^\nu |y'| \p_\nu |y'|) = (Q'(y'))^2 Q''(y') (1 + \calO(r^2\dot\wp)).
    \]
    On the other hand, $\p_{\mu, \nu}^2 |y'|$ trivially contains $\dot\wp$ decay, which yields \[
        (Q'(y'))^3 (\p^\mu |y'|)(\p^\nu |y'|) (\p^2_{\mu, \nu} |y'|) = (Q'(y'))^3 \calO(r^3\dot\wp).
    \]
    Due to the fast decay of $Q'(\tilde r) \sim \tilde r^{-(n-1)}$ in $\tilde r$, we complete the proof of the claim.
\end{proof}

\begin{remark}
    It might be tempting to compute $\calF_0$ in $(y^0, y')$ coordinates purely, but it turns out that it is then hard to take advantage of the exact cancellation $[\Box_m, \Lambda_{\ell}] = [\Box_m, T_{\tilde\xi}] = 0$ in the case that $\ell, \tilde\xi$ are fixed. 
    Therefore, it is natural to conjecture that $\calF_0$ has nice $\tau$ decay since all the error terms when we introduce the dependence of $\ell, \tilde\xi$ on $\tau$ would come with $\p_\tau$ falling on these parameters.

    As a guide to the reader, one may want to view $A_\ell$ as $\gmm$ like in the one dimensional case, which will help a lot to make sense of the logistics behind this computation.
\end{remark}

\begin{remark}
    One can anticipate a cancellation for terms with $r$ growth so that we obtain $r^{-3}$ decay when $n = 4$ using the following heuristics. We may write $\Box_m$ in $(\tau, r, \theta)$ coordinate (especially in the hyperboloidal region) (see \eqref{eqn:Box_m0_U_hyperboloidal}) and compute the leading terms in the asymptotic behavior in $r$ \[\begin{aligned}
    &- \frac{2}{1 - \Theta \cdot \eta'} \p_\tau \p_r Q 
    - \frac{n-1}{r(1- \Theta \cdot \eta')} \p_\tau Q \\
    =& -\frac{2}{1 - \Theta \cdot \eta'}
    \p_r\left(\tilde r^{-(n-1)} \p_\tau(\gmm(1 - \Theta \cdot \ell))r\right)
    - \frac{n-1}{1- \Theta \cdot \eta'}\tilde r^{-(n-1)} \p_\tau(\gmm(1 - \Theta \cdot \ell)) \\
    =& - \frac{\p_\tau(\gmm(1 - \Theta \cdot \ell))}{1 - \Theta \cdot \eta'}   \left(-2(n-1) + 2 + (n-1) \right) \tilde r^{-(n-1)} = (n-3)\frac{\p_\tau(\gmm(1 - \Theta \cdot \ell))}{1 - \Theta \cdot \eta'} \tilde r^{-(n-1)}.
    \end{aligned}
    \] 
    One should note that this is not sufficient to conclude $\calF_0 = \calO(\dot \wp r^{-3})$ when $n = 4$ as the coefficients of $r^{-k}$, $k \geq 4$ does not come with $\tau$ decay as one can read from \eqref{eqn:Box_m0_U_hyperboloidal}. Also, as one can expect, it would be extremely difficult to exploit cancellation for each coefficient separately.
\end{remark}

\subsection{Global polar coordinates \texorpdfstring{$(\uptau, \uprho, \uptheta)$}{}} \label{Section:global_coord}

To define a global coordinate, we recall that $(t, \rho, \theta)$ coordinates (see Section~\ref{Section:parametrize_interior}) are well-defined on the whole flat region $\calC_\flatt$ even though the derivation of the linearized operator in Section~\ref{Section:1st_order_formulation} only works for $\calC_\flatnear$. On the other hand, the other coordinates $(\tau, r, \theta)$ (see Section~\ref{Section:parametrize_far_away}) are also accessible on $\calC_\flatfar \cap \calC_\tran \subset \calC_\flatfar \cap \{\chi_g = 0\} \subset \calC_\flatt$. The main goal in this subsection is to glue these two patches together along $\calC_\flatfar \cap \calC_\tran$ to define a global chart. The exposition here is a slight variant of \cite[Section 4.2.3]{LOS22} due to the modification of our foliation in the transition region. See Remark~\ref{rmk:slight_different_foliation}.

Let $\Vint := \calC_\flatnear \cup \calC_\flatfar \setminus \calC_\tran$ be an open neighborhood of $\calC_\flatnear$ in the ambient $\R^{1+(n+1)}$ and $\Vext := \mathring{(\calC_\flatnear^c)}$ be an open neighborhood of $\calC_\tran \cup \calC_\hyp$ in $\R^{1+(n+1)}$. From the definition, we know that $\Vint \cap \Vext \subset \calC_\flatfar$, where both coordinates $(t, \rho, \omg)$  and $(\tau, r, \theta)$ are well-defined.
We recall the parametrization \eqref{eqn:parametrize_calQ_in_flatt} so that a point $p = (t, \xi(t) + \gmm^{-1}P_{\ell(t)} F(\rho, \omg) + P_{\ell(t)}^\perp F(\rho, \omg))$ in $\Vint \cap \calQ$ can be represented in the following rectangular coordinate map \[
    \Psiint(p) = (X^0, \ldots, X^n), \quad X^0 = t, X^i = \rho\Theta^i(\omg), \quad i = 1, \ldots, n.
\]
The rectangular coordinate map $\Psiext$ in $\Vint \cap \Vext \cap \calQ$ can be defined analogously, following from \eqref{eqn:parametrize_bsupsigma_in_tau_r_theta}. For a point $p = (\sigma, \eta(\sigma) + r \Theta(\theta))$ in $\Vint \cap \Vext \cap \calQ$ with $\sigma = \tau + \gmm R_f$, \[
    \Psiext(p) = (X^0, \ldots, X^n), \quad X^0 = \sigma, X^i = r \Theta^i(\theta),\quad i = 1, \ldots, n.
\]
Now we recall the definition of $\chi_g$ in \eqref{eqn:chi_g_used_in_global_coord}. In particular, it is supported in $\Vint$ and equal to one on $\Vint \setminus \Vext$. We define the global rectangular coordinates by \[
    \Psi(p) := \chi_g(p) \Psiint(p) + (1 - \chi_g(p)) \Psiext(p), \quad p \in \Vint \cap \Vext \cap \calQ, 
\]
where \begin{equation}\label{eqn:relation_p_in_intersection}
    p = (t, \xi(t) + \gmm^{-1}P_{\ell(t)} F(\rho, \omg) + P_{\ell(t)}^\perp F(\rho, \omg)) = (\sigma, \eta(\sigma) + r \Theta(\theta)),
\end{equation}
where $F$, with a slight abuse of notation mentioned in Section~\ref{Section:parametrize_interior}, is given by $F = \brk{\rho}\Theta(\omg)$.
Moreover, the global coordinates are defined by expressing $(\Psi^0, \Psi^1, \ldots, \Psi^n)$ in polar coordinates, denoted by $(\uptau, \uprho, \uptheta)$.
From \eqref{eqn:relation_p_in_intersection}, one can read off the relation $\uptau = t = \sigma$ in the overlapping region. 
\begin{remark}
    It is worthnoting that $\uptau$ is not the same as $\tau$ (instead, same as $\sigma$ in the exterior as we feel this might be a point which would cause a confusion.
\end{remark}

To see that $\Psi$ indeed defines a coordinate map, it is left to check $d\Psi(p)$ is invertible for all $p \in \Vint \cap \Vext \cap \calQ$ and that $\Psi$ is one-to-one on $(\Vint \cup \Vext) \cap \calQ$. 

\subsubsection{Invertibility of $d\Psi$ in $\Vint \cap \Vext \cap \calQ$}
To check the invertibility of $d\Psi$, we consider \[\begin{aligned}
    \Psi \circ \Psiint^{-1}(x) &= \chi_g \circ \Psiint^{-1}(x) x + (1 - \chi_g \circ \Psiint^{-1}(x)) \Psiext \circ \Psiint^{-1}(x) \\ &= \Psiext \circ \Psiint^{-1}(x) + \chi_g \circ \Psiint^{-1}(x) (x - \Psiext \circ \Psiint^{-1}(x)).
\end{aligned}
\]
One could directly compute $d(\Psi \circ \Psiint^{-1})(x)$ and notice from the structure that it suffices to show that \begin{enumerate}[ wide = 0pt, align = left, label=\arabic*. ]
    \item $x - \Psiext \circ \Psiint^{-1}(x)$ is small for $x \in \Psiint(\Vint \cap \Vext \cap \calQ)$,
    \item $I - d(\Psiext \circ \Psiint^{-1})$ is also small for $x \in \Psiint(\Vint \cap \Vext \cap \calQ)$.
\end{enumerate}
To prove these two claims, we write $\Psiext \circ \Psiint^{-1} : x = (t, y) \mapsto (\sigma, z)$.
According to \eqref{eqn:relation_p_in_intersection}, \begin{equation}\label{eqn:relation_p_in_intersection_F_replaced_by_y}
    t = \sigma, \quad \xi(t) +  \gmm^{-1}(t) P_{\ell(t)} \left(\brk{y} \frac{y}{|y|} \right) + P_{\ell(t)}^\perp\left(\brk{y} \frac{y}{|y|}\right) = \eta(\sigma) + z.
\end{equation}
From the smallness of $\ell$, both $\eta - \xi$ and $\gmm^{-1} P_{\ell} \big(\brk{y} \frac{y}{|y|} \big) + P_{\ell}^\perp\big(\brk{y} \frac{y}{|y|}\big) - \brk{y} \frac{y}{|y|}$ are small. 
Therefore, we obtain smallness of $z - \brk{y} \frac{y}{|y|}$.
On the other hand, it follows from the definition \eqref{eqn:defn_of_4_regions} that $\brk{z} > \frac12 R_f$. Hence, \[
    |y| > R_f, \quad x = (t, y) \in \Psiint(\Vint \cap \Vext \cap \calQ).
\]
By noting that $R_f$ is large, combined with the preceding smallness properties, we know that $|z - y|$ is small, which proves the first claim.
Using the smallness of $\ell, \eta' - \xi', |y|^{-1}$, we can compute $\det(\frac{\p z^k}{\p y^j})_{1 \leq j, k \leq n}$ via implicit differentiation to find out that the leading term (when $|y|^{-1}$ and $|\ell|$ small enough) is $1$, which in turn concludes the proof of the second claim.

\subsubsection{Injectivity of $\Psi$ on $(\Vint \cup \Vext)\cap \calQ$}\label{Subsubsection:inj_of_Psi}
We then prove injectivity of $\Psi$ by contradiction. 
Since $\Psi|_{\{\chi_g = 1\}}$ and $\Psi|_{\{\chi_g = 0\}}$ are injective, respectively, it suffices to rule out the following four possibilities.

\textit{Case 1 : There exist $p \neq q \in \Vint \cap \Vext \cap \calQ$ such that $\Psi(p) = \Psi(q)$. }
Assuming that there exists $\tilde\Psi =: \Psiext \circ \Psiint^{-1} : x_j \mapsto \tilde\Psi(x_j)$ with $j = 1, 2$, $x_1 \neq x_2$ sharing the same image under $\Psi \circ \Psiint^{-1}$, that is, \[
    \tilde\Psi(x_1) + \chi_g(\Psiint^{-1}(t_1, y_1)) \cdot (x_1 - \tilde\Psi(x_1)) = \tilde\Psi(x_2) + \chi_g(\Psiint^{-1}(t_2, y_2)) \cdot (x_2 - \tilde\Psi(x_2)).
\]
We compute \[\begin{aligned}
    |\tilde\Psi(x_1) - \tilde\Psi(x_2)| \leq |(I - \tilde\Psi)(x_2)| \cdot | (\chi_g\circ\Psiint^{-1})(x_1) - (\chi_g\circ\Psiint^{-1})(x_2)| \\ + |\chi_g(\Psiint^{-1}(t_1, y_1))| \cdot |(I - \tilde\Psi)(x_1) - (I - \tilde\Psi)(x_2)|.
\end{aligned}
\]
The two terms on the right hand side can be both bounded by a small multiple of $|x_1 - x_2|$ thanks to the two preceding claims, respectively. Therefore, by a similar consideration, \[
    |x_1 - x_2| \leq |\tilde\Psi(x_1) - \tilde\Psi(x_2)| + (\sup|I - d\tilde\Psi|) \cdot |x_1 - x_2| \leq \alpha |x_1 - x_2|,
\]
where $\alpha < 1$. This is a contradiction. 

\textit{Case 2 : There exist $q \in (\Vint\setminus\Vext) \cap \calQ$ and $p \in \Vint \cap \Vext \cap \{\chi_g \neq 1\} \cap \calQ$ such that $\Psi(p) = \Psi(q)$. }

From
\[
    \Psiint(q) = \chi_g(p)\Psiint(p) + (1 - \chi_g(p))\Psiext(p), 
\]
we deduce that \[
    |\Psiint(q) - \Psiint(p)| = |(1 - \chi_g(p))(\Psiext(p) - \Psiint(p))|
    \leq \alpha |\Psiint(p)| \leq \alpha R_f,
\]
where $\alpha \ll 1$ could be chosen arbitrarily small, which follows from the second claim in Section~\ref{Subsubsection:inj_of_Psi}.
On the other hand, from the definition of $\chi_g$, we notice that $|\Psiint(q) - \Psiint(p)| \geq \dist(\{\chi_g < 1\}, \Vint\setminus\Vext) > \frac19 R_f$, which leads to a contradiction. We remark that though $\Psiint$ is not a coordinate in terms of $r$, we could obtain the preceding inequality due to the smallness of $|r - \rho|$ in the region $\Vint \cap \Vext \cap \calQ$ where these two are jointly defined.

\textit{Case 3 : There exist $q \in (\Vext\setminus\Vint) \cap \calQ$ and $p \in \Vint \cap \Vext \cap \{\chi_g \neq 0\} \cap \calQ$ such that $\Psi(p) = \Psi(q)$.}
A contradiction can be derived similar to the previous case.

\textit{Case 4 : There exist $p \in (\Vint\setminus\Vext) \cap \calQ$ and $q \in (\Vext\setminus\Vint) \cap \calQ$ such that $\Psi(p) = \Psi(q)$.}

Note that $\Psi(p) = \Psi(q)$ implies $\rho(p) = r(q)$. However, since $p$ and $q$ are separated from each other by $\Vint \cap \Vext$, where for any point $p_1 \in \Vint \cap \Vext \cap \calQ$, $|\Psiint(p_1) - \Psiext(p_1)| < \delta_2 \ll 1$ has uniform smallness. Thus, $\rho(p) \leq \frac12 R_f + \delta_2 < R_f - \delta_1 < r(q)$, which contradicts the assumption.

Thus, by a possible shrinking of $\Vint$ and $\Vext$, $\Psi$ defines an invertible map $\Psi$ from $\Vint \cap \Vext \cap \calQ$ onto some open set in $\R^{1+n}$ and hence defines a global coordinate chart.

\section{Analysis on the Riemannian  catenoid \texorpdfstring{$\barcalC$}{C}}\label{section_ell_est_on_Rm_catenoid}
This section is based on the notations introduced in Section~\ref{Section:basic_properties_of_barcalC} and might be of separate interests. Also, we keep the dimension $n$ to be general in this part. The spectral analysis of the linearized operator will be crucial as this will help to locate the  obstructions and provide a roadmap for designing orthogonality conditions in Section~\ref{Section:modulation_equations}. Some weighted elliptic estimates are derived in this part, which will serve as a building block for concluding pointwise decay in the end. 

\subsection{Elliptic estimates (with lower order terms) for the stability operator \texorpdfstring{$L$}{L}}

Now we give a weighted elliptic estimates for the stability operator $L$ defined in \eqref{eqn:stab_op_on_C}.
First, we give a definition of weighted spaces in the setting of Riemannian catenoid as follows.

\begin{definition}[Weighted $C^k$ class]
    $C^{k, \beta}(\barcalC)$ is a Banach space of $C^k$ functions such that the norm \[
        \|f\|_{C^{k,\beta}}^2 = \sup_{x \in \barcalC} \sum_{0\leq m\leq k} \brk{\rho}^{m + \beta} |\nabla^k f|,
    \]
    where $\nabla$ is the covariant derivative.
\end{definition}

\begin{definition}[Weighted Sobolev spaces]\label{defn_weighted_Sobolev}
    $H^{s,\delta}(\barcalC)$ is a Hilbert space characterized by 
    \begin{equation}
    f \in H^{s,\delta}(\barcalC) \iff \|f\|_{H^{s,\delta}}^2 := \sum_{0 \leq k \leq s} \int_{\barcalC} \brk{\rho}^{2(\delta + k)} |\nabla^k f|^2\, d\Vol_{h_{\barcalC}},
\end{equation}
    where $s \in \bbN$ and $h_{\barcalC}$ denotes the metric \eqref{eqn:metric_C_in_rho}.
    We remark that it is easy to extend the definition by duality and interpolation to $s \in \bbR$.
\end{definition}

The idea of weighted Sobolev spaces can be traced back to \cite{CBC81}, \cite{NW73}. Notions like $b$-Sobolev spaces \cite{Mel93}, though historically defined in a different context, are quite relevant as well.
In this subsection, we introduce some fundamental properties with proof included. We refer the reader to \cite{CBC81}, \cite{LO24} and the reference therein, where ideas can easily be adapted to our setting although the assumptions might be a bit different.

\begin{lemma}\label{lemma_cpt_embedding_weighted_Sobolev}
    If $s > s_1, \delta > \delta_1$, then $H^{s,\delta} \subset\subset H^{s_1, \delta_1}$ is a compact inclusion. 
\end{lemma}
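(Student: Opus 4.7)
The plan is to reduce to the classical Rellich--Kondrachov theorem on a compact piece of $\barcalC$ and use the weight gap $\delta > \delta_1$ to control the two asymptotic ends. Since $\barcalC$ has two ends asymptotic to $\{X^{n+1} = \pm S\}$ and the coordinate $\rho$ parametrizes $\barcalC$ globally with $\brk{\rho} \to \infty$ exactly at these ends, the decomposition $\barcalC = \{|\rho| \leq R\} \cup \{|\rho| > R\}$ gives a compact core together with two ``far'' regions that are standard Riemannian ends (up to uniformly equivalent metrics).

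First, I will handle the case of integer $s, s_1$. Take a bounded sequence $\{f_j\} \subset H^{s,\delta}$, say $\|f_j\|_{H^{s,\delta}} \leq 1$. For each fixed $R>1$, the restrictions $f_j \rst \{|\rho| \leq R\}$ form a bounded sequence in the unweighted Sobolev space $H^s(\{|\rho|\leq R\})$ with respect to $h_{\barcalC}$, since on this compact set the weight $\brk{\rho}^{2(\delta+k)}$ is uniformly bounded above and below. By the classical Rellich--Kondrachov theorem on compact Riemannian manifolds with smooth boundary, a subsequence converges in $H^{s_1}(\{|\rho| \leq R\})$. Applying this to $R = 1, 2, 3, \dots$ and extracting a diagonal subsequence, I obtain $f_{j_k}$ that converges in $H^{s_1}_{\loc}$ on every compact subset of $\barcalC$.

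Second, the tail estimate on the ends is the main quantitative input. For each integer $0 \leq m \leq s_1$ and any $R > 1$,
\begin{equation*}
\int_{|\rho| > R} \brk{\rho}^{2(\delta_1 + m)} |\nabla^m f|^2\, d\Vol_{h_{\barcalC}} \leq R^{-2(\delta - \delta_1)} \int_{|\rho| > R} \brk{\rho}^{2(\delta + m)} |\nabla^m f|^2 \, d\Vol_{h_{\barcalC}} \leq R^{-2(\delta-\delta_1)} \|f\|_{H^{s,\delta}}^2,
\end{equation*}
since $\brk{\rho}^{-2(\delta-\delta_1)} \leq R^{-2(\delta-\delta_1)}$ on $\{|\rho|>R\}$. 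Summing over $m$ gives $\|f\|_{H^{s_1,\delta_1}(\{|\rho|>R\})} \lesssim R^{-(\delta-\delta_1)}$ uniformly in $j$.

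Combining the two: given $\eps > 0$, choose $R$ so large that the tail is $\leq \eps$ for every $f_j$ in the sequence, then use the locally convergent subsequence on $\{|\rho|\leq R\}$ to make the core contribution $\leq \eps$ for $j, k$ sufficiently large along the subsequence, yielding a Cauchy sequence in $H^{s_1,\delta_1}$. For the non-integer range, extend via complex interpolation of the weighted spaces (Definition~\ref{defn_weighted_Sobolev} is extended by duality and interpolation) combined with the integer case at two nearby exponents, a standard argument once the integer case is established. The main obstacle I anticipate is purely bookkeeping, namely arranging the two ends and the compact core consistently in the chosen atlas so that Rellich applies uniformly; no serious difficulty arises because the catenoid has a smooth global radial coordinate $\rho$ and the metric \eqref{eqn:metric_C_in_rho} is smooth and asymptotically of polynomial type in $\brk{\rho}$.
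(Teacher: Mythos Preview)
The paper states this lemma without proof, deferring to the standard references \cite{CBC81} and \cite{NW73} for the basic theory of weighted Sobolev spaces on asymptotically Euclidean manifolds. Your argument is correct and is exactly the standard one: Rellich--Kondrachov on compact exhaustions combined with the uniform tail estimate coming from the strict weight gap $\delta>\delta_1$, followed by a diagonal extraction.
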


\begin{lemma}
    If $s > s_1, \delta > \delta_1$, then for every $\veps > 0$, there exists $C > 0$ such that for all $f \in H^{s,\delta}$, the following estimate holds : \[
        \|f\|_{H^{s_1, \delta_1}} \leq \veps \|f\|_{H^{s, \delta}} + C\|f\|_{H^{0, \delta}} .
    \]
\end{lemma}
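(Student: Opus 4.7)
The plan is to deduce this estimate from the compactness statement of the previous lemma via the standard Ehrling (Lions) argument, together with a trivial monotonicity observation on the weight in the zeroth-order norm.

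First I would record the continuous embedding $H^{0,\delta} \hookrightarrow H^{0,\delta_1}$: since $\brk{\rho} \geq 1$ and $\delta \geq \delta_1$, one has $\brk{\rho}^{2\delta_1} \leq \brk{\rho}^{2\delta}$ pointwise, so $\|f\|_{H^{0,\delta_1}} \leq \|f\|_{H^{0,\delta}}$ directly from Definition~\ref{defn_weighted_Sobolev}. It therefore suffices to establish the intermediate inequality
\[
\|f\|_{H^{s_1,\delta_1}} \leq \veps \|f\|_{H^{s,\delta}} + C \|f\|_{H^{0,\delta_1}},
\]
as the desired bound then follows by absorbing the zeroth-order term.

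Second, I would prove this intermediate inequality by contradiction, which is the usual Ehrling argument. If it fails for some $\veps_0 > 0$, then there exist $f_n \in H^{s,\delta}$ with
\[
\|f_n\|_{H^{s_1,\delta_1}} > \veps_0 \|f_n\|_{H^{s,\delta}} + n \|f_n\|_{H^{0,\delta_1}}.
\]
Normalizing by $\|f_n\|_{H^{s_1,\delta_1}} = 1$, the sequence is bounded in $H^{s,\delta}$ (by $\veps_0^{-1}$) and satisfies $\|f_n\|_{H^{0,\delta_1}} < 1/n$. By Lemma~\ref{lemma_cpt_embedding_weighted_Sobolev}, the compact embedding $H^{s,\delta} \subset\subset H^{s_1,\delta_1}$ yields a subsequence converging in $H^{s_1,\delta_1}$ to some $f$ with $\|f\|_{H^{s_1,\delta_1}} = 1$. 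Since $H^{s_1,\delta_1}$ continuously embeds into $H^{0,\delta_1}$ (both weight and order are weakened), this subsequence also converges to $f$ in $H^{0,\delta_1}$. But the normalization forces $f_n \to 0$ in $H^{0,\delta_1}$, so $f = 0$, contradicting $\|f\|_{H^{s_1,\delta_1}} = 1$.

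There is no real obstacle here: the argument is purely abstract and relies only on (i) compactness of $H^{s,\delta} \hookrightarrow H^{s_1,\delta_1}$ from the preceding lemma, (ii) continuity of $H^{s_1,\delta_1} \hookrightarrow H^{0,\delta_1}$ (immediate from the definition), and (iii) the elementary weight monotonicity $H^{0,\delta} \hookrightarrow H^{0,\delta_1}$ for $\delta \geq \delta_1$. The only minor subtlety to flag is the mismatch between the weight $\delta_1$ that naturally appears in Ehrling's lemma and the weight $\delta$ appearing in the statement — but this is resolved by step (iii), which actually sharpens the inequality one would get from a direct application of Ehrling.
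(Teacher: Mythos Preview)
Your argument is correct. The paper does not supply its own proof of this lemma; it simply lists it among ``fundamental properties'' and refers to \cite{CBC81}, \cite{LO24} for the ideas. The Ehrling--Lions contradiction argument you give, leveraging the compact embedding of Lemma~\ref{lemma_cpt_embedding_weighted_Sobolev}, is precisely the standard route and is what those references would yield; your observation that Ehrling naturally produces $\|f\|_{H^{0,\delta_1}}$ on the right and that the weight monotonicity $\|f\|_{H^{0,\delta_1}} \leq \|f\|_{H^{0,\delta}}$ then gives the stated (slightly weaker) inequality is exactly right.
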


\begin{lemma}\label{lemma_Sobolev_embedding}
    Suppose $s > \frac{n}{2} + s', 
    \delta' < \delta + \frac{n}{2}$, then we have the following continuous inclusion \[
        H^{s,\delta} \subset C^{s', \delta'}.
    \]
\end{lemma}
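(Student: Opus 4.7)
The plan is to perform a dyadic decomposition of $\barcalC$ in $\brk{\rho}$ and run a scaling argument that reduces the claim to the standard unweighted Sobolev embedding on a fixed compact reference annulus. The compact core $\{\brk{\rho} \le 4\}$ is handled directly: on it the weights $\brk{\rho}^{\, \cdot}$ are bounded from above and below, and the metric $h_{\barcalC}$ in \eqref{eqn:metric_C_in_rho} is smooth and non-degenerate, so the classical Sobolev embedding $H^s \hookrightarrow C^{s'}$ (for $s > \tfrac{n}{2}+s'$) on a relatively compact open set gives the required pointwise bound.

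For the two ends, I introduce the dyadic annuli $A_j := \{2^{j-1} \le \brk{\rho} \le 2^{j+1}\}$ with $j \ge 1$. The key structural input is that the metric \eqref{eqn:metric_C_in_rho} is asymptotically $d\rho \otimes d\rho + \brk{\rho}^2 \ringsg$, because $\tfrac{\rho^2 \brk{\rho}^{2(n-2)}}{\brk{\rho}^{2(n-1)}-1} \to 1$ as $|\rho| \to \infty$, with derivatives of the metric coefficients decaying appropriately. Consequently, under the constant rescaling $\tilde g_j := 2^{-2j}\, h_{\barcalC}$ and the change of variables $(\rho,\omg) \mapsto (\tilde\rho,\omg) = (2^{-j}\rho,\omg)$, the pulled-back metric on the fixed reference annulus $A^* := \{1/2 \le |y| \le 2\} \subset \R^n$ is uniformly (in $j$) equivalent to the Euclidean metric, with all coefficients lying in a precompact subset of $C^\infty(A^*)$.

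Since the rescaling is by a constant, the Christoffel symbols are unchanged as tensors, and I get the scaling identities
\begin{equation*}
    |\tilde\nabla^k \tilde f|_{\tilde g_j} = 2^{jk} |\nabla^k f|_{h_\barcalC}, \qquad d\Vol_{\tilde g_j} = 2^{-jn}\, d\Vol_{h_\barcalC}.
\end{equation*}
Combined with $\brk{\rho} \sim 2^j$ on $A_j$, the weighted norm Definition~\ref{defn_weighted_Sobolev} translates into the uniform bound
\begin{equation*}
    \| \tilde f \|_{H^s(A^*,\, \tilde g_j)}^2 \lesssim 2^{-j(n+2\delta)}\, \| f \|_{H^{s,\delta}(A_j)}^2.
\end{equation*}
Applying the classical Sobolev embedding on $(A^*, \tilde g_j)$ (valid uniformly in $j$ by the precompactness of the metric family) yields, for every $m \le s'$,
\begin{equation*}
    \sup_{A^*}\, |\tilde\nabla^m \tilde f|_{\tilde g_j}^2 \lesssim 2^{-j(n+2\delta)}\, \| f \|_{H^{s,\delta}(A_j)}^2,
\end{equation*}
and unscaling gives $\sup_{A_j} |\nabla^m f|^2 \lesssim 2^{-2jm - j(n+2\delta)} \| f \|_{H^{s,\delta}(A_j)}^2$, so that
\begin{equation*}
    \sup_{A_j}\, \brk{\rho}^{m+\delta'}|\nabla^m f| \lesssim 2^{j(\delta' - \delta - n/2)}\, \| f \|_{H^{s,\delta}(A_j)}.
\end{equation*}
The strict inequality $\delta' < \delta + n/2$ makes the exponent negative, and the finite-overlap property $\sum_{j} \| f \|_{H^{s,\delta}(A_j)}^2 \lesssim \| f \|_{H^{s,\delta}(\barcalC)}^2$ allows me to take the supremum over $j$ (and $m \le s'$) and conclude $\| f \|_{C^{s',\delta'}} \lesssim \| f \|_{H^{s,\delta}}$.

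The only real obstacle is bookkeeping: checking that the rescaled metrics $\tilde g_j$ genuinely form a precompact family in $C^\infty(A^*)$ (so that the Sobolev embedding constant is uniform in $j$), and tracking weights and derivative counts consistently through the change of variables. Once the scaling identities above are in hand, the rest is a packaging of the standard embedding. For non-integer $s$ the same argument works after interpolation, which commutes with the constant rescaling, so no additional complications arise.
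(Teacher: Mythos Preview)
Your proposal is correct and is precisely the standard dyadic-rescaling argument that the paper's one-line proof (``relies on the property that $\barcalC$ has two ends \ldots which are asymptotically Euclidean'') leaves implicit. The paper gives no further details beyond that structural remark, so your approach is essentially the same, just fully written out; one minor note is that the finite-overlap property is not actually needed for the sup norm conclusion, since $\|f\|_{H^{s,\delta}(A_j)} \le \|f\|_{H^{s,\delta}(\barcalC)}$ already suffices.
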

\begin{proof}
    The proof of this lemma relies on the property that $\barcalC$ has two ends (see \cite[Section 2, Definition]{Sch83} for a rigorous definition), which are asymptotically Euclidean.
\end{proof}

Now we use the spectral properties of $L$ we developed just now to prove an elliptic estimate for $L$ in weighted Sobolev spaces without lower order terms. The weights can be read off from the failure case $p = 1$ for the Hardy's inequality \eqref{eqn:Hardy}, in Step 3 of the proof for Theorem~\ref{thm_ell_reg}. The estimates of this spirit can be found in \cite[Lemma 5.2, Theorem 6.2]{CBC81} but we provide a direct proof for a Riemannian catenoid $\barcalC$.
\begin{theorem}[Elliptic regularity]\label{thm_ell_reg}
    Suppose $f\in H^{0, \delta+2}$ with $-\frac{n}{2} < \delta < \frac{n}{2} - 2$, then every solution $u \in H^2_{loc} \cap H^{0,\delta}$ of $Lu = f$ is also in $H^{2,\delta}$ and satisfies \[
        \|u\|_{H^{2,\delta}} \lesssim \|f\|_{H^{0,\delta+2}} + \|u\|_{H^{1, \delta'}},
    \]
    for any $\delta' \in \R$.
\end{theorem}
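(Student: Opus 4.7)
The plan is to combine standard interior elliptic regularity on a compact core of $\barcalC$ with a dyadic scaling argument at each of the two asymptotically Euclidean ends, exploiting the structure of the metric \eqref{eqn:metric_C_in_rho} at infinity.

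First, I would decompose $\barcalC = K \cup \bigcup_{j \geq j_0}(A_j^+ \cup A_j^-)$, where $K = \{|\rho| \leq R_0\}$ is a compact core and $A_j^{\pm} = \{2^j \leq \pm\rho \leq 2^{j+2}\}$ are dyadic annular pieces at each end. On $K$, the operator $L$ is uniformly elliptic with smooth coefficients, so standard interior estimates give $\|u\|_{H^2(K)} \lesssim \|Lu\|_{L^2(K')} + \|u\|_{L^2(K')}$ on a slightly enlarged $K' \supset K$. Since $K'$ is compact, $\|u\|_{L^2(K')} \lesssim_{\delta'} \|u\|_{H^{1,\delta'}}$ for any $\delta' \in \R$, so the compact piece causes no difficulty.

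Next, on each annulus $A_j^{\pm}$ with $j$ large, I rescale via $u_j(y) = u(2^j y)$ for $y$ in a fixed unit annulus $A \subset \R^n$. In view of \eqref{eqn:metric_C_in_rho}, the induced metric on $\barcalC$ agrees with the Euclidean metric $d\rho \otimes d\rho + \rho^2 \ringsg$ modulo corrections of size $\rho^{-2(n-1)}$; the lower-order coefficient of $L$ (essentially $|A|^2$) decays at a comparable rate. Thus the rescaled operator $2^{2j}L$ converges in $C^k(A')$ to the flat Laplacian $-\Delta$ as $j \to \infty$, with uniform ellipticity constants. The interior elliptic estimate, applied uniformly in $j$, gives $\|u_j\|_{H^2(A)} \lesssim \|2^{2j}L u_j\|_{L^2(A')} + \|u_j\|_{L^2(A')}$. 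Unscaling produces
\begin{equation*}
    \sum_{k=0}^{2} 2^{2j(\delta+k)} \|\nabla^k u\|_{L^2(A_j^{\pm})}^2 \lesssim 2^{2j(\delta+2)} \|Lu\|_{L^2((A_j^\pm)')}^2 + 2^{2j\delta}\|u\|_{L^2((A_j^\pm)')}^2,
\end{equation*}
and summing over $j$ with bounded overlap yields the preliminary bound
\begin{equation*}
    \|u\|_{H^{2,\delta}} \lesssim \|f\|_{H^{0,\delta+2}} + \|u\|_{H^{0,\delta}}.
\end{equation*}
Since $u \in H^{0,\delta}$ by hypothesis, a standard approximation/mollification argument upgrades this a priori bound to the conclusion $u \in H^{2,\delta}$, and gives the claimed estimate whenever $\delta' \geq \delta$ (as then $\|u\|_{H^{0,\delta}} \leq \|u\|_{H^{1,\delta'}}$).

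To upgrade the error term to $\|u\|_{H^{1,\delta'}}$ for arbitrary $\delta' \in \R$, I argue by contradiction. If the estimate failed for some $\delta'$, there would exist $\{u_n\} \subset H^{2,\delta}$ with $\|u_n\|_{H^{2,\delta}} = 1$ and $\|Lu_n\|_{H^{0,\delta+2}} + \|u_n\|_{H^{1,\delta'}} \to 0$. By Lemma~\ref{lemma_cpt_embedding_weighted_Sobolev}, a subsequence converges strongly in $H^{1,\delta''}$ for any $\delta'' < \delta$; applying the preliminary bound with weight $\delta''$ promotes this to strong convergence in $H^{2,\delta''}$. The limit $u_\infty \in H^{2,\delta}$ satisfies $Lu_\infty = 0$ and vanishes in $H^{1,\delta'}$; the characterization of $\ker L$ in weighted Sobolev spaces (carried out in the next subsection via spherical harmonic decomposition and ODE analysis) then forces $u_\infty \equiv 0$, contradicting $\|u_n\|_{H^{2,\delta}} = 1$ together with the strong convergence of a subsequence in $H^{2,\delta''}$.

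The principal obstacle is maintaining uniformity of the interior elliptic estimate under dyadic rescaling: one must verify that the coefficients of $2^{2j}L$ converge at a rate sufficient to keep the ellipticity and $C^k$ bounds uniform in $j$, which hinges on the $\rho^{-2(n-1)}$ decay of the non-Euclidean terms in \eqref{eqn:metric_C_in_rho} and in the stability potential. The restriction $-n/2 < \delta < n/2 - 2$ is not visibly used in the preliminary bound but is essential in the compactness step, ensuring the weighted spaces avoid the indicial exponents of $-\Delta$ on $\R^n$ so that the kernel characterization cleanly excludes spurious limits.
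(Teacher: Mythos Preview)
Your dyadic rescaling argument for the preliminary bound $\|u\|_{H^{2,\delta}} \lesssim \|f\|_{H^{0,\delta+2}} + \|u\|_{H^{0,\delta}}$ is correct and is a genuinely different route from the paper's. The paper instead reduces the far region to the flat Laplacian and proves the sharper estimate $\|v\|_{H^{2,\delta}} \lesssim \|\Delta_{\mathfrak e} v\|_{H^{0,\delta+2}}$ --- with \emph{no} lower-order term --- for $v$ supported in $\{r \gtrsim R_0\}$, by expanding $\int r^{2(\delta+2)}|\Delta_{\mathfrak e} v|^2$, integrating by parts, and invoking the Hardy inequality \eqref{eqn:Hardy} with its exact constant. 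The condition $-n/2 < \delta < n/2 - 2$ is used precisely there, to make the Hardy constants absorb all cross terms. Because the far region then carries no error, the only lower-order contribution is the commutator $[L,\chi]u$, supported in a fixed compact annulus where all weights are equivalent; this is what produces an arbitrary $\delta'$.

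Your contradiction step, however, does not close. You obtain strong convergence of a subsequence to $u_\infty$ in $H^{2,\delta''}$ for $\delta'' < \delta$, and indeed $u_\infty = 0$ (the kernel characterization is unnecessary here, since $\|u_n\|_{H^{1,\delta'}} \to 0$ already forces the distributional limit to vanish; invoking it would also be circular, as the kernel analysis in Section~\ref{Section:spectral_analysis_of_stab_op_L} relies on Theorem~\ref{thm_ell_high_reg}). But $u_n \to 0$ in $H^{2,\delta''}$ does not contradict $\|u_n\|_{H^{2,\delta}} = 1$: the sequence can concentrate at spatial infinity. Your preliminary bound only gives $\|u_n\|_{H^{0,\delta}} \gtrsim 1$, and no compact embedding controls that norm. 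This is exactly the role of Hardy and the range restriction in the paper's proof --- it kills the far-region lower-order term outright, which a dyadic interior estimate, by its nature, cannot do. Your final paragraph therefore has the logic reversed: the restriction on $\delta$ enters to make the far-region estimate error-free, not through a kernel step in a compactness argument.
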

\begin{proof}
    Let $\chi_{R_0}$ be a cutoff function being supported in $\{|\rho| > R_0\}$ and being one in $\{|\rho| > 2R_0\}$ region for some fixed $R_0$. (This $R_0$ is irrelevant to the $R_f$ chosen before.)

    \textit{Step 1 : Introducing cutoff.}
    We write $u = (1 - \chi_{R_0}) u + \chi_{R_0} u := u_1 + u_2$, then one can use the elliptic estimates in compact domain to obtain \[
        \|u_1\|_{H^2} \leq \|Lu_1\|_{L^2} + \|u_1\|_{L^2(B_{2R_0}\setminus B_{R_0})}.
    \]

    \textit{Step 2 : Reduction using asymptotic flatness.}
    For $u_2$, we further split it into two parts $u_2 = u_{2,+} + u_{2, -}$ where $\supp u_{2,+} \subset \{\rho > 0\}$, $\supp u_{2,-} \subset \{\rho < 0\}$. 
    By writing this into \begin{equation}\label{eqn:u_2_+}
        L u_{2,+}
        = [L, \chi]u_{2,+} + \chi_+ Lu_2
        = [L, \chi]u_{2,+} + \chi_+ f,
    \end{equation}
    where $u_+ = u|_{\rho > 0}$ and $\chi_+ = \chi|_{\rho > 0}$.
    One notes that $u_{2,+}$, $[L, \chi]u_+$, $\chi f_+$ are all supported in $\rho > R_0$ region.
    For the purpose of setting up an estimate for $u_{2,+}$, we are allowed switch back to $(r,\theta)$ coordinates \eqref{eqn:metric_C_in_r} from $(\rho, \omg)$ \eqref{eqn:metric_C_in_rho}.
    In $\rho > R_0$ region, we write the right hand side of \eqref{eqn:u_2_+} as \[\begin{aligned}
        L u_{2,+}
        &= \frac{\sqrt{r^{2(n-1)}-1}}{r^{2(n-1)}} \p_r \left( \frac{\sqrt{r^{2(n-1)}-1}}{r^{n-1}}r^{n-1}\p_r u_{2,+}\right) + \frac1{r^2}\sDelta u_{2,+} + \frac{n(n-1)}{r^{2n}}u_{2,+} \\
        &= \frac1{r^{n-1}}\p_r(r^{n-1}\p_r u_{2,+}) + \frac1{r^2}\sDelta u_{2,+}  + O(r^{-1})\p_r^2 u_{2,+} + O(r^{-2}) \p_r u_{2,+} + O(r^{-3}) u_{2,+}.
    \end{aligned}
    \]
    Noting that \begin{equation}\label{eqn:equiv_volume_form_r_large}
        d\Vol_{h_\barcalC} = \frac{r^{2(n-1)}}{\sqrt{r^{2(n-1)}-1}}\, dr\, d\theta \simeq r^{n-1}\, dr\, d\theta,
    \end{equation}
    it suffices to show the following :  \begin{equation}\label{eqn:ell_est_after_ignoring_lot_reduced_to}
        \frac1{r^{n-1}}\p_r(r^{n-1}\p_r v) + \frac1{r^2}\sDelta v = g \Rightarrow \|v\|_{H^{2,\delta}} \lesssim  \|g\|_{H^{0, \delta+2}},
    \end{equation}
    where $\supp v \subset \{r \gtrsim R_0\}$ and $v \in H^2_{loc} \cap H^{0,\delta}$, $g \in H^{0,\delta+2}$ with the equivalent weighted Sobolev norms defined using the equivalent volume form \eqref{eqn:equiv_volume_form_r_large}.
    This is because we can treat \[
        O(r^{-1})\p_r^2 u_{2,+} + O(r^{-2}) \p_r u_{2,+} + O(r^{-3}) u_{2,+}
    \]
    as the source term and use \eqref{eqn:ell_est_after_ignoring_lot_reduced_to} to show \[
       \|u_{2,+}\|_{H^{2,\delta}} \lesssim \|f + [L,\chi] u_{2,+}\|_{H^{0,\delta+2}}
       \lesssim \|f\|_{H^{0,\delta+2}} + \|u_{2,+}\|_{H^{1, \delta'}}
    \]
    by choosing $R_0$ sufficiently large so that we can absorb error terms to the left hand side and choose $\delta'$ arbitrarily.

    \textit{Step 3 : Proof of \eqref{eqn:ell_est_after_ignoring_lot_reduced_to} with $v \in C^\infty_c$.}

    We first prove \eqref{eqn:ell_est_after_ignoring_lot_reduced_to} for $v \in C^\infty_c$.
    We shall use the equivalent volume form \eqref{eqn:equiv_volume_form_r_large} in the following computation. 
    \[\begin{aligned}
        &\|g\|^2_{{H^{0,\delta+2}}}
        = \int r^{2(\delta + 2)} |g|^2\, r^{n-1}\, dr\, d\theta  
        = \int \left(\frac1{r^{n-1}}\p_r(r^{n-1}\p_r v) + \frac1{r^2}\sDelta v \right)^2 r^{n-1+2\delta+4}\, dr\, d\theta \\
        =& \int r^{-(n-2\delta-5)}\big(\p_r(r^{n-1}\p_r v)\big)^2 + r^{n+2\delta-1} |\sDelta v|^2 + 2r^{2\delta+2}\big(\p_r(r^{n-1}\p_r v)\big) \sDelta v\, dr\, d\theta.
    \end{aligned}
    \]
    For the first term, we write 
    \[
    \begin{aligned}
        r^{-(n-2\delta-5)}\big(\p_r(r^{n-1}\p_r v)\big)^2
        &= r^{-(n-2\delta-5)} \left(r^{n-1}\p_r^2 v + (n-1) r^{n-2}\p_r v\right)^2 \\
        &= r^{n+3+2\delta}(\p_r^2 v)^2 + (n-1)^2 r^{n+1+2\delta} (\p_r v)^2 + (n-1)r^{n+2+2\delta}\p_r(\p_r v)^2
    \end{aligned}
    \]
    while for the last term, we compute \[
        \begin{aligned}
            &\quad  \int 2r^{2\delta+2}\big(\p_r(r^{n-1}\p_r v)\big) \sDelta v\, dr\, d\theta \\
            &= \int 2\p_r\big(r^{2\delta+2}r^{n-1}\p_r v \sDelta v\big) + 2r^{n+2\delta+1}|\p_r \snabla v|^2 + (2\delta+2) r^{n+2\delta} \p_r |\snabla v|^2 \, dr\, d\theta \\ 
            &= \int  2r^{n+2\delta+1}|\p_r \snabla v|^2 - (2\delta+2)(n+2\delta) r^{n-1+2\delta} |\snabla v|^2 \, dr\, d\theta.
        \end{aligned}
    \]

    Now we combine these results together \begin{equation}\label{eqn:ell_est_for_C_after_IBP_before_Hardy}
        \begin{aligned}
            \|g\|_{H^{0,\delta+2}}^2 =\ &\int r^{n+3+2\delta}(\p_r^2 v)^2 + (n-1)^2 r^{n+1+2\delta} (\p_r v)^2 + (n-1)r^{n+2+2\delta}\p_r(\p_r v)^2 + r^{n+2\delta-1} |\sDelta v|^2 \\ &\quad + 2r^{n+2\delta+1}|\p_r \snabla v|^2 - (2\delta+2)(n+2\delta) r^{n-1+2\delta} |\snabla v|^2\, dr\, d\theta 
            \\
            \geq\ & \int r^{n+3+2\delta}(\p_r^2 v)^2  - (n-1)(3+2\delta)r^{n+1+2\delta}(\p_r v)^2 + r^{n+2\delta-1} |\sDelta v|^2 \\ &\quad + 2r^{n+2\delta+1}|\p_r \snabla v|^2 - 2(\delta+1)(n+2\delta) r^{n-1+2\delta} |\snabla v|^2\, dr\, d\theta. \\
        \end{aligned}
    \end{equation}
    Furthermore, by applying Hardy inequality \eqref{eqn:Hardy} with the exact constant, we know that \[\begin{aligned}
        &\int r^{n+3+2\delta}(\p_r^2 v)^2  - (n-1)(3+2\delta)r^{n+1+2\delta}(\p_r v)^2
        \gtrsim_{\delta} \int r^{n+3+2\delta}(\p_r^2 v)^2 \, dr\, d\theta \\
    \end{aligned}
    \]
    thanks to the fact that $(n-1)(3+2\delta) < \frac{(n+2+2\delta)^2}{4}$ holds with strict inequality sign as long as $\frac{n}{2} \neq \delta + 2$, which is the forbidden endpoint in the assumption of our theorem.
    A similar consideration of the applicability of Hardy's inequality in  \[
        \int r^{n+2\delta+1}|\p_r \snabla v|^2 - (\delta+1)(n+2\delta) r^{n-1+2\delta} |\snabla v|^2\, dr\, d\theta \gtrsim_\delta \int r^{n+2\delta+1}|\p_r \snabla v|^2\, dr\, d\theta
    \]
    gives $(\delta + 1)(n + 2\delta) < \frac{(n+2\delta)^2}{4}$, which gives $\delta < -2 + \frac{n}{2}$ and $\delta > -\frac{n}{2}$, verifying the parameter assumptions again.
    By combining these two estimates together with \eqref{eqn:ell_est_for_C_after_IBP_before_Hardy} and applying Hardy inequality \eqref{eqn:Hardy} once again, we finally reached \[
        \|g\|_{H^{0,\delta+2}}^2 \gtrsim \int r^{n-1+2\delta} \left(r^4|\p_r^2 v|^2 + r^2|\p_r v|^2 + r^2 |\p_r \snabla v|^2 + |v|^2 + |\snabla v|^2 + |\snabla^2 v|^2\right) \, dr\, d\theta,\\
    \]
    where to produce the term with integrand $r^{n-1+2\delta} |v|^2$, we implicitly used the fact that $n + 2\delta \neq 0$ corresponding to the other endpoint for the range of $\delta$. Thus we complete the proof of \eqref{eqn:ell_est_after_ignoring_lot_reduced_to} with $v \in C^\infty_c$.

    \textit{Step 4 : Proof of \eqref{eqn:ell_est_after_ignoring_lot_reduced_to} under the general assumption for $v$ after \eqref{eqn:ell_est_after_ignoring_lot_reduced_to}.}
        We use a standard regularization argument, for $v$ satisfies \eqref{eqn:ell_est_after_ignoring_lot_reduced_to}, we introduce a new cut-off function $\chi_{R_1} = \tilde\chi(r/R_1)$, where $\tilde\chi \in C^\infty_c$, $\supp \tilde\chi \subset (\frac12, 2)$.
        Then $v_{R_1} := \chi_{R_1}v$ satisfies \eqref{eqn:ell_est_after_ignoring_lot_reduced_to} with $g$ replaced by $g + 2\chi_{R_1}'\p_r v + \frac1{r^{n-1}}\p_r(r^{n-1}\chi_{R_1}')v$. By using the result in Step 3, we obtain \[
            \|v_{R_1}\|_{H^{2,\delta}} \lesssim  \|g\|_{H^{0, \delta+2}} + \|v_{R_1}\|_{H^{1, \delta}},
        \]
        where the implicit constant is independent of $R_1$. Therefore, by setting $R_1 \to \infty$ (technically, we take limsup as we don't know $v \in H^{1,\delta}$ a priori), it follows that \[
            \|v\|_{H^{2,\delta}} \lesssim  \|g\|_{H^{0, \delta+2}} + \|v\|_{H^{1, \delta}},
        \]
        where $v \in H^2_{loc} \cap H^{0,\delta}$.

        Moreover, by multiplying both sides of \eqref{eqn:ell_est_after_ignoring_lot_reduced_to} by $r^{2(\delta+1)}v$, one can prove that \[
            \|v\|_{H^{1,\delta}} \leq \|g\|_{H^{0,\delta+2}}.
        \]
        Combining these two equations allow us to conclude the proof of \eqref{eqn:ell_est_after_ignoring_lot_reduced_to} with $v \in H^2_{loc} \cap H^{0,\delta}$. 
\end{proof}

\begin{theorem}[Higher regularity]\label{thm_ell_high_reg}
    Suppose $f\in H^{s-2, \delta+2}$ with $-\frac{n}{2} < \delta < \frac{n}{2} - 2$, then every solution $u \in H^s_{loc} \cap H^{0,\delta}$ of $Lu = f$ is also in $H^{s,\delta}$ and satisfies \[
        \|u\|_{H^{s,\delta}} \lesssim \|f\|_{H^{s-2,\delta+2}} + \|u\|_{H^{s-1, \delta'}},
    \]
    for any $s \geq 2$ and for any $\delta' \in \bbR$.
\end{theorem}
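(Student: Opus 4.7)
The plan is induction on $s \geq 2$, the base case being Theorem~\ref{thm_ell_reg}. Assume the claim holds at order $s-1$; we upgrade it to $s$. We reuse the decomposition $u = (1-\chi_{R_0})u + \chi_{R_0}u = u_1 + u_2$ from the proof of Theorem~\ref{thm_ell_reg}, further splitting $u_2 = u_{2,+} + u_{2,-}$ across the two ends of $\barcalC$ and working on a single end in the coordinates $(r,\theta)$ of \eqref{eqn:metric_C_in_r}, where
\[
L = \tfrac{1}{r^{n-1}}\partial_r\bigl(r^{n-1}\partial_r\,\cdot\,\bigr) + \tfrac{1}{r^2}\sDelta + O(r^{-1})\partial_r^2 + O(r^{-2})\partial_r + O(r^{-3}).
\]
Standard interior elliptic regularity for the smooth operator $L$ controls $\|u_1\|_{H^s}$ by $\|f\|_{H^{s-2}}$ on a compact set plus lower-order commutator contributions on the annulus $\{R_0\leq|\rho|\leq 2R_0\}$, which are absorbed into $\|u\|_{H^{s-1,\delta'}}$.

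For $u_{2,+}$ I would commute the equation with a first-order vector field $D$ drawn from $\{\partial_{\theta^a},\, r\partial_r\}$, the natural b-derivatives adapted to our weighted norms: on an asymptotically Euclidean end $\|u\|_{H^{s,\delta}}$ is equivalent to $\sum_{j+|\alpha|\leq s}\|r^{\delta}(r\partial_r)^j\partial_\theta^\alpha u\|_{L^2(r^{n-1}\,dr\,d\theta)}$. The key commutator identities $[\partial_r^2, r\partial_r]=2\partial_r^2$ and $[\tfrac{1}{r}\partial_r, r\partial_r]=\tfrac{2}{r}\partial_r$, together with the radiality of the lower-order coefficients of $L$, imply that $[L,D]$ is a second-order operator whose coefficients carry the same (or better) inverse powers of $r$ as those of $L$. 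Thus $v := Du_{2,+}$ satisfies
\[
Lv = Df + [L,D]u_{2,+} + D[L,\chi_{R_0}]u,
\]
and the right-hand side lies in $H^{s-3,\delta+2}$ with norm controlled by $\|f\|_{H^{s-2,\delta+2}} + \|u\|_{H^{s-1,\delta}} + \|u\|_{H^{s-1,\delta'}}$; the middle term is bounded via the inductive hypothesis.

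Applying the inductive estimate at order $s-1$ to $v = Du_{2,+}$ then yields $\|Du_{2,+}\|_{H^{s-1,\delta}} \lesssim \|f\|_{H^{s-2,\delta+2}} + \|u\|_{H^{s-1,\delta'}}$; varying $D$ over the spanning family $\{\partial_{\theta^a}, r\partial_r\}$ and combining with the inductive bound on $u_{2,+}$ itself produces $\|u_{2,+}\|_{H^{s,\delta}}$. The analogous bound for $u_{2,-}$ together with the interior estimate for $u_1$ closes the induction.

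The main technical check — and the reason for commuting with $r\partial_r$ and $\partial_{\theta^a}$ rather than with $\partial_r$ alone — is to confirm that no commutator term produces a coefficient with a worse inverse power of $r$ than appears in $L$; equivalently, that $[L,D]u_{2,+}$ indeed lies in the target weighted space $H^{s-3,\delta+2}$. This is precisely what the explicit asymptotic expansion of $L$ above guarantees, since every perturbation of the flat Laplacian already carries at least one extra inverse power of $r$, and b-derivatives respect this hierarchy. No $\delta$-dependent Hardy-type constraint reappears at this stage, because all of them were consumed in the base case $s=2$, leaving the same range $-\tfrac{n}{2} < \delta < \tfrac{n}{2}-2$ throughout the induction.
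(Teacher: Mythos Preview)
Your proof is correct and follows essentially the same strategy as the paper: induction on $s$ with the base case $s=2$, splitting into interior and end pieces, and commuting the equation on each end with the scaling field $r\partial_r$ and angular derivatives. The only organizational difference is that the paper first reduces to the flat Euclidean Laplacian $\Delta_{\mathfrak e}$ (pushing the $O(r^{-1})\partial_r^2 + O(r^{-2})\partial_r + O(r^{-3})$ terms and the potential $|\II|^2$ into the source, as in \eqref{eqn:ell_est_after_ignoring_lot_reduced_to}) and then exploits the \emph{exact} commutation relations $[x^j\partial_k - x^k\partial_j,\Delta_{\mathfrak e}]=0$ and $[r\partial_r,\Delta_{\mathfrak e}]=-2\Delta_{\mathfrak e}$, so that e.g.\ $\Delta_{\mathfrak e}(r\partial_r u)=r\partial_r g+2g$ and one can invoke the $s=2$ estimate directly without tracking any commutator coefficients. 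Your version commutes $L$ itself with $\{r\partial_r,\partial_{\theta^a}\}$ and argues that the commutators preserve the $r$-weight hierarchy; this is equivalent but requires the extra bookkeeping you describe (and note that $\partial_{\theta^a}$ does not commute with $\sDelta$ in general, whereas the rotation fields $\Omega_{jk}$ do, which is why the paper's choice is slightly cleaner).
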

\begin{proof}
    We mimic the proof of Theorem~\ref{thm_ell_reg} and one notes that it could be easily reduced to prove the following result regarding the Laplacian operator $\Delta_{\mathfrak{e}}$ in Euclidean background: \begin{equation}\label{eqn:higher_order_ell_reduced_to}
        \Delta_{\mathfrak{e}} v := \frac1{r^{n-1}}\p_r(r^{n-1} \p_r v) + \frac1{r^2} \sDelta v = g \Rightarrow \|v\|_{H^{s, \delta}} \lesssim \|g\|_{H^{s-2, \delta + 2}}
    \end{equation}
    for $\supp v \subset \{r \gtrsim R_0\}$.

    Then we make an induction on $s$ and it is achieved thanks to the following commutation relation for $\Delta_{\mathfrak{e}}$ :
    \[
        [x^j \p_k - x^k \p_j, \Delta_{\mathfrak{e}}] = 0, \quad [r\p_r, \Delta_{\mathfrak{e}}] = -2 \Delta_{\mathfrak{e}}.
    \]
    For example, if one commutes with $r\p_r$, then one might want to estimate \[
        \Delta_{\mathfrak{e}}(r\p_r u) = r\p_r g + 2 \Delta_{\mathfrak{e}} u
        = r\p_r g + 2 g
    \]
    via Theorem~\ref{thm_ell_reg}.
\end{proof}
\begin{remark}
    \cite[Section 7]{LO24} manages to include the endpoint cases $\delta = -\frac{n}2$ and $\delta = \frac{n}2 - 2$ by a dyadic decomposition modification in the definition of weighted Sobolev spaces. 
\end{remark}

\subsection{Linear spectral analysis}\label{Section:spectral_analysis_of_stab_op_L}
Our stability proof requires that all the zero modes of the linearized operator $L$ on $\barcalC$ are generated from the symmetries of catenoid.
Recall that the stability operator (Jacobi operator) is defined by \begin{equation}\label{eqn:stab_op_on_C}
    L = \Delta_{\barcalC} + |\II|^2.
\end{equation}
The second variation formula will give \begin{equation}\label{eqn:2nd_variation_barcalC}
    -\int_{\barcalC}\brk{\cdot\,, L\,\cdot\,}    
\end{equation}
as $\barcalC$ is a minimal surface. See \cite[Chapter 1.8]{CM11}.

In this subsection, we recall some results about the spectrum of $L$ and provide an explicit calculation of all the zero modes, which is not contained in other literature as far as we know. In the following sections, the modulation argument and the decay mechanism of the profile will heavily rely on the geometric meaning (see Remark~\ref{rmk_geometric_meaning_of_zero_modes}) and the quantitative decay properties of all zero modes.

In \cite[Theorem 2.1]{TZ09}, they prove that the Morse index of $\barcalC$ is $1$. By translating from the language for the Hessian \eqref{eqn:2nd_variation_barcalC} to the level of operator $L$, this amounts to say that $L$ has a unique positive eigenvalue $\mu^2$. We denote the corresponding eigenfunction by $\varphi_\mu$. This instability corresponds to the shrinking of the neck of the catenoid. 
As the catenoid is asymptotically flat, the eigenfunction $\varphi_\mu$ decays exponentially. This can be deduced from \cite[Theorem 4.1]{Ag82} (integrated version of decay) and an adaptation of \cite[Proof of Theorem 5.1, 5.3]{Ag82}.

Now we start to characterize elements in the null space of $L$ as an operator acting on $H^{0,\delta}$ with $-\frac{n}{2} < \delta < \frac{n}{2} - 2$. The weighted Sobolev spaces are defined in Definition~\ref{defn_weighted_Sobolev} and the weights are the same as those in the elliptic estimates we prove in the following subsection. Note that this range is empty when $n = 1$ or $2$, which explains why a modulation is not needed in the $n = 2$ case (see \cite[Remark 2.1]{DKSW16}).

Thanks to the isometry of tensor products of Hilbert spaces (see \cite{RS80}), one has 
\begin{equation}\label{eqn:isometry_H_0_delta_as_Hilbert_space}
H^{0,\delta}(\barcalC) \simeq L^2(\R \times \bbS^{n-1}, \brk{\rho}^{n-1+2\delta}\, d\rho\times d\theta) \simeq L^2(\R, \brk{\rho}^{n-1+2\delta}\, d\rho)\otimes L^2(\bbS^{n-1}, d\theta)    
\end{equation}
with $\delta \in (-\frac{n}2, \frac{n}{2}-2)$.
Suppose $u \in H^{0,\delta}$ solves $Lu = 0$, then by elliptic regularity, $u \in C^\infty$. Moreover, by spherical harmonic decomposition, $u$ can be written into the form \[
    u = \sum_{l = 0}^{\infty} \sum_{m} u_l(\rho) Y_{lm}(\theta),
\] 
where $u_l \in L^2(\R, \brk{\rho}^{n-1+2\delta}\, d\rho)$ solves $L_l u_l = 0$ with \begin{equation}\label{eqn:L_l_formula}
    L_l = \frac{1}{\brk{\rho}^{n-1}|F_{\rho}|}\p_\rho(\brk{\rho}^{n-1}|F_\rho|^{-1}\p_\rho)- \frac{l(l+n-2)}{\brk{\rho}^2} + \frac{n(n-1)}{\brk{\rho}^{2n}}, \quad 
    |F_\rho| = \frac{\rho \brk{\rho}^{n-2}}{\sqrt{(\rho^2 + 1)^{n-1} - 1}}.
\end{equation}
Conversely, if $u_l \in L^2(\R, \brk{\rho}^{n-1+2\delta}\, d\rho)$ solves $L_l u_l = 0$, then $u_l(\rho) Y_{lm}(\theta)$ is a zero mode of $L$ in $H^{0,\delta}$ due to \eqref{eqn:isometry_H_0_delta_as_Hilbert_space}.
Hence, it suffices to find zero modes $u_l$ of $L_l$ in $L^2(\R, \brk{\rho}^{n-1+2\delta}\, d\rho)$.
By standard elliptic regularity, $u_l \in C^\infty$.
Set $v = u_l(\rho)Y_{lm}(\theta)$, then $v \in H^{0,\delta}$ solves $\Delta_{\barcalC} v + V v = 0$.
Thanks to Theorem~\ref{thm_ell_high_reg}, Lemma~\ref{lemma_Sobolev_embedding}, we know $v \in H^{\infty,\delta} \subset C^{\infty,\veps}$ for $0 < \veps < \delta + \frac{n}{2}$.
Thus, $|v(\rho)|, |v'(\rho)| \to 0$ as $\rho \to \infty$. 
Therefore, $u_l$ solves the following ODE \begin{equation}\label{eqn:ODE_for_u_l}
    L_l u_l = 0, \quad u_l(\infty) = u_l'(\infty) = 0.
\end{equation}
By making a change of variable $r = \brk{\rho}$ in the region $\rho > 0$, \eqref{eqn:ODE_for_u_l} becomes \[
    \p_r^2 u + \frac{n-1}{r}\p_r u - \frac{l(l+n-2)}{r^2}u - \frac{1}{r^{2(n-1)}}\p_r^2 u + \frac{n(n-1)}{r^{2n}}u = 0.
\]
\begin{remark}
    This equation in $r$ only works in the region $\{ r > 1\}$. If one examines $\p_\rho u = \frac{\rho}{\brk{\rho}}\p_r u$, we know that $\p_r u$ may have nontrivial boundary behavior at $r = 1$ even for $u$ smooth in $\rho$ variable, causing singularity at the boundary in the distributional sense.
\end{remark}

It is well-known that a solution to an elliptic equation with analytic coefficients is analytic. (See \cite{Jo81, MN57}.) Therefore, by putting \[
    u_l(r) = C_k r^{-k} + \sum_{m = k+1}^{\infty}  C_m r^{-m}, \quad r > 1
\]
into the equation above and matching coefficients, one would derive $k = n - 2 + l$ from the top order and inductively,
\[
    C_m\big(m(m-n+2) - l(l+n-2)\big) = C_j\big(j(j+1) - n(n-1)\big)
    , \quad m = j + 2(n-1), \quad j \in k + 2(n-1)\bbZ,
\]
while all other coefficients are zero.
The recursion formula can be rewritten as \begin{equation}\label{eqn:recursion_l_geq_2}\begin{aligned}
    C_m = C_{j + 2(n-1)} = \frac{(j-n+1)(j+n)}{(m-l- n + 2)(m + l)} C_j = \frac{j^2 + j - n(n-1)}{j^2 + (3n-2)j +(n-l)(2n+l-2)} C_j,
\end{aligned}
\end{equation}
for $j \in k + 2(n-1)\bbZ$ and $l \geq 2$.
If $l = 1$, the recursion stops at the very first step due to the vanishing of the multiplying factor and hence, $u_1 = \brk{\rho}^{-(n-1)}$, and one can check that this satisfies the equation $L_1 u_1 = 0$ (see \eqref{eqn:L_l_formula}) for all $\rho \in \R$. 
From \eqref{eqn:recursion_l_geq_2} and the Raabe's test, the series is uniformly convergent on $\{r \geq 1\}$ while its term by term derivative is uniformly convergent on $\{r \geq 1 + \delta\}$ for any $\delta > 0$. Suppose that there exists $u_l(\rho)$ solves \eqref{eqn:L_l_formula} then by considering \[
    0 = \brk{L_l u_l, u_1}_{L^2(\barcalC)}
    = \left\langle\big(L_1 - \frac{l(l+n-2) - (n-1)}{\brk{\rho}^2}\big)u_l, u_1\right\rangle_{L^2(\barcalC)} = C_l \brk{\brk{\rho}^{-2} u_l, u_1}_{L^2(\barcalC)}, 
\]
which implies that $u_l$ at least has one zero. However, we know from the formula of $u_l$ with $\rho \neq 0$ and the continuity that it doesn't have any zeroes, which is a contradiction. 
Finally, combining this with the knowledge of spherical harmonics, we know that all the zero modes of $L$ are given by \begin{equation}\label{eqn:zero_modes_of_L_ej}
    \nu^j = \frac{\Theta^j(\theta)}{\brk{\rho}^{n-1}} , \quad j = 1, \cdots, n
\end{equation}
where $x^j = r\Theta^j$ is the Euclidean coordinates for ambient space.
We normalize these in $H^{2, \delta}$ norm and denote them by $\ebar_j$ for $1 \leq j \leq n$.

Moreover, \cite{OS24} proves that the operator $L$ does not have any threshold resonance, which is crucial for the proof of ILED. Indeed, one just need to rule out the potential candidates from scaling in the ambient spacetime (scaling of $S$) and translation along the axis of rotation.

\begin{remark}\label{rmk_geometric_meaning_of_zero_modes}
    Note that $\ebar_j \in L^2$ as well for $n \geq 3$. These will correspond to translation symmetries in $X^j$-directions of the Riemannian catenoid $\barcalC$ and can be obtained by formally differentiating the translation parameter. In Lorentzian case, $t\ebar_j$ are generalized zero modes of $\Box_{\calC} + |\II|^2$, corresponding to boosts. These geometric consideration is crucial in the proof of Lemma~\ref{lemma_zero_modes}.
\end{remark}

\subsection{Elliptic estimates (without lower order terms) for \texorpdfstring{$L$}{L}}
\begin{theorem}[Elliptic estimate without lower order terms]\label{thm_ell_est_for_L_w_lot}
    Suppose $f\in H^{0, \delta+2}$ with $-\frac{n}{2} < \delta < \frac{n}{2} - 2$, then for each $u \in H^{2,\delta}$, \begin{equation}
        \|u\|_{H^{2,\delta}} \lesssim \|Lu\|_{H^{0,\delta+2}} + \sum_j |\brk{u, \ebar_j}|,
    \end{equation}
    where $\{\ebar_j, 1 \leq j \leq n\}$, the zero modes of $L$, are normalized in $H^{2,\delta}$.
\end{theorem}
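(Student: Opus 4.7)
The plan is to prove this via a standard Fredholm-alternative-style contradiction argument, combining Theorem~\ref{thm_ell_reg} (the elliptic estimate with a lower-order term) with the compact embedding from Lemma~\ref{lemma_cpt_embedding_weighted_Sobolev} and the explicit characterization of $\ker L$ from Section~\ref{Section:spectral_analysis_of_stab_op_L}. Concretely, I would assume the estimate fails and, by homogeneity, extract a sequence $\{u_k\} \subset H^{2,\delta}$ with $\|u_k\|_{H^{2,\delta}} = 1$ yet $\|L u_k\|_{H^{0,\delta+2}} + \sum_{j=1}^n |\brk{u_k, \ebar_j}| \to 0$ as $k \to \infty$.

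Fix any $\delta' < \delta$ (for instance $\delta' = \delta - 1$). Lemma~\ref{lemma_cpt_embedding_weighted_Sobolev} furnishes the compact embedding $H^{2,\delta} \hookrightarrow H^{1,\delta'}$, so along a subsequence $u_k$ converges strongly in $H^{1,\delta'}$; reflexivity of the Hilbert space $H^{2,\delta}$ then provides a further subsequence converging weakly in $H^{2,\delta}$. Uniqueness of limits identifies the two as a single $u_\infty \in H^{2,\delta}$. Continuity of $L : H^{2,\delta} \to H^{0,\delta+2}$ together with $L u_k \to 0$ in $H^{0,\delta+2}$ forces $L u_\infty = 0$, and since the $L^2(\barcalC)$-pairings $\brk{\cdot, \ebar_j}$ act as continuous linear functionals on $H^{2,\delta}$, they pass to the weak limit, producing $\brk{u_\infty, \ebar_j} = 0$ for each $j = 1, \ldots, n$.

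Next, I would invoke the kernel characterization from Section~\ref{Section:spectral_analysis_of_stab_op_L}: every element of $\ker L \cap H^{0,\delta}$ (which contains $u_\infty$ by elliptic regularity) lies in $\mathrm{span}\{\ebar_1, \ldots, \ebar_n\}$. Because the $\ebar_j$'s have the form $c_j\,\Theta^j(\theta)/\brk{\rho}^{n-1}$, their $L^2(\barcalC)$-Gram matrix is diagonal and positive by orthogonality of the coordinate functions on $\bbS^{n-1}$; hence the vanishing pairings force $u_\infty = 0$. Finally, applying Theorem~\ref{thm_ell_reg} to each $u_k$ with the chosen $\delta'$ gives
\[
1 = \|u_k\|_{H^{2,\delta}} \lesssim \|L u_k\|_{H^{0,\delta+2}} + \|u_k\|_{H^{1,\delta'}} \longrightarrow 0,
\]
since strong convergence in $H^{1,\delta'}$ yields $\|u_k\|_{H^{1,\delta'}} \to \|u_\infty\|_{H^{1,\delta'}} = 0$. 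This contradiction closes the argument.

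The only genuine subtlety, rather than a serious obstacle, is verifying that each $\ebar_j$ defines a continuous linear functional on $H^{2,\delta}$ via the $L^2(\barcalC)$ duality, so that the pairings $\brk{u_k, \ebar_j}$ pass to the weak limit. Given the pointwise decay $\ebar_j \sim \brk{\rho}^{-(n-1)}$, the admissible weight range $-\tfrac{n}{2} < \delta < \tfrac{n}{2} - 2$, and the volume form read off from \eqref{eqn:metric_C_in_rho}, this is a direct weighted-$L^2$ computation. Everything else reduces to mechanical assembly of the three ingredients already in place in the excerpt.
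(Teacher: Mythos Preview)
Your proposal is correct and follows essentially the same contradiction strategy as the paper: extract a normalized sequence with vanishing $\|Lu_k\|_{H^{0,\delta+2}}$ and $\brk{u_k,\ebar_j}$, use the compact embedding (Lemma~\ref{lemma_cpt_embedding_weighted_Sobolev}) together with Theorem~\ref{thm_ell_reg} to pass to a limit, and reach a contradiction via the kernel characterization of Section~\ref{Section:spectral_analysis_of_stab_op_L}. The only organizational difference is that the paper first proves the estimate for $\bbP_c u$ (the projection away from the eigenfunctions) and then reassembles $u$, whereas you run the contradiction directly on $u$; the paper also upgrades to \emph{strong} convergence in $H^{2,\delta}$ by showing the sequence is Cauchy via Theorem~\ref{thm_ell_reg}, while you use weak $H^{2,\delta}$ convergence and obtain the contradiction at the final step---both routes are standard and equivalent here.
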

\begin{proof}
    We denote the spectral projection to continuous spectrum by $\bbP_c$ (with respect to standard $L^2$), that is, \[
        \bbP_c u = u - \brk{u, \varphi_\mu}_{L^2}\varphi_\mu - \sum_{j = 1, 2, 3, 4} \brk{u, \ebar_j}_{L^2}\ebar_j.
    \]
    Note that $\bbP_c$ is also bounded on $H^{2, \delta}$ thanks to the strong decay of eigenfunctions.
    
    \textit{Step 1 : Showing an estimate for $\bbP_c u$ without lower order terms.}
    We prove that for any $u \in H^2_{loc} \cap H^{0,\delta}$, \begin{equation}\label{eqn:ell_est_with_spectral_proj}
        \|\bbP_c u\|_{H^{2,\delta}} \lesssim \|L \bbP_c u \|_{H^{0,\delta+2}}.
    \end{equation}
    Assuming \eqref{eqn:ell_est_with_spectral_proj} fails, then there exists $u_k \in H^2_{loc} \cap H^{0,\delta}$, $\|\bbP_c u_k\|_{H^{2,\delta}} = 1$, \[
        \|L \bbP_c u_k\|_{H^{0,\delta+2}} \leq \frac1{k}.
    \]
    Thanks to the compactness lemma (Lemma~\ref{lemma_cpt_embedding_weighted_Sobolev}), by choosing $\delta' < \delta$ and passing to a subsequence, $\{\bbP_c u_k\}_k$ is convergent to some function $v$ in $H^{1, \delta'}$. Thanks to  Theorem~\ref{thm_ell_reg}, we know that \[
        \|\bbP_c (u_k - u_m)\|_{H^{2,\delta}} \lesssim \frac1{m} + \frac1{k} + \|\bbP_c (u_k - u_m)\|_{H^{1,\delta'}} \to 0
    \]
    as $k, m \to \infty$.
    Thanks to the uniqueness of limit in $\calD'$, we know $\bbP_c u_k \to v$ in $H^{2,\delta}$.
    Since $\|\bbP_c u_k\|_{H^{2,\delta}} = 1$, we know $\|v\|_{H^{2,\delta}} = 1$ and $Lv = 0$. However, this is a contradiction since $v \notin \ker L$. Indeed, for any $1 \leq j \leq n$, \[
    \brk{v, \ebar_j} = \lim_k \brk{\bbP_c u_k, \ebar_j} = 0,
    \]
    which implies $v = \bbP_c v$. 

    \textit{Step 2 : Using spectral decomposition and decay properties of zero modes to conclude the proof.}
    
    For any $u\in H^{2,\delta}$, we write \[
        u = \bbP_c u + \sum_j \brk{u, \ebar_j}\ebar_j,
    \]
    then we compute \[
        \|u\|_{H^{2,\delta}} \leq
        \|\bbP_c u\|_{H^{2,\delta}} + \sum_j |\brk{u, \ebar_j}| 
        \leq  \|Lu\|_{H^{0,\delta+2}} + \sum_j |\brk{u, \ebar_j}|,
    \]
    which completes the proof.
\end{proof}

\begin{remark}
    In the preceding proofs, we make use of the knowledge of eigenfunctions in $H^{2,\delta}$. In fact, one can get around this by performing a cut-off trick (see \cite[Section 8.4]{LOS22}) but one still needs to know all zero eigenfunctions in $L^2$.
\end{remark}

One uses the commutation argument as in the proof of Theorem~\ref{thm_ell_high_reg}, the higher regularity elliptic estimate without lower order terms follows.
\begin{corollary}
    Suppose $f\in H^{s-2, \delta+2}$ with $-\frac{n}{2} < \delta < \frac{n}{2} - 2$, then for each $u \in H^{s,\delta}$ with $s \geq 2$, \begin{equation}
        \|u\|_{H^{s,\delta}} \lesssim \|Lu\|_{H^{s-2,\delta+2}} + \sum_j |\brk{u, \ebar_j}|.
    \end{equation}
\end{corollary}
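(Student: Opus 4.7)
The plan is a short induction on the regularity index $s$, with base case $s = 2$ supplied directly by Theorem~\ref{thm_ell_est_for_L_w_lot}. The inductive step combines the ``with lower-order term'' higher-regularity estimate of Theorem~\ref{thm_ell_high_reg} with the induction hypothesis at regularity $s-1$ to absorb that lower-order term.

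More precisely, I would fix $s \geq 3$, assume the corollary holds at level $s - 1$, and let $u \in H^{s,\delta}$ with $-\frac{n}{2} < \delta < \frac{n}{2} - 2$. Since $u \in H^{s,\delta}$ automatically satisfies $u \in H^s_{\loc} \cap H^{0,\delta}$, Theorem~\ref{thm_ell_high_reg} is applicable; choosing $\delta' = \delta$ in that theorem gives
\[
\|u\|_{H^{s,\delta}} \lesssim \|L u\|_{H^{s-2,\delta+2}} + \|u\|_{H^{s-1,\delta}}.
\]
The induction hypothesis at level $s-1 \geq 2$, combined with the trivial monotonicity $\|Lu\|_{H^{s-3,\delta+2}} \leq \|Lu\|_{H^{s-2,\delta+2}}$ in the regularity index, then yields
\[
\|u\|_{H^{s-1,\delta}} \lesssim \|L u\|_{H^{s-3,\delta+2}} + \sum_j |\brk{u,\ebar_j}| \leq \|L u\|_{H^{s-2,\delta+2}} + \sum_j |\brk{u,\ebar_j}|.
\]
Plugging this into the previous display closes the induction.

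All of the genuinely analytic content has already been done: the commutation argument with the vector fields $r\partial_r$ and the angular rotations $x^j \partial_k - x^k \partial_j$, which preserve $\Delta_{\mathfrak e}$ up to harmless scalar corrections, is carried out inside Theorem~\ref{thm_ell_high_reg}, and the projection away from $\ker L$ via the normalized zero modes $\ebar_j$ is handled in Theorem~\ref{thm_ell_est_for_L_w_lot}. Consequently, I do not anticipate any real obstacle; the only point that merits a line of verification is that the hypotheses on $u$ and on $\delta$ are unchanged across the induction, so that each invocation of Theorem~\ref{thm_ell_high_reg} and of the induction hypothesis is legitimate.
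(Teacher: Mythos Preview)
Your proof is correct. The paper's one-line proof simply says to rerun the commutation argument from Theorem~\ref{thm_ell_high_reg} starting from the $s=2$ base case of Theorem~\ref{thm_ell_est_for_L_w_lot}; you instead invoke Theorem~\ref{thm_ell_high_reg} as a black box (choosing $\delta'=\delta$) and close by induction on $s$. These are the same argument organized differently: both rest on the $s=2$ estimate without lower-order terms and the commutation with $r\partial_r$ and the rotations, and your packaging is arguably cleaner since it avoids repeating any of the commutator computations.
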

\begin{remark}
    For the purpose of elliptic estimates without lower order terms, we only need to project away from zero eigenfunctions. For the boundedness of energy type of estimates later, it is crucial to project away from $\varphi_\mu$ as well to ensure the coercivity of energy flux. 
\end{remark}

\section{Derivation of equations for \texorpdfstring{$\psi$, $\wp$}{psi, wp} and a decomposition for \texorpdfstring{$\wp$}{wp} }\label{Section:Derivation_of_op_modified_profile}

In this section, we will record the first order formulation derived in \cite{LOS22}, which works for $\calC_\flatnear$. Then we will use a relative intrinsic formulation to derive a second order formulation in the whole space. Next, we list different forms of linear operator $\calP$ that will be used in the following chapters. Finally, we introduce the new idea of modified profile (\cite{OS24}), which in turn define the modulation parameters.

\subsection{First order formulation in \texorpdfstring{$\calC_\flatnear$}{C\_flatnear}}\label{Section:1st_order_formulation}
In this subsection, we record some relevant results of the first order formulation valid on $\calC_\flatnear$ for $\psi$ (see \eqref{eqn:defn_psi}). We refer to \cite[Section 3.1-3.4]{LOS22} for details.  
The derivation is based on the Euler-Lagrange equation \[
    \p_t\left(\frac{\delta \calL}{\delta (\p_t \psi)}\right) + \p_j\left( \frac{\delta \calL}{\delta (\p_j \psi)}\right) = \frac{\delta \calL}{\delta \psi},
\]
where $\calL = \sqrt{|g|} = \sqrt{-\det(g)}$. To facilitate the derivation, we are in need of three different types of metric components $h_{\mu\nu}, k_{\mu\nu}$ and $g_{\mu\nu}$ : \begin{equation}\label{eqn:defn_g_k_h_in_Cflat}
    g_{\mu\nu} := (\Phi^*\eta)_{\mu\nu} = \eta(\p_\mu \Phi, \p_\nu\Phi), \quad k_{\mu\nu} := \eta(\p_\mu \Psi_\wp, \p_\nu \Psi_\wp), \quad h_{\mu\nu} := \eta(\Psi_{\mu; \wp}, \Psi_{\nu; \wp}), 
\end{equation} 
where $\Psi_{\mu; \wp} := \p_\mu \Psi_\wp  \Big|_{\substack{\dot{\ell} = 0 \\\dot{\xi} = \ell}}$. Here $0 \leq \mu, \nu \leq n$ are denoting $(t, \rho, \omg)$ coordinates. We will see in the following subsection that this notation could be generalized to the whole region. In $\calC_\flatnear$, we read off from the parametrization \eqref{eqn:parametrize_calQ_in_flatt} that \begin{equation}\label{eqn:Psi_mu_wp_in_flat_region}
    \Psi_{0; \wp} = \begin{pmatrix}
        1 \\ \ell 
    \end{pmatrix},
    \Psi_{j; \wp} = \begin{pmatrix}
        0 \\ \gmm^{-1} \p_j P_\ell F + \p_j P_\ell^\perp F
    \end{pmatrix}, \quad 1 \leq j \leq n.
\end{equation}
The metric components of $h$ can be then computed explicitly \begin{equation}
    \label{eqn:metric_h_in_Cflat}
    h_{00} = -1 + |\ell|^2, \quad h_{0j} = \calO(|\ell|), \quad h_{ij} = \p_i F \cdot \p_j F + \calO(|\ell|^2), 
\end{equation}
where $\cdot$ is the usual dot product in $\R^{n+1}$. Given \eqref{eqn:metric_C_in_rho} and \eqref{eqn:F_for_Rm_catenoid_rho_omg_coord}, the leading term of its determinant and inverse can be calculated easily : \begin{equation}\label{eqn:1st_order_formulation_h_det_inv}
    \sqrt{|h|} = \frac{\rho \brk{\rho}^{2n-3}}{\sqrt{\brk{\rho}^{2(n-1)}-1}} + \calO(|\ell|), \quad h^{00} = -1 + \calO(|\ell|), \quad h^{0j} = \calO(|\ell|).
\end{equation}

\begin{remark}\label{rmk:convention_fixing_parameters}
    We clarify that the parameters in \eqref{eqn:Psi_mu_wp_in_flat_region} are still time dependent. The only difference between $\p_\mu \Psi_\wp$ and $\Psi_{\mu; \wp}$ is that we impose two relations $\dot\ell = 0$ and $\dot\xi = \ell$ when taking $\p_\mu$, that is, we get rid of the terms involving $\dot\ell$ and replace $\dot\xi$ by $\ell$. However, there is no constancy of $\ell$ assumed in the quantity $\Psi_{\mu; \wp}$ and $h_{\mu\nu}$ as well. See Remark~\ref{rmk:explanation_between_our_form_of_the_discrepancy_of_decay_-gmm-1} for a comparison.
\end{remark}

Introducing a linear operator \[
    L_1 := \frac1{\sqrt{|h|}}\p_j\left( \sqrt{|h|}\, \widetilde h^{jk} \p_k\right) + |\II|^2, \quad \widetilde h^{jk} := h^{jk} - \frac{h^{j0}h^{0k}}{h^{00}},
\]
we define the matrix operator $M$ as \begin{equation}\label{eqn:matrix_operator_M}
    M \vec u := \begin{pmatrix}
        - \frac{h^{0j}}{h^{00}}\p_j u_1 + \frac1{\sqrt{|h|}h^{00}} u_2 \\ 
        -\sqrt{|h|} L_1 u_1 -\p_j\big(\frac{h^{j0}}{h^{00}} u_2\big)
    \end{pmatrix},
    \quad \vec u = \begin{pmatrix}
            u_1 \\ u_2
        \end{pmatrix}.
\end{equation}
We set \begin{equation}\label{eqn:vec_K_in_C_flatn}
    \vec K := \begin{pmatrix}
             K \\ \dot K
            \end{pmatrix}
          = \begin{pmatrix}
             - \eta \Bigl( \bigl( \dot{\ell} \cdot \nabla_\ell + (\dot{\xi}-\ell) \cdot \nabla_\xi \bigr) \Psi_\wp, N \Bigr) \\ - \eta \bigl( \sqrt{|h|} (h^{-1})^{00} (0, \dot{\ell}), N \bigr)
            \end{pmatrix}, 
  \qquad 
  \vec f := \begin{pmatrix}
              f \\ \dot f 
             \end{pmatrix},
\end{equation}
where \begin{equation}\label{eqn:expression_of_f_in_1st_order_form}
        \begin{aligned}
f := & -\frac{1}{\sqrt{|k|}k^{00}}\left(\mathcal{E}_1+\ldots+\mathcal{E}_4\right)+\left(\frac{1}{\sqrt{|k|}k^{00}}-\frac{1}{\sqrt{|h|}h^{00}}\right) \dot{\psi}
 -\left(\frac{\sqrt{|k|}k^{0j}}{\sqrt{|k|}k^{00}}-\frac{\sqrt{|h|}h^{0j}}{\sqrt{|h|}h^{00}}\right) \partial_j \psi \\
& +\left(-\frac{\sqrt{|h|}h^{00}}{\sqrt{|k|}k^{00}}+1\right) \eta\left(\left(\dot{\ell} \cdot \nabla_{\ell}+(\dot{\xi}-\ell) \cdot \nabla_{\xi}\right) \Psi_{\wp}, N\right), \\
&\calE_1 = \calO(\left(\psi, \partial_\mu \psi\right)^2), \quad \calE_2 = \calO(\dot\wp^2, \dot\wp\ell), \quad \calE_3 = \calO(\dot\wp \p\psi), \quad \calE_4 = \calO(\dot\wp\psi), \\
\dot f := &\ \partial_j\left(\sqrt{|h|} h^{j 0} \,\eta\left( \big(\dot{\ell} \cdot \nabla_{\ell}+(\dot{\xi}-\ell) \cdot \nabla_{\xi}\big) \Psi_{\wp}, N\right)\right)-\partial_j\left(\sqrt{|h|}h^{j 0} f\right)+ \calE_5 + \ldots + \calE_{11} , \\
&\calE_5 + \ldots + \calE_{11} = \calO(\dot\wp \psi + \dot\wp\p\psi + \dot\wp \p_\Sigma\p\psi + \dot\wp^{2} + \dot\wp\ell + (\psi + \p\psi + \p_\Sigma\p\psi)^2).
\end{aligned}
\end{equation}
Here $\p_\Sigma$ denotes $\p_j$ with $1 \leq j \leq n$ while $\p$ denotes all possible derivatives $\p_\mu$ with $0 \leq \mu \leq n$.
Let $\vec\psi := \begin{pmatrix}
    \psi & \dot\psi 
\end{pmatrix}^T$ with \[\begin{aligned}
    \dot\psi := \eta\left(\sqrt{|g|}g^{0 \nu} \partial_\nu \Phi - \sqrt{|h|}h^{00}(1, \ell)^T, N\right) - B \psi, \quad
    B \psi := \eta\left(\left.\frac{\delta (\sqrt{|g|} g^{0\nu} \partial_\nu \Phi )}{\delta \psi} \right|_{\substack{\psi=0 \\ k=h}} \psi, N\right),
\end{aligned}
\]
then a first order formulation of HVMC equation is given by \begin{equation}\label{eqn:1st_order_HVMC}
    (\p_t - M) \vec\psi = \vec K + \vec f.
\end{equation}
In particular, from the first entry of \eqref{eqn:1st_order_HVMC}, we can solve $\dot\psi$ : \begin{equation}\label{eqn:1st_entry_of_HVMC_1st_order_eqn}
    \dot\psi = \sqrt{|h|} h^{0\nu}\p_\nu \psi - \sqrt{|h|}h^{00}(K + f).
\end{equation}
Plugging this into the second entry, we arrive at \begin{equation}\label{eqn:2nd_order_HVMC_in_C_flatn}
    \frac{1}{\sqrt{|h|}} \p_\mu \big( \sqrt{|h|} h^{\mu\nu} \partial_\nu \psi \big) + |\II|^2 \psi = \frac{1}{\sqrt{|h|}} (\dot{K} + \dot{f}) + \frac{1}{\sqrt{|h|}} \p_\nu \big( \sqrt{|h|} h^{0\nu} (K+f) \big).
\end{equation}
We end this subsection by a few remarks. 
\begin{enumerate}[ wide = 0pt, align = left, label=\arabic*. ]
    \item The error terms $\calE_1$ through $\calE_{11}$ in the first order formulation \eqref{eqn:1st_order_HVMC} still contain $\p_t \psi$, but all come with extra smallness.
    \item The derivation for \eqref{eqn:1st_order_HVMC} in \cite{LOS22} relies on the precise structure of parametrization and the vector $N$ in $\calC_\flatnear$.
\end{enumerate}

\subsection{Second order formulation in the whole region}\label{Section:2nd_order_formulation_whole_region}
Recall that \eqref{eqn:2nd_order_HVMC_in_C_flatn} is derived from the first order formulation \eqref{eqn:1st_order_HVMC} in $\calC_\flatnear$, which in turn relies on the fact that $N \parallelsum \tilde N_{\mathrm{int}}$ in $\calC_\flatnear$ and $\mu, \nu$ vary in the specific coordinate $(t, \rho, \omg)$ in $\calC_\flatt$. 

In what follows, the computation works in the whole region with respect to the global chart $(\uptau, \uprho, \uptheta)$ and use Greek letters, varying from $0$ to $n$, to denote the corresponding coordinates.
The aim is to derive a second order equation for $\psi$ in the whole region in a relatively intrinsic way. 

Note that the metric components defined in \eqref{eqn:defn_g_k_h_in_Cflat} in the flat region can be generalized to all regions as the decomposition $\calM = \calQ + \psi N$ is not exclusive for the flat region. We still use $\Psi_\wp$ to denote the parametrization of $\calQ$, where $\Phi = \Psi_\wp + \psi N$, then the metric components are given by \begin{equation}\label{eqn:defn_g_k_metric}
    g_{\mu\nu} := (\Phi^*\eta)_{\mu\nu} = \eta(\p_\mu \Phi, \p_\nu\Phi), \quad k_{\mu\nu} := \eta(\p_\mu \Psi_\wp, \p_\nu \Psi_\wp) .
\end{equation}
We also introduce \begin{equation}\label{eqn:defn_h_mu_nu}
    \Psi_{\mu; \wp} := \p_\mu \Psi_\wp  \Big|_{\substack{\dot{\ell} = 0 \\\dot{\xi} = \ell}}, \quad h_{\mu\nu} := \eta(\Psi_{\mu; \wp}, \Psi_{\nu; \wp}),
\end{equation}
where we use the convention mentioned in Remark~\ref{rmk:convention_fixing_parameters}. We shall use $\nabla$ to denote the covariant derivative with respect to the embedding $\Phi$.

\subsubsection{Expansions of \texorpdfstring{$\Box_g \Phi$}{Box\_g Phi} \texorpdfstring{in terms of $(\psi, \p\psi)$}{}}
Since the mean curvature tensor of $\Phi(\calM)$ is zero, we derive \begin{equation}\label{eqn:Box_g_Phi_expansion}
    0 = \Box_g \Phi = g^{\mu\nu} \nabla_\mu\nabla_\nu \Psi_\wp + g^{\mu\nu} \nabla_\mu \nabla_\nu (\psi N)
    = g^{\mu\nu} \nabla_\mu\nabla_\nu \Psi_\wp + g^{\mu\nu}\p_\mu\p_\nu(\psi N) - g^{\mu\nu} \Gmm_{\mu\nu}^\lambda \p_\lambda(\psi N),
\end{equation}
where the metric component $g$ is given by \[
    g_{\mu\nu} = k_{\mu\nu} + \eta(\p_\mu \Psi_\wp, N) \p_\nu \psi + \eta(\p_\mu \Psi_\wp, \p_\nu N) \psi + \eta(\p_\mu N, \p_\nu \Psi_\wp) \psi + \eta(N, \p_\nu \Psi_\wp) \p_\mu \psi + \calO((\psi, \p\psi)^2).
\]
For simplicity, we define \begin{equation}\label{eqn:veps_munu}
    \veps_{\mu\nu} := \eta(\p_\mu \Psi_\wp, N) \p_\nu \psi + \eta(\p_\mu \Psi_\wp, \p_\nu N) \psi + \eta(\p_\mu N, \p_\nu \Psi_\wp) \psi + \eta(N, \p_\nu \Psi_\wp) \p_\mu \psi.
\end{equation}
Using the expansion $(A + \delta A)^{-1} = A^{-1} - A^{-1} (\delta A) A^{-1} + O((\delta A)^2)$ valid for any matrix $A$, we obtain the expansion of $g^{-1}$ \[
    g^{\mu\nu} = k^{\mu\nu} - k^{\mu\alpha} \veps_{\alpha\beta} k^{\beta\nu} + \calO((\psi, \p\psi)^2). 
\]
Combining these with \eqref{eqn:Box_g_Phi_expansion}, we arrive at \begin{equation}\label{eqn:Box_g_Phi_preliminary_exp}
\begin{aligned}
    0 = k^{\mu\nu}\nabla_\mu\nabla_\nu \Psi_\wp - k^{\mu\alpha} \veps_{\alpha\beta} k^{\beta\nu} \nabla_\mu\nabla_\nu \Psi_\wp &+ 
    (\Box_k \psi) N + \psi (\Box_k N) \\&+ k^{\mu\nu}(\p_\mu \psi \p_\nu N + \p_\nu \psi \p_\mu N)
    + \calO((\psi, \p\psi)^2).
\end{aligned}
\end{equation}
Using the short hand notation $\dot\Gmm_{\mu\nu}^\lambda := \Gmm_{\mu\nu}^\lambda - \tilde \Gmm_{\mu\nu}^\lambda$, we further expand the first term as \begin{equation}\label{eqn:kmunu_nabla_mu_nabla_nu_exp}
    k^{\mu\nu}\nabla_\mu\nabla_\nu \Psi_\wp
    = k^{\mu\nu}(\p_\mu \p_\nu \Psi_\wp - \Gmm_{\mu\nu}^\lambda \p_\lambda \Psi_\wp) 
    = \Box_k \Psi_\wp - k^{\mu\nu} \dot\Gmm_{\mu\nu}^\lambda \p_\lambda \Psi_\wp,
\end{equation}
where \[
    \Box_k = k^{\mu\nu} \tilde\nabla_\mu \tilde\nabla_\nu, \quad \tilde\nabla_\mu \tilde\nabla_\nu \Psi_\wp := \p_\mu\p_\nu \Psi_\wp - \tilde\Gmm^\lambda_{\mu\nu}\p_\lambda\Psi_\wp, \quad \tilde\Gmm^\lambda_{\mu\nu} := \frac12 k^{\lambda\kappa} (\p_\mu k_{\kappa\nu} + \p_\nu k_{\mu\kappa} - \p_\kappa k_{\mu\nu}).
\]
Thanks to \eqref{eqn:kmunu_nabla_mu_nabla_nu_exp}, \eqref{eqn:Box_g_Phi_preliminary_exp} becomes \begin{equation}\label{eqn:Box_g_Phi_preliminary2}
    \begin{aligned}
    0 =&\ \Box_k \Psi_\wp - k^{\mu\alpha} \veps_{\alpha\beta} k^{\beta\nu} \nabla_\mu\nabla_\nu \Psi_\wp - k^{\mu\nu} \dot\Gmm_{\mu\nu}^\lambda \p_\lambda \Psi_\wp \\ &+ 
    (\Box_k \psi) N + \psi (\Box_k N) + k^{\mu\nu}(\p_\mu \psi \p_\nu N + \p_\nu \psi \p_\mu N)
    + \calO((\psi, \p\psi)^2) .
\end{aligned}
\end{equation}
Finally, we compute $\dot\Gmm^\lambda_{\mu\nu}$ by expanding \[\begin{aligned}
    \Gmm_{\mu\nu}^\lambda
    &= \frac12 g^{\lambda\delta} (\p_\mu g_{\delta \nu} + \p_\nu g_{\mu\delta} - \p_\delta g_{\mu\nu}) \\
    &= \tilde\Gmm^\lambda_{\mu\nu} + \frac12 k^{\lambda\delta}(\p_\mu \veps_{\delta \nu} + \p_\nu \veps_{\mu\delta} - \p_\delta \veps_{\mu\nu}) - \frac12 k^{\lambda\alpha}\veps_{\alpha\beta}k^{\beta\delta}(\p_\mu k_{\delta \nu} + \p_\nu k_{\mu\delta} - \p_\delta k_{\mu\nu}) + \calO((\psi, \p\psi)^2) \\
    &= \tilde\Gmm^\lambda_{\mu\nu} + \frac12 k^{\lambda\delta}(\p_\mu \veps_{\delta \nu} + \p_\nu \veps_{\mu\delta} - \p_\delta \veps_{\mu\nu}) -  k^{\lambda\alpha}\veps_{\alpha\beta}\tilde\Gmm_{\mu\nu}^\beta + \calO((\psi, \p\psi)^2).
\end{aligned}
\]
In view of \[\begin{aligned}
    &\p_\mu \veps_{\delta \nu} + \p_\nu \veps_{\mu\delta} - \p_\delta \veps_{\mu\nu} \\
    =&\  \p_\mu \left( \eta(\p_\delta \Psi_\wp, N) \p_\nu \psi + \eta(\p_\delta \Psi_\wp, \p_\nu N) \psi + \eta(\p_\delta N, \p_\nu \Psi_\wp) \psi + \eta(N, \p_\nu \Psi_\wp) \p_\delta \psi \right) \\
    &\ + \p_\nu \left( \eta(\p_\mu \Psi_\wp, N) \p_\delta \psi + \eta(\p_\mu \Psi_\wp, \p_\delta N) \psi + \eta(\p_\mu N, \p_\delta \Psi_\wp) \psi + \eta(N, \p_\delta \Psi_\wp) \p_\mu \psi \right) \\
    &\ - \p_\delta \left( \eta(\p_\mu \Psi_\wp, N) \p_\nu \psi + \eta(\p_\mu \Psi_\wp, \p_\nu N) \psi + \eta(\p_\mu N, \p_\nu \Psi_\wp) \psi + \eta(N, \p_\nu \Psi_\wp) \p_\mu \psi \right)\\
    =& 2 \big( \eta(\p_\delta\Psi_\wp, \p_\mu N) \p_\nu \psi + \eta(\p_\delta\Psi_\wp, N) \p_\mu\p_\nu \psi + \eta(\p_\delta\Psi_\wp, \p_\mu\p_\nu N) \psi  \\
    & + \eta(\p_\delta \Psi_\wp, \p_\nu N) \p_\mu\psi + \eta(\p_\delta N, \p_\mu\p_\nu \Psi_\wp)\psi + \eta(N, \p_\mu\p_\nu \Psi_\wp) \p_\delta \psi \big),
\end{aligned}
\]
we obtain \begin{equation}\label{eqn:dot_Gmm_Christoff}
    \begin{aligned}
    \dot\Gmm^\lambda_{\mu\nu} 
    = &\  \eta(\tilde\nabla^\lambda\Psi_\wp, \p_\mu N) \p_\nu \psi + \eta(\tilde\nabla^\lambda\Psi_\wp, N) \p_\mu\p_\nu \psi + \eta(\tilde\nabla^\lambda\Psi_\wp, \p_\mu\p_\nu N) \psi  \\
    & + \eta(\tilde\nabla^\lambda \Psi_\wp, \p_\nu N) \p_\mu\psi + \eta(\tilde\nabla^\lambda N, \p_\mu\p_\nu \Psi_\wp)\psi + \eta(N, \p_\mu\p_\nu \Psi_\wp) \tilde\nabla^\lambda \psi + \calO((\psi, \p\psi)^2)\\
    & - \eta(\tilde\nabla^\lambda\Psi_\wp, N)\tilde\Gmm^\beta_{\mu\nu}\p_\beta\psi - \eta(\tilde\nabla^\lambda \Psi_\wp, \tilde\Gmm^\beta_{\mu\nu}\p_\beta N)\psi - \eta(\tilde\nabla^\lambda N, \tilde\Gmm^\beta_{\mu\nu}\p_\beta \Psi_\wp)\psi - \eta(N, \tilde\Gmm^\beta_{\mu\nu}\p_\beta\Psi_\wp) \tilde\nabla^\lambda \psi \\
    = &\ \eta(\tilde\nabla^\lambda\Psi_\wp, \p_\mu N) \p_\nu \psi + \eta(\tilde\nabla^\lambda\Psi_\wp, N) \tilde\nabla_\mu\tilde\nabla_\nu \psi + \eta(\tilde\nabla^\lambda\Psi_\wp, \tilde\nabla_\mu\tilde\nabla_\nu N) \psi  \\
    & + \eta(\tilde\nabla^\lambda \Psi_\wp, \p_\nu N) \p_\mu\psi + \eta(\tilde\nabla^\lambda N, \tilde\nabla_\mu\tilde\nabla_\nu \Psi_\wp)\psi + \eta(N, \tilde\nabla_\mu\tilde\nabla_\nu \Psi_\wp) \tilde\nabla^\lambda \psi + \calO((\psi, \p\psi)^2). \\
\end{aligned}
\end{equation}
With \eqref{eqn:dot_Gmm_Christoff}, we write \eqref{eqn:Box_g_Phi_preliminary2} as \begin{equation}\label{eqn:Box_g_Phi_preliminary3}
    \begin{aligned}
    0 =&\ \Box_k \Psi_\wp - \veps_{\mu\nu}  \tilde\nabla^\mu\tilde\nabla^\nu \Psi_\wp - 2\eta(\tilde\nabla^\lambda\Psi_\wp, \p_\mu N) \tilde\nabla^\mu \psi \p_\lambda\Psi_\wp
    - \eta(\tilde\nabla^\lambda\Psi_\wp, N) 
    \Box_k\psi \p_\lambda\Psi_\wp \\ 
    & - \eta(\tilde\nabla^\lambda\Psi_\wp, \Box_k N) \psi \p_\lambda\Psi_\wp -  \eta(\tilde\nabla^\lambda N, \Box_k \Psi_\wp)\psi \p_\lambda\Psi_\wp - \eta(N, \Box_k \Psi_\wp) \tilde\nabla^\lambda \psi 
    + (\Box_k \psi) N + \psi (\Box_k N) \\ 
    & + k^{\mu\nu}(\p_\mu \psi \p_\nu N + \p_\nu \psi \p_\mu N)
    + \calO((\psi, \p\psi)^2) .
\end{aligned} 
\end{equation} 

\subsubsection{Computations with fixed \texorpdfstring{$\ell$, $\xi - \uptau \ell$}{parameters}}
We first assume $\ell(\uptau) = \ell$ for some fixed $\ell \in \R^{n+1}$ and $\xi(\uptau) = a + \uptau \ell$ for some fixed $a \in \R^{n+1}$. In other words, we abuse notations between $h_{\mu\nu}$ and $k_{\mu\nu}$ in the following computation.
Under this assumption, $\Box_h \Psi_\wp = 0$ since $\Psi_\wp$ is obtained by a boost and translation of a catenoid so that the mean curvature tensor vanishes. Besides, $\eta(\p_\lambda \Psi_\wp, n_\wp) = 0$ follows from the definition of $n_\wp$. We also recall the relation $\eta(N, n_\wp) = 1$.
Therefore, by testing
\eqref{eqn:Box_g_Phi_preliminary3} with $n_\wp$, it reduces to \begin{equation}\label{eqn:Box_g_Phi_preliminary4}
    \begin{aligned}
    0 = \eta(\Box_g \Phi, n_\wp) = - \veps_{\mu\nu}  \eta(\tilde\nabla^\mu\tilde\nabla^\nu \Psi_\wp, n_\wp)    
    + \Box_k \psi  + \eta(\Box_k N, n_\wp)\psi + 2\eta(\tilde\nabla^\mu N, n_\wp)\p_\mu \psi 
    + \calO((\psi, \p\psi)^2) .    
\end{aligned}
\end{equation}
We single out the linear parts of $\psi$ and $\p \psi$ and compute separately :  \begin{equation}\label{eqn:2_lin_parts_of_Box_g_Phi}
    \begin{aligned}
    \eta\left(\frac{\delta(\Box_g \Phi)}{\delta \psi}, n_\wp\right)
    &= - \eta(\tilde\nabla^\mu\tilde\nabla^\nu \Psi_\wp, n_\wp) \eta(\p_\mu \Psi_\wp, \p_\nu N) - \eta(\tilde\nabla^\mu\tilde\nabla^\nu \Psi_\wp, n_\wp) \eta(\p_\mu N, \p_\nu \Psi_\wp) + \eta(\Box_k N, n_\wp), \\
    \eta\left(\frac{\delta(\Box_g\Phi)}{\delta (\p_\mu\psi)}, n_\wp\right)
    &= -\eta(\tilde\nabla^\mu\tilde\nabla^\nu \Psi_\wp, n_\wp) \eta(N, \p_\nu \Psi_\wp) - \eta(\tilde\nabla^\nu\tilde\nabla^\mu \Psi_\wp, n_\wp)\eta(\p_\nu \Psi_\wp, N) + 2 \eta(\tilde\nabla^\mu N, n_\wp).
\end{aligned}
\end{equation}

We introduce an auxiliary vector $W := N - n_\wp$. One observes that $\eta(W, n_\wp) = 0$ and hence $W$ 
denotes the tangential part of $N$.
In preparation, we record the following expansion \begin{equation}\label{eqn:W_expansion}
     W = \eta(W, \p_\mu \Psi_\wp) \tilde\nabla^\mu \Psi_\wp
\end{equation}
thanks to the fact that $\{\p_\nu \Psi_\wp\}$ spans the tangent space.

Then we start to show the second quantity in \eqref{eqn:2_lin_parts_of_Box_g_Phi} is vanishing : \[\begin{aligned}
    \eta\left(\frac{\delta(\Box_g\Phi)}{\delta (\p_\mu\psi)}, n_\wp\right)
&= 2\eta(\tilde\nabla^\nu \Psi_\wp, \tilde\nabla^\mu n_\wp) \eta(N, \p_\nu \Psi_\wp) + 2 \eta(\tilde\nabla^\mu N, n_\wp) \\
&= 2\eta(\tilde\nabla^\nu \Psi_\wp, \tilde\nabla^\mu n_\wp) \eta(W, \p_\nu \Psi_\wp) + 2 \eta(\tilde\nabla^\mu W, n_\wp) \\
&= 2\eta(\tilde\nabla^\nu \Psi_\wp, \tilde\nabla^\mu n_\wp) \eta(W, \p_\nu \Psi_\wp) - 2 \eta(W, \tilde\nabla^\mu n_\wp) = 0,
\end{aligned}
\]
where the last equality follows from  \eqref{eqn:W_expansion}.

To simplify the first line in \eqref{eqn:2_lin_parts_of_Box_g_Phi}, a result $\Box_h n_\wp = -|\II|^2 n_\wp$ by \cite[Corollary II]{RV70} is needed. We first note that \[
    \eta(\tilde\nabla_\mu\tilde\nabla_\nu \Psi_\wp, \p_\sigma \Psi_\wp) = \eta(\p_\mu\p_\nu \Psi_\wp, \p_\sigma \Psi_\wp) - \frac12 k^{\lambda\kappa}(\p_\mu k_{\kappa\nu} + \p_\nu k_{\mu\kappa} - \p_\kappa k_{\mu\nu}) k_{\lambda\sigma}
    = 0
\]
and hence $\eta(\tilde\nabla_\mu\tilde\nabla_\nu \Psi_\wp, W) = 0$.
We next compute $\Box_h W$ using \eqref{eqn:W_expansion} : \[\begin{aligned}
    \Box_h W = &\eta(W, \p_\mu \Psi_\wp) \tilde\nabla^\mu \Box_h \Psi_\wp + \eta(W, \p_\mu \Psi_\wp) \tilde R^{\mu\lambda}\p_\lambda \Psi_\wp + (\Box_h \eta(W, \p_\mu \Psi_\wp)) \tilde\nabla^\mu \Psi_\wp \\ &+ 2\eta(\p_\nu W, \p_\mu \Psi_\wp) \tilde\nabla^\nu \tilde\nabla^\mu \Psi_\wp + 2\eta(W, \tilde\nabla_\mu\tilde\nabla_\nu \Psi_\wp) \tilde\nabla_\mu\tilde\nabla_\nu \Psi_\wp,
\end{aligned}
\]
where the first and last terms are both zero. By testing against $n_\wp$, one obtains \[
    \eta(\Box_h W, n_\wp) = 2\eta(\p_\nu W, \p_\mu \Psi_\wp) \eta(\tilde\nabla^\nu \tilde\nabla^\mu \Psi_\wp, n_\wp).
\]
We may thus compute \[\begin{aligned}
    \eta\left(\frac{\delta(\Box_g \Phi)}{\delta \psi}, n_\wp\right)
    &= - 2\eta(\tilde\nabla^\nu \tilde\nabla^\mu \Psi_\wp, n_\wp) \eta(\p_\mu \Psi_\wp, \p_\nu N) + \eta(\Box_h n_\wp, n_\wp) + \eta(\Box_h W, n_\wp) \\
    &= -2 \eta(\tilde\nabla^\nu \tilde\nabla^\mu \Psi_\wp, n_\wp) \eta(\p_\mu \Psi_\wp, \p_\nu n_\wp) - |\II|^2 \\
    &= 2\eta(\tilde\nabla^\mu \Psi_\wp, \tilde\nabla^\nu n_\wp) \eta(\p_\mu \Psi_\wp, \p_\nu n_\wp) - |\II|^2 = |\II|^2.
\end{aligned}
\]
Thus, \eqref{eqn:Box_g_Phi_preliminary4} can be rewritten as  \begin{equation}\label{eqn:Box_g_Phi_preliminary5}
    (\Box_h + |\II|^2)\psi + \calO((\psi, \p\psi)^2) = 0.  
\end{equation}

\begin{remark}\label{rmk_lin_op_not_affected}
    As a side product, this computation tells us that though the gauge choice is not the geometric normal, it does not affect the linearized operator if the parameters are fixed, i.e., $\xi - \upsigma \ell$ and $\ell$ are constants. Heuristically, the linearized operator is obtained via pairing \eqref{eqn:Box_g_Phi_expansion} with $n_\wp$, where only the inner product $\eta(N, n_\wp) = 1$ matters instead of the gauge itself. 
\end{remark}

\subsubsection{Computations with time-dependent $\ell, \xi$}
Now we let the parameters to be time-dependent as usual. If we use $(\uptau, \uprho, \uptheta)$ coordinates, then all the extra terms coming out are due to a derivative falling on parameters, which produces $\calO(\dot\wp)$ coefficients. More precisely, the discrepancy in the coefficients we care about is of size $O(k_{\mu\nu} - h_{\mu\nu}) = \calO(\dot\wp)$, which one can easily verify in $\calC_\flatt$. A more involved computation for the nonlinearity has already been done in Lemma~\ref{lemma_F_0_decay}.

Therefore, by examining how we obtain \eqref{eqn:Box_g_Phi_preliminary5} from \eqref{eqn:Box_g_Phi_preliminary3} through \eqref{eqn:Box_g_Phi_preliminary4}, we know that $\psi$ satisfies \begin{equation}\label{eqn:P_psi_eqn}
    \calP \psi = f,
\end{equation}
where $\calP = \calP_h + \calP_\pert$ with \begin{equation}\label{eqn:form_of_P_P_0_P_pert}
    \calP_h = \frac1{\sqrt{|h|}}\p_\mu( \sqrt{|h|}h^{\mu\nu}\p_\nu) + V, \quad \calP_\pert = \calO(\dot\wp) + \calO(\dot\wp)\p_\mu + \calO(\dot\wp)\p_\mu\p_\nu, \quad f = \calO(\dot\wp + (\psi, \p\psi)^2).
\end{equation}
As a note, $V$ is the squared norm of the second fundamental form of the embedding via $\Psi_\wp$ up to terms involving $\dot\ell, \dot\xi - \ell$.

Note that \eqref{eqn:P_psi_eqn} together with \eqref{eqn:form_of_P_P_0_P_pert} is sufficient for our use in regions with bounded $\uprho$, i.e. the complement of $\calC_\hyp$. On the other hand, we would like to know the precise $\uprho$ decay as well as the $\uptau$ decay of the coefficients of $\calP_\pert$ and in $\calF$. The equation derived in terms of the graph formulation introduced in Section~\ref{Section:eqn_derivation_for_varphi_in_Chyp} makes this accessible. We fix the parameters first as before.

\subsubsection{Computation via graph formulation in $\calC_\hyp$}\label{Section:identify_P_and_P_graph}
In this subsubsection, we establish the relation between the operator $\calP_\graph$ (see \eqref{eqn:calP_graph}) derived in the graph formulation and the operator $\calP$ derived in this section (Section~\ref{Section:2nd_order_formulation_whole_region}) in terms of a semi-intrinsic computation (see \eqref{eqn:P_psi_eqn} and \eqref{eqn:form_of_P_P_0_P_pert}).

We first fix the parameters as usual and then view them as time dependent later.
Recall that the parametrization $\Psi_\wp$ is via $(X^0, X^1, \ldots, X^n, Q_\wp)$ in $\calC_\hyp$, we calculate \[
    h_{\mu\nu} = \eta((\p_\mu, Q_\mu), (\p_\nu, Q_\nu)) =  m_{\mu\nu} + Q_\mu Q_\nu,
\]
where the computation is in the $(X^0, \cdots, X^n)$ coordinates.
A direct computation shows that \begin{equation}\label{eqn:h_inverse_in_C_hyp}
    h^{\mu\nu} = m^{\mu\nu} - \frac{Q^\mu Q^\nu}{1 + Q^\alpha Q_\alpha} = m^{\mu\nu} - s^{-2} Q^\mu Q^\nu, 
\end{equation}
where $Q^\mu = m^{\mu\nu} Q_\nu$ and the last equality follows from \eqref{eqn:alternative_formula_for_s}.
Rewriting the operator $\calP_\graph$ \eqref{eqn:calP_graph}, we obtain \begin{equation}\label{eqn:calP_graph_via_h_metric}
    \begin{aligned}
    \calP_\graph &= h^{\mu\nu}\p_\mu\p_\nu - 2s^{-2}\left(Q^{\mu\nu} Q_\mu \p_\nu - s^{-2} Q^\nu Q^\mu Q^\lambda Q_{\mu\lambda}\p_\nu\right) \\
    &=  h^{\mu\nu}\p_\mu\p_\nu - 2s^{-2}\left(m^{\mu\nu} (s\p_\mu s) \p_\nu - s^{-2} Q^\nu Q^\mu (s\p_\mu s)\p_\nu\right) = h^{\mu\nu} \p_\mu\p_\nu - 2s^{-1}(\p_\mu s)h^{\mu\nu}\p_\nu
\end{aligned}
\end{equation}
thanks to the identity $s\p_\mu s = Q_{\mu\nu} Q^\nu$.

Recall that $\varphi$ (see \eqref{eqn:relation_varphi_psi}) satisfies $\calP_\graph \varphi = \calF$. We then could read off the equation satisfied by $\psi$ : \begin{equation}\label{eqn:relation_calP_and_calP_graph}
    \calP \psi = s^{-1}\calP_\graph \varphi = s^{-1}\calF, \quad \calP := \calP_\graph + s^{-1}[\calP_\graph, s].
\end{equation}
Though it seems that $\calP$ is a slight abuse of notation compared to \eqref{eqn:P_psi_eqn}, it would be clear that the operator $\calP$ here shares the same decomposition  \eqref{eqn:form_of_P_P_0_P_pert} with more precise $r$-decay properties.
Using \eqref{eqn:calP_graph_via_h_metric}, we compute \begin{equation}\label{eqn:calP_obtained_from_calP_graph_preliminary}
    \calP \psi = s^{-1}h^{\mu\nu}\p_\mu\p_\nu(s\psi) - 2s^{-2}(\p_\mu s) h^{\mu\nu}\p_\nu(s\psi)
    = h^{\mu\nu}\p_\mu\p_\nu \psi + s^{-1}h^{\mu\nu}\left(\p_\mu\p_\nu s - 2s^{-1}(\p_\mu s)(\p_\nu s)\right) \psi.
\end{equation}
On the other hand, the Christoffel symbol relative to metric $h$ is given by \[
    \Gmm_{\mu\nu}^\lambda = \frac12 h^{\lambda\kappa}(\p_\mu h_{\nu\kappa} + \p_\nu h_{\mu\kappa} - \p_\kappa h_{\mu\nu}) = h^{\lambda\kappa} Q_{\mu\nu}Q_\kappa.
\]
We note that \[
    h^{\mu\nu} Q_{\mu\nu}
    = m^{\mu\nu} \p_\nu \p_\mu Q - s^{-2}Q^\mu Q^\nu Q_{\mu\nu}
    = \calF_0
\]
exhibits $\calO(\dot\wp r^{-3})$ decay thanks to Lemma~\ref{lemma_F_0_decay}. Therefore, \[
    \Box_h = h^{\mu\nu}\p_\mu\p_\nu + h^{\mu\nu} \Gmm_{\mu\nu}^{\lambda}\p_\lambda = h^{\mu\nu}\p_\mu\p_\nu + \calO(\dot\wp r^{-4})\p_\lambda.
\]
Combining with \eqref{eqn:calP_obtained_from_calP_graph_preliminary}, we obtain \[
    \calP = \Box_h + s^{-1}(\Box_h s - 2s^{-1}h^{\mu\nu} (\p_\mu s)(\p_\nu s)) + \calO(\dot\wp r^{-4})\p^{\leq 2}_\lambda.
\]
Furthermore, we observe \[
    -|\II|^2 = \eta(\Box_h n_\wp, N) = s\Box_h s^{-1} = s^{-1}(\Box_h s - 2s^{-1}h^{\mu\nu} (\p_\mu s)(\p_\nu s)),
\]
where first two equalities follow from $\Box_h n_\wp = -|\II|^2 n_\wp$ and $N = s(0, 1)^T$, respectively.

Finally, we let those parameters start to vary with respect to $\uptau$. We would like to write out using $\p_\uptau, \p_\uprho, \p_\uptheta$ instead of $\p_{X^\mu}$'s. As a note, \eqref{eqn:h_inverse_in_C_hyp} remains true even if we change the coordinates as long as parameters are fixed. However, when parameters start to vary, we need to add extra caution in terms of the $r$-decay since the discrepancy caused by $m^{\mu\nu}$ might not come with $r$-decay. To read off the precise structure in the following formula \eqref{eqn:calP_pert_in_C_hyp}, the $r$-decay of those coefficients in \eqref{eqn:Box_m0_U_hyperboloidal} is needed.
We record the following formula, which works in the hyperboloidal region $\calC_\hyp$ : \begin{equation}\label{eqn:calP_pert_in_C_hyp}
\begin{aligned}
    \calP = \calP_h + \calP_\pert, \quad 
    \calP_\pert = \ringa \p_\tau(\p_r + \frac{n-1}{2r}) + \ringa^{\mu\nu}\p_{\mu\nu} + \ringb^\mu \p_\mu + \ringc, \\
    |\ringa|, |\ringa^{rr}| \lesssim \dot\wp, \quad |\ringa^{\tau y}|, |\ringb^y| \lesssim \dot\wp r^{-1}, \quad |\ringa^{\tau\tau}|, |\ringb^\tau| \lesssim \dot\wp r^{-2}, \quad |\ringc| \lesssim \dot\wp r^{-4}
\end{aligned}
\end{equation}
with $y$ denoting the spatial variables $(r, \theta)$. 
\begin{remark}\label{rmk:explanation_between_our_form_of_the_discrepancy_of_decay_-gmm-1}
    \eqref{eqn:calP_pert_in_C_hyp} is in line with \cite[Remark 4.2 (2)]{LOS22}. The only difference is that they further restrict the metric component $h_{\mu\nu}$ further at the final time $\uptau = \tau_f$ of the bootstrap assumption (see Section~\ref{Section:BA}) which amounts to another error caused by Taylor's expansion \[
        |\ell(\uptau) - \ell(t_2)| \leq \uptau \sup_{\uptau \leq s \leq t_2} \dot\ell(s) \lesssim \uptau^{-\gmm}
    \] 
    if $|\dot\wp| \lesssim \uptau^{-\gmm-1}$.
    We will also record the form with restriction to final time later, in preparation for the ILED estimates and the modified profiles. For example, this decomposition was used in \cite[Proof of Lemma 7.8]{LOS22}.
\end{remark}

\begin{remark}
    By making a separate Taylor expansion of the coefficients in \eqref{eqn:form_of_P_P_0_P_pert}, \eqref{eqn:calP_pert_in_C_hyp} around $\dot\xi = \ell$, $\ell = 0$ (i.e., expanding terms with respect to the original catenoid without any shift and boost), in view of \eqref{eqn:1st_order_formulation_h_det_inv} , we obtain \begin{equation}\label{eqn:form_of_calP_suit_for_ell_est}
        \calP = \calP_\elliptic + \calP_\uptau,
    \end{equation}
    where \begin{equation}\label{eqn:calP_uptau_op}
        \calP_\uptau = \calO(1)\p_\uprho\p_\uptau + \calO(\brk{\uprho}^{-1})\p_\uptheta \p_\uptau + \calO(\brk{\uprho}^{-1}) \p_\uptau^2 + \calO(\brk{\uprho}^{-1})\p_\uptau 
    \end{equation}
    and \begin{equation}\label{eqn:op_P_elliptic}
        \calP_\elliptic = L + \calP_\elliptic^\pert, \quad  \calP_\elliptic^\pert := o_{\wp, R_f}(1)\left(\p_\Sigma^2 + \brk{\uprho}^{-1} \p_\Sigma + \brk{\uprho}^{-2} \right) + \calO(\dot\wp) \p_\Sigma.
    \end{equation}
    Both operators are represented in $(\uptau, \uprho, \uptheta)$ coordinates and $\calP_\elliptic$ is an elliptic operator which does not contain any $\p_\uptau$ derivatives. Moreover, we recall the definition of the linearized operator $L$ (see \eqref{eqn:stab_op_on_C} and \eqref{eqn:L_l_formula}) : \[
        L = \Delta_{\barcalC} + |\II|^2 = \frac{1}{\brk{\uprho}^{n-1}|F_\uprho|}\p_\uprho \left(\brk{\uprho}^{n-1}|F_\uprho|^{-1} \p_\uprho \right) + \frac1{\brk{\uprho}^2} \sDelta_{\bbS^{n-1}} + \frac{12}{\brk{\uprho}^{2n}}.
    \]
    This remark is in line with \cite[Remark 4.2 (1)]{LOS22}.
\end{remark}

We end this part by analyzing the source term. In view of \eqref{eqn:P_psi_eqn}, \eqref{eqn:relation_varphi_psi}, \eqref{eqn:relation_calP_and_calP_graph} and Lemma~\ref{lemma_F_0_decay}, the source term in $C_\hyp$ is of the form \begin{equation}\label{eqn:source_term_f_in_C_hyp}
    f = s^{-1}\calF_0 + s^{-1}(\calF_2 + \calF_3) = \calO(\dot\wp r^{-3} + (\psi, \p\psi)^2).
\end{equation}
Though $\calP$ and \eqref{eqn:relation_calP_and_calP_graph} might be different up to something non-decaying in $\uptau$ and $\uprho$, we abuse notations to identify those as in \eqref{eqn:relation_calP_and_calP_graph} going forward since this won't affect the decay analysis.

\subsection{Modified profile}\label{Section:mod_profile}
From \eqref{eqn:source_term_f_in_C_hyp}, we know that the perturbation $\psi$ satisfies an equation with forcing term $\calO(\dot\wp r^{-3})$ plus some nonlinearity. One fundamental idea in proving time decay (late time tails) of wave type equation is to trade $r$-decay for $\tau$-decay. In order to achieve twice integrable $\tau$-decay, we need the source term to be a little bit better, say $\calO(\dot\wp r^{-4})$. 

Our goal in this subsection is to construct a modified profile $p$ such that $\psi = p + q$ and $p$ captures the slow decay of the source term of $\psi$. On the other hand, we expect to establish an improved (more specifically, a twice integrable rate) $\tau$-decay of $q$. This idea is based on how \cite{OS24} resolves a similar issue in the $n = 3$ case.

Naively, one might wish to choose $p$ such that $\calP p = \chi s^{-1}\calF_0$ with suitable cutoff $\chi$. However, due to the spectral property of the linearized operator, we need to ensure appropriate orthogonality conditions to let $p$ stay away from the unstable directions. To compensate, we construct $p$ to be a solution with zero initial data (on $\Sigma_{\uptau = 0}$) to \begin{equation}\label{eqn:calP_h_eqn_for_mod_profile}
    \calP_h p = f_c + g_1 + \p_\tau g_2, \quad f_c := \chi_{|\uprho| > \uprho_0}(\rho) s^{-1}\calF_0, 
\end{equation}
where $\chi_{|\uprho| > \uprho_0}(\rho)$ is a smooth bump function with support in $|\uprho| > \uprho_0 \gg R_f \gg 1$ and $g_1 + \p_\tau g_2$ (to be determined) features a $10$ dimensional flexibility. Note that the change from $\calP$ to $\calP_h$ is just for simplicity as the perturbation part $\calP_\pert$ can be treated as acceptable errors. (See \eqref{eqn:form_of_P_P_0_P_pert} and \eqref{eqn:calP_pert_in_C_hyp}.)The existence of such $p$ shall follow from a similar style of proof for implicit function theorem, namely iteration.
Assuming the existence of such $p$ for now, then $q = \psi - p$ would satisfy \begin{equation}\label{eqn:calP_q_source_term}
    \calP q = (f - f_c) - (g_1 + \p_\tau g_2) - \calP_\pert p = \chi_{\leq \uprho_0} (s^{-1} \calF_0) + s^{-1}(\calF_2 + \calF_3) - (g_1 + \p_\tau g_2) - \calP_\pert p,
\end{equation}
where a slight abuse of notation appears here that we use $s^{-1}(\calF_2 + \calF_3)$ to denote the nonlinearity for the the whole region instead of only in $\calC_\hyp$.

Note that $\calP_h|_{\ell \equiv \ell(\tau_f)}$ is the exact linearized operator corresponding to $\barcalC_{\ell(\tau_f), a}$ (see Remark~\ref{rmk_lin_op_not_affected}),
where $a := \lim_{\uptau \to \tau_f} \xi(\uptau) - \uptau \ell(\uptau)$ is a fixed constant. Therefore, setting that pullback of $\ebar_m$ as $\varphi_m^\stat$ ($m = \mu, 1, 2, 3, 4$), to be all the admissible eigenfunctions of $\calP_h|_{\ell \equiv \ell(\tau_f)}$. Therefore, $\varphi_m^\stat$ is a nice approximation of the eigenfunctions of $\calP_h$ in the sense that the error has $\uptau$-decay : \begin{equation}\label{eqn:calP_0_e_ftn_nice_decay}
    \calP_h(\varphi_j^\stat) = \calO(\uptau \dot\wp \brk{\uprho}^{-3}), \quad j = 1, 2, 3, 4, \qquad \calP_h(\varphi_\mu^\stat) = \mu^2 \varphi_\mu^\stat +  \calO(\uptau \dot\wp \brk{\uprho}^{-3}).
\end{equation}
See Remark~\ref{rmk:reason_chosen_cutoff_and_eftn} why this selection is necessary and we could not simply use $\ebar_j$'s.

In the following, we follow the convention that a subscript $m$ (or $l$) and $j$ shall vary in $\{\mu, 1, 2, 3, 4\}$ and $\{1, 2, 3, 4\}$, respectively.
Let \begin{equation}\label{eqn:proxy_eftn}
    W_j := \chi_{< \frac12 R_f} \varphi_j^\stat, \quad 
    Y_j := \chi_{< \tau + R_f} \varphi_j^\stat, \quad 
    W_\mu = Y_\mu := \chi_{< \tau + R_f} \varphi_\mu^\stat.
\end{equation}
Here we mention that the cutoff for $W_j$ is chosen to be the same as the $\chi$ in \eqref{eqn:defn_of_Z_i_s}.
See Remark~\ref{rmk:reason_chosen_cutoff_and_eftn} for a detailed discussion for the motivation of these choices.

One may expect the natural orthogonality condition $\brk{p, W_m} = 0$ to hold. To this end, we start with \[
    \left.\int_{\Sigma_\tau} p W_m \sqrt{|h|} \right|_{\tau_1}^{\tau_2}
    = \int_{\tau_1}^{\tau_2} \int_{\Sigma_\tau} \p_\tau (p W_m \sqrt{|h|}) \, d\uptheta\, d\uprho\, d\uptau
\]
and we use divergence theorem to write this in a more symmetric way \begin{equation}\label{eqn:W_m_int_eqn}
    \begin{aligned}
    0 = &\int_{\Sigma_\tau} \p_\tau (p W_m \sqrt{|h|}) \, d\uptheta\, d\uprho
    = \int_{\Sigma_\tau} \p_\nu \left(\frac{h^{0\nu}}{h^{00}}p W_m \sqrt{|h|} \right)\, d\uptheta\, d\uprho \\
    =& \int_{\Sigma_\tau} \frac1{\sqrt{|h|}} \p_\nu\left(\frac{h^{0\nu}\sqrt{|h|}}{h^{00}}\right) p W_m \sqrt{|h|} \, d\uptheta\, d\uprho
    + \int_{\Sigma_\tau}  \frac{h^{0\nu}}{h^{00}} (W_m \p_\nu p + p \p_\nu W_m)  \sqrt{|h|} \, d\uptheta\, d\uprho.
\end{aligned}
\end{equation}
Next, pairing \eqref{eqn:calP_h_eqn_for_mod_profile} with $Y_m$ : \begin{equation}\label{eqn:mod_profile_test_w_Ym}
    \int_{\tau_1}^{\tau_2} \int_{\Sigma_\tau} Y_m \calP_h p \sqrt{|h|}\, d\uptheta\, d\uprho\, d\uptau 
    = \int_{\tau_1}^{\tau_2} \int_{\Sigma_\tau} Y_m (f_c + g_1 + \p_\tau g_2) \sqrt{|h|}\, d\uptheta\, d\uprho\, d\uptau,
\end{equation}
where the right hand side can be written as \[
    \int_{\tau_1}^{\tau_2} \int_{\Sigma_\tau} (Y_m g_c + Y_m g_1 - \frac{\p_\tau(Y_m \sqrt{|h|})}{\sqrt{|h|}} g_2) \sqrt{|h|}\, d\uptheta\, d\uprho\, d\uptau 
    + \left.\int_{\Sigma_\tau} Y_m g_2 \sqrt{|h|}\, d\uptheta\, d\uprho \right|_{\tau_1}^{\tau_2}.
\]
On the other hand, integrating by parts on the left hand side of \eqref{eqn:mod_profile_test_w_Ym} and then equating them lead to 

\begin{equation}\label{eqn:Y_m_int_eqn}
    \begin{aligned}
        &\left.\int_{\Sigma_\tau} 
    \left(Y_m g_2 + h^{0\nu} \left(p \p_\nu Y_m - Y_m \p_\nu p\right)\right) \sqrt{|h|} \, d\uptheta\, d\uprho \right|_{\tau_1}^{\tau_2} \\
    =& \int_{\tau_1}^{\tau_2} \int_{\Sigma_\tau} \left(p \calP_h Y_m - Y_m g_c - Y_m g_1 + \frac{\p_\tau(Y_m \sqrt{|h|})}{\sqrt{|h|}}g_2\right) \sqrt{|h|} \, d\uptheta\, d\uprho\, d\uptau.
    \end{aligned}
\end{equation}

Motivated by \eqref{eqn:W_m_int_eqn} and \eqref{eqn:Y_m_int_eqn}, it is natural to think about the following identities as constraint equations for $g_1$ and $g_2$ :  
\begin{align}
&\int_{\Sigma_\tau} \left(p \calP_h Y_m - Y_m g_c - Y_m g_1 + \frac{\p_\tau(Y_m \sqrt{|h|})}{\sqrt{|h|}}g_2\right) \sqrt{|h|} \, d\uptheta\, d\uprho = 0, \label{eqn:constraint_g12_1}\\
&\int_{\Sigma_\tau} \left(\frac1{\sqrt{|h|}} \p_\nu\left(\frac{h^{0\nu}\sqrt{|h|}}{h^{00}}\right) p W_m + h^{0\nu} \left((\frac{W_m}{h^{00}} + Y_m) \p_\nu p + p (\frac{\p_\nu W_m}{h^{00}} - \p_\nu Y_m)\right)   \right)\sqrt{|h|} \, d\uptheta\, d\uprho \nonumber \\& \qquad \qquad \qquad \qquad \qquad \qquad \qquad \qquad \qquad \qquad \qquad \qquad \qquad \qquad = \int_{\Sigma_\tau} Y_m g_2 \sqrt{|h|} \, d\uptheta\, d\uprho     \label{eqn:constraint_g12_2}
\end{align}
for $m = \mu, 1, 2, 3, 4$. Here, we remark that $h^{00} < 0$.
To avoid loss of derivatives, we modify \eqref{eqn:constraint_g12_2} as \begin{equation}\label{eqn:constraint_g12_2_Sp}
    \begin{aligned}
        &S_p\int_{\Sigma_\tau} \left(\frac1{\sqrt{|h|}} \p_\nu\left(\frac{h^{0\nu}\sqrt{|h|}}{h^{00}}\right) p W_m + h^{0\nu} \left((\frac{W_m}{h^{00}} + Y_m) \p_\nu p + p (\frac{\p_\nu W_m}{h^{00}} - \p_\nu Y_m)\right)   \right)\sqrt{|h|} \, d\uptheta\, d\uprho \\& \qquad \qquad \qquad \qquad \qquad \qquad \qquad \qquad \qquad \qquad \qquad \qquad \qquad \qquad \qquad  = \int_{\Sigma_\tau} Y_m g_2 \sqrt{|h|} \, d\uptheta\, d\uprho,
    \end{aligned}
\end{equation}
where $S_p$ is a smoothing operator with kernel $k_p$ \[
    (S_p h)(t) := \int_\R \chi_{[-1, \infty)}(s) h(s) k_p(t-s)\, ds, \quad t \geq -1, \quad \forall h\in L^1_\loc([-1, \infty))
\]
satisfying the moment condition $\int_0^1 s k_p(s)\, ds = 0$. See \eqref{eqn:defn_of_op_S} for comparison and we define $\tilde S_p$ correspondingly. The reason why this $S_p$ is necessary is explained in Remark~\ref{rmk_smoothing_S_p}.
By requiring \eqref{eqn:constraint_g12_1} and \eqref{eqn:constraint_g12_2_Sp}, we obtain a modified orthogonality condition for $p$ : \begin{equation}\label{eqn:mod_ortho_for_p}
\begin{aligned}
    \int_{\Sigma_\tau} p W_m \sqrt{|h|}\, d\uptheta\, d\uprho = -&\tilde{\tilde S}_p \p_\tau \int_{\Sigma_\tau} \left(\frac1{\sqrt{|h|}} \p_\nu\left(\frac{h^{0\nu}\sqrt{|h|}}{h^{00}}\right) p W_m \right.\\
    & \qquad \qquad \left. + h^{0\nu} \left((\frac{W_m}{h^{00}} + Y_m) \p_\nu p + p (\frac{\p_\nu W_m}{h^{00}} - \p_\nu Y_m)\right)   \right)\sqrt{|h|} \, d\uptheta\, d\uprho,
\end{aligned}
\end{equation}
where $\tilde{\tilde S}_p$ is a convolution operator with kernel $\tilde{\tilde k}_p := \int_0^r \tilde k_p(s)$. It then satisfies the relation $\frac{d}{dt}\tilde{\tilde S}_p = \tilde S_p$. The moment condition ensures the appearance of $\p_\tau$ on the right hand side of \eqref{eqn:mod_ortho_for_p}.

\begin{remark}
    In contrast with the orthogonality conditions in \eqref{eqn:ortho_cond_for_vec_bfOmg} and \eqref{eqn:ortho_cond_on_phi_for_Z_mu}, the orthogonality conditions for $p$ is derived in the second order formulation. This is because our choices in \eqref{eqn:proxy_eftn} are not compactly supported.
\end{remark}

We further require that $g_1, g_2$ have the following decomposition \begin{equation}\label{eqn:decomp_of_g_i}
    g_i = \sum_{m = \mu,1,2,3,4} c_{im}(\uptau) X_m, \quad X_m := \chi(\uprho) \varphi_m^{\stat}(\uprho), \quad i = 1, 2,
\end{equation}
where $\chi$ is a cutoff to the region $\{ |\uprho| \leq R_f/3 \}$.
Set $A_{ml} =: \brk{X_m, Y_l}$, then the entry of the inverse matrix $|(A^{-1})^{ml}|$ is uniformly bounded from above in view of $\brk{\nu^i, \nu^j} = \delta_{ij}$ and the smallness assumptions of $\wp$ and $R_f^{-1}$. 
Therefore, the coefficients can be represented by \begin{equation}\label{eqn:coeff_c_im}
    c_{im} = \sum_{l = \mu, 1, 2,3,4,} (A^{-1})^{lm} \brk{g_i, Y_l}, \quad i = 1, 2.
\end{equation}
From \eqref{eqn:constraint_g12_2_Sp} and \eqref{BA-p}, we obtain $c_{2m}$ satisfies the same pointwise decay in $\tau$ as in \eqref{BA-p}. In addition, we argue that it contains $\delta_\wp \eps$ smallness. First, in $\{\uprho \gtrsim R_f\}$ region, it is obvious that the strong spatial decay of eigenfunctions gives the desired extra smallness. In the flat region, we notice that for $\nu \in \{1, 2, \cdots, n\}$, we could use the smallness of $h^{0j}$ in \eqref{eqn:1st_order_formulation_h_det_inv}. Lastly, for $\nu = 0$, we argue the three integrands in \eqref{eqn:constraint_g12_2_Sp} separately. For the first integrand, even though $\sqrt{|h|}$ itself does not have any smallness, $\p_0 \sqrt{|h|}$ indeed captures $\dot\ell$ smallness. The term $\frac{W_m}{h^{00}} + Y_m$ is of $\calO(\ell + R_f^{-1})$ due to cancellation (recall the form of $h^{00}$ in \eqref{eqn:1st_order_formulation_h_det_inv}). For $\frac{\p_0 W_m}{h^{00}} - \p_0 Y_m$, we know that $\p_0$ produces $R_f^{-1}$ smallness if it falls on the cutoff while it produces $\calO(\dot\wp)$ smallness if it falls on the eigenfunctions. This discussion can be summarized as the decay \begin{equation}\label{eqn:decay_of_c_2m}
    |c_{2m}(\uptau)| \lesssim C_k\delta_\wp \eps \brk{\uptau}^{-2 + \kappa}.
\end{equation}
One can refer to Section~\ref{Section:BA} for the definition of $\delta_\wp$.
Taking this updated information \eqref{eqn:decay_of_c_2m} for $c_{2m}$ into consideration, it follows from \eqref{eqn:constraint_g12_1} that $c_{1m}$ satisfies \[
    |c_{1m}(\uptau)| \lesssim C_k\delta_\wp \eps \brk{\uptau}^{-\frac94 + \kappa},
\]
where $C_k$ is our bootstrap constant and the extra smallness $\delta_\wp$ is obtained like in \eqref{eqn:decay_of_c_2m}.
\begin{remark}\label{rmk:reason_chosen_cutoff_and_eftn}
    In the estimate above, the term $p \calP_h Y_m$ deserves a detailed discussion. Even though $p$ itself does not have sufficient decay $\brk{\uptau}^{-\frac94 + \kappa}$, thanks to the choice of the cutoff, for any $\p_\uptau$ derivative falling on the cutoff, one gains one order decay in $\uptau$. Moreover, it is also crucial that $\varphi_m^\stat$ is a nice proxy of eigenfunctions of $\calP_h$. This leads to a sufficient gain in $\tau$ decay as well when $\calP_h$ falls on $\varphi_j^\stat$ ($j = 1,2,3,4$). On the other hand, when $\calP_h$ falls on $\varphi_\mu^\stat$,  the major contribution besides the nice error is of the form $\mu^2\brk{p, Y_m}$, which in turn can be bounded via \eqref{eqn:mod_ortho_for_p} due to our choice $Y_m = W_m$. 
\end{remark}

Applying $\p_\uptau$ on the coefficients via \eqref{eqn:coeff_c_im},  \eqref{eqn:constraint_g12_1} and \eqref{eqn:constraint_g12_2_Sp}, we obtain \begin{equation}\label{eqn:all_decay_of_c_im}
    |\p_\uptau^j c_{1m}(\uptau)| \lesssim C_j \delta_\wp \eps \brk{\uptau}^{-\frac94 + \kappa}, \quad 0 \leq j \leq 2, \quad 
    |\p_\uptau^j c_{2m}(\uptau)| \lesssim C_j \delta_\wp \eps \brk{\uptau}^{-\frac94 + \kappa}, \quad 1 \leq j \leq 3
\end{equation}
thanks to \eqref{BA-dtau-p}, where $C_j$'s here denote the dependence on bootstrap constants. For the purpose of proving Lemma~\ref{lemma_control_higher_der_of_parameters}, we also record the following \begin{equation}\label{eqn:L2_est_of_c_im}
    \|\p_\uptau^j c_{1m}(\uptau)\|_{L^2_\uptau} \lesssim C_j \delta_\wp \eps \brk{\uptau}^{-\frac94 + \kappa}, \quad 1 \leq j \leq 2, \quad
    \|\p_\uptau^2 c_{2m}(\uptau)\|_{L^2_\uptau} \lesssim C_j \delta_\wp \eps \brk{\uptau}^{-\frac94 + \kappa},
\end{equation}
where the first one follows from \eqref{BA-p}, \eqref{BA-dtau-p} and the support condition of $Y_m$ while the second one follows from \eqref{BA-p}, \eqref{BA-dtau-p} and \eqref{BA-p-LE}. 

\begin{remark}
    The importance of the extra smallness and decay properties of $c_{im}$'s are discussed in Remark~\ref{rmk_on_construction_of_p}.
\end{remark}

\begin{remark}
    The existence of $p$ with zero Cauchy data satisfying \eqref{eqn:calP_h_eqn_for_mod_profile} with the existence of $g_1$ and $g_2$ satisfying \eqref{eqn:decomp_of_g_i} was shown in \cite[Proposition 4.1]{OS24}.
\end{remark}

\subsection{Modulation equations} \label{Section:modulation_equations}
In this subsection, we aim to introduce the governing equations of $\xi$ and $\ell$ by imposing orthogonality condition \eqref{eqn:ortho_cond_for_vec_bfOmg}. 

\subsubsection{Introducing the symplectic form $\bfOmg$ and related quantities}
We define the symplectic form $\bfOmg$ \[
    \bfOmg(\vec u, \vec v) := \brk{\vec u, J\vec v}, \quad \vec u = \begin{pmatrix}
        u_1 \\ u_2
    \end{pmatrix}, \vec v = \begin{pmatrix}
        v_1 \\ v_2 
    \end{pmatrix}, \quad J = \begin{pmatrix}
        0 & 1 \\ -1 & 0
    \end{pmatrix}
\]
via an inner product \begin{equation}\label{eqn:inner_product_u_v}
    \brk{\vec u, \vec v} := \int u_1 v_1 + u_2 v_2 \, d\omg\, d\rho.
\end{equation}
We note that the reason why we do not use the global coordinates and not include the volume form in this inner product is that at least one of $\vec u$, $\vec v$ is supported in the flat region in our applications later.

For any matrix operator $P = J$ or $M$, we shall use $P^*$ to denote the operator adjoint of $P$ with respect to the inner product \eqref{eqn:inner_product_u_v} (instead of its adjoint matrix). Then the adjoint $M^*$ of $M$ (see \eqref{eqn:matrix_operator_M}) is given by \[
    M^* \vec v = \begin{pmatrix}
        - \p_j\big(\frac{h^{0j}}{h^{00}} v_1\big) & -\sqrt{|h|} L_1 v_2 \\
        \frac1{\sqrt{|h|}h^{00}} v_1 & \frac{h^{j0}}{h^{00}} \p_j v_2
    \end{pmatrix}.
\]
It follows from $J^* = - J$ and $JM + M^* J = 0$ that $JM = (JM)^*$ and hence \begin{equation}\label{eqn:s.a._M_wrt_bfOmg}
    \bfOmg(\vec u, M\vec v) = \bfOmg(M\vec u, \vec v).
\end{equation}

Now we define a truncated version of the (generalized) eigenfunctions of $M$. To this end, we record the results in \cite[Section 3.5, 3.6]{LOS22} as follows. 
\begin{lemma}\label{lemma_zero_modes}
    For $\ell = \ell(t) \in \R^n$ with $|\ell| < 1$, we define the kernel, resp. generalized kernel of $M$ : \[
        \vec\varphi_i = \begin{pmatrix}
            \varphi_i \\ \dot\varphi_i
        \end{pmatrix}, \quad \vec\varphi_{n+i} := \begin{pmatrix}
            \varphi_{n+i} \\ \dot\varphi_{n+i}
        \end{pmatrix}, \quad 1 \leq i \leq n,
    \]
    in the sense that $M\vec\varphi_i = 0$, $M \vec\varphi_{n+i} = \vec \varphi_{i}$ for $1 \leq i \leq n$. The explicit forms of $\vec\varphi_i, \vec\varphi_{n+i}$ are given by \[
        \begin{aligned}
    \varphi_i & := |\ell|^{-2}(\gamma-1)(\ell \cdot \nu) \ell^i + \nu^i = \nu^i + \calO(|\ell|^2), \\
    \dot{\varphi}_i & := \sqrt{|h|}h^{0 j} \partial_j \varphi_i = \calO(|\ell|), \\
    \varphi_{n+i} & := -\gamma(\ell \cdot F) \nu^i-|\ell|^{-2} \gamma(\gamma-1)(\ell \cdot F)(\ell \cdot \nu) \ell^i = \calO(|\ell|), \\
    \dot{\varphi}_{n+i} & := \sqrt{|h|}h^{0 j} \partial_j \varphi_{n+i} + \sqrt{|h|}h^{00} \varphi_i = \sqrt{|h|}h^{00} \varphi_i + \calO(|\ell|).
\end{aligned}
    \]
\end{lemma}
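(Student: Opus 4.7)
The plan is to verify the four scalar identities encoded in $M\vec\varphi_i = 0$ and $M\vec\varphi_{n+i} = \vec\varphi_i$ componentwise, guided by the geometric origin of the modes. The first components of both identities are essentially tautological: inserting the definition $\dot\varphi_i = \sqrt{|h|}h^{0j}\partial_j \varphi_i$ into the first row of $M\vec\varphi_i$ gives $-\tfrac{h^{0j}}{h^{00}}\partial_j\varphi_i + \tfrac{1}{\sqrt{|h|}h^{00}}\cdot \sqrt{|h|}h^{0j}\partial_j\varphi_i = 0$, and likewise the definition of $\dot\varphi_{n+i}$ is exactly what is needed for the first row of $M\vec\varphi_{n+i}$ to equal $\varphi_i$. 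So the content lies in the second-row identities.

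Substituting the definitions of $\dot\varphi_i, \dot\varphi_{n+i}$ into the second row, and using $\tilde h^{jk} + \tfrac{h^{j0}h^{0k}}{h^{00}} = h^{jk}$, the two second-row identities reduce, after dividing by $\sqrt{|h|}$, to the statements
\[
\tfrac{1}{\sqrt{|h|}}\partial_j\bigl(\sqrt{|h|}h^{jk}\partial_k\varphi_i\bigr) + |\II|^2 \varphi_i = 0,
\]
\[
\tfrac{1}{\sqrt{|h|}}\partial_j\bigl(\sqrt{|h|}h^{jk}\partial_k\varphi_{n+i}\bigr) + |\II|^2 \varphi_{n+i} + \tfrac{1}{\sqrt{|h|}}\partial_j\bigl(\sqrt{|h|}h^{j0}\varphi_i\bigr) + h^{0j}\partial_j\varphi_i = 0.
\]
Since $\varphi_i$ and $\varphi_{n+i}$ are time-independent, these are exactly the statements that $\varphi_i$ solves the linearized HVMC equation $(\Box_h + |\II|^2)\phi = 0$, and that $t\,\varphi_i + \varphi_{n+i}$ (whose $\Box_h$ produces the last two terms through the $h^{0j}\partial_0\partial_j$ and $\Gamma$-type contributions) likewise solves it.

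These two facts can be established by the symmetry argument: the profile $\calQ$ parametrized by $\Psi_\wp$ is a solution of the HVMC equation when $\dot\ell = 0$, $\dot\xi = \ell$, and hence is preserved (up to reparametrization) by the ambient Minkowski translations $X^i \mapsto X^i + \veps e^i$ and by the Lorentz boosts $\ell \mapsto \ell + \veps e^i$ accompanied by $\xi \mapsto \xi + \veps t\, e^i$ (so as to maintain $\dot\xi = \ell$). Formally differentiating the HVMC equation $\Box_g\Phi = 0$ with respect to these one-parameter families at $\veps = 0$ yields solutions of the linearized equation around $\calQ$; testing the resulting vector-valued variations against the gauge-adapted normal $n_\wp$ produces scalar solutions of $(\Box_h + |\II|^2)\phi = 0$. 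Computing these variations explicitly from the parametrization \eqref{eqn:parametrize_calQ_in_flatt}, the translation variation yields $\eta(e^i, n_\wp) = \varphi_i$ (a stationary solution), while the boost variation with the accompanying $\xi$-shift yields $t\,\varphi_i + \varphi_{n+i}$ (a linearly growing solution). Matching constants and collecting leading-order terms in $|\ell|$ confirms the explicit formulas stated in the lemma.

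The main step that requires care is the verification of the explicit closed-form expressions for $\varphi_i, \varphi_{n+i}$: one must trace through the $A_\ell$-projection, the factor $\gamma$, and the $\eta(\cdot,n_\wp)$ pairing, and check that the resulting expression reproduces $|\ell|^{-2}(\gamma-1)(\ell\cdot\nu)\ell^i + \nu^i$ and the corresponding formula for $\varphi_{n+i}$. Once these formulas are pinned down, the identity $M\vec\varphi_{n+i} = \vec\varphi_i$ follows by combining $L_1\varphi_i = 0$ with the explicit expansion of $\Box_h \varphi_{n+i}$ (exploiting that $\varphi_i,\varphi_{n+i}$ are time-independent so the $\partial_0$ derivatives of $h^{\mu\nu}$ and $\sqrt{|h|}$ do not contribute) — this is the most bookkeeping-heavy step but involves no new ideas beyond the symmetry interpretation above.
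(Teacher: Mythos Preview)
Your proposal is correct and follows essentially the same approach as the one the paper has in mind: the paper does not give its own proof of this lemma but records it from \cite[Section~3.5, 3.6]{LOS22}, and Remark~\ref{rmk_geometric_meaning_of_zero_modes} explicitly confirms that the geometric interpretation via translations and boosts is ``crucial in the proof of Lemma~\ref{lemma_zero_modes}.'' Your reduction of the first-row identities to tautologies and of the second-row identities to $(\Box_h + |\II|^2)\varphi_i = 0$ and $(\Box_h + |\II|^2)(t\varphi_i + \varphi_{n+i}) = 0$ is exactly right, and the symmetry argument you invoke --- differentiating $\Psi_\wp$ in $\xi$ and in $(\ell,\xi)$ jointly, then pairing with $n_\wp$ --- is the intended route (e.g.\ $\eta((0,e^i),n_\wp) = (A_{-\ell}\nu)^i = (\gamma-1)|\ell|^{-2}(\ell\cdot\nu)\ell^i + \nu^i = \varphi_i$).
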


We next introduce truncated versions \begin{equation}\label{eqn:defn_of_Z_i_s}
    \vec Z_i = \chi \vec\varphi_i, \quad \vec Z_{n+i} = \chi \vec\varphi_{n+i}, \quad i = 1, \ldots, n,
\end{equation}
where $\chi$ is a smooth cut-off with support in $|\rho| < \frac12 R_f$ and being $1$ in $|\rho| < \frac14 R_f$. This is chosen to be the same as $\chi_{< \frac12 R_f}$ for $W_j$'s in \eqref{eqn:proxy_eftn}. See \eqref{eqn:defn_of_4_regions} for the geometric meaning of $R_f$. 

Using the first order formulation of HVMC equation \eqref{eqn:1st_order_HVMC} and taking advantage of \eqref{eqn:s.a._M_wrt_bfOmg}, we find that \begin{equation}\label{eqn:p_t_bfOmg}
    \p_t \big( \bfOmg(\vec\psi, \vec Z_j)\big) - \bfOmg(\vec\psi, M \vec Z_j)= \bfOmg(\vec K, \vec Z_j) + \bfOmg(\vec f, \vec Z_j) + \bfOmg(\vec\psi, \p_t \vec Z_j), \quad j = i \text{ or } n+i, \quad 1 \leq i \leq n.
\end{equation}

We discuss the structure of the three terms on the right hand side of \eqref{eqn:p_t_bfOmg} separately.
In view of \eqref{eqn:parametrize_calQ_in_flatt}, \eqref{eqn:N_in_C_flatn} and \eqref{eqn:vec_K_in_C_flatn}, we find that \begin{equation}\label{eqn:leading_term_of_K}
    K = -(\dot\xi - \ell) \cdot (\nu + \calO(|\ell|^2)) + \dot\ell \cdot \calO(|\ell|), \quad 
    \dot K = - \sqrt{|h|}h^{00}(\dot\ell \cdot \nu + \calO(|\ell|^2)),
\end{equation}
and thus \[\begin{aligned}
    \bfOmg(\vec K, \vec Z_i) &= \sum_j \dot{\ell}_j ( d_{ij} + r_{ij} ) + \sum_j (\dot{\xi}_j - \ell_j) b_{ij}, \\
    \bfOmg(\vec K, \vec Z_{n+i}) &= \sum_j \dot{\ell}_j \tilde b_{ij} - \sum_j (\dot{\xi}_j - \ell_j) ( d_{ij} + \tilde r_{ij})
\end{aligned}
\]
with $r_{ij}$, $\tilde r_{ij}$, $b_{ij}$,  $\tilde b_{ij}$ all of order $\calO(|\ell|)$ and (see \eqref{eqn:1st_order_formulation_h_det_inv}) \[
    d_{ij} = \int \chi \nu^i \nu^j \sqrt{|h|}h^{00}\, d\rho\, d\omg = C \delta^{ij} + o_{R_f, \ell}(1).
\]
Here we use the fact $\brk{\Theta^i, \Theta^j}_{L^2(\bbS^{n-1})} = \delta^{ij}$, namely the orthogonality of first spherical harmonics.
Therefore, if we use $D$ to denote the $n \times n$ matrix with entries $d_{ij}$, we know that $D$ is invertible provided $R_f^{-1}$ and $|\ell|$ both sufficiently small. 
On the other hand, the second term on the right hand side of \eqref{eqn:p_t_bfOmg} can be expressed as \[
    \calO(\dot\wp, \ell, \psi, \p\psi, \p_\Sigma\p\psi) \dot\wp + \calO((\psi, \p\psi, \p_\Sigma\p\psi)^2).
\] 
while the last term also comes with $\calO(\dot\wp)$ smallness coefficients.
Therefore, we find that \begin{equation}\label{eqn:p_t_vec_bfOmg_prelim}
    \p_t\vec\bfOmg - \vec N = \begin{pmatrix}
        D + R & R \\
        R & -D + R \\
    \end{pmatrix}\dot\wp + \vec H,
\end{equation}
where \begin{equation}\label{eqn:defn_of_vec_bfOmg_N_R_H}
    \begin{aligned}
    \vec \bfOmg := \big( \bfOmg(\vec\psi, \vec Z_1), \ldots, \bfOmg(\vec\psi, \vec Z_{2n})\big), \quad& \vec N := \big( \bfOmg(\vec\psi, M\vec Z_1), \ldots, \bfOmg(\vec\psi, M\vec Z_{2n})\big), \\
    R = \calO(\psi, \p\psi, \p_\Sigma\p\psi, \ell, \dot\wp), \quad& \vec H = \calO((\psi, \p\psi, \p_\Sigma\p\psi)^2).
\end{aligned}
\end{equation}

One can solve for $\p_t\psi$ from the first entry of \eqref{eqn:1st_order_HVMC} and an application of implicit function theorem. Thus, by inserting $\p_t \psi$ to the right hand side of \eqref{eqn:p_t_vec_bfOmg_prelim}, we reach the form \begin{equation}\label{eqn:p_t_vec_bfOmg}
    \p_t\vec\bfOmg - \vec N = \begin{pmatrix}
        D + \tilde R & \tilde R \\
        \tilde R & -D + \tilde R \\
    \end{pmatrix}\dot\wp + \vec {\tilde H} := \vec F,
\end{equation}
where \[
    \tilde R = \calO(\p_{\Sigma}^{\leq 2}\vec\psi, \ell, \dot\wp), \quad \vec{\tilde H} = \calO\big((\p_\Sigma^{\leq 2}\vec \psi)^2, (\ell\p_{\Sigma}^{\leq 2}\vec \psi)^2\big).
\]

With a slight abuse of notations, we denote \[
    \vec F = \vec F(\p_\Sigma^{\leq 2}\vec\psi, \ell, \dot\wp)
\]
by identifying the function $\vec F$ and its upper bound which only depends on tangential derivatives up to the second order $\p_\Sigma^{\leq 2}\vec\psi$ and parameters $\ell$, $\dot\wp$.
We write \[
    \dot\wp = \begin{pmatrix}
        D & 0 \\
        0 & -D \\
    \end{pmatrix}^{-1} \left(\vec F - \vec{\tilde H} - \begin{pmatrix}
        \tilde R & \tilde R \\
        \tilde R & \tilde R \\
    \end{pmatrix} \dot\wp \right).
\]
and a further application of implicit function theorem due to smallness dependence on $\dot\wp$ on the right hand side would give arise to the relation $\dot \wp = \vec G$. With an abuse of notations as well, \begin{equation}\label{eqn:vec_G_eqn_derived_from_1st_order_HVMC_1}
    \vec G = \vec G(\p_{\Sigma}^{\leq 2} \vec\psi, \ell, \vec F), \quad |\vec G| \lesssim |\p_{\Sigma}^{\leq 2}\vec\psi|^2 + |\vec F|
\end{equation}
provided the smallness in the bootstrap assumptions. We should keep in mind that $\vec F = \p_t \vec\bfOmg - \vec N$ thanks to \eqref{eqn:p_t_vec_bfOmg} in the derivation above and hence we arrive at \begin{equation}\label{eqn:vec_G_eqn_derived_from_1st_order_HVMC_2}
    \dot \wp = \vec G(\p_{\Sigma}^{\leq 2} \vec\psi, \ell, \p_t \vec\bfOmg - \vec N).
\end{equation}
To summarize, for small $x, y, w$ and $z$, we have $z = \vec F(x, y, w)$ if and only if $w = \vec G(x, y, z)$ and $\vec G$ satisfies the estimate $|\vec G(x, y, z)| \lesssim |x|^2 + |z|$ uniformly for small $x, y$ and $z$.

Note that all the equations in this section so far are derived from the first order formulation of HVMC equation \eqref{eqn:1st_order_HVMC} satisfied by $\vec \psi$. Now we are prepared to state the governing equations of $\dot\wp$, i.e. modulation equations. 

\subsubsection{A first order formulation for the modified profile}
In order to define the modulation equations, we need to introduce a first order formulation for the modified profile $p$ given in \eqref{eqn:calP_h_eqn_for_mod_profile}.
For $p$ defined above, we set \begin{equation}\label{eqn:defn_vec_p}
    \vec p = \begin{pmatrix}
        p \\ \sqrt{|h|}h^{0\nu}\p_\nu p
    \end{pmatrix}.
\end{equation}
Note that although $p$ is well-defined in the whole region, we will only use the information in the flat region like in Section~\ref{Section:1st_order_formulation}.
A direct computation using the matrix $M$ (see \eqref{eqn:matrix_operator_M}) reveals that \begin{equation}\label{eqn:1st_order_eqn_for_p}
    (\p_t - M) \vec p = \begin{pmatrix}
        0 \\ \p_\nu \big( \sqrt{|h|} h^{\mu\nu} \p_\mu p\big) + \sqrt{|h|}|\II|^2 p
    \end{pmatrix}
    = \begin{pmatrix}
        0 \\ \sqrt{|h|}(f_c + g_1 + \p_\tau g_2)
    \end{pmatrix}.
\end{equation}
Like in \eqref{eqn:p_t_bfOmg}, we write \begin{equation}\label{eqn:p_t_Omg_p_minus_N_p}
    \p_t \big(\bfOmg(\vec p, \vec Z_j)\big) - \bfOmg(\vec p, M \vec Z_j) = \bfOmg((\p_t - M)\vec p, \vec Z_j) + \bfOmg(\vec p, \p_t \vec Z_j).
\end{equation}
Naturally, $\vec \bfOmg_p$, $\vec N_p$, $\vec \bfOmg_q$ and $\vec N_q$ could be defined analogously to $\vec N$ in \eqref{eqn:defn_of_vec_bfOmg_N_R_H} with $\vec \psi$ replaced by $\vec p$ and $\vec q := \vec \psi - \vec p$, respectively. 
To avoid ambiguities, we also record the trivial relations $\vec N = \vec N_p + \vec N_q$ and $\vec \bfOmg = \vec \bfOmg_p + \vec \bfOmg_q$.
The following formula of the second component of $\vec q := \vec \psi - \vec p$ is taken down for future use as well. From \eqref{eqn:defn_vec_p} and \eqref{eqn:1st_entry_of_HVMC_1st_order_eqn}, we reach that \begin{equation}\label{eqn:2nd_entry_of_vec_q}
    \dot q = \sqrt{|h|}h^{0\nu} \p_\nu q - \sqrt{|h|}h^{00}(K+f).
\end{equation}

\begin{remark}\label{rmk_on_construction_of_p}
    At this stage, we leave a heuristic comment about closing the bootstrap. Note that $\p_t \vec \bfOmg_p - \vec N_p$ would have the same (or better) $\tau$-decay as in $\dot\wp$ as well as the extra $\delta_\wp$-smallness thanks to \eqref{eqn:all_decay_of_c_im} and  the cutoff introduced in $f_c$. Momentarily, we will see in \eqref{eqn:modulation_eqn} that $\p_t \vec \bfOmg_p - \vec N_p$ enters the parameter derivatives. Therefore, both decay and extra smallness here are equally important in our construction for this modified profile $p$.
\end{remark}

\subsubsection{Modulations equations and orthogonality conditions}
We aim to impose orthogonality conditions so that the following modulation equation \eqref{eqn:modulation_eqn} is satisfied by the modulated parameters $\xi$ and $\ell$ : \begin{equation}\label{eqn:modulation_eqn}
    \dot\wp = \vec G\big(S \p_{\Sigma}^{\leq 2}\vec\psi, \ell, S(\p_t \vec \bfOmg_p - \vec N_p - \vec N_q) - \beta \vec \omg\big),
\end{equation}
where the smoothing operator $S$ (see \eqref{eqn:defn_of_op_S}) and the damping term $-\beta\vec\omg$ (see \eqref{eqn:defn_of_vec_Upomg_omg_2}) will be defined shortly. The motivation could be explained as follows.
Compared with \eqref{eqn:vec_G_eqn_derived_from_1st_order_HVMC_2}, $S$ is introduced to avoid the potential loss of derivatives caused by the presence of $\p^2_{\Sigma}\psi$ in \eqref{eqn:vec_G_eqn_derived_from_1st_order_HVMC_2}. Moreover, compared with the modulation equations in \cite{LOS22}, we keep the form $\p_t \vecbfOmg_p - \vec N_p$ in the third entry but only change the part relative to $\vec q$ instead of modifying for the whole quantity $\vec \psi = \vec p + \vec q$. This is due to the slow decaying rate of $\vec p$ itself while $\p_t \vecbfOmg_p - \vec N_p$ as a whole satisfies nice decay property in $\tau$. If one ignores the damping term and the operator $S$, it is easy to see that the orthogonality condition prompting \eqref{eqn:modulation_eqn} is simply $\vec\bfOmg_q = 0$, appearing to be virtually the same as those in the classical modulation theory (see for instance \cite{Wein85, Stu01}). 

Now we derive an equivalent form of \eqref{eqn:modulation_eqn}. First, plugging \eqref{eqn:modulation_eqn} into \eqref{eqn:p_t_vec_bfOmg}, we obtain \[
    \p_t \vec\bfOmg - \vec N = \vec F\left(\p_{\Sigma}^2 \vec\psi, \ell, \vec G\big(S \p_{\Sigma}^{\leq 2}\vec\psi, \ell, S(\p_t \vec \bfOmg_p - \vec N_p - \vec N_q) - \beta \vec \omg\big)\right)
\]
Using that $z = \vec F(0, \ell, \vec G(0, \ell, z))$, it is further equivalent to \begin{equation}\label{eqn:equiv_form_1_of_mod_eqn}
    \p_t \vec\bfOmg - \vec N = S(\p_t \vec \bfOmg_p - \vec N_p - \vec N_q) - \beta \vec \omg - \vec F_\omg, 
\end{equation}
where \[
    \vec F_\omg := \vec F\left(0, \ell, \vec G\big(0, \ell, S(\p_t \vec \bfOmg_p - \vec N) - \beta \vec \omg\big) \right) - \vec F\left(\p_{\Sigma}^2 \vec\psi, \ell, \vec G\big(S \p_{\Sigma}^{\leq 2}\vec\psi, \ell, S(\p_t \vec \bfOmg_p - \vec N) - \beta \vec \omg\big)\right).
\]
Rewriting \eqref{eqn:equiv_form_1_of_mod_eqn}, we achieve \[
    \p_t \vec\bfOmg_q = (S - I)(\p_t \vec \bfOmg_p - \vec N_p - \vec N_q) - \beta \vec \omg - \vec F_\omg.
\]

We impose the following decomposition \begin{equation}\label{eqn:ortho_cond_for_vec_bfOmg}
    \vec\bfOmg_q = \vec\Upomg + \vec\omg,
\end{equation}
where \begin{align}
        \p_t \vec\Upomg &= (S - I)(\p_t \vec \bfOmg_p - \vec N_p - \vec N_q + \vec F_\omg), \label{eqn:defn_of_vec_Upomg_omg_1}\\
        \p_t\vec\omg &= -\beta\vec\omg -S\vec F_\omg. \label{eqn:defn_of_vec_Upomg_omg_2}
    \end{align}
We first view \eqref{eqn:defn_of_vec_Upomg_omg_2} as the definition of $\vec\omg$, then viewing $\vec\omg$ as known, \eqref{eqn:defn_of_vec_Upomg_omg_1} can be viewed as the definition of $\vec\Upomg$.
To be specific, we write \eqref{eqn:defn_of_vec_Upomg_omg_2} into its integral form \begin{equation}\label{eqn:defn_of_vec_Upomg_omg_2_int_form}
    \vec\omg(t) = \int_0^t e^{-\beta(t - s)} (S\vec F_\omg)(s)\, ds.
\end{equation}
Then the existence of $\vec\omg \in C^\infty([0,\infty))$ follows from a standard Picard iteration argument once (by choosing $\beta < 1/ (2 + C)$) we notice that \[\begin{aligned}
    |\vec F_{\omg_j} - \vec F_{\omg_{j-1}}|
    \leq &\beta|\vec\omg_j - \vec\omg_{j-1}| + |\vec H_{\omg_j} - \vec H_{\omg_{j-1}}| \leq \beta|\vec\omg_j - \vec\omg_{j-1}| + C\beta |\vec \omg_j - \vec \omg_{j-1}|, \\
    &\vec H_{\omg} := \vec F\left(\p_{\Sigma}^2 \vec\psi, \ell, \vec G\big(S \p_{\Sigma}^{\leq 2}\vec\psi, \ell, S(\p_t \vec \bfOmg_p - \vec N) - \beta \vec \omg\big)\right).
\end{aligned}
\] 

Then the decomposition \eqref{eqn:ortho_cond_for_vec_bfOmg} is the so-called orthogonality condition so that the modulation equation \eqref{eqn:modulation_eqn} holds. The terminology orthogonality condition shall be clear once we prove Lemma~\ref{lemma_decay_of_bfOmg_phi}, indicating the smallness of $\vec\bfOmg_q$.

Now we introduce the smoothing operator (in time) $S$. Let $k \in C^\infty_c(\bbR)$ be a nonnegative function supported in $[0, 1]$ such that $\int_0^1 k(s)\, ds = 1$. We define the operator $S$ as \begin{equation}\label{eqn:defn_of_op_S}
    (Sh)(t) := \int_\R \chi_{[-1, \infty)}(s) h(s) k(t-s)\, ds, \quad t \geq -1, \quad \forall h \in L^1_\loc([-1, \infty)),
\end{equation}
where $\chi_{[-1, \infty)}$ is a cutoff to the interval $[-1, \infty)$ taking constant value $1$ in $[0, \infty)$. In lieu of choosing $k$ to be the convolution kernel directly, the cutoff $\chi_{[-1, \infty)}$ is inserted as we only care about forward solutions. This gives rise to the property that $(Sh)(t)$ is smooth for all $t \geq 0$. 
We also define an auxiliary operator $\widetilde S$ such that \begin{equation}\label{eqn:relation_widetilde_S_w_S}
    (S - I)h = \frac{d}{dt}(\widetilde S h),
\end{equation} 
which will provide a succinct way to express \eqref{eqn:defn_of_vec_Upomg_omg_1}.
To this end, we set \begin{equation}\label{eqn:defn_of_widetil_S_op}
    \tilde k(r) := \begin{cases}
        0 & r < 0 \\
        -\int_r^\infty k(s)\, ds & r \geq 0
    \end{cases}, \quad 
    (\widetilde S h)(t) := \int_\R \chi_{[-1, \infty)}(s) h(s) \tilde k(t-s)\, ds, \quad t \geq -1.
\end{equation} 
Note that $\tilde k(r)$ is also supported in the interval $[0, 1]$.

We then rewrite \eqref{eqn:defn_of_vec_Upomg_omg_1} as \begin{equation}\label{eqn:defn_of_vec_Upomg_omg_1_int_form}
    \vec\Upomg := \widetilde S(\p_t \vec \bfOmg_p - \vec N_p - \vec N_q + \vec F_\omg).
\end{equation}
Later in Section~\ref{Section:parameter_control}, we will work with \eqref{eqn:defn_of_vec_Upomg_omg_2_int_form} and \eqref{eqn:defn_of_vec_Upomg_omg_1_int_form} to perform estimates for both $\vec\omg$ and $\vec\Upomg$.

\begin{remark}
    Note that though $S$ and $\widetilde S$ are nonlocal operators, both $S$ and $\widetilde S$ are chosen to be almost local in time. This ensures that $Sh$ and $h$ having comparable decay properties. Specifically, if $|h| \lesssim \brk{t}^{-\gmm}$, then $|Sh|, |\widetilde S h| \lesssim \brk{t}^{-\gmm}$.

    Moreover, one might have noticed that we sometimes abuse notations by applying $S$ or $\widetilde S$ onto some functions only defined on $[0, \infty)$. For any $h$ with domain in $[0, \infty)$, this makes sense in the following way : we first extend it trivially such that $h \equiv 0$ on $[-1, 0)$ and then apply the operator $S$ or $\widetilde S$.
\end{remark}

\subsubsection{The existence of such a decomposition \eqref{eqn:ortho_cond_for_vec_bfOmg}}\label{Section:Existence_of_decomp_Omg}
We define a map \[
    \vec\Upsilon(\Phi, \xi, \ell) := \vec\bfOmg_q - \vec\Upomg - \vec\omg.
\]
Thanks to \eqref{eqn:defn_of_vec_Upomg_omg_1_int_form}, this is equivalent to \begin{equation}\label{eqn:vec_Upsilon_defn}
    \vec\Upsilon(\Phi, \xi, \ell) = \vec\bfOmg - \vec\bfOmg_p - (S - I)\vec\bfOmg_p - \widetilde{S}(- \vec N + \vec F_\omg) - \vec\omg
    = \vec\bfOmg + \widetilde{S}(\vec N - \vec F_\omg) - \vec \omg - S\vec\bfOmg_p.
\end{equation}
Since $\vec\Upsilon(\Psi_0,0,0) = 0$ and $\vec\Upsilon : C^5 \times C^2_\uptau \times C^2_\uptau \to C^2_\uptau$ (see \cite[Section 5.1]{OS24}), the Fr\'{e}chet derivative $D_{\xi, \ell}(\Psi_0, 0, 0) \in L(C^2 \to C^2)$ can be viewed a matrix, where $\Psi_0 = (t, F)$. The invertibility of this matrix will imply the existence thanks to the  implicit function theorem (see for instance \cite{Pat19}). Note that the Fr\'{e}chet derivative of the first four terms in \eqref{eqn:vec_Upsilon_defn} can be computed identically as in \cite[Section 3.6]{LOS22}. Here, the time interval of interest is $[0, \tau_0]$ for $\tau_0 \leq \tau_f$ but we need to mention that we still need to make an extension of all the functions to $[-1, 0]$ in order to make nonlocal operators well-defined. See Remark~\ref{rmk_on_extension_in_t} for a further discussion.

We write \[
    \frac{\delta \bfOmg(\vec p, \vec Z_j)}{\delta \wp} = \bfOmg\big(\frac{\delta \vec p}{\delta \wp}, \vec Z_j\big) + \bfOmg\big(\vec p, \frac{\delta \vec Z_j}{\delta \wp}\big), \quad j = 1, 2, \cdots, 2n, \quad \wp = \xi \text{ or } \ell.
\]
Recall that $p|_{\Phi = \Psi_0} = 0$, therefore, after restricting to $(\Phi, \xi, \ell) = (\Psi_0, 0, 0)$, we obtain \begin{equation}\label{eqn:Frechet_der_of_p}
    \left.\frac{\delta \bfOmg(\vec p, \vec Z_j)}{\delta \wp}\right|_{(\Phi, \xi, \ell) = (\Psi_0, 0, 0)} = 
    \begin{cases}
        \int_{\Sigma_\uptau} \chi\nu^i \big(\p_0\dfrac{\delta p}{\delta \wp}\big) \sqrt{|h|}|_{\ell = 0}, \quad 1 \leq j \leq n, \quad j = i, \\ 
        -\int_{\Sigma_\uptau} \chi\nu^i \dfrac{\delta p}{\delta \wp} \sqrt{|h|}|_{\ell = 0}, \quad n+1 \leq j \leq 2n, \quad j = i + n, \\ 
    \end{cases}
\end{equation}
where we used \eqref{eqn:metric_h_in_Cflat}, \eqref{eqn:defn_vec_p} and Lemma~\ref{lemma_zero_modes}.
We claim that the right hand side of \eqref{eqn:Frechet_der_of_p} is zero.
To see this, one first notices that \[
W_i|_{(\Phi, \xi, \ell) = (\Psi_0, 0, 0)} = Z_i|_{(\Phi, \xi, \ell) = (\Psi_0, 0, 0)} = \dot Z_{4+i}|_{(\Phi, \xi, \ell) = (\Psi_0, 0, 0)} = \chi \nu^i,\]
by taking $\frac{\delta}{\delta\wp}$ on both sides of \eqref{eqn:mod_ortho_for_p}, we obtain \begin{equation}\label{eqn:taking_Fre_der_in_ortho_cond}
    \left|\int_{\Sigma_\uptau} \chi\nu^i \dfrac{\delta p}{\delta \wp} \sqrt{|h|}|_{\ell = 0}\right| = O(R_f^{-1}) \left|\tilde S_p\int_{\Sigma_\uptau} \chi\nu^i \dfrac{\delta (p + \p_0 p)}{\delta \wp} \sqrt{|h|}|_{\ell = 0}\right|.
\end{equation}
We further notice that $\tilde S_p \p_0 p = (S - I)p$ and hence it shows that the left hand side of \eqref{eqn:taking_Fre_der_in_ortho_cond} is zero. By taking $\p_0$ first in \eqref{eqn:mod_ortho_for_p} and then applying $\frac{\delta}{\delta\wp}$, combining with the preceding conclusion, it proves the claim. This concludes the proof of existence for $\xi$ and $\ell$.

\subsection{Controlling the unstable mode}
In the preceding subsection, we have settled the issue caused by the presence of zero modes of the stability operator $L$. Now we are ready to take care of the non-decaying solution produced by the positive eigenvalue of $L$.

Starting from the first order formulation \eqref{eqn:1st_order_HVMC}, combining with \eqref{eqn:leading_term_of_K}, \eqref{eqn:expression_of_f_in_1st_order_form}, together with a similar application of implicit function theorem in the derivation of \eqref{eqn:p_t_vec_bfOmg}, we obtain \[
    (\p_t - M) \vec \psi = \vec F_1(\p_{\Sigma}^{\leq 2}\vec\psi, \ell, \dot\wp),
\]
where the coefficients of $\p_{\Sigma}^{\leq 2}\vec\psi$ come with $\calO(\dot\wp, \p_{\Sigma}^{\leq 2}\vec\psi)$ factor. 
In view of \eqref{eqn:1st_order_eqn_for_p}, we obtain \begin{equation}\label{eqn:vec_F_2_defn}
    (\p_t - M) \vec q = \vec F_2(\p_{\Sigma}^{\leq 2}\vec \psi, \ell, \dot\wp, f_c + g_1 + \p_\tau g_2) := \vec F_1 - (0, f_c + g_1 + \p_\uptau g_2)^T,
\end{equation}
where the smallness in $\vec F_1$ mentioned above still holds for $\vec F_2$.

Recall that for the stability operator $L$ (defined in \eqref{eqn:stab_op_on_C}) has only $1$ positive eigenvalue $\mu^2$ (we use the convention $\mu > 0$). Denote the unique corresponding eigenfunction by $\varphi_
\mu$, then \[
    L \varphi_\mu = \mu^2 \varphi_\mu.
\]
This leads to the matrix form \begin{equation}\label{eqn:eftn_wrt_mu}
    \begin{pmatrix}
        0 & -\frac1{\sqrt{|h|}}|_{\ell = 0} \\
        - \sqrt{|h|}|_{\ell = 0} L & 0 \\
    \end{pmatrix}
    \begin{pmatrix}
        \varphi_\mu \\ \pm \mu \sqrt{|h|}|_{\ell = 0}\, \varphi_\mu
    \end{pmatrix}
    = \mp \mu \begin{pmatrix}
        \varphi_\mu \\ \pm \mu \sqrt{|h|}|_{\ell = 0}\, \varphi_\mu
    \end{pmatrix},
\end{equation}
where we know from \eqref{eqn:metric_h_in_Cflat} that $h|_{\ell = 0}$ is simply the metric on $\calC$.  
\begin{remark}
    If one would like to compare this with almost zero modes $\vec \varphi_1, \ldots, \vec \varphi_n$ in Lemma~\ref{lemma_zero_modes}, one could view the eigenfunctions \eqref{eqn:eftn_wrt_mu} here as \[
    \begin{pmatrix}
        e^{\pm\mu t} \varphi_\mu \\
        \p_t(e^{\pm\mu t} \varphi_\mu)
    \end{pmatrix}
    \]
    with modifying factor $\sqrt{|h|}|_{\ell = 0}$ due to the factor in the matrix operator.
\end{remark}

Motivated by \eqref{eqn:eftn_wrt_mu}, we define the time-independent almost eigenfunctions of the operator $M$ as \begin{equation}\label{eqn:defn_Z_pm}
    \vec Z_{\pm} := c_\pm \begin{pmatrix}
        \chi \varphi_\mu \\ \mp \mu \sqrt{|h|}|_{\ell = 0} \chi \varphi_\mu 
    \end{pmatrix}
    = c_\pm \begin{pmatrix}
        \chi \varphi_\mu \\ \pm \mu (\sqrt{|h|} h^{00})|_{\ell = 0} \chi \varphi_\mu 
    \end{pmatrix},
\end{equation}
where $\chi$ is the same cutoff as the one used in \eqref{eqn:defn_of_Z_i_s} and $c_\pm$ are normalization constants such that $\bfOmg(\vec Z_+, \vec Z_-) = 1$. In light of \eqref{eqn:eftn_wrt_mu}, \begin{equation}\label{eqn:M_Z_pm}
    M \vec Z_\pm = \pm \mu \vec Z_\pm + \vec\calE_\pm,
\end{equation}
where $\vec\calE_\pm$ consist of terms that are supported in $|\uprho| \simeq R_f$ or that have additional smallness in terms of $\ell$.

\subsubsection{Orthogonality conditions \eqref{eqn:ortho_cond_on_phi_for_Z_mu} and some corollaries}
Now we aim to enact a decomposition for $\vec q$ into the form \begin{equation}\label{eqn:q_decomp}
    \vec q = \vec \phi + a_+(t) \vec Z_+ + a_-(t) \vec Z_-,
\end{equation}
where the time-dependent parameters $a_+(t)$ and $a_-(t)$ will be defined shortly by imposing suitable orthogonality conditions.
A direct computation shows that \[
    (a_+' - \mu a_+) \vec Z_+ + (a_-' + \mu a_-)\vec Z_- + (\p_t - M)\vec \phi = a_+\vec\calE_+ + a_-\vec\calE_- + \vec F_2,
\]
where $\vec F_2$ is defined in \eqref{eqn:vec_F_2_defn} and slightly different from the $\vec K + \vec f$ in \eqref{eqn:vec_K_in_C_flatn}.
Furthermore, it yields that \[
    \frac{d}{dt}(e^{-\mu t}a_+) = - \frac{d}{dt}\left( e^{-\mu t} \bfOmg(\vec\phi, \vec Z_-)\right) + e^{-\mu t}F_+, \quad
    -\frac{d}{dt}(e^{\mu t} a_-) = - \frac{d}{dt}\left( e^{\mu t} \bfOmg(\vec\phi, \vec Z_+)\right) + e^{\mu t}F_-
\]
where \begin{equation}\label{eqn:defn_of_F_pm}
    F_\pm := \bfOmg(a_+ \vec\calE_+ + a_- \vec\calE_-, \vec Z_\mp) + \bfOmg(\vec\phi, \vec\calE_\mp) + \bfOmg(\vec F_2, \vec Z_\mp).
\end{equation}
It is then natural to impose the following orthogonality conditions on $\vec q$ that \begin{equation}\label{eqn:ortho_cond_on_phi_for_Z_mu}
    \bfOmg(\vec \phi, \vec Z_-) = -e^{\mu t} \widetilde S( e^{-\mu t} F_+), \quad \bfOmg(\vec \phi, \vec Z_+) = -e^{-\mu t} \widetilde S( e^{\mu t} F_-),
\end{equation}
where the operator $\widetilde S$ is defined in \eqref{eqn:defn_of_widetil_S_op}. Using the fact $\frac{d}{dt} \widetilde S = S - I$, we obtain that \begin{equation}\label{eqn:diff_eqn_for_a_pm}
    \frac{d}{dt}(e^{-\mu t} a_+) = S(e^{-\mu t} F_+), \quad \frac{d}{dt}(e^{\mu t} a_-) = -S(e^{\mu t} F_-).
\end{equation}
Besides, the commutability of $\widetilde S$ and $\frac{d}{dt}$ on the time interval of interest in $[0, \infty)$ implies that \[
    \big(\frac{d}{dt}\big)^j \bfOmg(\vec \phi, \vec Z_-) = -e^{\mu t} \widetilde S\left( e^{-\mu t} \big(\frac{d}{dt}\big)^j F_+\right), \quad 
    \big(\frac{d}{dt}\big)^j \bfOmg(\vec \phi, \vec Z_+) = -e^{-\mu t} \widetilde S\left( e^{\mu t} \big(\frac{d}{dt}\big)^j F_-\right)
\]
for any $j \geq 0$.

\subsubsection{Existence of the decomposition \eqref{eqn:q_decomp} obeying \eqref{eqn:ortho_cond_on_phi_for_Z_mu}}
We first summarize what we have done. First, we start with a given $\psi$ and show the existence of decomposition $\vec\psi = \vec p + \vec q$ for any arbitrary $\xi, \ell$ satisfying suitable smallness assumptions. Simultaneously, $c_{im}$'s ($i = 1, 2$, $m = 1,2,3,4,\mu$) are determined. In particular, if the perturbation $\psi \equiv 0$, then $f_c \equiv 0$ and finally $p$, $q$ are vanishing as well. Afterwards, the modulation parameters are determined via \eqref{eqn:modulation_eqn} by imposing suitable orthogonality conditions. Finally, we want to show the existence decomposition \eqref{eqn:q_decomp} for $\vec q$.

Given this consideration, $\vec p$ is known given $\vec \psi$ so we could view $\vec q$ as given now. Also, we need to recall that vanishing of $\vec \psi$ means vanishing of $\vec q$. Then it suffices to show that there exists $a_+(t), a_-(t)$ so that the functional \[
    \Upsilon_\mu(\vec q, a_+, a_-) := \begin{pmatrix} 
    \bfOmg\big( \vec q - a_+ \vec Z_+ - a_- \vec Z_-, \vec Z_- \big) + e^{\mu t}\widetilde S(e^{-\mu t}F_+) \\
    \bfOmg\big( \vec q - a_+ \vec Z_+ - a_- \vec Z_-, \vec Z_+ \big) + e^{-\mu t}\widetilde S(e^{\mu t}F_-) 
    \end{pmatrix}
\]
vanishes provided that $\vec q$ is given. We remark that we actually replace $\vec\phi$ in $\vec F_\pm$ by $\vec q - a_+ \vec Z_+ - a_- \vec Z_-$ and $\vec F_2$ by $(\p_t - M)\vec q$ in this definition of $\Upsilon_\mu$ so that $\Upsilon_\mu$ indeed depends solely on $a_+, a_-$ and $\vec q$. 
Now we first note that $\Upsilon_\mu(0, 0, 0) = 0$. Then we compute the Fr\'{e}chet derivative \[
    D_{a_+, a_-}\Upsilon_\mu(0, 0, 0) = \begin{pmatrix}
        -1 & 0 \\ 0 & 1 \\
    \end{pmatrix}
    + o_{R_f, \ell}(1),
\]
where the presence of $\calE_\pm$ in $\calF_\pm$ contributes to the error terms.
Thus, for $\vec \psi$ under bootstrap assumptions, the existence of \eqref{eqn:q_decomp} with \eqref{eqn:ortho_cond_on_phi_for_Z_mu} follows from the implicit function theorem.
We remark that $C^2([0, \tau_0])$ is the functional space we choose to work in for $a_+, a_-$ when applying implicit function theorem like in Section~\ref{Section:Existence_of_decomp_Omg}. Here, $\tau_0$ is any arbitrary time no later the final time $\tau_f$ in the bootstrap argument. 
Therefore, it follows from \eqref{eqn:q_decomp} and \eqref{eqn:ortho_cond_on_phi_for_Z_mu} that \begin{equation}\label{eqn:eqn_for_a_pm}
     \bfOmg(\vec q, \vec Z_-) = -e^{\mu t} \widetilde S(e^{-\mu t} F_+) + a_+(t), \quad \bfOmg(\vec q, \vec Z_+) = -e^{-\mu t} \widetilde S(e^{\mu t} F_-) - a_-(t), 
\end{equation}
for $t \geq 0$.
In particular, putting $t = 0$ in \eqref{eqn:eqn_for_a_pm}, in view of $p$ having zero Cauchy data (i.e. $\vec q(0) = \vec \psi(0)$), we know that  \begin{equation}\label{eqn:size_eps_of_a_-_0}
    a_-(0) = -\bfOmg(\vec \psi(0), \vec Z_+(0)) - \widetilde S(e^{\mu \cdot}F_-)(0), \quad
    a_+(0) = \bfOmg(\vec\psi(0), \vec Z_-(0)) + \widetilde S(e^{-\mu \cdot} F_+)(0)
\end{equation}
are both of size $\eps$. 
\begin{remark}\label{rmk_on_extension_in_t}
    Note that $\widetilde S(e^{\pm\mu \cdot}F_\mp)(0)$ is purely determined via $F_\pm|_{[-1,0]}$. In other words, this term purely depends on the initial data and how we extend the solutions. One could recall that we extend all parameters trivially and $\psi$ is extended in a linear way $\psi = \psi_0 + t \psi_1$, which is explicitly mentioned in the proof of Theorem~\ref{main_thm}. These extensions depend on nothing other than the initial data and the forms of which do not matter too much as long as they inherit $\veps$-smallness from initial data.
\end{remark}

To summarize, all conditions including   \eqref{eqn:mod_ortho_for_p}, \eqref{eqn:ortho_cond_for_vec_bfOmg} and \eqref{eqn:ortho_cond_on_phi_for_Z_mu} are imposed in this section for $p$ and $q$ to deal with the obstructions on the linear level.

\section{Bootstrap assumptions}\label{section:bootstrap_assumptions}

We first briefly summarize what we have accomplished so far. First, we assume the existence of $\psi$, which solves 
\eqref{eqn:P_psi_eqn}. Then we provide a scheme to determine a so-called modified profile $p$, which solves a similar equation as $\psi$ with a different source term. Then we show that we could find $\xi, \ell$ such that the decomposition \eqref{eqn:ortho_cond_for_vec_bfOmg} is valid. Under proper smallness assumptions of modulation parameters, we show the existence of decomposition \eqref{eqn:ortho_cond_for_vec_bfOmg}, and \eqref{eqn:q_decomp}, leading to the governing equations of parameters \eqref{eqn:modulation_eqn} and \eqref{eqn:diff_eqn_for_a_pm}. The modulation equation \eqref{eqn:modulation_eqn} serves as our heuristics to see that we would at least expect $\p_t\vec\bfOmg_p - \vec N_p$ and $\vec N_q$ both having $\tau^{-2-}$ decay in order to ensure the desired $\tau^{-2-}$ decay for $\dot\wp$.
In what follows, we will use $U_k$ to denote $(\uprho\p_\uprho)^{k_3}(\p_\uptheta)^{k_2}(\p_\uptau)^{k_1} U$ for $k = (k_1, k_2, k_3)$. The order of $k$ is given by $|k| = k_1 + k_2 + k_3$.

\subsection{Bootstrap assumption list}\label{Section:BA}

We assume that the following trapping assumption holds on $\uptau \in [0, \tau_f]$ : \begin{equation}\label{eqn:trapping_assumption}\tag{trapping-$a_+$}
    |a_+(\uptau)| \leq C_\trap \eps \brk{\uptau}^{-\frac94 + \kappa}.
\end{equation}
\eqref{eqn:trapping_assumption} will not be improved like other assumptions in Section~\ref{Section:BA}. Instead, we will close the bootstrap under this trapping assumption and show that we could select codimension-$1$ initial data set such that this trapping assumption holds in all time. Note that $a_+(0) = O(\eps)$ from \eqref{eqn:size_eps_of_a_-_0} and hence this assumption is at least valid locally in time.

Now, we state our bootstrap assumptions and we refer to Remark~\ref{rmk_on_extra_smallness_delta_wp} for an explanation of the parameter $\delta_\wp$. We assume that the following estimates are satisfied for $\tau \in [0, \tau_f]$ :  
\begin{align}
    |\frac{d^k}{d\uptau^k}\dot\wp(\uptau)| &\leq 2C_k\delta_\wp \eps \brk{\uptau}^{-\frac{9}{4}+\kappa},\quad k \geq 0, \tag{BA-$\dot\wp$}\label{BA-dotwp} \\
    \|\frac{d^k}{d\uptau^k}\ddot\wp\|_{L^2_{\uptau}[\tau_1, \tau_2]} &\leq 2C_k \delta_\wp \eps \brk{\tau_1}^{-\frac{9}{4} + \kappa}, \quad k \geq 0, \tag{BA-$\ddot\wp$-$L^2$}\label{BA-ddotwp-L2} \\
    |a_+^{(k)}(\uptau)| &\leq C_\trap \eps \brk{\uptau}^{-\frac{9}{4}+\kappa} + 2C_k\delta_\wp^{\frac12} \eps \brk{\uptau}^{-\frac{9}{4}+\kappa}, \quad k \geq 0, \tag{BA-$a_+$}\label{BA-a_+}\\
    |a_-^{(k)}(\uptau) - (-\mu)^k a_-(0)e^{-\mu t}| &\leq 2C_k\delta_\wp^{\frac12} \eps \brk{\uptau}^{-\frac{9}{4}+\kappa}, \quad k \geq 0, \tag{BA-$a_-$}\label{BA-a_-} \\
    |p_k| &\leq 2C_k\eps \brk{\uptau}^{-2+\kappa}, \quad 0 \leq |k| \leq M - 7, \tag{BA-$p$}\label{BA-p} \\
    |\p_\uptau p_k| &\leq 2C_k\eps \brk{\uptau}^{-\frac94+\kappa}, \quad 0 \leq |k| \leq M - 8, \tag{BA-$\p_\tau p$}\label{BA-dtau-p} \\
    \chi_{\uprho \geq R} |p_k| &\leq 2C_k\eps \brk{\uprho}^{-1} \brk{\uptau}^{-1 + \frac{\kappa}2},\quad 0 \leq |k| \leq M - 2, \tag{BA-$p$-ext1}\label{BA-p-ext1} \\
    \chi_{\uprho \geq R} |p_k| &\leq 2C_k\eps \brk{\uprho}^{-\frac32}\brk{\uptau}^{-\frac{1}{2}+\frac{\kappa}{4}},\quad 0 \leq |k| \leq M - 2, \tag{BA-$p$-ext2}\label{BA-p-ext2} \\
    \|\p_\tau^2 \bbP_k\|_{LE(\calD^{\tau'}_\tau)} &\leq 2 C_k \eps \brk{\tau}^{-\frac94 + \kappa}, \quad |k| \leq M - 7, \tag{BA-$p$-LE}\label{BA-p-LE} \\
    |\phi_k| &\leq 2C_k\eps \brk{\uptau}^{-\frac94 + \kappa}, \quad 0 \leq |k| \leq M - 29, \tag{BA-$\phi$}\label{BA-q} \\
    \chi_{\uprho \geq R} |\phi_k| &\leq 2C_k\eps \brk{\uprho}^{-1}\brk{\uptau}^{-\frac{5}{4}+\kappa}, \quad 0 \leq |k| \leq M - 26, \tag{BA-$\phi$-ext1}\label{BA-q-ext1}\\
    \chi_{\uprho \geq R} |\phi_k| &\leq 2C_k\eps \brk{\uprho}^{-\frac32}\brk{\uptau}^{-\frac{5}{8}+\frac{\kappa}2}, \quad 0 \leq |k| \leq M - 26, \tag{BA-$\phi$-ext2}\label{BA-q-ext2} \\
    \|\p_\tau \mathbb\Phi_k\|_{LE(\calD^{\tau'}_\tau)} &\leq 2 C_k \eps \brk{\tau}^{-\frac94 + \kappa}, \quad |k| \leq M - 29, \tag{BA-$\phi$-LE}\label{BA-q-LE}
\end{align}
where we remark that all $C_k$'s could be viewed sufficiently large as it needs to beat the linear effects in short time scales. 
These assumptions are assumed to be true throughout the paper including previous sections.

\begin{remark}\label{rmk_on_extra_smallness_delta_wp}
    In our bootstrap argument, the energy of the perturbation $\psi$ linearly enters when estimating the parameter derivatives, while the parameter derivatives enter linearly in the energy estimates in turn. What breaks the circularity is that the linear appearance of the energy of the perturbation in the estimates for the parameter derivatives is always accompanied by a small (but non-decaying) constant, denoted by $\delta_\wp$. Indeed, in view of the equation for parameter derivatives \eqref{eqn:modulation_eqn} and the property of $\vec G$ in \eqref{eqn:vec_G_eqn_derived_from_1st_order_HVMC_1}, we notice that the linear contribution of $\vec q$ comes from $\vec N_q$, which will possess extra smallness in terms of $\calO(\ell)$ or inverse power of $R_f$ (see Step 2 and 3 in the proof of Lemma~\ref{lemma_decay_of_bfOmg_phi} and in particular \eqref{eqn:inverse_R_1_decay_contribution_for_delta_wp}). This small constant is denoted by $\delta_\wp$ whose precise form is given in \eqref{eqn:delta_wp_defn}. Moreover, the linear contribution of $\vec p$ in \eqref{eqn:modulation_eqn} is of the form $\p_t \vec p - M\vec p$, having extra smallness as well. See Remark~\ref{rmk_on_construction_of_p}.
    Shortly, we shall see that the smallness of $\delta_\wp$ is indeed in terms of $O(\ell)^{\frac12}$, inverse powers of $R_f$.
\end{remark}

\begin{remark}
    To compare our trapping assumptions with the one in \cite{OS24}, one needs to note that even though the quantity therein seems to be on $C^2$-level according to the orthogonality conditions, it's still understood on the $C^0$ level of $a_\pm$. The reason why we do not need this modification in this paper is that we do not attempt to get better decay when taking derivatives in $\uptau$ for $\phi$. In contrast, better decay of $\ddot a_+$ is required (even if this might not be stated explicitly in bootstrap assumptions for $a_+^{(k)}$) to close bootstrap for quantities with at least one $\mathbf{T}$ derivative (see for instance \cite[Equation (6.7), Lemma 7.1]{OS24}). 
\end{remark}

\subsection{Improving bootstrap assumptions}
\begin{proposition}\label{prop_closing_BA_1}
    Suppose the bootstrap assumptions in Section~\ref{Section:BA} are satisfied. Moreover, the orthogonality conditions \eqref{eqn:mod_ortho_for_p}, \eqref{eqn:ortho_cond_for_vec_bfOmg} and \eqref{eqn:ortho_cond_on_phi_for_Z_mu} hold true. If the size of the initial perturbation $\eps$ is sufficiently small, the constants $C_k$ appearing in bootstrap assumptions are sufficiently large (compared to $\frac{a_-(0)}{\eps}$ and the linear effect), then the following improved bootstrap estimates hold : 
    \begin{align}
        |\frac{d^k}{d\uptau^k}\dot\wp(\uptau)| &\leq C_k\delta_\wp \eps \brk{\uptau}^{-\frac94+\kappa},\quad k \geq 0, \tag{IBA-$\dot\wp$}\label{IBA-dotwp}\\
        \|\frac{d^k}{d\uptau^k}\ddot\wp\|_{L^2_{\uptau}[\tau_1, \tau_2]} &\leq C_k \delta_\wp \eps \brk{\tau_1}^{-\frac{9}{4} + \kappa}, \quad k \geq 0, \tag{IBA-$\ddot\wp$-$L^2$}\label{IBA-ddotwp-L2} \\
        |a_+^{(k)}(\uptau)| &\leq C_\trap \eps \brk{\uptau}^{-\frac{9}{4}+\kappa} +  C_k\delta_\wp^{\frac12} \eps \brk{\uptau}^{-\frac94+\kappa}, \quad k \geq 0, \tag{IBA-$a_+$}\label{IBA-a_+}\\
        |a_-^{(k)}(\uptau) - (-\mu)^k a_-(0)e^{-\mu t}| &\leq C_k\delta_\wp^{\frac12} \eps \brk{\uptau}^{-\frac{9}{4}+\kappa}, \quad k \geq 0. \tag{IBA-$a_-$}\label{IBA-a_-}  
    \end{align}
\end{proposition}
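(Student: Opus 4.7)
The plan is to improve each of the four bootstrap bounds by inspecting the governing equations derived in Section~\ref{Section:modulation_equations} and Section~\ref{Section:Derivation_of_op_modified_profile}, isolating in every contribution an extra smallness factor of either $\delta_\wp$ or $\delta_\wp^{1/2}$ (to be verified in the form \eqref{eqn:delta_wp_defn}). Two structural facts drive everything: first, the estimate $|\vec G(x,y,z)| \lesssim |x|^2 + |z|$ from \eqref{eqn:vec_G_eqn_derived_from_1st_order_HVMC_1}; second, the fact that the linear contributions of $\vec q$ and of $\vec p$ enter the third argument of $\vec G$ respectively through $\vec N_q$ and $\p_t \vec\bfOmg_p - \vec N_p$, each of which already carries the required smallness and $\uptau$-decay (see Remark~\ref{rmk_on_extra_smallness_delta_wp} and Remark~\ref{rmk_on_construction_of_p}).

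For \eqref{IBA-dotwp} I would start from the modulation equation \eqref{eqn:modulation_eqn} and control the three arguments of $\vec G$ separately. The quadratic term $|S\p_\Sigma^{\leq 2}\vec\psi|^2$ is estimated via the decomposition $\vec\psi = \vec p + \vec\phi + a_+\vec Z_+ + a_-\vec Z_-$: the $\vec p$--$\vec p$ product uses \eqref{BA-p}+\eqref{BA-dtau-p}, the mixed and $\vec\phi$--$\vec\phi$ terms use \eqref{BA-q}, and the $a_-\vec Z_-$ contribution decays exponentially thanks to \eqref{BA-a_-} together with the $\eps$-size of $a_-(0)$ from \eqref{eqn:size_eps_of_a_-_0}, while $a_+\vec Z_+$ is handled by the trapping assumption \eqref{eqn:trapping_assumption}; overall one gains an $\eps\brk{\uptau}^{-9/4+\kappa}$ bound with a prefactor that can be absorbed into $\delta_\wp$. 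The term $S(\p_t\vec\bfOmg_p - \vec N_p)$ is already of size $\delta_\wp\eps\brk{\uptau}^{-9/4+\kappa}$ by \eqref{eqn:all_decay_of_c_im}, because in view of \eqref{eqn:1st_order_eqn_for_p} and \eqref{eqn:p_t_Omg_p_minus_N_p} it reduces to pairings of $f_c + g_1 + \p_\tau g_2$ against $\vec Z_j$, whose coefficients $c_{im}$ decay with the extra $\delta_\wp$ factor. The linear term $S\vec N_q$ is controlled by pairing $M\vec Z_j$ against $\vec q$, which produces spatially localized factors carrying $\calO(\ell) + \calO(R_f^{-1})$ smallness; this is where the $\vec\phi$-contribution acquires its $\delta_\wp$ factor. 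Finally, $\vec\omg$ is handled via the Duhamel representation \eqref{eqn:defn_of_vec_Upomg_omg_2_int_form} combined with the same structural smallness of $\vec F_\omg$. Summing up gives a constant strictly less than $2C_k$, yielding the improvement. The higher derivatives $\p_\uptau^k\dot\wp$ are treated analogously by differentiating \eqref{eqn:modulation_eqn}, using that $S$, $\widetilde S$ commute with $\p_\uptau$ up to boundary contributions on $[-1,0]$ which depend only on the initial data.

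For \eqref{IBA-ddotwp-L2} I would differentiate \eqref{eqn:modulation_eqn} once more in $\uptau$ and observe that $\vec G$, being smooth at the origin, yields a multilinear expression in the $\p_\uptau^{\leq k}$-derivatives of the three arguments. The pointwise quantities are handled as above, and the $L^2_\uptau$ integrability comes precisely from the bootstrap assumption \eqref{BA-p-LE} for the $\vec p$-contribution and \eqref{BA-q-LE} for the $\vec\phi$-contribution; the exponentially decaying $a_-$-part contributes a pointwise bound that is trivially $L^2$-integrable, while $a_+$ is absorbed by \eqref{eqn:trapping_assumption}. The key point is that at most one factor needs its $L^2_\uptau$-norm exploited while the others are controlled pointwise, so the nonlinear terms are harmless.

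For \eqref{IBA-a_+} and \eqref{IBA-a_-} I would integrate \eqref{eqn:diff_eqn_for_a_pm}, writing
\[
    a_+(\uptau) = e^{\mu\uptau}a_+(0) + e^{\mu\uptau}\int_0^\uptau S(e^{-\mu s}F_+)(s)\,ds,
\]
and the analogous Duhamel formula for $a_-$. Inspecting the definition \eqref{eqn:defn_of_F_pm} of $F_\pm$, the term $\bfOmg(a_+\vec\calE_+ + a_-\vec\calE_-, \vec Z_\mp)$ brings in $|a_+|+|a_-|$ multiplied by the $\calO(\ell)+\calO(R_f^{-1})$ smallness inherent to $\vec\calE_\pm$ (cf.\ \eqref{eqn:M_Z_pm}); the term $\bfOmg(\vec\phi, \vec\calE_\mp)$ inherits the same spatial smallness, providing the $\delta_\wp^{1/2}$ factor on top of \eqref{BA-q}; and $\bfOmg(\vec F_2, \vec Z_\mp)$ is controlled via \eqref{eqn:vec_F_2_defn} by \eqref{BA-dotwp}, \eqref{BA-p}, \eqref{BA-q} together with the construction of the modified profile. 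Precisely here the trapping assumption \eqref{eqn:trapping_assumption} cancels the exponential growth $e^{\mu\uptau}$ in the $a_+$-equation, while $a_-$ converges by pure exponential damping; $k$-th derivatives are handled by differentiating \eqref{eqn:diff_eqn_for_a_pm} and reapplying the same bounds.

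The main obstacle will be the bookkeeping of the $\delta_\wp$ and $\delta_\wp^{1/2}$ smallness for the $\vec\phi$-contributions in both the modulation estimate and the $a_\pm$ equations: one must verify in each linear-in-$\phi$ appearance that the accompanying factor is either a parameter of size $\ell$, a spatial cutoff against an exponentially-decaying eigenfunction, or an inverse power of $R_f$, so that it can be absorbed into the small constant $\delta_\wp$ before the (potentially large) bootstrap constants $C_k$ are fixed. Once this is systematically done, the improvements follow by choosing $R_f$ large, then $\eps$ sufficiently small.
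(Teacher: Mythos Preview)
Your overall strategy matches the paper's, but there is a genuine gap in your treatment of \eqref{IBA-a_+}. You propose the forward Duhamel formula
\[
    a_+(\uptau) = e^{\mu\uptau}a_+(0) + e^{\mu\uptau}\int_0^\uptau S(e^{-\mu s}F_+)(s)\,ds
\]
and claim that the trapping assumption ``cancels the exponential growth $e^{\mu\uptau}$.'' It does not: both summands on the right grow like $e^{\mu\uptau}$, and the trapping assumption \eqref{eqn:trapping_assumption} is a bound on $a_+(\uptau)$ itself, not a mechanism that converts either term into something decaying. A forward Duhamel representation is simply unusable for the exponentially unstable mode.

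The paper avoids this entirely. For $k=0$ there is nothing to prove: \eqref{eqn:trapping_assumption} \emph{is} the first term of \eqref{IBA-a_+}, so the bound holds trivially. For $k\geq 1$ one does not integrate at all but reads off the pointwise identity
\[
    \dot a_+ = \mu a_+ + e^{\mu t}S(e^{-\mu t}F_+)
\]
from \eqref{eqn:diff_eqn_for_a_pm}, bounds the first term by the $k=0$ case (trapping) and the second by Lemma~\ref{lemma_decay_of_F_pm}, then proceeds inductively. Your Duhamel formula for $a_-$ is fine, since there the exponential is damping. Separately, for the $L^2_\uptau$ bound on $\dot a_+$ appearing in \eqref{IBA-ddotwp-L2}, the paper \emph{does} integrate, but backward from $\tau_f$ (see Lemma~\ref{lemma_control_higher_der_of_parameters}), using the already-closed pointwise bound on $\dot a_+(\tau_f)$ together with Schur's test; forward integration fails there for the same reason.

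The remaining parts of your proposal---controlling the three arguments of $\vec G$ via \eqref{eqn:vec_G_eqn_derived_from_1st_order_HVMC_1}, extracting the $\delta_\wp$ smallness from $\vec N_q$ and from $\p_t\vec\bfOmg_p - \vec N_p$, and absorbing higher $\p_\uptau$-derivatives into the kernel of $S$---are in line with the paper's argument.
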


\begin{proposition}\label{prop_closing_BA_2}
    Suppose the bootstrap assumptions in Section~\ref{Section:BA} are satisfied. Moreover, the orthogonality conditions \eqref{eqn:mod_ortho_for_p}, \eqref{eqn:ortho_cond_for_vec_bfOmg} and \eqref{eqn:ortho_cond_on_phi_for_Z_mu} hold true. If the size of the initial perturbation $\eps$ is sufficiently small, the constants $C_k$ appearing in bootstrap assumptions are sufficiently large (compared to $\frac{a_-(0)}{\eps}$, $C_\trap$ and the linear effect), then the following improved bootstrap estimates hold : 
    \begin{align}
    |p_k| &\leq C_k\eps \brk{\uptau}^{-2+\kappa}, \quad 0 \leq |k| \leq M - 7, \tag{IBA-$p$}\label{IBA-p} \\
    |\p_\uptau p_k| &\leq C_k\eps \brk{\uptau}^{-\frac94+\kappa}, \quad 0 \leq |k| \leq M - 8, \tag{IBA-$\p_\tau p$}\label{IBA-dtau-p} \\
    \chi_{\uprho \geq R} |p_k| &\leq C_k\eps \brk{\uprho}^{-1} \brk{\uptau}^{-1 + \frac{\kappa}2},\quad 0 \leq |k| \leq M - 2, \tag{IBA-$p$-ext1}\label{IBA-p-ext1} \\
    \chi_{\uprho \geq R} |p_k| &\leq C_k\eps \brk{\uprho}^{-\frac32}\brk{\uptau}^{-\frac{1}{2}+\frac{\kappa}{4}},\quad 0 \leq |k| \leq M - 2, \tag{IBA-$p$-ext2}\label{IBA-p-ext2} \\
    \|\p_\tau^2 \bbP_k\|_{LE(\calD^{\tau'}_\tau)} &\leq C_k \eps \brk{\tau}^{-\frac94 + \kappa}, \quad |k| \leq M - 7, \tag{IBA-$p$-LE}\label{IBA-p-LE} \\
    |\phi_k| &\leq C_k\eps \brk{\uptau}^{-\frac94 + \kappa}, \quad 0 \leq |k| \leq M - 29, \tag{IBA-$\phi$}\label{IBA-q} \\
    \chi_{\uprho \geq R} |\phi_k| &\leq C_k\eps \brk{\uprho}^{-1}\brk{\uptau}^{-\frac{5}{4}+\kappa}, \quad 0 \leq |k| \leq M - 26, \tag{IBA-$\phi$-ext1}\label{IBA-q-ext1}\\
    \chi_{\uprho \geq R} |\phi_k| &\leq C_k\eps \brk{\uprho}^{-\frac32}\brk{\uptau}^{-\frac{5}{8}+\frac{\kappa}2}, \quad 0 \leq |k| \leq M - 26, \tag{IBA-$\phi$-ext2}\label{IBA-q-ext2} \\
    \|\p_\tau \mathbb\Phi_k\|_{LE(\calD^{\tau'}_\tau)} &\leq C_k \eps \brk{\tau}^{-\frac94 + \kappa}, \quad |k| \leq M - 29, \tag{IBA-$\phi$-LE}\label{IBA-q-LE}
    \end{align}
\end{proposition}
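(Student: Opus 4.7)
The proof is organized as a linear bootstrap-within-a-bootstrap: having fixed \eqref{IBA-dotwp}--\eqref{IBA-a_-} from Proposition~\ref{prop_closing_BA_1}, I first improve every bound involving only the modified profile $p$, and then feed these back into the equation \eqref{eqn:calP_q_source_term} for $q$ to improve every bound involving $\phi = q - a_+ Z_+ - a_- Z_-$. For each of $p$ and $\phi$, the workflow is: (a) a degenerate integrated local energy decay estimate in the bounded region, obtained from the Hardy-type fix that repairs the loss at the trapped unstable orbit around the neck, combined with spectral coercivity coming from the orthogonality conditions \eqref{eqn:mod_ortho_for_p}, \eqref{eqn:ortho_cond_for_vec_bfOmg}, \eqref{eqn:ortho_cond_on_phi_for_Z_mu}; (b) the new commutator-enhanced $r^p$-hierarchy of Section~\ref{section_rp_methods_hyperboloidal} run on the hyperboloidal portion $\calC_\hyp$; (c) matching of (a) and (b) along $\calC_\tran$, followed by a pigeonhole in $\tau$ that converts flux control into energy decay; (d) weighted Sobolev embedding on the leaves $\Sigma_\uptau$ to upgrade energies to the claimed pointwise and far-field bounds.

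For the $p$-bounds, I would start from $\calP_h p = f_c + g_1 + \p_\tau g_2$, where $f_c = \calO(\dot\wp\, r^{-3})$ by Lemma~\ref{lemma_F_0_decay} and $g_1, g_2$ are the compactly supported pieces of \eqref{eqn:decomp_of_g_i} whose coefficients $c_{im}$ come with the extra smallness $\delta_\wp$ and the decay \eqref{eqn:all_decay_of_c_im}--\eqref{eqn:L2_est_of_c_im}. Inserting \eqref{IBA-dotwp} into $f_c$ gives source decay $\delta_\wp \eps \brk{\tau}^{-9/4+\kappa} r^{-3}$, which is integrable against the weight $r^{p+1}$ appearing in the $r^p$-energy identity for any $p < 3$. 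Pushing the commutator hierarchy up to effective exponent close to $5/2$ and pigeonholing yields $\brk{\tau}^{-9/4+\kappa}$ decay of the zeroth-order energy flux, and hence, after commuting with the tangential fields of Section~\ref{Section:notation} up to order $M-7$, the pointwise bounds \eqref{IBA-p}, \eqref{IBA-dtau-p} via weighted Sobolev embedding on the leaves. The far-field bounds \eqref{IBA-p-ext1}, \eqref{IBA-p-ext2} are read off the $r^p$-bulk norms at intermediate exponents. Finally, \eqref{IBA-p-LE} is obtained by rewriting $\p_\tau^2 p$ through the equation as spatial derivatives of $p$ plus source and applying the ILED estimate on $\calD^{\tau'}_\tau$.

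For the $\phi$-bounds the strategy is identical, but with two modifications. First, the unstable mode is stripped off exactly by the decomposition \eqref{eqn:q_decomp}, and the trapping assumption \eqref{eqn:trapping_assumption} combined with \eqref{IBA-a_+}, \eqref{IBA-a_-} controls $a_\pm \vec Z_\pm$ at order $\eps\brk{\tau}^{-9/4+\kappa}$. Second, the source for $\phi$ now contains $\calP_\pert p$ as in \eqref{eqn:calP_q_source_term}, whose decay is governed by the precise structure of \eqref{eqn:calP_pert_in_C_hyp} together with \eqref{IBA-dotwp} and the already-improved $p$-bounds, plus the quadratic and cubic nonlinearity $s^{-1}(\calF_2 + \calF_3)$, which is supercritical under \eqref{BA-q}--\eqref{BA-q-LE}. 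The bulk-ILED step for $\phi$ invokes Theorem~\ref{thm_ell_est_for_L_w_lot} to recover the full Sobolev topology after projecting away from the zero modes $\ebar_j$ of $L$, whose residual contribution to $\vec q$ is exactly $\vec\omg + \vec\Upomg$, which in turn is controlled through the integral forms \eqref{eqn:defn_of_vec_Upomg_omg_1_int_form}--\eqref{eqn:defn_of_vec_Upomg_omg_2_int_form} by quantities already improved in Proposition~\ref{prop_closing_BA_1}.

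The hard part will be two intertwined technical points in the top-order estimates. First, the quasilinear perturbation $\calP_\pert$ and the nonlinearities $\calF_2, \calF_3$ can appear to lose a derivative at the highest order; the cure, following \cite{LOS22}, is to pair the top-order commuted equation against parameter-smoothed bootstrap quantities produced by the smoothing operator $S$ from \eqref{eqn:modulation_eqn}, so that the would-be loss is absorbed into a commutator with $S$ that is harmless by the moment condition on its kernel. Second, the new commutator $K = r^{3/2}\p_r$ produces the first-order term $r^{-1}\p_\tau Y$ in the $Y$-equation whose sign is not definite; the Hardy-type identity sketched in the introduction recovers coercivity but only outside the excluded exponents $p \in \{1, 2\}$, so the interpolation between the baseline and the $5/2$-endpoint must avoid these values. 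Once these two obstacles are handled, the improved constants $C_k$ are obtained uniformly in $\tau_f$, closing the bootstrap for $\eps$ sufficiently small and $R_f$ sufficiently large.
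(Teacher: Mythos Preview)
Your overall architecture is right for $\phi$, but there is a genuine gap in your treatment of $p$: the commutator-$K$ hierarchy \emph{cannot} be run on $\bbP$. Recall that the source in \eqref{eqn:calP_h_eqn_for_mod_profile} satisfies $\calF_{0,\bbP}=\calO(\dot\wp\,r^{-3})$, so the conjugated source is $\tilde\calF_{0,\bbP}=r^{3/2}(1-\Theta\cdot\eta')\calF_{0,\bbP}=\calO(\dot\wp\,r^{-3/2})$. After commuting with $K=r^{3/2}\p_r$, the right-hand side of the $Y$-equation in Lemma~\ref{lemma_computation_for_R_c} is $\calG_{1}=(K+2r^{1/2})\tilde\calF_{0,\bbP}=\calO(\dot\wp\,r^{-1})$, and therefore $\int_{\calD}\chi\,r^{p+1}|\calG_{1}|^2\,dr\,d\theta\,d\tau$ diverges for every $p\geq 0$; the $Y$-estimate \eqref{eqn:main_rp_est_for_Y_w_G12} is vacuous. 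This is exactly why the modified profile $p$ was introduced in the first place: to siphon off the slow $r^{-3}$ tail into $\bbP$ and leave $\mathbb\Phi$ with the faster source $\calF_{0,\mathbb\Phi}=\calO(\dot\wp\,r^{-4})$, which \emph{does} survive the $K$-commutation. Relatedly, your integrability claim ``$r^{-3}$ against $r^{p+1}$ for $p<3$'' is off because the energy identity is for $\tilde U$, not $U$; the relevant $L^2_\tau L^2_x$ integrand is $r^{p+1}|\tilde\calF_{0,\bbP}|^2\sim|\dot\wp|^2 r^{p-2}$, integrable only for $p<1$.

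The paper's route for $\bbP$ is therefore quite different from what you propose: one runs only the \emph{standard} hierarchy of Proposition~\ref{prop_higher_order_rp_for_U} on $\bbP$, placing the source in $L^1_\tau L^2_x$ (so that $\int r^p|\tilde\calF_{0,\bbP}|^2\,dr$ needs only $p<2$), which yields the weaker rate \eqref{IBA-p}. The improved rate \eqref{IBA-dtau-p} for $\p_\tau p_k$ is not obtained via $K$ either, but by the algebraic trade \eqref{eqn:algebraic_relation_between_eng_of_bbP_and_p_tau_bbP} (solving the equation for $\p_\tau\p_r\tilde\bbP$) together with a further run of the standard hierarchy at the $\p_\tau\bbP$ level, where for small $p$ the source can now go into $L^2_\tau L^2_x$ via \eqref{BA-ddotwp-L2}. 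Your $\phi$-plan is essentially correct; one smaller point: the top-order quasilinear loss in $\calF_2,\calF_3$ is not handled through the smoothing operator $S$ (which lives only in the modulation ODEs) but by singling out the $\mathbb\Phi$-factor carrying the highest derivative and treating the term as a perturbation of the principal part via integration by parts, as in Remark~\ref{rmk_handle_nonlinearity}.
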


We end this subsection with several heuristic remarks on the bootstrap argument.
\begin{remark}
    The proof of the main theorem (Theorem~\ref{main_thm}) relies on a continuity argument which is based on an adequate version of local existence theorem (see Appendix~\ref{Section:LWP}) and bootstrapping. Although in our setup for the problem, there are two nonlocal constructions, namely the parameter smoothing and the shooting argument to show the existence of $a_+$ satisfying \eqref{eqn:trapping_assumption}. However, the procedure will not be affected by the nonlocality as these constructions still preserve causality, i.e., they do not depend on future times. This is in contrast to some global arguments in other works, for instance, the foliation chosen in \cite{CK90} is constructed from the last slice (i.e., the leaf parametrized the final time in the bootstrap). This process is described in \cite[Step 4 in Proof 10.2.1]{CK90} in detail. The bootstrap and modulation arguments that we adopt here are also different from the dyadic approach developed in \cite{DHRT24}. Modulation on dyadic time-slabs has also been performed in \cite{Kadar24a}, \cite{Kadar24b} and \cite{AKU24}. 
\end{remark}

\begin{remark}\label{rmk_on_a_-_0_in_BA}
    For the bootstrap regarding $a_-(t)$, there is a certain part that does not contain $\delta_\wp$-smallness. Nevertheless, it does not cause a problem to improve the bootstrap for estimates regarding $p$ and $\phi$ as $a_-(0)$ is an absolute constant determined by the initial data of $\psi$. See \eqref{eqn:size_eps_of_a_-_0}. We could simply choose $C_k$'s in the bootstrap assumptions in Proposition~\ref{prop_closing_BA_2} (not required for the ones in Proposition~\ref{prop_closing_BA_1}) large enough to beat this constant. The appearance of $C_\trap$ in the bootstrap assumptions of $a_+$ does not cause any trouble due to the same consideration.
\end{remark}

\subsection{Precise statement of main theorem and its proof}
We shall defer the proof of Proposition~\ref{prop_closing_BA_1} to the following subsection and that of Proposition~\ref{prop_closing_BA_2} to Section~\ref{section_rp_methods_hyperboloidal}. Instead, we state and prove the main theorem assuming Proposition~\ref{prop_closing_BA_1} and \ref{prop_closing_BA_2}.

To state the main theorem, we first recall the coordinates on $\barcalC$ in \eqref{eqn:metric_C_in_rho}. 
We first recall the $\chi \varphi_\mu$ defined to be the first entry of $\vec Z_\pm$ (see \eqref{eqn:defn_Z_pm}), where $\varphi_\mu$ is introduced in Section~\ref{Section:spectral_analysis_of_stab_op_L}. 
\begin{theorem}\label{main_thm}
    Given $\psi_0, \psi_1 \in C^\infty_0(\barcalC)$ with support in $\{|\Xbar| \leq R_f/2\} \subset \barcalC$. Suppose \[
        \sum_{k = 0}^M \|(\brk{\rho} \p_\Sigma)^k (\psi_0 \circ F)\|_{L^2(\barcalC)} + \|(\brk{\rho} \p_\Sigma)^{k-1} (\psi_1 \circ F)\|_{L^2(\barcalC)} \leq \eps
    \]
    for some sufficiently large fixed constant $M$ and sufficiently small $\eps$. (See Remark~\ref{rmk_decay_assumption_on_id} to see why we choose this norm.) 
    Here, $\p_\Sigma$ indicates size $1$ derivative $\p_\rho$ or $\brk{\rho}^{-1}\p_\omg$. Additionally, we require $\psi_0, \psi_1$ satisfying the codimension-$1$ assumption \eqref{eqn:codim_1_cond}.
    Then there exists $b \in \R$ with $|b| \lesssim 1$ and $\Phi$ being an embedding $\Phi : \calM \to \R^{1+(4+1)}$ satisfying \eqref{eqn:HVMC} such that \[
        \Phi|_{X^0 = 0} = \Phi_0[\eps(\psi_0 + b \chi\varphi_\mu)], \quad \p_{X^0}\Phi|_{X^0 = 0} = \Phi_1[\eps(\psi_1 - \mu b \chi\varphi_\mu)],
    \]
    where we use the notations in local theory \eqref{eqn:data_on_const_t_slice} to define $\Phi_0, \Phi_1$. 
    Moreover, the region $\Phi(\calM) \cap \calR$ in the ambient spacetime (see \eqref{eqn:region_of_ambient_space}) can be parametrized as follows : \begin{equation}\label{eqn:parametrization_in_statement_thm}
        \cup_\uptau \Sigma_\uptau \ni p \mapsto p + \psi(p) N(p) \in \R^{1 + (4+1)},
    \end{equation}
    where $\Sigma_\uptau$ given in \eqref{eqn:defn_calQ} with $\xi, \ell$ satisfying $\xi(0) = \ell(0) = 0$ (see Remark~\ref{rmk_decay_assumption_on_id}) and $|\dot\xi|, |\ell| \lesssim \eps$. Along with the perturbation $\psi$, the following quantitative decay is shown \[
        |\dot\ell|, |\dot\xi - \ell| \lesssim \eps \brk{\uptau}^{-\frac94 + \kappa}, \quad \|\psi\|_{L^\infty(\Sigma_\uptau)} \lesssim \eps \brk{\uptau}^{-2 + \kappa}.
    \]
\end{theorem}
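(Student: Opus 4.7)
The plan is to run a standard modulated bootstrap/continuity argument, combined with a topological shooting argument that selects the codimension-$1$ subset of perturbations killing the linearly unstable mode. First, given initial data $(\psi_0, \psi_1)$ satisfying the smallness hypothesis, I would extend to a function on $[-1,0]\times\barcalC$ (say linearly in time), then for each candidate choice of $b \in \R$ (the parameter which will measure the projection onto the unstable direction), invoke the local well-posedness theory of Appendix~\ref{Section:LWP} to obtain an HVMC solution on a short time interval with the prescribed Cauchy data. Combined with the constructions of Section~\ref{Section:modulation_equations} and of the modified profile in Section~\ref{Section:mod_profile}, this produces $\xi(\uptau)$, $\ell(\uptau)$, the modified profile $p$, the remainder $q$ and its decomposition $\vec q = \vec\phi + a_+\vec Z_+ + a_-\vec Z_-$ satisfying all three orthogonality conditions \eqref{eqn:mod_ortho_for_p}, \eqref{eqn:ortho_cond_for_vec_bfOmg}, \eqref{eqn:ortho_cond_on_phi_for_Z_mu} on some interval $[0,\tau_0]$. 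By the initial data smallness, all bootstrap assumptions in Section~\ref{Section:BA} hold initially (with constants replaced by factor $1/2$), so the supremum $\tau_f$ of times for which the bootstrap assumptions hold is strictly positive.

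Next, on the interval $[0,\tau_f]$ I would apply Proposition~\ref{prop_closing_BA_1} together with Proposition~\ref{prop_closing_BA_2} to upgrade all bootstrap assumptions except the trapping assumption \eqref{eqn:trapping_assumption}; by standard continuation this forces $\tau_f = \infty$ provided \eqref{eqn:trapping_assumption} holds on $[0,\infty)$. The heart of the argument is then the choice of $b$. For any $b$, define the $\eps$-normalized unstable coefficient $A_+(\uptau):=\brk{\uptau}^{9/4-\kappa}a_+(\uptau)/\eps$ and stop the evolution at the first time $\uptau_\star(b)$ (if any) where $|A_+(\uptau_\star)| = C_\trap$. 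From the differential relation \eqref{eqn:diff_eqn_for_a_pm} and the improved decay estimates on $S(e^{-\mu\uptau}F_+)$ coming from the closed bootstrap (which give integrability in time with an $\eps\delta_\wp^{1/2}$-small constant), one sees that at any such exit time $\dot A_+(\uptau_\star)$ has the same sign as $A_+(\uptau_\star)$ with a definite nonzero lower bound. Moreover, from \eqref{eqn:size_eps_of_a_-_0} the initial value $a_+(0)$ depends continuously (in fact affinely, to leading order) on $b$ with nonzero derivative, so the map $b \mapsto A_+(0;b)$ sweeps through an interval containing $[-C_\trap, C_\trap]$.

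The main obstacle -- and where the shooting is essential -- is then the topological argument: the set of $b$ for which \eqref{eqn:trapping_assumption} fails in finite time decomposes into the two disjoint open sets $\{\uptau_\star(b)<\infty,\ A_+(\uptau_\star)=+C_\trap\}$ and $\{\uptau_\star(b)<\infty,\ A_+(\uptau_\star)=-C_\trap\}$, by the transversal exit property. Since both sets are nonempty (corresponding to $b$ near the two endpoints of the range of $A_+(0;\cdot)$) and $b$ lies in a connected set, there must exist at least one $b=b_\star$ with $\uptau_\star(b_\star)=\infty$, i.e. for which \eqref{eqn:trapping_assumption} persists globally. For this $b_\star$, the bootstrap closes for all $\uptau\ge 0$ and yields the asserted pointwise decay $\|\psi\|_{L^\infty(\Sigma_\uptau)}\lesssim \eps\brk{\uptau}^{-2+\kappa}$ (from \eqref{IBA-p} and \eqref{IBA-q}) and $|\dot\ell|+|\dot\xi-\ell|\lesssim \eps\brk{\uptau}^{-9/4+\kappa}$ (from \eqref{IBA-dotwp}).

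Finally, I would convert these estimates into the geometric conclusion: the integrability $\int_0^\infty |\dot\wp|\, d\uptau<\infty$ gives the existence of limits $\ell_0,\xi_0$ with $|\ell(\uptau)-\ell_0|+|\xi(\uptau)-\xi_0-\uptau\ell_0|\to 0$; the graph $p\mapsto p+\psi(p)N(p)$ over $\cup_\uptau\Sigma_\uptau$ parametrizes $\Phi(\calM)\cap\calR$ thanks to the smallness of $\psi$ and non-degeneracy of $N$ (from Section~\ref{Section:profile_plus_gauge_choice}); and the pointwise decay of $\psi$ translates, via the decomposition $\Phi = \Psi_\wp + \psi N$ and the convergence of $\xi,\ell$, into the asymptotic convergence of $\Sigma_\uptau = \calC_\uptau\cap\bsUpsigma_\uptau$ toward the boosted translated catenoid $\calC_{\ell_0,\xi_0}$. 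The matching of the initial data, namely $\Phi|_{X^0=0}=\Phi_0[\eps(\psi_0+b_\star\chi\varphi_\mu)]$, is built into the setup since $(\psi_0+b_\star\chi\varphi_\mu,\psi_1-\mu b_\star\chi\varphi_\mu)$ was the Cauchy data fed into the local existence theorem.
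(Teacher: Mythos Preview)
Your proposal is correct and follows essentially the same approach as the paper's proof: the paper also runs a bootstrap--continuity argument closed by Propositions~\ref{prop_closing_BA_1} and~\ref{prop_closing_BA_2}, shows that failure can only occur by saturation of \eqref{eqn:trapping_assumption}, and then performs a shooting argument on $b$ (parametrized there by $\beta_0 = a_+(0)$ via the implicit function theorem, using the codimension-$1$ condition to center the range at $b=0$). The paper phrases the topological step as continuity of the exit map $\Lambda(\beta_0) = \brk{\tau_\trap(\beta_0)}^{9/4-\kappa}\beta(\tau_\trap(\beta_0)) \in \{\pm\lambda_0\}$ together with the monotonicity $\frac{d}{d\uptau}\beta^2 \geq \mu\beta^2$ in the annulus $\tfrac12\lambda(\uptau) < |\beta(\uptau)| < \lambda(\uptau)$, which is exactly your transversal-exit/connectedness argument in different clothing.
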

\begin{proof}
    Given $(\psi_0, \psi_1)$, we artificially define $\psi$ on $[-1, 0]$ as $\psi = \psi_0 + t \psi_1$ to stay compatible with first order formulation so that our nonlocal smoothing operators are well-defined on the positive real line.
    For each $b$, we define $\tau_f(b)$ as the maximal time so that there is a solution parameterized as \eqref{eqn:parametrization_in_statement_thm} with two parameters $\xi, \ell$ on $[0, \tau_f(b)]$ ($[0, \infty)$ when $\tau_f(b) = \infty$), 
    where the trapping assumptions and bootstrap assumptions in Section~\ref{Section:BA} and orthogonality conditions \eqref{eqn:mod_ortho_for_p}, \eqref{eqn:decomp_of_g_i}, \eqref{eqn:ortho_cond_for_vec_bfOmg}, \eqref{eqn:ortho_cond_on_phi_for_Z_mu} are satisfied. Thanks to the local theory in Appendix~\ref{Section:LWP}, we know that $\tau_f(b) > 0$ for any choice of $b \in \R$. Thanks to Proposition~\ref{prop_closing_BA_1} and \ref{prop_closing_BA_2}, it suffices to show the existence of some $b$ so that $\tau_f(b)$ is infinite. We suppose by contradiction that $\tau_f(b) < \infty$ for each $b$.

    \textit{Step 1 : \eqref{eqn:trapping_assumption} must be saturated at $\tau_f(b)$, i.e., the inequality becomes an equality. } 
    We suppose \eqref{eqn:trapping_assumption} is an strict inequality on $[0, \tau^*]$ and it suffices to show that $\tau_f(b) > \tau^*$.
    By applying Proposition~\ref{prop_closing_BA_1} and \ref{prop_closing_BA_2} (with $\tau_f$ replaced by $\tau^*$), we can improve the bootstrap assumptions except for \eqref{eqn:trapping_assumption}. Then by applying local theory in Appendix~\ref{Section:LWP} with $\ell_0 = \ell(\tau^*)$ and $\xi_0 = \xi(\tau^*)$, we could extend our solution beyond $\tau^*$, say on interval $[0, \tau^* + \tau']$. According to the improvement of bootstrap assumptions on $[0, \tau^*]$, all bootstrap assumptions are strict on $[0, \tau^*]$ and hence can still hold on $[0, \tau^* + \tau']$. Therefore, the parametrization and orthogonality conditions are still satisfied with the same implicit function theorem argument to demonstrate the existence in Section~\ref{Section:Derivation_of_op_modified_profile}. Thus, we obtain a larger interval than $[0, \tau^*]$ on which bootstrap assumptions (as well as parametrization, orthogonality conditions) are satisfied. This implies that $\tau_f(b) > \tau_*$ by contradiction.

    \textit{Step 2 : Select $b$ given $(\psi_0, \psi_1)$ satisfying the codimension-$1$ condition \eqref{eqn:codim_1_cond}. }
    We define \[
        \beta(\uptau) := a_+(\uptau), \quad \lambda(\uptau) := C_\trap \eps \brk{\uptau}^{-\frac94+\kappa}, \quad \lambda_0 := C_\trap\eps.
    \]
    We now define a map  $\calZ : C^\infty(\barcalC) \times C^\infty(\barcalC) \times I \to \bbR$, where $I \subset \R$ is a neighborhood of zero, by \[
        \calZ(\psi_0, \psi_1, b) = \beta(0).
    \]
    This map is well-defined, that is,  $\beta(0)$ is indeed determined purely by the initial data. One can see this by restricting  \eqref{eqn:eqn_for_a_pm} to $t = 0$.

    Now we restrict attention to $(\psi_0, \psi_1)$ satisfying codimension-$1$ condition \begin{equation}\label{eqn:codim_1_cond}
        \calZ(\psi_0, \psi_1, 0) = 0.
    \end{equation}
    One can see that $\dfrac{\delta \calZ}{\delta b} \sim c_\pm \neq 0$(with $c_\pm$ being the normalization constant in \eqref{eqn:defn_Z_pm}),
    then it follows from implicit function theorem that for each $\beta_0$ such that $|\beta_0| \leq \lambda_0$ ($\eps$ is chosen small enough so that this is in the local range where implicit function theorem holds), there is a choice $b = b(\beta_0)$ in a neighborhood of $0$ for which $\beta(0) = \beta_0$.

    \textit{Step 3 : A shooting argument to derive a contradiction. }
    It follows from Step 1 and the contradiction assumption that for any $\beta_0$ with $|\beta_0| \leq \lambda_0$, there exists $\tau_\trap(\beta_0) := \tau_f(b(\beta_0))$ and a solution with $\beta(0) = \beta_0$ such that \begin{equation}\label{eqn:beta_property}
        |\beta(\uptau)| < |\lambda(\uptau)|, \ \forall \uptau \in [0, \tau_\trap(\beta_0)), \qquad |\beta(\tau_\trap(\beta_0))| = |\lambda(\tau_\trap(\beta_0))|,
    \end{equation}
    that is, it gets saturated at $\tau_\trap(\beta_0)$.
    We then use a standard shooting argument (see for instance \cite[Lemma 3.9]{OP24}) to falsify this claim.
    Define $\Lambda : [-\lambda_0, \lambda_0] \to \{\pm \lambda_0\}$ by \[
        \Lambda(\beta_0) = \beta(\tau_\trap(\beta_0)) \brk{\tau_\trap(\beta_0)}^{\frac{9}{4} - \kappa}.
    \]
    Rewriting the equation for $a_+$ in \eqref{eqn:diff_eqn_for_a_pm} as \begin{equation}\label{eqn:DE_of_q}
        \dot \beta(\uptau) = \mu \beta(\uptau) + e^{\mu\uptau} S(e^{-\mu\uptau} F_+),
    \end{equation}
    it follows that \begin{equation}\label{eqn:q_square_increasing}
        \frac{d}{d\uptau} \beta^2(\uptau) \geq \mu \beta^2(\uptau)\ \text{for $\uptau$ such that } \frac12 \lambda(\uptau) < |\beta(\uptau)| < \lambda(\uptau).
    \end{equation}
    Then it follows from \eqref{eqn:beta_property} and \eqref{eqn:q_square_increasing} that \[
        \Lambda(\beta_0) = \begin{cases}
            -\lambda_0, \quad &-\lambda_0 < \beta_0 < -\frac34\lambda_0, \\
            \lambda_0, \quad &\frac34\lambda_0 < \beta_0 < \lambda_0. \\
        \end{cases}
    \]
    Now it suffices to show the continuity of $\Lambda$ to derive a contradiction.

    \textit{Step 4 : Continuity of $\Lambda$. }
    We now specify the dependence of $\beta$ on $\beta(0) = \beta_0$ by $\beta_{\beta_0}$. By \eqref{eqn:q_square_increasing}, given any $\tilde\veps > 0$, there exists $\tilde\delta \ll 1$ such that if $(1 - \tilde\delta)\lambda(\uptau) < |\beta_{\beta_0}(\uptau)| < \lambda(\uptau)$ for some $\uptau < \tau_\trap(\beta_0)$, then $|\uptau - \tau_\trap(\beta_0)| < \tilde\veps$. Given $|\beta_1| < \lambda_0$, there exists $\uptau_1 < \tau_\trap(\beta_1)$ such that $(1 - \tilde\delta^2)\lambda(\uptau_1) < |\beta_{\beta_1}(\uptau_1)| < (1 - \tilde\delta^3)\lambda(\uptau_1)$. For $\beta_2$ sufficiently close to $\beta_1$, by the continuous dependence, bootstrap assumptions (in the range where trapping assumption holds) and condition $|\beta_{\beta_1}(\uptau_1)| < (1 - \tilde\delta^3)\lambda(\uptau_1)$, we know that $\uptau_1 < \tau_\trap(\beta_2)$ and $(1 - \tilde\delta)\lambda(\uptau_1) < |\beta_{\beta_2}(\uptau_1)| < \lambda(\uptau_1)$. Hence, $|\tau_\trap(\beta_1) - \tau_\trap(\beta_2)| < |\tau_\trap(\beta_1) - \uptau_1| + |\tau_\trap(\beta_2) - \uptau_1| < 2\tilde\veps$, which completes the proof.
\end{proof}

\subsection{Proof of Proposition~\ref{prop_closing_BA_1}}\label{Section:parameter_control}
In this subsection, we aim to prove Proposition~\ref{prop_closing_BA_1}. The purpose of the smoothing operator $S$ will become clear in this proof. Taking \eqref{IBA-dotwp} as an example, we would absorb all the excessive derivatives in $\uptau$ via the smoothing operator $S$ in \eqref{eqn:modulation_eqn} so that the finiteness of the number of derivatives stated in the bootstrap assumptions of those estimates regarding $p$ and $\phi$ won't restrict us putting as many derivatives as we want on $\dot\wp$. The argument would be quite similar to the ones in \cite{LOS22} or \cite{OS24}. Nevertheless, we inspect in detail to get a precise form of $\delta_\wp$.

\begin{lemma}\label{lemma_on_omg}
    Under bootstrap assumptions in Section~\ref{Section:BA}, when $R_f$ is sufficiently large and $\eps$ is sufficiently small, we have \[
        \big(\frac{d}{dt}\big)^k \vec\omg \lesssim \eps^{\frac32} \brk{\uptau}^{-4 + 2\kappa}, \quad \forall k \geq 0,
    \]
    where the constant would not depend on the bootstrap constants $C_k$'s.
\end{lemma}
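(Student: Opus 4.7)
The plan is to exploit the quadratic structure of $\vec F_\omg$ in the perturbation $\vec\psi$ and the strong time-decay provided by the bootstrap assumptions, using the integral representation \eqref{eqn:defn_of_vec_Upomg_omg_2_int_form} together with the smoothing operator $S$ to handle higher $t$-derivatives.

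First, I would show that $\vec F_\omg$ is at least quadratic in $\p_\Sigma^{\leq 2}\vec\psi$. Observe that by the very definition of $\vec F_\omg$ following \eqref{eqn:equiv_form_1_of_mod_eqn}, the two $\vec F$-evaluations coincide once $\p_\Sigma^{\leq 2}\vec\psi \equiv 0$, so $\vec F_\omg$ vanishes there. Using the structural bound $|\vec G(x,y,z)| \lesssim |x|^2 + |z|$ from \eqref{eqn:vec_G_eqn_derived_from_1st_order_HVMC_1} and the expression $\vec F = (D+\tilde R)\dot\wp + \vec{\tilde H}$ from \eqref{eqn:p_t_vec_bfOmg}, where $\tilde R = \calO(\p_\Sigma^{\leq 2}\vec\psi, \ell, \dot\wp)$ and $\vec{\tilde H} = \calO((\p_\Sigma^{\leq 2}\vec\psi)^2)$, a first-order Taylor expansion in the $x$-slot yields
\[
|\vec F_\omg| \lesssim |\p_\Sigma^{\leq 2}\vec\psi|^2 + |S\p_\Sigma^{\leq 2}\vec\psi|^2 + |\p_\Sigma^{\leq 2}\vec\psi|\,(|\ell|+|\dot\wp|+|\vec\omg|).
\]

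Second, I would insert the bootstrap decay of $\vec\psi = \vec p + \vec\phi + a_+\vec Z_+ + a_-\vec Z_-$. The dominant contribution is $\vec p$, which decays as $\eps\brk{\uptau}^{-2+\kappa}$ by \eqref{BA-p} (the other components decay faster by \eqref{BA-q}, \eqref{BA-a_+}, \eqref{BA-a_-}). Since $S$ is an almost-local averaging operator, $|S\p_\Sigma^{\leq 2}\vec\psi|$ enjoys the same bound. Combining with \eqref{BA-dotwp}, \eqref{eqn:trapping_assumption}, and the a priori smallness $|\ell|\lesssim\eps$, we obtain
\[
|\vec F_\omg(\uptau)| \lesssim C^2\eps^2\brk{\uptau}^{-4+2\kappa},
\]
where $C$ denotes the collection of bootstrap constants. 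Choosing $\eps$ small enough so that $\eps^{1/2}C^2 \leq 1$ absorbs the $C^2$ into $\eps^{1/2}$, giving $|\vec F_\omg| \lesssim \eps^{3/2}\brk{\uptau}^{-4+2\kappa}$ with a constant independent of bootstrap constants --- this is precisely why the lemma gains the power $\eps^{3/2}$ rather than $\eps^2$.

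Third, plugging this into \eqref{eqn:defn_of_vec_Upomg_omg_2_int_form} and using $\beta>0$ together with the elementary estimate $\int_0^t e^{-\beta(t-s)}\brk{s}^{-4+2\kappa}\,ds \lesssim \brk{t}^{-4+2\kappa}$, I conclude the $k=0$ case. For $k\geq 1$, I would differentiate the ODE \eqref{eqn:defn_of_vec_Upomg_omg_2} directly: $\p_t^{k+1}\vec\omg = -\beta\p_t^k\vec\omg - \p_t^k S\vec F_\omg$. The crucial observation is that derivatives of $Sh$ can be transferred onto the smooth kernel $k$ of $S$ (see \eqref{eqn:defn_of_op_S}), so that $|\p_t^k S\vec F_\omg(t)| \lesssim \sup_{s\in[t-1,t]} |\vec F_\omg(s)|$ without needing any additional regularity of $\vec F_\omg$ beyond the pointwise bound already established. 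An induction on $k$ then closes the argument.

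The main obstacle, and the reason the lemma is nontrivial, is verifying the quadratic vanishing of $\vec F_\omg$ with the correct uniformity; this requires unpacking the nested definition involving $\vec F$, $\vec G$, and $S$, and carefully tracking that every term indeed carries a factor of $\p_\Sigma^{\leq 2}\vec\psi$ (or its smoothed version) rather than merely being small. Once this structural fact is isolated, the rest is a routine application of the bootstrap bounds, the contractive property of $S$ on time-decaying functions, and the Gr\"onwall-type estimate for the integral operator $\int_0^t e^{-\beta(t-s)}\cdot\,ds$.
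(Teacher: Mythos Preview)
Your overall strategy matches the paper's, but there is a genuine gap in your bound for $\vec F_\omg$. You claim
\[
|\vec F_\omg| \lesssim |\p_\Sigma^{\leq 2}\vec\psi|^2 + |S\p_\Sigma^{\leq 2}\vec\psi|^2 + |\p_\Sigma^{\leq 2}\vec\psi|\,(|\ell|+|\dot\wp|+|\vec\omg|),
\]
and then assert that this yields $|\vec F_\omg|\lesssim C^2\eps^2\brk{\uptau}^{-4+2\kappa}$. But the term $|\p_\Sigma^{\leq 2}\vec\psi|\cdot|\ell|$ only decays like $C\eps^2\brk{\uptau}^{-2+\kappa}$, since $|\ell|\lesssim\eps$ carries no $\tau$-decay. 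So the conclusion does not follow from your displayed bound. The fix is that no such $|\ell|$ term actually arises: writing $\vec F_\omg = \vec F(Sx,\ell,w)-\vec F(x,\ell,w)$ with $x=\p_\Sigma^{\leq 2}\vec\psi$ and $w=\vec G(Sx,\ell,z)=\dot\wp$, the $x$-derivative of $\vec F=(D+\tilde R)w+\vec{\tilde H}$ produces only $\calO(|w|)+\calO(|x|)$ (the $\ell$-dependence of $\tilde R$ is additive and that of $\vec{\tilde H}$ is quadratic in $\ell x$, so neither contributes a bare $|\ell|$). The correct estimate is the paper's
\[
|\vec F_\omg|\lesssim |\p_\Sigma^{\leq 2}\vec\psi|\big(|\p_\Sigma^{\leq 2}\vec\psi|+|S(\p_t\vec\bfOmg_p-\vec N)-\beta\vec\omg|\big),
\]
and the second factor inherits the needed $\tau$-decay from the bootstrap bounds on $\p_t\vec\bfOmg_p-\vec N_p$, $\vec N_q$, and $\dot\wp$.

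A second, smaller gap: your Step~2 silently drops the $|\p_\Sigma^{\leq 2}\vec\psi|\cdot|\vec\omg|$ contribution when asserting the final pointwise bound on $\vec F_\omg$, and Step~3 treats the integral equation as if the right-hand side were already independent of $\vec\omg$. It is not; one obtains $|\vec F_\omg|\lesssim C^2\eps^2\brk{\uptau}^{-4+2\kappa}+C\eps\brk{\uptau}^{-2+\kappa}|\vec\omg|$, and the self-referential term must be closed by Gronwall applied to $e^{\beta t}|\vec\omg(t)|$, exactly as the paper does. Your passing mention of a ``Gr\"onwall-type estimate'' in the last paragraph refers only to the kernel $e^{-\beta(t-s)}$ acting on a fixed decaying source, which is a different (and insufficient) step.
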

\begin{proof} 
    First, we generalize the integral form \eqref{eqn:defn_of_vec_Upomg_omg_2_int_form} of the differential equation \eqref{eqn:defn_of_vec_Upomg_omg_2} to arbitrary order derivatives. By taking $\p_t^k$ in \eqref{eqn:defn_of_vec_Upomg_omg_2} and considering its integral form again, we obtain \[
        \p_t^k \vec\omg(t) = \int_0^t e^{-\beta(t-t')} (\p_{t'}^k (S \vec F_\omg))(t')\, dt'.
    \]
    Due to the structure of the smoothing operator $S$, we could absorb the $\p_{t'}^k$ derivatives into the smoothing kernel and hence reduces the proof to the $k = 0$ case. 
    
    For $k = 0$, we notice that \eqref{eqn:vec_G_eqn_derived_from_1st_order_HVMC_1} and an application of Taylor's inequality help us to bound \[
        |\vec F_\omg| \leq |\p^2_\Sigma \vec\psi| \left(|\p^2_\Sigma \vec\psi| + |S(\p_t \vec \bfOmg_p - \vec N_p - \vec N_q) - \beta \vec \omg|\right).
    \]
    Then by rewriting the first entry of \eqref{eqn:1st_order_HVMC}, we obtain that \[
        |\p_\Sigma^l \dot\psi| \lesssim |\p_{\Sigma}^l \p \psi| + |\dot\wp| + |\p_{\Sigma}^l \psi|^2,
    \]
    where we only need $l \leq 2$ cases for this proof. Here, $\dot\psi$ denotes the second entry of $\vec\psi$.
    Moreover, $\dot p$ is directly related to $p$ itself according to the definition of $\vec p$ in \eqref{eqn:defn_vec_p}. For $q$, we recall the decomposition $q = \phi + a_+ Z_+ + a_- Z_-$ as well. Hence, putting all the preceding considerations together, we can estimate \[
        |\vec F_\omg| \lesssim C_k^2 \eps^2 \brk{\uptau}^{-4 + 2\kappa} + C_k \eps \brk{\uptau}^{-2 + \kappa} |\vec\omg|.
    \]

    Then \[
        |\vec\omg(t)| \lesssim C_k^2 \eps^2 \int_0^t e^{-\beta(t-s)} \brk{s}^{-4 + 2\kappa}\, ds + C_k \eps \int_0^t e^{-\beta(t-s)} \brk{s}^{-2 + \kappa} |\vec\omg(s)|\, ds.
    \]
    The first term can be estimated by separating the integral bounds \begin{equation}\label{eqn:separating_integral_est}
        \int_0^t e^{-\beta(t-s)} \brk{s}^{-4 + 2\kappa}\, ds = \int_0^{\frac{t}2} e^{-\beta(t-s)} \brk{s}^{-4 + 2\kappa}\, ds + \int_{\frac{t}{2}}^t e^{-\beta(t-s)} \brk{s}^{-4 + 2\kappa}\, ds 
        \lesssim e^{-\beta t} + \brk{t}^{-4 + 2\kappa}.
    \end{equation}
    Then an application of Gronwall's inequality on $e^{\beta s}|\vec\omg(s)|$ gives \[
        |\vec\omg(\tau)| \lesssim C_k^2 \eps^2 \brk{\tau}^{-4 + 2\kappa}.
    \]
    By taking $\eps$ sufficiently small, we obtain the desired estimate.
\end{proof}

\begin{lemma}\label{lemma_decay_of_bfOmg_phi}
    Under bootstrap assumptions in Section~\ref{Section:BA}, when $R_f$ is sufficiently large and $\eps$ is sufficiently small, we have \[
        |\vec\bfOmg_q|, |\vec\bfOmg_\phi| \lesssim (\eps^{\frac12} + R_f^{-1})
        \eps \brk{t}^{-\frac94 + \kappa} + \eps^{\frac32} \brk{t}^{-4 + 2\kappa},
    \]
    where the constant would not depend on the bootstrap constants $C_k$'s.
\end{lemma}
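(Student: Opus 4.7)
The plan is to exploit the orthogonality decomposition \eqref{eqn:ortho_cond_for_vec_bfOmg}, i.e.\ $\vec\bfOmg_q = \vec\Upomg + \vec\omg$, so that Lemma~\ref{lemma_on_omg} directly takes care of the $\vec\omg$ piece and produces the $\eps^{3/2}\brk{\uptau}^{-4+2\kappa}$ contribution. The remaining task is to control $\vec\Upomg = \widetilde S(\partial_t\vec\bfOmg_p - \vec N_p - \vec N_q + \vec F_\omg)$, cf.\ \eqref{eqn:defn_of_vec_Upomg_omg_1_int_form}. Since the kernel $\tilde k$ of $\widetilde S$ is supported in $[0,1]$, any pointwise bound on the integrand at times near $\uptau$ transfers directly to $\vec\Upomg(\uptau)$; thus it will suffice to establish the pointwise estimate $(\eps^{1/2}+R_f^{-1})\eps\brk{\uptau}^{-9/4+\kappa}$ for the integrand, modulo quadratic remainders absorbed into $\eps^{3/2}\brk{\uptau}^{-4+2\kappa}$.

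Next, I will handle each piece of the integrand in turn. For $\partial_t\vec\bfOmg_p - \vec N_p$, I will apply \eqref{eqn:p_t_Omg_p_minus_N_p} and substitute \eqref{eqn:1st_order_eqn_for_p}: the contribution from $f_c$ vanishes because $f_c$ is supported in $|\uprho|>\uprho_0\gg R_f$ whereas $\vec Z_j$ is supported in $|\uprho|<R_f/2$, while the $g_1, g_2$ contributions are dispatched via the sharp bound \eqref{eqn:all_decay_of_c_im}, which already provides both the $\delta_\wp\eps$ smallness and the $\brk{\uptau}^{-9/4+\kappa}$ decay. The remainder $\bfOmg(\vec p,\partial_t\vec Z_j)$ picks up a factor of $\dot\wp$ each time $\partial_t$ falls on the modulation parameters hidden in $\vec Z_j$, closing via \eqref{BA-dotwp} and \eqref{BA-p}. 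For $\vec N_q = \bfOmg(\vec q, M\vec Z_j)$, I will decompose $\vec q = \vec\phi + a_+\vec Z_+ + a_-\vec Z_-$ and use that $M\vec Z_j$ differs from an $M$-action on an exact (generalized) zero mode only by (i) commutator terms with the cutoff $\chi$ supported on $|\uprho|\sim R_f$, where the spatial decay of $\nu^i$ (resp.\ the exponential decay of $\varphi_\mu$) yields an $R_f^{-N}$ gain, and (ii) $\calO(|\ell|)$ corrections inherited from the $\ell$-expansions in Lemma~\ref{lemma_zero_modes}. Pairing against $\vec\phi$, $\vec Z_+$, $\vec Z_-$ with the bootstrap bounds \eqref{BA-q}, \eqref{BA-a_+}, \eqref{BA-a_-} then produces the desired $(\eps^{1/2}+R_f^{-1})\eps\brk{\uptau}^{-9/4+\kappa}$ bound; in particular, the non-decaying $a_-(0)\sim\eps$ contribution appears against a small cross-pairing $\bfOmg(\vec Z_-, M\vec Z_j)$ that already carries $R_f^{-N}+|\ell|$ smallness. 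Finally, $\vec F_\omg$ is quadratic in $\vec\psi$ and is fed into the $\eps^{3/2}\brk{\uptau}^{-4+2\kappa}$ bucket, exactly as in the proof of Lemma~\ref{lemma_on_omg}.

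To pass from $\vec\bfOmg_q$ to $\vec\bfOmg_\phi$, I will use $\vec\bfOmg_\phi = \vec\bfOmg_q - a_+\bfOmg(\vec Z_+,\vec Z_j) - a_-\bfOmg(\vec Z_-,\vec Z_j)$; the cross-pairings would vanish at the level of exact eigenfunctions of the self-adjoint operator $L$ with distinct eigenvalues (as noted in Section~\ref{Section:spectral_analysis_of_stab_op_L}), so only truncation and $\ell$-dependent residuals survive, again of size $R_f^{-N}+|\ell|$, yielding the same right-hand side after invoking \eqref{BA-a_+}, \eqref{BA-a_-}. The main technical obstacle I anticipate is the uniform extraction of the $(\eps^{1/2}+R_f^{-1})$ factor from every term that couples linearly to $\vec q$ inside $\vec N_q$: it is precisely this extra smallness that breaks the circularity described in Remark~\ref{rmk_on_extra_smallness_delta_wp} between the modulation equation \eqref{eqn:modulation_eqn} and the energy estimates, so the book-keeping of which geometric feature (the cutoff support, the spatial decay of the zero modes, or the $\ell$-expansion of Lemma~\ref{lemma_zero_modes}) delivers the smallness in each summand has to be carried out carefully.
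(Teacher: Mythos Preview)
Your proposal follows the same overall strategy as the paper's proof and correctly identifies essentially all the ingredients. There is, however, one structural point you gloss over that is not merely organizational but is needed to secure the $C_k$-independence asserted in the lemma.

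For the last $n$ components of $\vec N_q$, i.e.\ $j=n+i$, the relation $M\vec\varphi_{n+i}=\vec\varphi_i$ from Lemma~\ref{lemma_zero_modes} means that the ``$M$-action on the exact generalized zero mode'' is \emph{not} small: the main term in $M\vec Z_{n+i}$ is $\vec Z_i$, so $\bfOmg(\vec q,M\vec Z_{n+i})$ contains $\bfOmg(\vec q,\vec Z_i)$ with an $\calO(1)$ coefficient. This term carries no intrinsic $R_f^{-1}$ or $|\ell|$ gain, and estimating it by pairing $\vec\phi$ against $\vec Z_i$ via \eqref{BA-q} as you propose would produce a $C_k\eps\brk{t}^{-9/4+\kappa}$ bound, not the claimed $(\eps^{1/2}+R_f^{-1})\eps\brk{t}^{-9/4+\kappa}$. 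The paper's proof avoids this by arguing \emph{sequentially}: it first closes the estimate for the first $n$ components of $\vec\bfOmg_\phi$ (where $M\vec\varphi_i=0$ so $M\vec Z_i$ is genuinely small), obtaining a $C_k$-independent bound on $\bfOmg(\vec\phi,\vec Z_i)$; only then does it feed this already-established bound back into the last $n$ components. Your write-up should make this two-step dependence explicit rather than treating all $2n$ components of $\vec N_q$ uniformly.
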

\begin{proof}
    \textit{Step 1: Reduction to estimating $\vec\Upomg$. } 
    In view of the decomposition \eqref{eqn:q_decomp} of $q$, the extra smallness we could obtain in $\bfOmg(\vec Z_i, \vec Z_\pm) = O(R_f^{-5})$ thanks to the orthogonality $\brk{\varphi_i, \varphi_\mu} = 0$ and the spatial decay of eigenfunctions. Therefore, by \eqref{BA-a_-} and \eqref{BA-a_+}, the difference $|\vec\bfOmg_q - \vec\bfOmg_\phi|$ satisfies $R_f^{-1} \eps \brk{t}^{-\frac94 + \kappa}$ bound with $C_k$-independent implicit constant. Hence, it suffices to prove the stated decay for $\vec\bfOmg_q$.
    Moreover, by Lemma~\ref{lemma_on_omg} and the orthogonality condition \eqref{eqn:ortho_cond_for_vec_bfOmg}, it suffices to prove the estimate for $\vec\Upomg$ instead.

    \textit{Step 2 : Reduction to estimating $\vec N_q$.}
    We first note that even if $\widetilde S$ is not a (infinitely) smoothing operator, it is still able to absorb excessive $\p_t$ derivative thanks to the relation \eqref{eqn:relation_widetilde_S_w_S} so that it reduces estimating $\widetilde Sh$ to estimate $Sh$ and $h$.
    
    We now estimate $\vec\Upomg$ in terms of \eqref{eqn:defn_of_vec_Upomg_omg_1}. Recall the expression of $\p_t \vecbfOmg_p - \vec N_p$ in \eqref{eqn:p_t_Omg_p_minus_N_p}.
    
    In view of \eqref{eqn:1st_order_eqn_for_p}, the decay of $\bfOmg((\p_t - M)\vec p, \vec Z_j)$ with extra smallness follows from $\supp f_c \cap \supp Z_j = \varnothing$ and \eqref{eqn:all_decay_of_c_im}. For the part $\bfOmg(\vec p, \p_t\vec Z_j)$ in \eqref{eqn:p_t_Omg_p_minus_N_p}, it generates extra $\dot\wp$ smallness and decay.
    On the other hand, $\vec F_\omg$ is estimated again by Lemma~\ref{lemma_on_omg}.
    Now it suffices to estimate $\vec N_q$, which is the only term left to be controlled in \eqref{eqn:defn_of_vec_Upomg_omg_1_int_form}. 

    \textit{Step 3 : Estimating the first $n$ components in $\vec N_q$ to conclude the proof of this lemma for the first $n$ components. }
    For $1 \leq i \leq n$, recall from Lemma~\ref{lemma_zero_modes}, $h^{0j} = \calO(|\ell|)$ and $M\vec\varphi_i = 0$ that \[
        M \vec Z_i = M(\chi \vec\varphi_i) 
        = \begin{pmatrix}
            \calO(|\ell|)\p\chi \\ 
            -\sqrt{|h|} \big((\p\chi) (\p\varphi_i) + (\p^2 \chi) \varphi_i\big) + \calO(|\ell|)\p\chi 
        \end{pmatrix}.
    \]
    The terms involving $\calO(|\ell|)$ gains extra smallness obviously. For other terms, thanks to the antisymmetry of $\vec\bfOmg$, \eqref{eqn:M_Z_pm}, \eqref{eqn:2nd_entry_of_vec_q}, \eqref{eqn:vec_K_in_C_flatn} and $\p^l \chi \simeq R_f^{-l} \chi_{\uprho \sim R_f}$, the desired control \[
        |\vec\bfOmg(\phi, Z_i)| \lesssim \calO(\ell + R_f^{-1}) \eps \brk{t}^{-\frac94 + \kappa}, \quad 1 \leq i \leq n
    \]
    can be reached. One typical term is given by \begin{equation}\label{eqn:inverse_R_1_decay_contribution_for_delta_wp}
        R_f^{-2} \brk{\chi_{\uprho \sim R_f} \phi, \calO(r^{-n+1})} \lesssim C_k R_f^{-2} \eps \brk{\tau}^{-\frac94 + \kappa} \int_{\uprho \sim R_f} r^{-3}\, dr \lesssim \eps R_f^{-4} \brk{\uptau}^{-\frac94 + \kappa},
    \end{equation}
    where \eqref{BA-q} is applied.
    Note that we switch from $q$ to $\phi$ with no cost as $M \vec Z_i$ contains either good support condition or extra smallness.
    This concludes the proof that \begin{equation}\label{eqn:bfOmg_i_phi_est_1st_part}
        |\vec\bfOmg(\phi, Z_i)| \lesssim \eps \brk{t}^{-\frac94 + \kappa}, \quad 1 \leq i \leq n.
    \end{equation}
    
    \textit{Step 4: Concluding the proof of the lemma for the last $n$ components by estimating the last $n$ components of $\vec N_q$. }
    Now we estimate $|\vec\bfOmg(\phi, MZ_{n+i})|$.
    Note that \[
    \vec\bfOmg(\phi, MZ_{n+i}) = \vec\bfOmg(\phi, Z_i) + \calO(|\ell|)\big( (\p\chi) (\p \phi) + (\p^2\chi)\phi \big)
    \]
    thanks to Lemma~\ref{lemma_zero_modes} and $M\varphi_{n+i} = \varphi_i$.
    The first term is exactly what we have provided estimates for in  \eqref{eqn:bfOmg_i_phi_est_1st_part}. The error terms can be estimated via bootstrap assumption in \eqref{BA-q} with extra smallness in $\calO(|\ell|)$. 
\end{proof}

Now we are ready to prove the estimate for the parameter derivatives in Proposition~\ref{prop_closing_BA_1}.
\begin{proof}[Proof of \eqref{IBA-dotwp} in Proposition~\ref{prop_closing_BA_1}]
    The estimate for $\dot\wp$ simply follows from \eqref{eqn:modulation_eqn} and \eqref{eqn:vec_G_eqn_derived_from_1st_order_HVMC_1} with the following consideration in mind : 
    \begin{enumerate}
        \item For quadratic terms in $\p_{\Sigma}^{\leq 2} \vec\psi$, we use \eqref{BA-p}, \eqref{BA-q}. 
        \item For the linear contribution in $\bfOmg(\p_t \vec p - M \vec p, Z_i)$, see Step 2 of Lemma~\ref{lemma_decay_of_bfOmg_phi}.
        \item The linear contribution in $\vec N_q$ could be estimated in the same way as in Step 3 and 4 in the proof of Lemma~\ref{lemma_decay_of_bfOmg_phi}.
        \item The linear contribution of $\vec \omg$ is estimated using Lemma~\ref{lemma_on_omg}.
    \end{enumerate}
    Then we arrive at \[
        |\dot\wp| \lesssim \eps^{\frac32} \brk{\uptau}^{-4 + 2\kappa} + (\eps^{\frac12} + R_f^{-1})\eps \brk{\uptau}^{-\frac94 + \kappa}
    \]
    with $C_k$-independent implicit constant.
    We furthermore denote this constant by \begin{equation}\label{eqn:delta_wp_defn}
        \delta_\wp := \eps^{\frac12} + R_f^{-1},
    \end{equation}
    which is the one we mentioned in Remark~\ref{rmk_on_extra_smallness_delta_wp} and used in Section~\ref{Section:BA}.
    For higher derivatives, we absorb the derivatives by $S$ and the proof follows from the same argument.
\end{proof}

\begin{remark}
    In view of \eqref{eqn:delta_wp_defn}, the estimate in Lemma~\ref{lemma_decay_of_bfOmg_phi} could be rewritten as \[
        |\vec\bfOmg_\phi| \lesssim \delta_\wp \eps \brk{t}^{-\frac94 + \kappa}.
    \]
\end{remark}

Similar to Lemma~\ref{lemma_decay_of_bfOmg_phi}, we establish the following for $F_\pm$. 

\begin{lemma}\label{lemma_decay_of_F_pm}
    Under bootstrap assumptions in Section~\ref{Section:BA}, when $R_f$ is sufficiently large and $\eps$ is sufficiently small, we have (see \eqref{eqn:defn_of_F_pm} for the formula of $F_\pm$) \[
        |F_\pm| \lesssim (\eps + R_f^{-1} + C_k\delta_\wp)
        \eps \brk{t}^{-\frac94 + \kappa} \lesssim \delta_\wp^{\frac12} \eps \brk{t}^{-\frac94 + \kappa},
    \]
    where the implicit constants do not depend on the bootstrap constants $C_k$'s.
    As a corollary, $|\bfOmg(\phi, Z_\pm)|$ satisfies the same decay rate estimate.
\end{lemma}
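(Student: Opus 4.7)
The plan is to bound each of the three summands in $F_\pm = \bfOmg(a_+\vec\calE_+ + a_-\vec\calE_-, \vec Z_\mp) + \bfOmg(\vec\phi, \vec\calE_\mp) + \bfOmg(\vec F_2, \vec Z_\mp)$ separately, in the spirit of Lemma~\ref{lemma_decay_of_bfOmg_phi}. The structural input is that $\vec\calE_\pm$, defined through $M\vec Z_\pm = \pm\mu\vec Z_\pm + \vec\calE_\pm$, decomposes into two kinds of terms: those spatially supported in the annulus $\{|\uprho| \sim R_f\}$ (from derivatives falling on the cutoff $\chi$ in \eqref{eqn:defn_Z_pm}) and those carrying an explicit $|\ell|$-factor (from the $h$-metric components deviating from their $\ell = 0$ values). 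Both features translate, after integration against $\vec Z_\mp$ or $\vec\phi$, into an $(|\ell| + R_f^{-1})$-gain, which will eventually be absorbed into $\delta_\wp$.

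For the first summand I would use \eqref{BA-a_+}, \eqref{BA-a_-} together with the two smallness features of $\vec\calE_\pm$. The non-$\delta_\wp^{1/2}$ part of $a_-$, namely $a_-(0)e^{-\mu t}$, is of size $\eps e^{-\mu t}$, and when multiplied by the $(|\ell| + R_f^{-1})$ gain coming from pairing with $\vec Z_\mp$, it is absorbed by $\delta_\wp^{1/2}\eps\brk{t}^{-9/4 + \kappa}$; the remaining $\delta_\wp^{1/2}$-parts are handled directly. For the second summand I would estimate $\bfOmg(\vec\phi, \vec\calE_\mp)$ region by region: on $\{|\uprho| \sim R_f\}$ I use \eqref{BA-q} (or \eqref{BA-q-ext1} when convenient) together with the narrow support and the $\chi$-weight $R_f^{-1}$; on the flat region I use the explicit $|\ell|$-factor carried by $\vec\calE_\mp$. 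This replicates Steps 3--4 of Lemma~\ref{lemma_decay_of_bfOmg_phi} and produces the $(|\ell| + R_f^{-1})\eps\brk{t}^{-9/4+\kappa}$ contribution.

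The main obstacle is the third summand $\bfOmg(\vec F_2, \vec Z_\mp)$, since this forces us to unpack $\vec F_2 = \vec F_1 - (0, f_c + g_1 + \p_\uptau g_2)^T$ from \eqref{eqn:vec_F_2_defn}. For $\vec F_1$, which is of the form $\calO((\p_\Sigma^{\le 2}\vec\psi)^2) + \calO(\dot\wp\,\p_\Sigma^{\le 2}\vec\psi) + \calO(\ell\dot\wp)$, the quadratic part is controlled by \eqref{BA-p} and \eqref{BA-q} applied twice (each factor giving at least $\eps\brk{t}^{-2+\kappa}$, so the product is integrably small), and the linear-in-$\dot\wp$ part is controlled by the already-proved \eqref{IBA-dotwp}. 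For $f_c$, I would invoke the crucial support disjointness $\supp f_c \subset \{|\uprho| > \uprho_0\}$ against $\supp \vec Z_\mp \subset \{|\uprho| < R_f/2\}$, which kills this term. For $g_1$ and $\p_\uptau g_2$, the decomposition \eqref{eqn:decomp_of_g_i} into the compactly supported $X_m$'s combined with the coefficient estimates \eqref{eqn:all_decay_of_c_im}, \eqref{eqn:decay_of_c_2m} provide both the $\delta_\wp\eps$ extra smallness and the $\brk{\uptau}^{-9/4+\kappa}$ decay; the $\p_\uptau g_2$ contribution is directly handled by the pointwise estimate on $\p_\uptau c_{2m}$ in \eqref{eqn:all_decay_of_c_im}. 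Summing the three pieces yields the stated bound for $F_\pm$, with implicit constant independent of the bootstrap constants $C_k$.

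For the corollary, I would appeal directly to the orthogonality identity \eqref{eqn:ortho_cond_on_phi_for_Z_mu}, which reads $\bfOmg(\vec\phi, \vec Z_\mp) = -e^{\pm\mu t}\widetilde S(e^{\mp\mu t}F_\pm)$. Since the kernel $\tilde k$ of $\widetilde S$ in \eqref{eqn:defn_of_widetil_S_op} is supported in $[0,1]$, the operator $\widetilde S$ is local-in-time within a unit window; thus $|\widetilde S(e^{\mp\mu t}F_\pm)(t)| \lesssim e^{\mp\mu t}\sup_{s\in[t-1,t]}|F_\pm(s)|$, the exponential weights cancel against the outer $e^{\pm\mu t}$, and $|\bfOmg(\vec\phi, \vec Z_\pm)|$ inherits the same $\delta_\wp^{1/2}\eps\brk{t}^{-9/4+\kappa}$ decay as $F_\mp$.
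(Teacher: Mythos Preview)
Your proposal is correct and follows essentially the same approach as the paper: treat the first two summands via the $(|\ell|+R_f^{-1})$-smallness built into $\vec\calE_\pm$, unpack $\vec F_2$ into the quadratic-in-$\psi$ piece, the $\dot\wp$-piece (controlled by \eqref{BA-dotwp}), and the $f_c + g_1 + \p_\uptau g_2$ piece (support disjointness and \eqref{eqn:all_decay_of_c_im}), then deduce the corollary from \eqref{eqn:ortho_cond_on_phi_for_Z_mu} and the unit-window locality of $\widetilde S$. One minor inaccuracy: your schematic for $\vec F_1$ should include a pure $\calO(\dot\wp)$ term coming from $\vec K$ (see \eqref{eqn:leading_term_of_K}), not just $\calO(\ell\dot\wp)$; your subsequent handling via \eqref{IBA-dotwp} already covers it, so the argument is unaffected.
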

\begin{proof}
     Recall that $F_\pm$ is given by \eqref{eqn:defn_of_F_pm}, where we note that the first two terms on the right hand side of which has nice decay property with extra smallness $\calO(\ell)$ or $R_f^{-1}$ coming from $\calE_\pm$, which is in turn due to the support condition of the error $\calE_\pm$ in \eqref{eqn:M_Z_pm}. 
     
     It suffices to establish desired control on the linear term $\bfOmg(\vec F_2, \vec Z_\mp)$ with $\vec F_2$ given in \eqref{eqn:vec_F_2_defn}.
    As mentioned in the remark after \eqref{eqn:vec_F_2_defn}, the dependence on $\p_{\Sigma}^{\leq 2} \vec\psi$ comes with coefficients with extra smallness and decay (in other words, they are nonlinear in view of the decay). This ensures that the presence of $\vec p$ is accompanied by other terms with $\tau$-decay. The analysis of $f_c + g_1 + \p_\tau g_2$ is directly from the decay \eqref{eqn:all_decay_of_c_im} with the support condition of $Z_\mp$. Finally, for the linear dependence on $\dot\wp$, we read from the definition of $\vec K$ in \eqref{eqn:vec_K_in_C_flatn} that it is of the form $\calO(\dot\wp)$ and we estimate it via \eqref{BA-dotwp}. See \eqref{rmk_on_extra_smallness_delta_wp} for definition of $\delta_\wp$.
    Thus, we can conclude the desired estimate for $F_\pm$ stated in this lemma. 

    In addition, examining \eqref{eqn:ortho_cond_on_phi_for_Z_mu} gives the same decay rate for $|\bfOmg(\phi, Z_\pm)|$. 
\end{proof}

Finally, we prove the last two estimates in Proposition~\ref{prop_closing_BA_1}. 
\begin{proof}[Proof of \eqref{IBA-a_-}, \eqref{IBA-a_+} in Proposition~\ref{prop_closing_BA_1}]
    For $a_-$, we write \[
        a_-(t) = a_-(0) e^{-\mu t} + \int_0^t e^{-\mu t} S(e^{\mu s} F_-(s))\, ds.
    \]
    Note that the second term could be estimated directly by Lemma~\ref{lemma_decay_of_F_pm} as in \eqref{eqn:separating_integral_est} and it yields that \begin{equation}\label{eqn:est_of_a_-}
        |a_-(t) - a_-(0)e^{-\mu t}| \lesssim \delta_\wp^{\frac12} \eps \brk{t}^{-\frac94 + \kappa}.
    \end{equation}

    For $a_+$, the case for $k = 0$ is directly included in \eqref{eqn:trapping_assumption}. For higher $k$, we need to use expand \eqref{eqn:diff_eqn_for_a_pm}. Taking $k = 1$ as an example, \[
        \dot a_+ = \mu a_+ + e^{\mu t} S(e^{-\mu t F_+})
    \]
    can be estimated via \eqref{eqn:trapping_assumption} and Lemma~\ref{lemma_decay_of_F_pm}. Inductively, this proves \eqref{IBA-a_+}.
\end{proof}

Even though we have established some estimates indicating the decay rate for $\dot\wp$ and its arbitrary order derivatives, we would naturally expect to show better decay for higher derivatives, say $\ddot\wp$. If one recalls the proof, all the excessive derivatives are absorbed by the smoothing operator and therefore, no improved decay is exploited for higher derivatives before. In the following, we commute one derivative with $S$ and absorb all the extra derivatives in $S$ as before. Moreover, we estimate at the level of $L^2_t$ with the advantage of sufficient available energy bounds.
\begin{remark}
    To get pointwise bound via the $r^p$-weighted vector field method, one needs to go to one $\p_\uptau$-derivative higher in energy space. This is why we discuss $L^2_t$-based estimates instead for higher derivatives.
\end{remark}

To start with, one could derive the following equation for $\ddot\wp$ from \eqref{eqn:p_t_bfOmg}, \eqref{eqn:vec_G_eqn_derived_from_1st_order_HVMC_1} and \eqref{eqn:modulation_eqn} by differentiating in $t$ : \[
    \ddot\wp = \vec G_{\mathrm{prime}}(S\p_{\Sigma}^{\leq 2}\vec\psi, \dot\wp, \vec F),
\]
where $\vec G_{\mathrm{prime}}$ satisfies \[
    |\vec G_{\mathrm{prime}}| \lesssim |\dot\wp|^2 + |S\p_{\Sigma}^{\leq 2}\vec\psi| \left|\frac{d}{dt}\big(S\p_{\Sigma}^{\leq 2}\vec\psi\big)\right| + |\dot\wp| |S\p_{\Sigma}^{\leq 2}\vec\psi|^2 + \left|\p_t \vec F\right|
\]
with \begin{equation}\label{eqn:vec_F_in_Gprime}
    \vec F = S(\p_t \vec\bfOmg_p - \vec N_p - \vec N_q) - \beta\vec\omg - \vec F_\omg
\end{equation}
The nonlinear terms can be simply estimated by pointwise bounds. We summarize the proof of \eqref{IBA-ddotwp-L2} in the following lemma with an even stronger conclusion.

\begin{lemma}\label{lemma_control_higher_der_of_parameters}
    Under bootstrap assumptions in Section~\ref{Section:BA}, when $R_f$ is sufficiently large and $\eps$ is sufficiently small, we have 
    \[
    \|\bfOmg(\p_\uptau \vec\phi, \vec Z_j)\|_{L^2_\uptau[\tau_1, \tau_2]} + \|\dot a_+\|_{L^2_\uptau [\tau_1, \tau_2]} + \|\dot a_- + \mu a_-(0) e^{-\mu t}\|_{L^2_\uptau [\tau_1, \tau_2]} + \|\ddot\wp\|_{L^2_\uptau [\tau_1, \tau_2]} \lesssim (C_\trap + \delta_\wp) \eps \brk{\uptau}^{-\frac{9}{4} + \kappa},
    \]
    where the constant would not depend on the bootstrap constants $C_k$'s.
\end{lemma}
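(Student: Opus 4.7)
The plan is to combine the pointwise bounds already established in Proposition~\ref{prop_closing_BA_1} with the integrated local energy bootstrap assumptions \eqref{BA-p-LE} and \eqref{BA-q-LE}, exploiting the fact that the test functions $\vec Z_j, \vec Z_\pm$ (and the commutator errors $\vec{\calE}_\pm$, $M\vec Z_j$ up to good-smallness pieces) are all supported in $\{|\uprho| \lesssim R_f\}$, which is exactly the region controlled by the local energy norm.

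First, I would establish the $L^2_\uptau$ bound for $\bfOmg(\p_\uptau \vec\phi, \vec Z_j)$. Imitating Steps 3--4 of Lemma~\ref{lemma_decay_of_bfOmg_phi}, one reduces the pairing to quantities involving $\p_\uptau \vec\phi$ against $\vec Z_j$ or $M \vec Z_j$. The $\vec Z_j$-contributions carry a $\chi$-support in the flat region, so by the Cauchy--Schwarz inequality and \eqref{BA-q-LE} (together with $c_{im}$ control \eqref{eqn:L2_est_of_c_im} and \eqref{BA-a_+}--\eqref{BA-a_-} to pass from $\vec q$ to $\vec\phi$ and $\vec p$), one obtains an $L^2_\uptau$ bound with precisely the desired $\brk{\uptau}^{-9/4+\kappa}$ weight. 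The smallness $\delta_\wp$ appears through the same mechanism as in Lemma~\ref{lemma_decay_of_bfOmg_phi}: either from an explicit $\calO(\ell)$ factor in $\vec\varphi_i$'s $h^{0j}$-components, or from derivative landing on the cutoff $\chi$ producing a factor of $R_f^{-1}$.

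Second, for $\dot a_\pm$, I would use the integrated form of \eqref{eqn:diff_eqn_for_a_pm}, that is $\dot a_+ = \mu a_+ + e^{\mu\uptau}S(e^{-\mu\uptau}F_+)$ and analogously for $a_-$. The $\mu a_+$ term is controlled pointwise by \eqref{eqn:trapping_assumption} and $\mu(\dot a_- + \mu a_-(0)e^{-\mu t}) = \mu(a_- - a_-(0)e^{-\mu t}) - e^{\mu\uptau}S(e^{-\mu\uptau}F_-)$ by \eqref{eqn:est_of_a_-}; in both cases the $S$-term requires an $L^2_\uptau$ bound on $F_\pm$. Revisiting \eqref{eqn:defn_of_F_pm}, the terms $\bfOmg(a_\pm \vec{\calE}_\pm, \vec Z_\mp)$ are handled by the pointwise bounds on $a_\pm$, the term $\bfOmg(\vec\phi, \vec{\calE}_\mp)$ is of the same type as in the first step and falls under \eqref{BA-q-LE}, while $\bfOmg(\vec F_2, \vec Z_\mp)$ is controlled by combining \eqref{BA-dotwp}, \eqref{eqn:all_decay_of_c_im}--\eqref{eqn:L2_est_of_c_im} with \eqref{BA-p-LE} and \eqref{BA-q-LE} (again the support condition of $\vec Z_\mp$ is crucial to access the local energy norm).

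Third, for $\ddot\wp$, I would work directly with the formula $\ddot\wp = \vec G_{\mathrm{prime}}(S\p_\Sigma^{\leq 2}\vec\psi, \dot\wp, \p_t\vec F)$ derived in the excerpt, with $\vec F$ given by \eqref{eqn:vec_F_in_Gprime}. The genuinely nonlinear terms $|\dot\wp|^2$, $|S\p_\Sigma^{\leq 2}\vec\psi|\,|\tfrac{d}{dt}S\p_\Sigma^{\leq 2}\vec\psi|$ and $|\dot\wp|\,|S\p_\Sigma^{\leq 2}\vec\psi|^2$ are estimated by splitting one factor in $L^\infty_\uptau$ (via Proposition~\ref{prop_closing_BA_1}, \eqref{BA-p}, \eqref{BA-q}) and the other in $L^2_\uptau$ (via \eqref{BA-ddotwp-L2}, \eqref{BA-p-LE}, \eqref{BA-q-LE}, where a $\uptau$-derivative is absorbed into the smoothing kernel of $S$). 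The critical linear contribution is $\p_t \vec F$: expanding, the terms $\p_t S(\p_t \vec\bfOmg_p - \vec N_p)$ and $\p_t\vec F_\omg$ and $\p_t(\beta\vec\omg)$ are treated as in Step~2 of Lemma~\ref{lemma_decay_of_bfOmg_phi} (and Lemma~\ref{lemma_on_omg}), absorbing the extra $\p_t$ into $S$. The decisive piece is $\p_t S \vec N_q = S\p_t\vec N_q = S \bfOmg(\p_\uptau \vec q, M\vec Z_j) + \text{good}$, which via the decomposition $\vec q = \vec\phi + a_+\vec Z_+ + a_-\vec Z_-$ reduces to the $L^2_\uptau$ bounds on $\bfOmg(\p_\uptau\vec\phi, M\vec Z_j)$ (part one of this proof) and on $\dot a_\pm$ (part two). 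The boundedness of $S$ on $L^2_\uptau$ with sharp weights then delivers the desired estimate.

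The main obstacle is ensuring the extra smallness $C_\trap + \delta_\wp$ is not lost when the time derivative falls on coefficients in $\vec G_{\mathrm{prime}}$ or inside commutators. For the genuinely linear-in-$\vec\phi$ and linear-in-$a_\pm$ contributions the smallness is automatic through the support/commutator mechanism of Lemma~\ref{lemma_decay_of_bfOmg_phi}--\ref{lemma_decay_of_F_pm}; for the semilinear contributions it comes from one factor being pointwise bounded by $C_k\eps\brk{\uptau}^{-9/4+\kappa}$, which supplies an $\eps$ (and, when paired with $\delta_\wp$-small coefficients, a $\delta_\wp$). The bookkeeping, matching of weights, and careful separation of $a_-(0)e^{-\mu t}$ (which does not decay polynomially) are what must be done with care.
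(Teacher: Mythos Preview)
Your overall plan for $\bfOmg(\p_\uptau\vec\phi,\vec Z_j)$ and for $\ddot\wp$ matches the paper's Steps~0--2 closely and is sound. The gap is in your treatment of $\dot a_\pm$.

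You propose to bound $\|\dot a_+\|_{L^2_\uptau[\tau_1,\tau_2]}$ via the undifferentiated identity $\dot a_+=\mu a_+ + e^{\mu t}S(e^{-\mu t}F_+)$, controlling $\mu a_+$ pointwise by \eqref{eqn:trapping_assumption}. But integrating the pointwise bound $|a_+|\lesssim C_\trap\eps\brk{\uptau}^{-9/4+\kappa}$ in $L^2[\tau_1,\tau_2]$ only yields $C_\trap\eps\brk{\tau_1}^{-7/4+\kappa}$, which is half a power short of the $\brk{\tau_1}^{-9/4+\kappa}$ the lemma asserts (and that is needed to close \eqref{IBA-ddotwp-L2}). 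The same loss occurs in your $\dot a_-$ argument through the term $-\mu(a_- - a_-(0)e^{-\mu t})$. Likewise, your proposed bound $\|F_\pm\|_{L^2_\uptau}$ from pointwise control of $F_\pm$ (Lemma~\ref{lemma_decay_of_F_pm}) suffers the same $\tau_1^{1/2}$ loss.

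The paper avoids this by \emph{differentiating} \eqref{eqn:diff_eqn_for_a_pm} first and, for the unstable mode, integrating \emph{backward} from $\tau_f$:
\[
\dot a_+(t)=\dot a_+(\tau_f)\,e^{-\mu(\tau_f-t)}-e^{\mu t}\!\int_t^{\tau_f} S\bigl(e^{-\mu s}\dot F_+\bigr)\,ds.
\]
The boundary term contributes only $|\dot a_+(\tau_f)|\lesssim(C_\trap+\delta_\wp)\eps\brk{\tau_f}^{-9/4+\kappa}\le(C_\trap+\delta_\wp)\eps\brk{\tau_1}^{-9/4+\kappa}$ to the $L^2$ norm (since $\|e^{-\mu(\tau_f-\cdot)}\|_{L^2}\lesssim 1$), while Schur's test on the integral reduces matters to $\|\dot F_+\|_{L^2_\uptau[\tau_1,\tau_2]}$. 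The point is that $\dot F_+$, unlike $F_+$ or $a_+$, has genuine $L^2_\uptau$ rate $\brk{\tau_1}^{-9/4+\kappa}$: its linear-in-$\dot\wp$ piece becomes $\ddot\wp$ and is handled by \eqref{BA-ddotwp-L2}, and the $\p_t(g_1+\p_\uptau g_2)$ piece by \eqref{eqn:L2_est_of_c_im}. For $\dot a_-$ the same differentiation followed by forward integration (stable direction) works analogously.
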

\begin{proof}
    From the discussion above, it suffices to control $\|\p_t \vec F\|_{L^2_\uptau}$ with $F$ given above in \eqref{eqn:vec_F_in_Gprime}. The proof is similar to the ones before. 

    \textit{Step 0 : Estimating $\p_t(\beta\vec\omg + \vec F_\omg)$. }
    This is trivial due to the  $L^\infty$ bound in Lemma~\ref{lemma_on_omg}.

    \textit{Step 1 : Estimating $\p_t(\p_t \vecbfOmg_p - \vec N_p)$. }
    For $\p_t f_c$, we estimate by \eqref{BA-ddotwp-L2}, which possess an extra small coefficient that allows us to absorb to the left hand side. For $\p_t(g_1 + \p_\uptau g_2)$, we estimate via \eqref{eqn:L2_est_of_c_im}, which in turn by the LE norm of $\p^2_\uptau p$. The other part related to $\bfOmg(\vec p, \p_t\vec Z_j)$ in \eqref{eqn:p_t_Omg_p_minus_N_p} will be bounded via \eqref{BA-dotwp} and \eqref{BA-p} since it contains $\dot\wp$.

    \textit{Step 2 : Estimating $\p_t \vec N_q$. }
    First, we need to reduce to $\p_t \vec N_\phi$ by allowing small constant times $\|\dot a_\pm\|_{L^2_\uptau}$ error, which can be absorbed to the left hand side. Next, we expand $M\vec Z_i$ like in Step 3 of the proof for Lemma~\ref{lemma_decay_of_bfOmg_phi}, where the typical term in \eqref{eqn:inverse_R_1_decay_contribution_for_delta_wp} is instead estimated by \[
        R_f^{-2} \|\brk{\chi_{\uprho \sim R_f} \p_\uptau \phi, \calO(r^{-n+1})}_{L^2_x}\|_{L^2_\uptau} \lesssim C_k R_f^{-2} \| \chi_{\uprho \sim R_f} \uprho^{-2}\|_{L^\infty L^2} \|\p_\uptau \phi\|_{LE} \lesssim \eps \delta_\wp \brk{\uptau}^{-\frac94 + \kappa},
    \]
    where the last step follows from \eqref{BA-q-LE}.
    As a side product, we also get the estimate for $\bfOmg(\p_\uptau \vec\phi, \vec Z_j)$ itself and we also include this in the statement as well.

    \textit{Step 3 : Estimating $a_\pm$. }
    Finally, we need to estimate $L^2_\uptau$ norm of $\dot a_\pm$ on the left hand side. Note that $\vec F_2$ can be simply estimated by pointwise bootstrap assumptions except the contribution from $\p_t (f_c + g_1 + \p_\uptau g_2)$, which is bounded via \eqref{eqn:L2_est_of_c_im}. The estimate for $a_-$ is standard.
    For $\dot a_+$, we differentiate \eqref{eqn:diff_eqn_for_a_pm} in $t$ and then integrate along $[t, \tau_f]$ : \[
        \dot a_+(t) = \dot a_+(\tau_f) e^{-\mu(\tau_f - t)} -  e^{\mu t} \int_t^{\tau_f} S(e^{-\mu s} \dot F_+)\, ds.
    \]
    Note that we already closed the bootstrap of \eqref{IBA-a_+}. By directly estimating $\dot a_+(\tau_f)$ in the first term pointwisely, we obtain \[
        \|\dot a_+(\tau_f) e^{-\mu(\tau_f - t)}\|_{L^2_\uptau[t, \tau_f]} \lesssim (C_\trap \eps + \delta_\wp \eps) \brk{\tau_f}^{-\frac{9}{4} + \kappa} \lesssim (C_\trap \eps + \delta_\wp \eps) \brk{t}^{-\frac{9}{4} + \kappa}.
    \]
    For the second term, we rewrite it as \[
        \int_t^{\tau_f} e^{-\mu(s - t)} \left(e^{\mu s}S(e^{-\mu s} \dot F_+)\right)\, ds
    \]
    and then invoke Schur's test with kernel $e^{-\mu(s-t)} \chi_{s \geq t}$. This allows us to conclude that \[
        \|\dot a_+\|_{L^2_\uptau[\tau_1, \tau_2]} \lesssim (C_\trap \eps + \delta_\wp \eps) \brk{\tau_1}^{-\frac{9}{4} + \kappa} + \|e^{\mu s}S(e^{-\mu s} \dot F_+)\|_{L^2_\uptau[\tau_1, \tau_2]}.
    \]
    Finally, estimating $F_+$ in $L^2$ and absorbing terms with extra smallness to the left hand side, it completes the proof.
\end{proof}

\section{Late time tails and proof of Proposition~\ref{prop_closing_BA_2}}\label{section_rp_methods_hyperboloidal}

\subsection{Derivation of the operator \texorpdfstring{$\calP_\graph$}{P\_graph}}\label{Section:derivation_of_calP_graph_op}

\subsubsection{Representation \texorpdfstring{$\Box_m$}{Box\_m} in \texorpdfstring{$(\tau, r, \theta)$}{(tau, r, theta)} coordinates}
Since in the asymptotically hyperboloidal region $\calC_{hyp}$, we have \[
    \begin{cases}
        x^0 = \tau + \brk{r} \\
        x' = \eta(\tau) + r\Theta(\theta)
    \end{cases}
    ,
\]
one can write \[\begin{aligned}
    ds^2 &= -(dx^0)^2 + (dx')^2 
    = -(d\tau + \frac{r}{\brk{r}}\, dr)^2 + (\eta'\, d\tau + \Theta\, dr + r\Theta_a d\theta^a)^2 \\
    &= - (1 - (\eta')^2)\, d\tau^2 + \brk{r}^{-2}\, dr^2
    + r^2 \ringsg_{ab}\, d\theta^a d\theta^b + 2(\eta' \cdot \Theta - \frac{r}{\brk{r}})\, d\tau dr + 2 r \eta' \cdot \Theta_a \, d\tau d\theta^a,
\end{aligned}
\]
where we denote $\Theta_a := \p_{\theta^a} \Theta$.

Therefore, we could write the metric in a matrix form in $(\tau, r, \theta)$ coordinates as 
$m = m_0 + m_1$, where
\[\begin{aligned}
    m_0 &= \begin{pmatrix}
       - (1 - (\eta')^2) &  -(1 - \Theta \cdot \eta') & r\eta' \cdot \Theta_a \\ 
       -(1 - \Theta \cdot \eta') & 0 & 0 \\
       r\eta' \cdot \Theta_a & 0 & r^2 \ringsg_{ab} \\
    \end{pmatrix}, \\
    m_1 &= \begin{pmatrix}
        0 & 1 - \frac{r}{\brk{r}} & 0 \\
        1 - \frac{r}{\brk{r}} & \brk{r}^{-2} & 0 \\
        0 & 0 & 0 \\
    \end{pmatrix}
    = \brk{r}^{-2}\begin{pmatrix}
        0 & \frac12 & 0 \\
        \frac12 & 1 & 0 \\
        0 & 0 & 0 \\
    \end{pmatrix} + \begin{pmatrix}
        0 & \calO(r^{-4}) & 0 \\
        \calO(r^{-4}) & 0 & 0 \\
        0 & 0 & 0 \\
    \end{pmatrix}
\end{aligned}
\]
where $m_1 = \calO(\brk{r}^{-2})$.

Note that $\ringsg$ is a diagonal matrix, it follows that $
\displaystyle\{\Theta, \Theta_a/\sqrt{\ringsg_{aa}}, \ a = 1, \cdots, n-1 \}
$
form an orthonormal basis. 
Thus \[
    (\Theta_b \cdot \eta') (\Theta^b \cdot \eta') = |\eta'|^2 - (\Theta \cdot \eta')^2.
\]
Using this identity, it is easy to compute the inverse of $m_0$ in the $(\tau, r, \theta)$ coordinates  \begin{equation}\label{eqn:m_0_inv_formula}
    m_0^{-1} = \begin{pmatrix}
    0 & -\frac{1}{1 - \Theta \cdot \eta'} & 0 \\ 
    -\frac{1}{1 - \Theta \cdot \eta'} & \frac{1 + \Theta \cdot \eta'}{1 - \Theta \cdot \eta'} & \frac{\Theta^b \cdot \eta'}{r(1 - \Theta \cdot \eta')} \\
    0 & \frac{\Theta^b \cdot \eta'}{r(1 - \Theta \cdot \eta')} & r^{-2} \ringsg^{-1}, \\ 
\end{pmatrix}
\end{equation}
where we use the notation $\Theta^b = \ringsg^{ba}\Theta_a$.
Moreover, if we write $m^{-1} = m_0^{-1} + \tilde m_1$, then 
\[
    I = mm^{-1} = (m_0 + m_1) (m_0^{-1} + \tilde m_1) = I + m_0 \tilde m_1 + m_1 m_0^{-1},
\]
which yields $\tilde m_1 = -m_0^{-1} m_1 m_0^{-1}$. Expanding this gives \begin{equation}\label{eqn:tilde_m_1_formula}
    \begin{aligned}
    \tilde m_1 
= & -\begin{pmatrix}
    - \frac{1 - \frac{r}{\brk{r}}}{1 - \Theta \cdot \eta'} & -\frac{\brk{r}^{-2}}{1 - \Theta \cdot \eta'} & 0 \\
    \frac{1 + \Theta \cdot \eta'}{1 - \Theta \cdot \eta'}(1 - \frac{r}{\brk{r}}) & -(1 - \frac{r}{\brk{r}})\frac{1}{1 - \Theta \cdot \eta'} + \frac{1 + \Theta \cdot\eta'}{1 - \Theta\cdot\eta'}\brk{r}^{-2} & 0 \\ 
    \frac{\Theta^b \cdot \eta'}{r(1 - \Theta \cdot \eta')}(1 - \frac{r}{\brk{r}}) & \frac{\Theta^b \cdot \eta'}{r\brk{r}^2(1 - \Theta \cdot \eta')} & 0 \\
\end{pmatrix}
\begin{pmatrix}
    0 & -\frac{1}{1 - \Theta \cdot \eta'} & 0 \\ 
    -\frac{1}{1 - \Theta \cdot \eta'} & \frac{\Theta \cdot \eta'}{1 - \Theta \cdot \eta'} & \frac{\Theta^b \cdot \eta'}{r(1 - \Theta \cdot \eta')} \\
    0 & \frac{\Theta^b \cdot \eta'}{r(1 - \Theta \cdot \eta')} & r^{-2} \ringsg^{-1}, \\ 
\end{pmatrix} \\
= &\ \brk{r}^{-2}\begin{pmatrix}
        \calO_{\theta,\tau}(1) & \calO_{\theta,\tau}(1) & 0 \\
        \calO_{\theta,\tau}(1) & \calO_{\theta,\tau}(1) & 0 \\ 
        0 & 0 & 0 \\ 
    \end{pmatrix} + \calO(\brk{r}^{-3}).
\end{aligned}
\end{equation}
The notation $\calO_{\theta,\tau}(1)$ denotes a term that only depends on $\theta, \tau$ and is independent of $r$. 
Also, recall that for any term $\calO(\cdot)$, by taking $\p_\tau$, it will produce extra order of $\dot\wp$.
In particular, $\p_\theta\tilde m_1^{\mu\nu} = \calO(\wp r^{-2} + r^{-3})$ and $\p_\tau\tilde m_1^{\mu\nu} = \calO(\dot\wp r^{-2} + r^{-3})$.

For determinants, one can compute \begin{equation}\label{eqn:determinant_relation_m0_m}
\begin{aligned}
    \det m &= \det m_0 \det (I + m_0^{-1}m_1)
    = \det m_0 + \calO(r^{-2}), \\
    |m_0| &= |\det m_0|
    = (1 - \Theta \cdot \eta')^2 r^{2(n-1)} |\ringsg|, \quad 
    \sqrt{|m|} - \sqrt{|m_0|} = \sqrt{|m_0|}\calO(r^{-2}),
\end{aligned}
\end{equation}
where we use the formula $\det(I + \veps A) = 1 + \veps \tr A + O(\veps^2).$
Now we derive an expression for the wave operator $\Box_m$ : \begin{equation}\label{eqn:Box_m_U_hyperboloidal}
    \begin{aligned}
    \Box_m U &= \frac1{\sqrt{|m|}} \p_\mu (\sqrt{|m|} m^{\mu\nu} \p_\nu U) \\
    &= \frac1{\sqrt{|m_0|}} \p_\mu (\sqrt{|m_0|} m_0^{\mu\nu} \p_\nu U)
    + \frac1{\sqrt{|m|}} \p_\mu \left((\sqrt{|m|} (m^{\mu\nu} - m_0^{\mu\nu}) + (\sqrt{|m|} - \sqrt{|m_0|}) m_0^{\mu\nu})\right) \p_\nu U \\
    & \quad + \left(\frac1{\sqrt{|m|}} - \frac1{\sqrt{|m_0|}}\right)\p_\mu (\sqrt{|m_0|} m_0^{\mu\nu}) \p_\nu U 
    + (m^{\mu\nu} - m_0^{\mu\nu})\p_\mu\p_\nu U \\
\end{aligned}
\end{equation}
Using the closed form of $m_0^{-1}$, 
we write \begin{equation}\label{eqn:Box_m0_U_hyperboloidal}
    \begin{aligned}
    &\frac1{\sqrt{|m_0|}} \p_\mu (\sqrt{|m_0|} m_0^{\mu\nu} \p_\nu U)
    = m_0^{\mu\nu} \p_\mu\p_\nu U + \frac1{\sqrt{|m_0|}} \p_\mu(\sqrt{|m_0|} m_0^{\mu\nu}) \p_\nu U \\
    &= - \frac{2}{1 - \Theta \cdot \eta'} \p_\tau \p_r U 
    + \frac{1 + \Theta \cdot \eta'}{1 - \Theta \cdot \eta'} \p_r^2 U + \frac{2\Theta^\theta \cdot \eta'}{r(1 - \Theta \cdot \eta')} \p_r \p_\theta U + r^{-2} \sDelta_{\bbS^{n-1}} U 
    - \frac{n-1}{r(1- \Theta \cdot \eta')} \p_\tau U \\
    & \quad  + \frac{(n-1)(1 + \Theta \cdot \eta')}{r(1- \Theta \cdot \eta')} \p_r U
    + \frac{n-3}{r^2(1 - \Theta \cdot \eta')} (\Theta^a \cdot \eta') \p_a U + 
    \frac1{\sqrt{|m_0|}} \p_a(\sqrt{|m_0|} m_0^{ra}) \p_r U. \\
\end{aligned}
\end{equation}

\subsubsection{Equations used to derive $r^p$-weighted estimates}
In contrast with \cite{LOS22}, where the geometrically adapted frames $L, \Lbar, T, \Omg$ are chosen to facilitate the computation, we recall the usual incoming, outgoing vector fields for the linear wave equation. Comparing their expansions, one would notice that \[
    T = \calO(1) \p_\tau, \quad L = \calO(1) \p_r
\]
up to a factor of $\calO(r^{-3} + \dot\wp)$ in the asymptotically hyperboloidal region.
Therefore, it is natural to compute things in local coordinates $(\tau, r, \theta)$ instead of using frames.

Let $U = r^{-\frac{n-1}{2}} \tilde U$,  we write 
\[\begin{aligned}
    \p_r U = r^{-\frac{n-1}{2}}\left(-\frac{n-1}{2r} \tilde U + \p_r \tilde U\right),  \quad  
    \p_\tau U &= r^{-\frac{n-1}{2}}\p_\tau\tilde U, \\
    \p_r\p_\tau U = r^{-\frac{n-1}{2}} \left(-\frac{n-1}{2r} \p_\tau\tilde U + \p_r \p_\tau\tilde U\right), \quad \p_r^2 U &= r^{-\frac{n-1}{2}}\left( \frac{(n-1)(n+1)}{4r^2} \tilde U - \frac{n-1}{r} \p_r \tilde U + \p_r^2 \tilde U\right).
\end{aligned}
\]

Therefore, the main contribution in \eqref{eqn:Box_m0_U_hyperboloidal} is 
\[\begin{aligned}
    &- \frac{2}{1 - \Theta \cdot \eta'} \p_\tau \p_r U 
    + \frac{1 + \Theta \cdot \eta'}{1 - \Theta \cdot \eta'} \p_r^2 U
    - \frac{n-1}{r(1- \Theta \cdot \eta')} \p_\tau U
    + \frac{(n-1)(1 + \Theta\cdot\eta')}{r(1- \Theta \cdot \eta')} \p_r U
    + r^{-2} \sDelta_{\bbS^{n-1}} U \\
    =& \frac1{1 - \Theta \cdot \eta'} \left(-2\p_\tau \p_r U 
    + \p_r^2 U
    + \frac{n-1}{r} (\p_r U - \p_\tau U)\right)
    + r^{-2} \sDelta_{\bbS^{n-1}} U + \calO_{\theta,\tau}(\wp) (\p_r^2 U + r^{-1}\p_r U)  \\
    =& \frac{r^{-\frac{n-1}{2}}}{1 - \Theta \cdot \eta'} \left( -\frac{(n-1)(n-3)}{4r^2} \tilde U - 2\p_r\p_\tau\tilde U + \p_r^2 \tilde U\right) + r^{-2} \sDelta_{\bbS^{n-1}} \tilde U + r^{-\frac{n-1}{2}}\calO_{\theta,\tau}(\wp) (\p_r^2 \tilde U + r^{-1}\p_r \tilde U + r^{-2}\tilde U).
\end{aligned}
\]

\begin{remark}\label{rmk:exact_cancel_p_tau}
    Note that there is an exact cancellation for the $\p_\tau \tilde U$ produced from the following two terms : \[
        - \frac{2}{1 - \Theta \cdot \eta'} \p_\tau \p_r U  - \frac{n-1}{r(1- \Theta \cdot \eta')} \p_\tau U
    \]
    in the computation above, which is crucial when we perform the $r^p$-weighted estimates. 
\end{remark}

Combining this with \eqref{eqn:Box_m0_U_hyperboloidal}, we could write \begin{equation}\label{eqn:r_conjugated_Box_m0_U}
    \begin{aligned}
        r^{\frac{n-1}{2}}(1 - \Theta \cdot \eta') \Box_{m_0} U 
    &= -\frac{(n-1)(n-3)}{4r^2} \tilde U - 2\p_r\p_\tau \tilde U + \p_r^2 \tilde U + r^{-2} \sDelta_{\bbS^{n-1}} \tilde U \\
    &+ \calO_{\theta, \tau}(\wp) \p_r^2 \tilde U + \calO_{\theta, \tau}(\wp)r^{-1} (\p_r\p_\theta \tilde U + \p_r \tilde U)
    + \calO_{\theta, \tau}(\wp)r^{-2} (\tilde U + \p_a \tilde U + \sDelta \tilde U) \\
    &+ \calO(r^{-2}) \p_r \tilde U + \calO(r^{-3}) \p_\theta \tilde U,
    \end{aligned}
\end{equation}
where $\calO_{\theta, \tau}(\wp)$ denotes that the constant here is independent of $r$ and contains $\wp$ smallness. Furthermore, in view of \eqref{eqn:determinant_relation_m0_m} and \eqref{eqn:Box_m_U_hyperboloidal}, we obtain extra terms of which some could not be treated as errors. We record \[
    \begin{aligned}
        r^{\frac{n-1}{2}}(1 - \Theta \cdot \eta') \Box_m U 
    &= -\frac{(n-1)(n-3)}{4r^2} \tilde U - 2\p_r\p_\tau \tilde U + \p_r^2 \tilde U + r^{-2} \sDelta_{\bbS^{n-1}} \tilde U \\
    &+ \calO_{\theta, \tau}(\wp) \p_r^2 \tilde U + \calO_{\theta, \tau}(\wp)r^{-1} (\p_r\p_\theta \tilde U + \p_r \tilde U)
    + \calO_{\theta, \tau}(\wp)r^{-2} (\tilde U + \p_a \tilde U + \sDelta \tilde U) \\
    &+ \calO(r^{-2}) \p_r \tilde U + \calO(r^{-3}) \p_\theta \tilde U + \calO(\dot\wp r^{-2} + r^{-3}) (r^{-1}\tilde U + \p_\tau \tilde U) + \tilde m_1^{\mu\nu}\p_\mu\p_\nu \tilde U,
    \end{aligned}
\]
where $\tilde m_1$ is given in \eqref{eqn:tilde_m_1_formula}.

Now we specialize to the $4$-dimensional case. In $(\tau, r, \theta)$ coordinates, and notice from the form of $m^{-1} = m_0^{-1} + \tilde m_1$, \begin{equation}\label{eqn:derivatives_of_Q_decay}
    \nabla^\tau Q = \calO(r^{-5}), \quad \nabla^r Q = \calO(r^{-3}), \quad \nabla^\theta Q = \calO(r^{-5}).
\end{equation}
Putting other errors terms in \eqref{eqn:calP_graph} into consideration, 
we finally derive \begin{equation}\label{eqn:eqn_for_rp_for_U}
    \begin{aligned}
    r^{\frac{3}{2}}\tilde\calP_\graph U 
    &= -\frac{3}{4r^2} \tilde U - 2\p_r\p_\tau \tilde U + \p_r^2 \tilde U + r^{-2} \sDelta_{\bbS^{n-1}} \tilde U \\
    &\ + \calO(\wp r^{-2} + r^{-4}) \tilde U + \calO(r^{-2}) \p_r\p_\tau \tilde U + \calO(\wp + r^{-2}) \p_r^2 \tilde U
    + \calO(\wp r^{-2} + r^{-3}) \p_a\p_b \tilde U \\
    &\ + \tilde m_1^{\tau\tau} \p_\tau^2 \tilde U +
     \calO(\wp r^{-1} + r^{-3})\p_r\p_\theta \tilde U 
     + \calO(r^{-3})\p_\tau\p_\theta \tilde U \\
    &\ + \calO(\wp r^{-1} + r^{-2}) \p_r \tilde U
    + \calO(\wp r^{-2} + r^{-3}) \p_a \tilde U + \calO(\dot\wp r^{-2} + r^{-3}) \p_\tau \tilde U,
\end{aligned}
\end{equation}
where $\tilde \calP_\graph = (1-\Theta\cdot\eta')\calP_\graph$ and \begin{equation}\label{eqn:defn_m1_tt}
    \tilde m_1^{\tau\tau} = - \frac{\brk{r}^{-2}}{(1 - \Theta \cdot \eta')^2} = \calO(r^{-2}).
\end{equation}

\begin{remark}\label{rmk_on_eqn_of_P_graph_coeff}
    The coefficients of the error terms turn out to be extremely delicate and some of which require extra care. 
    \begin{enumerate}[ wide = 0pt, align = left, label=\arabic*. ]
        \item The decay rate of the coefficient in front of $\p_\tau \tilde U$ in \eqref{eqn:eqn_for_rp_for_U} is at least $r^{-2}$ due to the exact cancellation mentioned in Remark~\ref{rmk:exact_cancel_p_tau}. 
        \item We group the terms such that the second line in \eqref{eqn:eqn_for_rp_for_U} is a small perturbation of the first line and hence no threat to the $r^p$-weighted estimates of $\tilde U$.
        \item Despite the preceding bullet point, we claim that multiplying $(1 - \Theta \cdot \eta')$ on both sides is crucial later for deriving $r^p$ estimates for $Y$. This leads to the absence of $\calO(\wp)\p_r\p_\tau \tilde U$ term on the right hand side, which further corresponds to the absence of $\calO(\wp) r^{-1}\p_\tau$ in \eqref{eqn:operator_Rc_defn}. 
        This term is harmless at the level of estimates for $\tilde U$ but will be problematic when deriving $r^p$ estimates for $Y$. See Remark~\ref{rmk_calR_12_not_error_term}.
        \item In the $r^p$ estimates for $\tilde U$ (see Proposition~\ref{prop_rp_for_U}), the exact formula for $\tilde m_1^{\tau\tau}$ does not affect the proof. However, we need to keep track of this in the estimate for $Y$. See Proposition~\ref{prop_rp_est_for_Y_w_G12}.
        \item All other error terms are harmless and can be handled by integration by parts and Cauchy-Schwarz inequality. See the proof of Proposition~\ref{prop_rp_for_U}, Lemma~\ref{lemma_computation_for_R_c}) and Proposition~\ref{prop_rp_est_for_Y_w_G12}.
    \end{enumerate}
\end{remark}

We denote the main part of the operator on the right hand side by $Q_0$ and we can write out the equation in a succinct way : \begin{equation}\label{eqn:Q_0_defn}
    Q_0 := -\frac{3}{4r^2} - 2\p_r\p_\tau + \p_r^2 + r^{-2} \sDelta_{\bbS^{n-1}}, \quad r^{\frac32}\calP_\graph U = (Q_0 + ErrQ_0) \tilde U.
\end{equation}

\subsection{General \texorpdfstring{$r^p$}{rp}-weighted estimates for \texorpdfstring{$\tilde U$}{U} in dimension \texorpdfstring{$n = 4$}{n=4}}
In this subsection, we develop the Dafermos–Rodnianski \cite{DR10} $r^p$-hierarchy for $\tilde U = r^{\frac32} U$, where $U$ solves the equation \[
    \calP_\graph U = \calF.
\]
We set $\tilde\calF = r^{\frac32} (1 - \Theta \cdot \eta')\calF$.
Let $\chi = \chi_R = \chi_{\geq R}$ be a smooth cut-off function with support in the hyperboloidal region ($R \gg R_f$) and it also satisfies $\chi \equiv 1$ when $r \gtrsim R$, $\supp\chi \subset \{r \gtrsim R\}$.

Before proving the estimates, we define some short hand notations : 
\begin{equation}\label{eqn:shorthand_notation_Ep_Bp}
    \begin{aligned}
    \calE^p[\tilde U](\tau) := \int_{\Sigma_\tau} \chi_R r^p (\p_r \tilde U)^2\, dr\, d\theta, \quad 
    \calB^p[\tilde U](\tau_1, \tau_2) := \int_{\calD_{\tau_1}^{\tau_2}} \chi_R r^{p-1}\left((\p_r\tilde U)^2 + \frac1{r^2}(\tilde U^2 + |\snabla\tilde U|^2) \right)\, d\tau\, dr\, d\theta.
\end{aligned}
\end{equation}
Also, we recall that the standard energy is given by \begin{equation}\label{eqn:standard_eng_defn}
    E[U](\tau) := \int_{\Sigma_\tau} \chi_{\leq R}\left(|\p U|^2 + \brk{\rho}^{-2}|U|^2 \right)\, dV + 
    \int_{\Sigma_\tau} \chi_R \left(|\p_r U|^2 + r^{-2}(|\p_\tau U|^2 + |U|^2 + |\p_\theta U|^2) \right)\, dV,
\end{equation}
where \begin{equation}\label{eqn:non_induced_volume_form}
    dV = \brk{\uprho}^{n-1}\, d\uprho\, d\uptheta
    \simeq \begin{cases}
        \, d\uprho\, d\uptheta, \text{ in the finite } \uprho \text{ region}, \\
        \, r^{n-1}\, dr\, d\theta, \text{ in the hyperboloidal region}.\\
    \end{cases}
\end{equation}
One should keep in mind that this is not the same as the volume form obtained from the induced metric on $\Sigma_\tau$. See \cite[Appendix A]{Mo15} for more discussion. Note that the zeroth order term is involved in the standard energy due to the presence of $\brk{U, Z_\mu}$ and $\brk{U, Z_j}$'s and the consideration of Hardy's inequality.

We also mention that we usually stick to the volume form $d\tau\, dr\, d\theta$ instead of the induced volume form from the metric $m$. The volume form $d\tau\, dr\, d\theta$ is induced from metric $-d\tau\otimes dr - dr\otimes d\tau + dr\otimes dr + \ringsg_{\bbS^{n-1}}$. and hence the divergence theorem is applied corresponding to this metric. 

\subsubsection{Preliminary $r^p$-estimates for $\tilde U$}
\begin{proposition}\label{prop_rp_for_U}
    Let $U$ be a solution to $\calP_\graph U = \calF$. We assume that the bootstrap assumption \eqref{BA-dotwp} and the smallness assumption of $\calO(\wp)$ are satisfied.
    For any $0 < \alpha \ll 1$, there exists $\tilde R, \tilde\eps$ such that for all $R > \tilde R$, $\eps < \tilde\eps$, for all $\tau_1 < \tau_2$, all $\delta < p < 2 - 2\alpha$,
    \begin{equation}\label{eqn:main_rp_est}
        \begin{aligned}
            \sup_{\tau_1 \leq \tau \leq \tau_2} \calE^p[\tilde U](\tau) + \calB^p[\tilde U](\tau_1, \tau_2) 
            \lesssim_{\alpha, \delta} &\ \calE^p[\tilde U](\tau_1) + \int_{\calD_{\tau_1}^{\tau_2}} \left(\chi r^p \left|\tilde\calF \left(\p_r \tilde U + \calO(r^{-3}) \p_\theta\tilde U\right)\right| + |\Err_\chi[\tilde U]|\right)\, d\tau\, dr\,d\theta \\
            &\ + \delta \int_{\calD_{\tau_1}^{\tau_2}} \chi r^{-1-\alpha} |\p_\tau \tilde U|^2\, d\tau\, dr\, d\theta + \delta \sup_{\tau_1 \leq \tau \leq \tau_2} E[U](\tau),
        \end{aligned}
    \end{equation}
    where $\delta$ is a small constant with $|\delta| \lesssim \tilde\eps + \tilde R^{-\alpha}$, $\delta < 2\alpha$ and $\Err_\chi[\tilde U]$ denotes all terms multiplied by $\chi'$ and squares of derivatives of $\tilde U$ up to order $1$. 
\end{proposition}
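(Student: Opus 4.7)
The plan is to run the Dafermos--Rodnianski multiplier method on the rewritten equation \eqref{eqn:eqn_for_rp_for_U}, which in the form $Q_0 \tilde U + \Err Q_0\,\tilde U = \tilde\calF$ (see \eqref{eqn:Q_0_defn}) isolates the flat-wave principal part $Q_0 = -\tfrac{3}{4r^2} - 2\p_r\p_\tau + \p_r^2 + r^{-2}\sDelta_{\bbS^3}$. The multiplier will be $\chi_R r^p \p_r \tilde U$, integrated over $\calD_{\tau_1}^{\tau_2}$ against the non-induced volume form $d\tau\,dr\,d\theta$. The reason to work with this volume form is that the divergence structure of $Q_0$ becomes transparent: each monomial in $Q_0$ combines with $\chi r^p \p_r \tilde U$ into a total derivative plus a coercive bulk term (and a cut-off error supported in $\{r \simeq R\}$, collected into $\Err_\chi$).

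First I would compute the main identities term-by-term. The mixed derivative $-2\p_r\p_\tau \tilde U \cdot \chi r^p \p_r\tilde U = -\p_\tau(\chi r^p (\p_r\tilde U)^2)$ gives the flux difference $\calE^p[\tilde U](\tau_2) - \calE^p[\tilde U](\tau_1)$. After integration by parts in $r$, $\p_r^2 \tilde U \cdot \chi r^p\p_r\tilde U$ contributes $-\tfrac{p}{2}\int \chi r^{p-1}(\p_r\tilde U)^2$, and $r^{-2}\sDelta \tilde U \cdot \chi r^p\p_r\tilde U$ contributes $-\tfrac{2-p}{2}\int \chi r^{p-3}|\snabla\tilde U|^2$ after an additional integration by parts on $\bbS^3$. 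The inverse-square potential piece yields $-\tfrac{3}{8}\p_r(\chi r^{p-2}\tilde U^2) + \tfrac{3(2-p)}{8}\chi r^{p-3}\tilde U^2$. Summing these and accounting for $\chi'$-localized boundary contributions in $\Err_\chi$, for $0 < p < 2$ the bulk integrals assemble into $\calB^p[\tilde U](\tau_1,\tau_2)$ with strictly positive coefficients $\tfrac{p}{2},\tfrac{2-p}{2},\tfrac{3(2-p)}{8}$.

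Next the perturbative terms in $\Err Q_0$: the small-coefficient second-order pieces $\calO(\wp + r^{-2})\p_r^2\tilde U$ and $\calO(r^{-2})\p_r\p_\tau \tilde U$ are integrated by parts exactly as the principal terms, producing bulk expressions bounded by $o(1)\,\calB^p[\tilde U]$ that are absorbed once $\tilde R$ is large and $\tilde\eps$ small; the angular terms $\calO(\wp r^{-2}+r^{-3})\p_a\p_b\tilde U$ admit one $\bbS^3$-integration by parts, then Cauchy--Schwarz against $\chi r^{p-3}|\snabla\tilde U|^2$ closes them. The lower-order contributions $\calO(\wp r^{-1}+r^{-2})\p_r\tilde U$, $\calO(\wp r^{-2}+r^{-3})\p_a\tilde U$, $\calO(\wp r^{-2}+r^{-4})\tilde U$ and $\calO(\dot\wp r^{-2}+r^{-3})\p_\tau\tilde U$ are controlled by Cauchy--Schwarz in the hyperboloidal region; a Hardy inequality converts the $\tilde U$-piece to the coercive $\chi r^{p-3}\tilde U^2$ budget, and the $\p_\tau$-piece is estimated against $\delta\int \chi r^{-1-\alpha}|\p_\tau\tilde U|^2$. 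The forcing term $\tilde\calF\cdot \chi r^p \p_r\tilde U$ contributes directly; for the $\calO(r^{-3})\p_\tau\p_\theta\tilde U$ term and the $\calO(r^{-3})\p_\theta$-type source, one additional integration by parts on $\bbS^3$ moves $\p_\theta$ onto the multiplier, producing the $\chi r^p\tilde\calF \cdot \calO(r^{-3})\p_\theta\tilde U$ piece in the statement.

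The hard part will be the term $\tilde m_1^{\tau\tau}\p_\tau^2\tilde U$ with $\tilde m_1^{\tau\tau} = \calO(r^{-2})$ (see \eqref{eqn:defn_m1_tt}), because $\p_\tau^2\tilde U$ is not controlled by $\calE^p$ or $\calB^p$. My plan is to integrate by parts once in $\tau$, writing
\[
\int_{\calD_{\tau_1}^{\tau_2}} \tilde m_1^{\tau\tau}\chi r^p \p_\tau^2\tilde U\,\p_r\tilde U \, d\tau\,dr\,d\theta
= \Bigl[\textrm{bdy}\Bigr]_{\tau_1}^{\tau_2} - \int \p_\tau\bigl(\tilde m_1^{\tau\tau}\chi r^p \p_r\tilde U\bigr)\,\p_\tau\tilde U\, d\tau\,dr\,d\theta,
\]
where the boundary terms are absorbed by the $\delta\sup_\tau E[U](\tau)$ budget, and $\p_\tau \tilde m_1^{\tau\tau} = \calO(\dot\wp r^{-2})$ is negligible under \eqref{BA-dotwp}. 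The remaining bulk integral $\tfrac12\int \tilde m_1^{\tau\tau}\chi r^p \p_r(\p_\tau\tilde U)^2$, after a second integration by parts in $r$, becomes of size $\int \chi r^{p-3}|\p_\tau\tilde U|^2$. The constraint $p < 2-2\alpha$ ensures $r^{p-3}\lesssim r^{-1-\alpha}$, so this contribution is precisely absorbed by the $\delta\int \chi r^{-1-\alpha}|\p_\tau\tilde U|^2$ term on the right-hand side of \eqref{eqn:main_rp_est}. This explains the simultaneous appearance of the auxiliary $\p_\tau\tilde U$ spacetime integral and of the upper bound $p < 2-2\alpha$ in the statement; the coercivity lower bound $p > \delta$ comes from the strict positivity of $\tfrac{p}{2}$ and the Hardy-type estimates applied to the lowest-order error pieces.
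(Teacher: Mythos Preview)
Your overall architecture is exactly that of the paper: multiply \eqref{eqn:eqn_for_rp_for_U} by $\chi r^p\partial_r\tilde U$, integrate over $\calD_{\tau_1}^{\tau_2}$ with the flat volume form, read off the coercive flux $\calE^p$ and bulk $\calB^p$ from $Q_0$, and treat the $\Err Q_0$ contributions as small perturbations. Your handling of the $\tilde m_1^{\tau\tau}\partial_\tau^2\tilde U$ term (integrate by parts in $\tau$, Cauchy--Schwarz on the flux, then integrate by parts in $r$ on the bulk and use $p<2-2\alpha$ to land in the $r^{-1-\alpha}|\partial_\tau\tilde U|^2$ budget) matches the paper's treatment of what it calls $\calR_3$. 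One small omission: the $\tau$-boundary term $\int_{\Sigma_\tau}\chi r^{p-2}\partial_\tau\tilde U\,\partial_r\tilde U$ is not absorbed by $\delta\sup_\tau E[U]$ alone; Cauchy--Schwarz splits off a piece $\veps\,\calE^p[\tilde U](\tau)$ that must be absorbed into the left-hand side.

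The one place your explanation goes wrong is the $\calO(r^{-3})\partial_\tau\partial_\theta\tilde U$ error (the paper's $\calR_5$). Integrating by parts on $\bbS^3$ as you propose leaves $\calO(r^{-3})\partial_\tau\tilde U\cdot \chi r^p\partial_r\partial_\theta\tilde U$, and $\partial_r\partial_\theta\tilde U$ is a genuine second-order quantity not controlled by $\calE^p$, $\calB^p$, or $E[U]$; a direct Cauchy--Schwarz does not close. The paper instead integrates by parts in $\tau$, producing a flux handled as above plus the bulk term $-\calO(r^{-3})\partial_\theta\tilde U\,\partial_r\partial_\tau\tilde U$, and then \emph{substitutes for $\partial_r\partial_\tau\tilde U$ using the equation} $r^{3/2}\calP_\graph U=\tilde\calF$. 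It is this substitution that generates the extra forcing coupling $\int\chi\,\calO(r^{p-3})\partial_\theta\tilde U\,\tilde\calF$ appearing in the statement, not a spherical integration by parts. All remaining terms produced by the substitution are lower order and handled as in your other error estimates.
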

\begin{remark}
    Later we will fix $\alpha$ when we apply this estimate so that $\kappa > 2\alpha$. Here, $\kappa$ is the small loss of the decay rate that we assume in the bootstrap assumptions. 

    When invoking integration by parts below involving error terms, one should keep in mind that \[
        \p_r \calO(\wp r^{-k}) = \calO(\wp r^{-k-1}), \quad 
        \p_\theta \calO(\wp r^{-k}) = \calO(\wp r^{-k}), \quad 
        \p_\tau \calO(r^{-k}) = \calO(\dot\wp r^{-k}).
    \]
\end{remark}

\begin{proof}
    We multiply \eqref{eqn:eqn_for_rp_for_U} by $\chi r^p \p_r \tilde U$ and obtain \begin{equation}\label{eqn:preliminary_rp_IBP}
        \begin{aligned}
    \int_{\calD_{\tau_1}^{\tau_2}} \chi\left(- r^p \p_\tau (\p_r \tilde U)^2 + \frac12 r^p \p_r (\p_r\tilde U)^2
    - \frac38 r^{p-2} \p_r \tilde U^2 - \frac12 r^{p-2} \p_r|\snabla \tilde U|^2\right)\, d\tau\, dr\, d\theta
    + \calR = \int_{\calD_{\tau_1}^{\tau_2}} \chi r^p \tilde\calF \p_r \tilde U, 
\end{aligned}
    \end{equation}
where the remainder $\calR = \sum_{j=1}^6 \calR_j$ is given by 
\[
    \begin{aligned}
        &\calR_1 = \int_{\calD_{\tau_1}^{\tau_2}} \chi  r^p \left(\calO(\wp r^{-2} + r^{-4}) \tilde U + \calO(r^{-2}) \p_r\p_\tau \tilde U + \calO(\wp + r^{-2}) \p_r^2 \tilde U \right) \p_r \tilde U \, d\tau\, dr\, d\theta, \\
        &\calR_2 = \int_{\calD_{\tau_1}^{\tau_2}} \chi r^p \calO(\wp r^{-2} + r^{-3}) \p_a\p_b \tilde U \p_r \tilde U\, d\tau\, dr\, d\theta, 
        \quad \calR_3 = \int_{\calD_{\tau_1}^{\tau_2}} \chi r^p \calO(r^{-2}) \p_\tau^2 \tilde U \p_r \tilde U\, d\tau\, dr\, d\theta, \\
        &\calR_4 = \int_{\calD_{\tau_1}^{\tau_2}} \chi r^p \calO(\wp r^{-1} + r^{-3})\p_r\p_\theta \tilde U \p_r \tilde U\, d\tau\, dr\, d\theta, 
        \quad \calR_5 = \int_{\calD_{\tau_1}^{\tau_2}} \chi r^p \calO(r^{-3})\p_\tau\p_\theta \tilde U \p_r \tilde U\, d\tau\, dr\, d\theta, \\
        &\calR_6 = \int_{\calD_{\tau_1}^{\tau_2}} \chi r^p \left( \calO(\wp r^{-1} + r^{-2}) \p_r \tilde U + \calO(\wp r^{-2} + r^{-3}) \p_a \tilde U + \calO(\dot\wp r^{-2} + r^{-3}) \p_\tau \tilde U \right) \p_r \tilde U \, d\tau\, dr\, d\theta, \\
    \end{aligned}  
\]
    From \eqref{eqn:preliminary_rp_IBP}, we can obtain a weighted estimates as follows : \begin{equation}\label{eqn:preliminary_rp_est}
        \begin{aligned}
            \sup_{\tau_1 \leq \tau \leq \tau_2}\int_{\Sigma_\tau} \chi r^p (\p_r \tilde U)^2\, dr\, d\theta + \frac12 \int_{\calD_{\tau_1}^{\tau_2}} \chi \left( pr^{p-1}(\p_r\tilde U)^2 + (2-p)\left(\frac34 r^{p-3} \tilde U^2 + r^{p-3}|\snabla\tilde U|^2\right) \right)\, d\tau\, dr\, d\theta \\
            \leq \int_{\Sigma_{\tau_1}} \chi r^p (\p_r \tilde U)^2\, dr\, d\theta + \left|\int_{\calD_{\tau_1}^{\tau_2}} \chi r^p \tilde\calF \p_r \tilde U\, d\tau\, dr\,d\theta\right|+ \int_{\calD_{\tau_1}^{\tau_2}}|\Err_\chi[\tilde U]|\, d\tau\, dr\,d\theta + |\calR| .
        \end{aligned}
    \end{equation}
    
    Now our task is to estimate $\calR$.
    We shall use $\delta$ throughout the proof to denote a small constant satisfying \[
        |\delta| \leq C(\eps + R^{-\alpha}),
    \]
    where $\alpha$ is a small constant $0 < \alpha \ll 1$.
    For $\calR_1$, we handle it using divergence theorem and using the smallness of $\wp$ and $R^{-1}$ to get it absorbed to the left hand side of \eqref{eqn:preliminary_rp_est}.

    Before discussing $\calR_2$, we record the following integration by parts identity \begin{equation}\label{eqn:IBP_for_pab_pmu}
        \p_{ab}\tilde U \p_\mu \tilde U
        = \frac12\left(\p_a(\p_b\tilde U \p_\mu \tilde U) + \p_b(\p_a\tilde U \p_\mu \tilde U) - \p_\mu(\p_a\tilde U \p_b \tilde U)\right).
    \end{equation}
    By using \eqref{eqn:IBP_for_pab_pmu} with $\mu = r$, the first two terms can be bounded in terms of and both absorbed to the left hand side due to the smallness of $\wp$, $r^{-1}$ in front of $r^{-2}$ type error : \begin{equation}\label{eqn:est_error_theta_r_bulk_in_R_2}
        \begin{aligned}
            \left|\int_{\calD_{\tau_1}^{\tau_2}} \chi r^p \calO(\wp r^{-2} + r^{-3}) \p_\theta \tilde U \p_r \tilde U\, d\tau\, dr\, d\theta\right| \lesssim \delta \int \chi r^{p-1} |\p_r \tilde U|^2 + \chi r^{p-3} |\p_\theta \tilde U|^2\, d\tau\, dr\, d\theta
        \end{aligned}
    \end{equation}
    while the last term of $\calR_2$ after applying \eqref{eqn:IBP_for_pab_pmu} is handled by \[
        \left|\int_{\calD_{\tau_1}^{\tau_2}} \chi r^p \calO(\wp r^{-3} + r^{-4}) \p_a \tilde U \p_b \tilde U\, d\tau\, dr\, d\theta\right| \lesssim \delta \int \chi r^{p-3} |\p_\theta \tilde U|^2 \, d\tau\, dr\, d\theta
    \]
    plus errors $\Err_\chi[\tilde U]$ in the integrand. The computation above could be expected since $\calR_2$ is a small perturbation of the last term in $Q_0 \tilde U$ especially when $a = b$.

    For $\calR_3$, we use integration by parts in $\tau$. Then the flux term is estimated by Cauchy-Schwarz \begin{equation}\label{eqn:estimate_flux_term_for_R3}
    \begin{aligned}
        \left|\int_{\Sigma_\tau} \chi r^p \calO(r^{-2})\p_\tau \tilde U \p_r \tilde U\, dr\, d\theta\right|
        &\leq \sup_{\tau_1 \leq \tau \leq \tau_2} \left(\veps 
        \int_{\Sigma_\tau} \chi r^p|\p_r \tilde U|^2 \, dr\, d\theta + \veps^{-1} \int_{\Sigma_\tau} \chi r^{p-4} |\p_\tau \tilde U|^2 \, dr\, d\theta\right)\\
        &\lesssim \veps \sup_{\tau_1 \leq \tau \leq \tau_2} \calE^p[\tilde U](\tau) + \delta \sup_{\tau_1 \leq \tau \leq \tau_2} E[U](\tau),
    \end{aligned}
    \end{equation}
    where the first term can be absorbed to the left hand side of the final estimate and we gain $\delta$ smallness in the second inequality thanks to the fact $p < 2 - 2\alpha$.
    The bulk term after integration by parts on $\calR_3$ can be estimated by \begin{equation}\label{eqn:bound_3rd_term_in_R3}
        \int_{\calD_{\tau_1}^{\tau_2}} \chi r^{p-3} |\p_\tau \tilde U|^2 \, d\tau\, dr\, d\theta \lesssim 
        R^{-\alpha}\int_{\calD_{\tau_1}^{\tau_2}} \chi r^{-1-\alpha} |\p_\tau \tilde U|^2 \, d\tau\, dr\, d\theta
    \end{equation}
    when $p \leq 2 - 2\alpha$ after a further integration by parts in $r$.

    The next error term $\calR_4$ is simply estimated by integration by parts in $\theta$ and the smallness of $\calO(\wp r^{-1} + r^{-3})$ to get absorbed.
    
    For $\calR_5$, we use the identity \[
        \calO(r^{-3})\p_\tau\p_\theta\tilde U \p_r \tilde U
        = \p_\tau(\calO(r^{-3})\p_\theta\tilde U \p_r \tilde U) - \calO(r^{-3})\p_\theta\tilde U \p_r\p_\tau \tilde U - \calO(\dot\wp r^{-3})\p_\theta\tilde U \p_r \tilde U,
    \]
    where the first term will be turned into a flux term and estimated by Cauchy-Schwarz inequality like in \eqref{eqn:estimate_flux_term_for_R3}. 
    The last term is handled like in \eqref{eqn:est_error_theta_r_bulk_in_R_2}.
    For the second term in the identity above, we replace $\p_r\p_\tau \tilde U$ using the equation $r^{\frac32}\calP_\graph U = \tilde\calF$ with the expression \eqref{eqn:calP_graph}.
    Each term produced by this process can be easily estimated as an error using divergence theorem except the term \[
        \int \chi \calO(r^{p-3}) \p_\theta\tilde U \tilde \calF\, d\tau\, dr\, d\theta,  
    \]
    which involves $\tilde\calF$. There is one more term among the errors which might need some attention, which is of the form $\p_\theta \tilde U \p_a\p_b \tilde U$. This can be estimated using \eqref{eqn:IBP_for_pab_pmu} with $\mu = \theta$. 
    
    For $\calR_6$, we estimate it using Cauchy-Schwarz inequality and the smallness of $\wp$ and $R^{-1}$. Thus \[
    |\calR_6| \lesssim \delta\int_{\calD_{\tau_1}^{\tau_2}} \chi \left(r^{p-1} |\p_r \tilde U|^2 + r^{p-3} |\snabla \tilde U|^2\right) + \chi r^{p-3}|\p_\tau \tilde U|^2\, d\tau\, dr\, d\theta,
    \]
    where the first two terms can be absorbed to the left hand side and the last one can be bounded by \eqref{eqn:bound_3rd_term_in_R3}
    when $p \leq 2 - 2\alpha$.

    Therefore, by combining all these estimates for $\calR_i$ together with \eqref{eqn:preliminary_rp_est}, we obtain \[
    \begin{aligned}
            &\sup_{\tau_1 \leq \tau \leq \tau_2}\int_{\Sigma_\tau} \chi r^p (\p_r \tilde U)^2\, dr\, d\theta + \int_{\calD_{\tau_1}^{\tau_2}} \chi \left( (p-\delta)r^{p-1}(\p_r\tilde U)^2 + (2-p-\delta) r^{p-3}\left(\tilde U^2 + |\snabla\tilde U|^2\right) \right)\, d\tau\, dr\, d\theta \\
            \lesssim& \int_{\Sigma_{\tau_1}} \chi r^p (\p_r \tilde U)^2\, dr\, d\theta + \left|\int_{\calD_{\tau_1}^{\tau_2}} \chi r^p \tilde\calF \left(\p_r \tilde U + \calO(r^{-3}) \p_\theta\tilde U\right)\, d\tau\, dr\,d\theta\right|+ \int_{\calD_{\tau_1}^{\tau_2}}|\Err_\chi|\, d\tau\, dr\,d\theta \\
            &+ \delta \int_{\calD_{\tau_1}^{\tau_2}} \chi r^{-1-\alpha} |\p_\tau \tilde U|^2\, d\tau\, dr\, d\theta + \delta \sup_{\tau_1 \leq \tau \leq \tau_2} E[U](\tau).
        \end{aligned}
    \]  
\end{proof}

\subsubsection{Higher order $r^p$-weighted estimates}
Then we follow the exposition in \cite[Section 7]{G23} to show $r^p$-estimates for higher order derivatives of $\tilde U$. In what follows, we sometimes use $\sDelta_S$ to denote the spherical Laplacian.
\begin{lemma}
    Suppose $Q_0 \tilde U = \tilde f$, then $\tilde U_N := (r\p_r)^N \tilde U$ satisfies \begin{equation}\label{eqn:tilde_U_N_boxm}
        \begin{aligned}
        -2\p_\tau\p_r \tilde U_N + \p_r^2 \tilde U_N - \frac{3}{4r^2} \tilde U_N + \frac1{r^2}\sDelta_S \tilde U_N
        + \frac{(N+1)N}{2r^2} \tilde U_N - N r^{-1}\p_r \tilde U_N - \frac{N}{r^2} \sDelta_S \tilde U_{N-1} &\\
        + N \sum_{k=0}^{N-1} O_\infty(r^{-2}) \tilde U_k + N(N-1) \sum_{k=0}^{N-2}O_\infty(r^{-2}) \sDelta_S \tilde U_k
        = \p_r(r\p_r)^{N-1}r\tilde f = (r\p_r + 1)^N\tilde f&.
    \end{aligned}
    \end{equation}
\end{lemma}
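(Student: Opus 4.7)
The plan is induction on $N$. The base case $N=0$ is the hypothesis $Q_0 \tilde U = \tilde f$ itself, with all sums being empty by convention. For the inductive step, I will apply the operator $r\p_r$ to the equation for $\tilde U_N$ and use the following elementary commutator identities, all of which follow from $[r\p_r,f(r)] = r f'(r)$ and the fact that $r\p_r$ commutes with $\p_\tau$ and $\sDelta_S$:
\begin{equation*}
    [r\p_r,\p_r] = -\p_r, \qquad [r\p_r,\p_r^2] = -2\p_r^2, \qquad [r\p_r,r^{-2}] = -2r^{-2}, \qquad [r\p_r,\p_\tau] = 0, \qquad [r\p_r,\sDelta_S] = 0.
\end{equation*}
The useful consequence is $[r\p_r,Q_0] = 2\p_r\p_\tau - 2\p_r^2 + \tfrac{3}{2r^2} - 2r^{-2}\sDelta_S$, together with the algebraic identity $(r\p_r+1) = \p_r\circ r$, which explains the appearance of $(r\p_r+1)^N\tilde f$ on the right-hand side.

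First I will verify the statement for $N=1$ directly as a sanity check: a short computation gives
\begin{equation*}
    Q_0 \tilde U_1 = (r\p_r+1)\tilde f + \p_r^2 \tilde U + r^{-2}\sDelta_S\tilde U,
\end{equation*}
after substituting $\tilde U_1 = r\p_r \tilde U$ and using $Q_0\tilde U = \tilde f$ to eliminate the stray $\p_r\p_\tau \tilde U$ term. Since $\tfrac{1}{r^2}\tilde U_1 - r^{-1}\p_r \tilde U_1 = -\p_r^2 \tilde U$, this matches the claimed form with the $O_\infty(r^{-2})$ coefficient at $N=1$ being zero.

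For the induction step, assume the identity at level $N$ and apply $r\p_r$ to both sides. On the right, $r\p_r(r\p_r+1)^N\tilde f = (r\p_r+1)^{N+1}\tilde f - (r\p_r+1)^N\tilde f$, and the subtracted copy will be absorbed momentarily. On the left, I will commute $r\p_r$ past each term: $r\p_r$ acting on $Q_0 \tilde U_N$ produces $Q_0 \tilde U_{N+1}$ plus $-[r\p_r, Q_0]\tilde U_N$; acting on $\tfrac{N(N+1)}{2r^2}\tilde U_N$ produces $\tfrac{N(N+1)}{2r^2}\tilde U_{N+1}$ plus $-\tfrac{N(N+1)}{r^2}\tilde U_N$; acting on $-Nr^{-1}\p_r\tilde U_N$ produces $-Nr^{-1}\p_r\tilde U_{N+1}$ plus lower order corrections using $r\p_r(r^{-1}\p_r\tilde U_N) = r^{-1}\p_r\tilde U_{N+1} - r^{-1}\p_r\tilde U_N$; acting on $-\tfrac{N}{r^2}\sDelta_S\tilde U_{N-1}$ produces $-\tfrac{N}{r^2}\sDelta_S\tilde U_N$ plus a $\tfrac{2N}{r^2}\sDelta_S\tilde U_{N-1}$ correction which is swept into the $O_\infty(r^{-2})\sDelta_S\tilde U_k$ sum. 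The key arithmetic check is that the $\tfrac{3}{2r^2}\tilde U_N$ and $-2r^{-2}\sDelta_S\tilde U_N$ contributions from $[r\p_r,Q_0]$, combined with the shift $\tfrac{N(N+1)}{2}\mapsto\tfrac{(N+1)(N+2)}{2}$ and the emergence of a new $-(N+1)r^{-1}\p_r\tilde U_{N+1}$ and $-\tfrac{N+1}{r^2}\sDelta_S\tilde U_N$ term, precisely reproduces the form at level $N+1$ after one more use of the equation $Q_0\tilde U = \tilde f$ (or rather, of the level-$N$ equation) to trade $2\p_r\p_\tau\tilde U_N$ for the remaining terms, as was done in the $N=1$ case.

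The only real obstacle is bookkeeping: tracking which commutators produce main terms (the $\tfrac{N(N+1)}{2r^2}$ and $-Nr^{-1}\p_r$ combinatorial coefficients) versus lower-order contributions that land in the $O_\infty(r^{-2})$ remainders. I expect the sharp coefficients to emerge from the simple recursion $\tfrac{N(N+1)}{2} + N = \tfrac{(N+1)(N+2)}{2}$ once the $-\tfrac{3}{2r^2}\tilde U_N$ from $[r\p_r,Q_0]$ is combined with the $\tfrac{3}{4r^2}\cdot 2$ contribution produced in absorbing $2\p_r\p_\tau\tilde U_N$ via the level-$N$ equation. All remaining error terms are manifestly $r^{-2}$ times $\tilde U_k$ or $\sDelta_S\tilde U_k$ with $k\leq N$, hence fit the claimed form.
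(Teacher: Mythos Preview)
Your proposal is correct and follows essentially the same inductive scheme as the paper. Two small remarks. First, at $N=1$ you dropped the $-\tfrac{3}{4r^2}\tilde U$ term when substituting $Q_0\tilde U=\tilde f$; the correct identity is $Q_0\tilde U_1 = (r\p_r+1)\tilde f + \p_r^2\tilde U - \tfrac{3}{4r^2}\tilde U + r^{-2}\sDelta_S\tilde U$, so the $O_\infty(r^{-2})$ coefficient at $N=1$ is $\tfrac{3}{4}$, not zero --- harmless for the statement since $O_\infty$ absorbs constants. Second, the paper organizes the inductive step slightly more cleanly: rather than applying $r\p_r$ and then re-substituting the level-$N$ equation to eliminate the stray $2\p_r\p_\tau\tilde U_N$, it multiplies the level-$N$ identity by $r$ (using $r\p_r^2\tilde U_N = \p_r\tilde U_{N+1} - r^{-1}\tilde U_{N+1}$ to convert to first order in $\tilde U_{N+1}$) and then applies $\p_r$. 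Since $\p_r\circ r = r\p_r+1$, this is your move plus the identity, but it sidesteps the back-substitution and the associated doubling of lower-order terms.
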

\begin{proof}
    First, thanks to 
        $\p_r (r\tilde f) = (r\p_r + 1)\tilde f$,
    it follows immediately that \[
        \p_r(r\p_r)^{N-1}r\tilde f = (r\p_r + 1)^N\tilde f.
    \]
    Now we prove by induction on $N$.
    When $N = 0, 1$, it is easy to verify that \eqref{eqn:tilde_U_N_boxm} holds true.
    We just make a remark that in the second line of \eqref{eqn:tilde_U_N_boxm}, $N O_\infty(r^{-2}) = O_\infty(r^{-2})$, but we just take $N$ out to emphasize that this constant vanishes when $N = 0$. Also, the same remark applies to $ N(N-1)O_\infty(r^{-2})$.
    Now we suppose that \eqref{eqn:tilde_U_N_boxm} is true for $N\geq 1$. 
    Thanks to \[
        r\p_r^2 \tilde U_N = \p_r(r\p_r \tilde U_N) - \p_r \tilde U_N
        = \p_r \tilde U_{N+1} - r^{-1} \tilde U_{N+1},
    \] 
    we multiply both sides by $r$ : \[\begin{aligned}
        -2\p_\tau \tilde U_{N+1} + \p_r \tilde U_{N+1} - r^{-1} \tilde U_{N+1} - \frac{3}{4r} \tilde U_N + \frac1{r}\sDelta_S \tilde U_N 
        + \frac{(N+1)N}{2r} \tilde U_N -N \p_r \tilde U_N - \frac{N}{r}\sDelta_S \tilde U_{N-1}& \\
        + N \sum_{k=0}^{N-1} O_\infty(r^{-1}) \tilde U_k + N(N-1) \sum_{k=0}^{N-2}O_\infty(r^{-1}) \sDelta_S \tilde U_k
        = (r\p_r)^{N}r\tilde f &
    \end{aligned}
    \]
    Now we apply $\p_r$ on both sides and notice that $\p_r \tilde U_k = r^{-1} \tilde U_{k+1}$, the left hand side become \[\begin{aligned}
        &-2\p_\tau\p_r \tilde U_{N+1} + \p_r^2 \tilde U_{N+1} + \frac{1}{r^2} \tilde U_{N+1} - r^{-1}\p_r\tilde U_{N+1} 
        - \frac{3}{4r^2} \tilde U_N + \frac1{r^2}\sDelta_S \tilde U_{N+1}  - \frac1{r^2} \sDelta_S \tilde U_N \\ 
        &+ \frac{(N+1)N}{2r^2} \tilde U_{N+1} 
        -N \p_r^2 \tilde U_N - \frac{N}{r^2}\sDelta_S \tilde U_{N}
        + \sum_{k=0}^{N} O_\infty(r^{-2}) \tilde U_k + \sum_{k=0}^{N-1}O_\infty(r^{-2}) \sDelta_S \tilde U_k.
    \end{aligned}
    \]
    In this expression, we further compute \[\begin{aligned}
        &\frac{1}{r^2} \tilde U_{N+1} - r^{-1}\p_r\tilde U_{N+1} 
        - \frac1{r^2} \sDelta_S \tilde U_N  
        + \frac{(N+1)N}{2r^2} \tilde U_{N+1} 
        -N \p_r^2 \tilde U_N - \frac{N}{r^2}\sDelta_S \tilde U_N \\
        =& \frac{N^2 + N + 2}{2r^2} \tilde U_{N+1} - \frac1{r}\p_r \tilde U_{N+1} - \frac{N+1}{r^2}\sDelta_S \tilde U_N - \frac{N}{r}(\p_r \tilde U_{N+1} - r^{-1}\tilde U_{N+1}) \\
        =& \frac{(N+2)(N+1)}{2r^2}\tilde U_{N+1} - \frac{N+1}{r}\p_r \tilde U_{N+1} - \frac{N+1}{r^2} \sDelta_S \tilde U_N,
    \end{aligned}
    \]
    which completes the induction.
\end{proof}

\begin{lemma}
    For any triple $N = (n_1, n_2, n_3)$, $\tilde U$, $\tilde f$ satisfying $Q_0 \tilde U = \tilde f$, we introduce the notations \begin{equation}\label{eqn:notation_tilde_U_N_triple}
        \tilde U_N := \tilde U_{n_1, n_2, n_3} := (r\p_r)^{n_3}(\p_\theta)^{n_2}(\p_\tau)^{n_1} \tilde U, \quad 
        \tilde f_N := \tilde f_{n_1, n_2, n_3} := (r\p_r + 1)^{n_3}(\p_\theta)^{n_2}(\p_\tau)^{n_1} \tilde f.
    \end{equation}
    Then the following equation is satisfied :
    \begin{equation}
        \label{eqn:tilde_U_n123_boxm}
        \begin{aligned}
            &-2\p_\tau\p_r \tilde U_N + \p_r^2 \tilde U_N - \frac{3}{4r^2} \tilde U_N + \frac1{r^2}\sDelta_S \tilde U_N
            + \frac{(n_3+1)n_3}{2r^2} \tilde U_N - n_3 r^{-1}\p_r \tilde U_N - \frac{n_3}{r^2} \sDelta_S \tilde U_{n_1, n_2, n_3 -1} \\
            &+ n_3 \sum_{|k|=0}^{|N|-1} O_\infty(r^{-2}) \tilde U_k + n_3(n_3-1) \sum_{|k|=0}^{|N|-2}O_\infty(r^{-2}) \sDelta_S \tilde U_k
            = \tilde f_N.
        \end{aligned}
    \end{equation}
\end{lemma}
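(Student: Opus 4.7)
The plan is to reduce this lemma to the previous one by peeling off the $\p_\tau$ and $\p_\theta$ derivatives first and then invoking the radial induction already established. The starting observation is that the coefficients of $Q_0$ depend only on $r$ (and on $\theta$ through the spherical Laplacian $\sDelta_S$), so $\p_\tau$ commutes exactly with every term of $Q_0$. Applying $(\p_\tau)^{n_1}$ to $Q_0 \tilde U = \tilde f$ therefore yields $Q_0(\p_\tau)^{n_1}\tilde U = (\p_\tau)^{n_1}\tilde f$ with no commutator error whatsoever.

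Next, I would handle $(\p_\theta)^{n_2}$. Since $\p_\theta$ commutes with $\p_r$, $\p_\tau$, and with $r^{-2}$, the only nontrivial commutator is $[\sDelta_S, \p_\theta]$. Interpreting $\p_\theta$ as a collection of rotational (Killing) vector fields on $\bbS^{n-1}$, this commutator vanishes identically; alternatively, if $\p_\theta$ is viewed as a coordinate derivative, then $[\sDelta_S,\p_\theta]$ is a first-order angular operator with bounded coefficients, and after multiplication by $r^{-2}$ it contributes precisely terms of the form $O_\infty(r^{-2})\tilde U_k$ with $|k|\leq |N|-1$, which are already permitted in the error structure of \eqref{eqn:tilde_U_n123_boxm}. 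Either convention gives $Q_0(\p_\theta)^{n_2}(\p_\tau)^{n_1}\tilde U = (\p_\theta)^{n_2}(\p_\tau)^{n_1}\tilde f + \mathrm{Err}$, where $\mathrm{Err}$ is of the admissible type.

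With the $\tau$- and $\theta$-derivatives commuted through, I would apply the preceding lemma with $\tilde U$ replaced by $(\p_\theta)^{n_2}(\p_\tau)^{n_1}\tilde U$ and $\tilde f$ replaced by $(\p_\theta)^{n_2}(\p_\tau)^{n_1}\tilde f$. The induction on the number $n_3$ of $r\p_r$-derivatives proceeds exactly as before: each step produces the potential shift $n_3(n_3+1)/(2r^2)$, the friction term $-n_3 r^{-1}\p_r$, and the angular correction $-n_3 r^{-2}\sDelta_S \tilde U_{n_1,n_2,n_3-1}$, while the remaining commutators accumulate into the lower-order sums weighted by $n_3$ and $n_3(n_3-1)$. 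The source term becomes $(r\p_r+1)^{n_3}(\p_\theta)^{n_2}(\p_\tau)^{n_1}\tilde f$, which matches the definition of $\tilde f_N$ in \eqref{eqn:notation_tilde_U_N_triple} thanks to the identity $\p_r(r\cdot)=(r\p_r+1)$ used in the previous lemma.

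The main obstacle I anticipate is bookkeeping of the angular commutators when $n_2\geq 1$: one must verify that the terms generated by $[\sDelta_S,\p_\theta^{n_2}]$ have the right $r^{-2}$ weight and the right multi-index bound $|k|\leq |N|-1$ to fit into the stated error sums, and in particular that they do not require a $\sDelta_S \tilde U_k$ term outside the $k\leq |N|-2$ range or with $n_3=0$ coefficient. This is essentially a combinatorial check rather than a new estimate, and it resolves cleanly either by the Killing-field interpretation of $\p_\theta$ or by an explicit inductive expansion of $[\sDelta_S,\p_\theta^{n_2}]$ into lower-order angular operators, both of which preserve the schematic form of \eqref{eqn:tilde_U_n123_boxm}.
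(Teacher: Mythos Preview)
Your approach is correct and matches the paper's: commute $\p_\tau$ and $\p_\theta$ through $Q_0$ first, then invoke the preceding radial lemma. The paper's proof is one line, simply asserting $[Q_0,\p_\tau]=[Q_0,\p_\theta]=0$ and applying the previous result.

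One clarification worth making: your ``alternative'' route, treating $\p_\theta$ as a coordinate vector field with $[\sDelta_S,\p_\theta]\neq 0$, would \emph{not} actually prove the statement as written. The error sums in \eqref{eqn:tilde_U_n123_boxm} carry explicit prefactors $n_3$ and $n_3(n_3-1)$, so when $n_3=0$ the equation reduces exactly to $Q_0\tilde U_{n_1,n_2,0}=\tilde f_{n_1,n_2,0}$ with no room for angular commutator errors. Hence the lemma is only true under the Killing-field interpretation (which you correctly identify as one resolution, and which the paper adopts implicitly). Your caveat about the $n_3=0$ coefficient in the final paragraph is exactly the obstruction; the resolution is simply to commit to the rotation-field reading of $\p_\theta$ from the start.
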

\begin{proof}
    Note that $[Q_0, \p_\theta] = [Q_0, \p_\tau] = 0$ and hence 
    $Q_0 \tilde U_{n_1, n_2, 0} = \tilde f_{n_1, n_2, 0}$.
    Then the proof follows from the preceding lemma \eqref{eqn:tilde_U_N_boxm}.
\end{proof}

\begin{corollary}
    Suppose $\calP_\graph U = \calF$, then \begin{equation}\label{eqn:tilde_U_N_calP}
        \begin{aligned}
        &-2\p_\tau\p_r \tilde U_N + \p_r^2 \tilde U_N - \frac{3}{4r^2} \tilde U_N + \frac1{r^2}\sDelta_S \tilde U_N
            + \frac{(n_3+1)n_3}{2r^2} \tilde U_N - n_3 r^{-1}\p_r \tilde U_N - \frac{n_3}{r^2} \sDelta_S \tilde U_{n_1, n_2, n_3 -1} \\
        &+ n_3 \sum_{|k|=0}^{|N|-1} O_\infty(r^{-2}) \tilde U_k + n_3(n_3-1) \sum_{|k|=0}^{|N|-2}O_\infty(r^{-2}) \sDelta_S \tilde U_k \\
        &+ \sum_{|k|=0}^{|N|} \left(\calO(\wp r^{-2} + r^{-4}) \tilde U_k + \calO(r^{-2}) \p_r\p_\tau \tilde U_k + \calO(\wp + r^{-2}) \p_r^2 \tilde U_k
        + \calO(\wp r^{-2} + r^{-3}) \p_a\p_b \tilde U_k \right.\\
        & \qquad\qquad + \calO(r^{-2}) \p_\tau^2 \tilde U_k +
         \calO(\wp r^{-1} + r^{-3})\p_r\p_\theta \tilde U_k 
         + \calO(r^{-3})\p_\tau\p_\theta \tilde U_k \\
        & \qquad\qquad \left. + \calO(\wp r^{-1} + r^{-2}) \p_r \tilde U_k
        + \calO(\wp r^{-2} + r^{-3}) \p_a \tilde U_k + \calO(\dot\wp r^{-2} + r^{-3}) \p_\tau \tilde U_k \right)   
        = \tilde\calF_N,
    \end{aligned}
    \end{equation}
    where we refer to \eqref{eqn:notation_tilde_U_N_triple} for the notations.
\end{corollary}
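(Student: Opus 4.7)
The plan is to reduce this corollary to the preceding lemma \eqref{eqn:tilde_U_n123_boxm} by treating the extra error terms in \eqref{eqn:eqn_for_rp_for_U} as part of an inhomogeneous source. More precisely, using the expansion $r^{3/2}\tilde\calP_\graph U = (Q_0 + ErrQ_0)\tilde U$ from \eqref{eqn:Q_0_defn}, together with the definition $\tilde\calF = r^{3/2}(1-\Theta\cdot\eta')\calF$, I first rewrite $\calP_\graph U = \calF$ as
\[
Q_0 \tilde U = \tilde f, \qquad \tilde f := \tilde\calF - ErrQ_0\tilde U,
\]
where $ErrQ_0\tilde U$ consists of the error terms appearing in the second through fifth lines of \eqref{eqn:eqn_for_rp_for_U}. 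Applying \eqref{eqn:tilde_U_n123_boxm} with this choice of $\tilde f$ immediately yields the first two lines of the stated equation, and it remains only to identify $\tilde f_N = (r\p_r+1)^{n_3}(\p_\theta)^{n_2}(\p_\tau)^{n_1}\tilde f$ with $\tilde\calF_N$ minus the nonlinear/error sum displayed in the last three lines of the corollary.

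The bulk of the work is therefore the commutator analysis showing that
\[
(r\p_r+1)^{n_3}(\p_\theta)^{n_2}(\p_\tau)^{n_1}\big(ErrQ_0 \tilde U\big)
\]
falls exactly into the schematic form displayed in lines 3--5 of the conclusion. This is a routine but bookkeeping-heavy verification, which rests on the big-$\calO$ convention of Section~\ref{Section:notation}: by design, any coefficient denoted $\calO(\wp r^{-k})$, $\calO(r^{-k})$, or $\calO(\dot\wp r^{-k})$ remains bounded (up to a constant) after the application of $r\p_r$ or $\p_\theta$, while application of $\p_\tau$ converts $\calO(\wp r^{-k})$ to $\calO(\dot\wp r^{-k})$ and leaves the other types unchanged. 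Consequently, for each error term of the form $c(\tau,r,\theta)\,\p^{(2)}\tilde U$ appearing in $ErrQ_0$, applying any of the commuting vector fields produces either a coefficient of the same schematic type multiplied by $\p^{(2)}\tilde U$, or the same coefficient multiplied by a differentiated version of $\p^{(2)}\tilde U$; the latter is, after the total of $|N|$ derivatives, of the form $\p^{(2)}\tilde U_k$ with $|k|\leq |N|$.

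The key point to track is that the total order of derivatives on $\tilde U$ in each resulting term is at most $|N|+2$, split as ``two fixed derivatives'' (the structural derivatives of the original error term, e.g.\ $\p_r\p_\tau$, $\p_r^2$, etc.) and ``at most $|N|$ commuted derivatives'' (which become $r\p_r$, $\p_\theta$, $\p_\tau$ applied to $\tilde U$ through the Leibniz rule). Hence the sum indexed by $|k|=0,\dots,|N|$ in the corollary exactly captures all such distributions of derivatives. The fact that the $r$-weights, $\wp$-factors, and $\dot\wp$-factors in the coefficients are preserved (or improved) under this procedure follows termwise from the stability of the $\calO$-notation under $r\p_r$, $\p_\theta$, $\p_\tau$ noted above.

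The only minor subtlety is the factor $(r\p_r+1)$ versus $r\p_r$ when applied to $\tilde f = \tilde\calF - ErrQ_0\tilde U$: the extra $+1$ produces additional undifferentiated copies of the error terms, but these are manifestly of the same schematic form and are absorbed into the sum $\sum_{|k|=0}^{|N|}$. Once this bookkeeping is carried out, combining with the ``pure $Q_0$'' part from \eqref{eqn:tilde_U_n123_boxm} assembles the claimed identity. I expect no genuine obstacle here beyond careful enumeration of the error types, as no cancellation or fine structure of coefficients is required at this stage; the sharper information will be exploited only later, when these equations are multiplied by $r^p$-weighted multipliers in the estimates that follow.
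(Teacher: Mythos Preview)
Your proposal is correct and follows essentially the same approach as the paper: write $Q_0\tilde U = \tilde\calF - ErrQ_0\tilde U$ and apply the preceding lemma \eqref{eqn:tilde_U_n123_boxm}, then verify that the commuted error terms retain the same schematic form. The paper's own proof is in fact a two-line version of exactly this argument, so your more detailed bookkeeping (tracking the Leibniz expansion and the stability of the $\calO$-coefficients under $r\p_r$, $\p_\theta$, $\p_\tau$) simply fills in what the paper leaves implicit.
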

\begin{proof}
    This follows directly from \eqref{eqn:Q_0_defn} by writing \[
        Q_0 \tilde U = \tilde \calF - \Err Q_0 \tilde U   
    \] 
    and applying the preceding lemma. The form of $\Err Q_0$ is given in \eqref{eqn:eqn_for_rp_for_U}.
\end{proof}

Now we use \eqref{eqn:tilde_U_N_calP} to derive higher order $r^p$-weighted estimates. The smallness assumptions on $\delta$ and $\alpha$ shall be the same as in Proposition~\ref{prop_rp_for_U}.
\begin{proposition}\label{prop_higher_order_rp_for_U}
    Let $U$ be a solution to $\calP_\graph U = \calF$. We assume that the bootstrap assumption \eqref{BA-dotwp} and the smallness assumption of $\calO(\wp)$ are satisfied.
    For any $\tau_1 < \tau_2$, for $R$ sufficiently large and $\eps$ small enough, 
    there exists a small constant $\delta$ with $|\delta| \lesssim \eps + R^{-\alpha}$ for some $\alpha \ll 1$
    such that for $\delta < p < 2 - 2\alpha$, 
    for any $N \in \bbN$, \begin{equation}\label{eqn:main_higher_order_rp_est}
        \begin{aligned}
            &\sum_{0 \leq |k| \leq N} \sup_{\tau_1 \leq \tau \leq \tau_2} \calE^p[\tilde U_k](\tau) + \calB^p[\tilde U_k](\tau_1, \tau_2) \\
            \lesssim& \ \sum_{0 \leq |k| \leq N} \left(\calE^p[\tilde U_k](\tau_1) + \int_{\calD_{\tau_1}^{\tau_2}} \chi r^p \left|\tilde\calF_k \left(\p_r \tilde U_k + \calO(r^{-3}) \p_\theta\tilde U_k\right)\right|\, d\tau\, dr\,d\theta \right)\\
            &\ + \int_{\calD_{\tau_1}^{\tau_2}} |\Err_{\chi, N}[\tilde U]|\, d\tau\, dr\, d\theta + \sum_{|k| = 0}^{N} \left(\delta \int_{\calD_{\tau_1}^{\tau_2}} \chi r^{-1-\alpha} |\p_\tau \tilde U_k|^2\, d\tau\, dr\, d\theta + \delta \sup_{\tau_1 \leq \tau \leq \tau_2} E[U_k](\tau)\right).
        \end{aligned}
    \end{equation}
    where $\Err_{\chi, N}[\tilde U]$ denotes all terms multiplied by $\chi'$ and squares of derivatives of $\tilde U$ up to order $N+1$. 
\end{proposition}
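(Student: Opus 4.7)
The plan is to proceed by induction on $|N|$, with the base case $|N|=0$ being exactly Proposition~\ref{prop_rp_for_U}. For the inductive step, assuming the estimate holds for all multi-indices $N'$ with $|N'|\le |N|-1$, I would apply the same multiplier $\chi r^p \p_r \tilde U_N$ to the commuted equation \eqref{eqn:tilde_U_N_calP} and integrate over $\calD_{\tau_1}^{\tau_2}$. The contribution of $Q_0 \tilde U_N$ produces, after the same integrations by parts carried out in the proof of Proposition~\ref{prop_rp_for_U}, the standard boundary term $\calE^p[\tilde U_N]$ and bulk term $\calB^p[\tilde U_N]$ with their usual positive coefficients $p/2$, $3(2-p)/8$, and $(2-p)/2$.

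The crucial step is to verify that the three new terms arising from the commutator $[Q_0,(r\p_r)^{n_3}]$, namely $\tfrac{(n_3+1)n_3}{2r^2}\tilde U_N$, $-n_3 r^{-1}\p_r \tilde U_N$, and $-\tfrac{n_3}{r^2}\sDelta_S \tilde U_{n_1,n_2,n_3-1}$, do not destroy coercivity. For the first term, paired with $\chi r^p \p_r \tilde U_N$ it becomes $\tfrac{(n_3+1)n_3}{4}\chi r^{p-2}\p_r(\tilde U_N^2)$; integration by parts in $r$ yields $+\tfrac{(n_3+1)n_3(2-p)}{4}\chi r^{p-3}\tilde U_N^2$ on the coercive side, which is \emph{positive} for $p<2$. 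For the second term, direct multiplication gives $-n_3\chi r^{p-1}(\p_r\tilde U_N)^2$ on the LHS of the integrated identity; after the standard rearrangement this appears as $+n_3\chi r^{p-1}(\p_r\tilde U_N)^2$, strengthening the bulk $(\p_r\tilde U_N)^2$ coefficient from $p/2$ to $p/2+n_3$. Thus the commutator structure is in fact favorable rather than obstructive. The third term has only lower-order derivatives and is moved to the right-hand side, where Cauchy--Schwarz with a small constant bounds it by an absorbable $\calB^p[\tilde U_N]$ contribution plus $\calB^p[\tilde U_{n_1,n_2,n_3-1}]$, which the inductive hypothesis controls.

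The remaining ingredients are routine. All the lower-order terms $O_\infty(r^{-2})\tilde U_k$ and $O_\infty(r^{-2})\sDelta_S\tilde U_k$ with $|k|\le |N|-1$ are handled by Cauchy--Schwarz and the inductive hypothesis, possibly after one integration by parts on the sphere to absorb $\sDelta_S$ into $|\snabla\cdot|^2$. The errors coming from $\Err Q_0$ acting on $\tilde U_k$ for $|k|\le|N|$ are of the exact same form as the remainders $\calR_1,\ldots,\calR_6$ appearing in the proof of Proposition~\ref{prop_rp_for_U}: the $\calO(\wp)$ and $\calO(r^{-2})$ factors allow absorption into the bulk by taking $\eps$ small and $R$ large, the $\calO(r^{-2})\p_\tau^2\tilde U_k$ and $\calO(\dot\wp r^{-2}+r^{-3})\p_\tau \tilde U_k$ terms give rise (as in \eqref{eqn:bound_3rd_term_in_R3}) to the $\delta\int\chi r^{-1-\alpha}|\p_\tau \tilde U_k|^2$ and $\delta E[U_k]$ terms on the right-hand side of \eqref{eqn:main_higher_order_rp_est}, and the mixed $\p_\tau\p_\theta$ terms are handled by the IBP-in-$\tau$ trick used for $\calR_5$ together with the equation to replace $\p_r\p_\tau$ in the resulting bulk. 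Finally, summing the resulting estimates over all $|k|\le N$ and collecting the $\chi'$-supported boundary errors into $\Err_{\chi,N}[\tilde U]$ yields \eqref{eqn:main_higher_order_rp_est}.

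The only truly delicate bookkeeping concerns the coefficient $\tilde m_1^{\tau\tau}\p_\tau^2$ and its contribution under commutation with $(r\p_r)^{n_3}$, where care must be taken that the $O(r^{-2})$ decay of $\tilde m_1^{\tau\tau}$ (see \eqref{eqn:defn_m1_tt}) is preserved so that the resulting error falls into the $\delta\int\chi r^{-1-\alpha}|\p_\tau \tilde U_k|^2$ bucket rather than spoiling the hierarchy; this is where the quantitative decay rates recorded in \eqref{eqn:tilde_m_1_formula} are essential. No other step is expected to pose substantive difficulty beyond the arithmetic tracking of commutators.
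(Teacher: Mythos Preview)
Your overall induction strategy is correct and matches the paper's approach, but there is a genuine sign error in your treatment of the commutator term $\frac{(n_3+1)n_3}{2r^2}\tilde U_N$. Compare with the reference term $-\frac{3}{4r^2}\tilde U$ in $Q_0$: that one carries a \emph{negative} coefficient and produces the \emph{positive} bulk $\frac{3(2-p)}{8}r^{p-3}\tilde U^2$ on the coercive side. The new term carries a \emph{positive} coefficient, so after the same integration by parts it lands with the \emph{opposite} sign, i.e.\ contributes $-\frac{(n_3+1)n_3(2-p)}{4}r^{p-3}\tilde U_N^2$ on the coercive side. For $n_3\ge 1$ this overwhelms the $\frac{3(2-p)}{8}$ coefficient, and Hardy's inequality cannot repair it once $n_3\ge 2$ (the enhanced $(\p_r\tilde U_N)^2$ coefficient $\frac{p}{2}+n_3$ is not large enough). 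This is precisely why the paper says these terms ``have the `wrong' sign''.

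The paper's fix is to abandon any attempt at coercivity from this term and instead bound it directly: write $|r^{p-2}\tilde U_N\p_r\tilde U_N|\lesssim \delta r^{p-1}|\p_r\tilde U_N|^2 + C_\delta r^{p-3}|\tilde U_N|^2$, then use the key algebraic identity $\tilde U_N = r\p_r\tilde U_{n_1,n_2,n_3-1}$ (valid whenever $n_3\ge 1$) to rewrite $r^{p-3}|\tilde U_N|^2 = r^{p-1}|\p_r\tilde U_{n_1,n_2,n_3-1}|^2$, which is exactly the integrand of $\calB^p[\tilde U_{n_1,n_2,n_3-1}]$ and is therefore controlled by the inductive hypothesis. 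Your treatment of the third term $-\frac{n_3}{r^2}\sDelta_S\tilde U_{n_1,n_2,n_3-1}$ has a related gap: a naive Cauchy--Schwarz leaves $r^{p-3}|\sDelta_S\tilde U_{N-1}|^2$, which involves two angular derivatives and is \emph{not} in $\calB^p[\tilde U_{N-1}]$. The paper instead integrates by parts on the sphere first, then uses $\snabla\p_r\tilde U_{N-1}=r^{-1}\snabla\tilde U_N$ (same identity) to obtain $-r^{p-3}|\snabla\tilde U_N|^2$ with a good sign plus cross terms controlled by induction. Your analysis of $-n_3 r^{-1}\p_r\tilde U_N$ and of the $\Err Q_0$ contributions is correct.
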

\begin{proof}
    We prove by induction on $N \in \bbN$. In the following, we specify a triple $N$ with order $N$. To conclude the proof, one just need to sum up all the estimates with order $N$ and use the induction hypothesis. (We abuse notations here with $N$ referring to both a triple $N$ and an integer $N$, alternatively.) 
    
    We multiply \eqref{eqn:tilde_U_N_calP} by $\chi r^p \p_r \tilde U_N$ and integrate over $\calD_{\tau_1}^{\tau_2}$.
    Note that the first four terms can be handled using the same method as the $|N| = 0$ case.
    Among all the other terms in \eqref{eqn:tilde_U_N_calP}, there are two terms
    \begin{equation}\label{eqn:2_terms_in_higher_order_rp_consideration}
        \frac{(n_3+1)n_3}{2r^2} \tilde U_N, \quad - \frac{n_3}{r^2}\sDelta_S \tilde U_{n_1, n_2, n_3-1}
    \end{equation}
    that need some special attention, where they are nonvanishing only when $n_3 \geq 1$.
    Though these two have the ``wrong'' sign, they can be estimated by the induction hypothesis.
    By multiplying by $\chi r^p \p_r \tilde U_N$, we estimate the first one in \eqref{eqn:2_terms_in_higher_order_rp_consideration} as follows : \begin{equation}\label{eqn:est_using_ind_hyp_higher_order_rp}
        \begin{aligned}
        \left|r^{p-2}\tilde U_N \p_r \tilde U_N\right| \lesssim \delta r^{p-1} |\p_r \tilde U_N|^2 + r^{p-3} |\tilde U_N|^2
        \lesssim \delta r^{p-1} |\p_r \tilde U_N|^2 + r^{p-1} |\p_r\tilde U_{n_1, n_2, n_3-1}|^2,
        \end{aligned}
    \end{equation}
    where the first term can be absorbed by the left hand side of the estimates while the other can be estimated using the induction hypothesis.
    The second one in \eqref{eqn:2_terms_in_higher_order_rp_consideration} can be handled by the following differential by parts formula : \[\begin{aligned}
        -r^{p-2} \p_r \tilde U_N \sDelta_S \tilde U_{N-1}
        &= \p_r \left( r^{p-2} \snabla\tilde U_N \snabla \tilde U_{N-1} \right) - r^{p-2} \snabla\tilde U_N \snabla \p_r \tilde U_{N-1} - (p-2) r^{p-3} \snabla\tilde U_N \snabla \tilde U_{N-1} \\
        &= \p_r \left( r^{p-2} \snabla\tilde U_N \snabla \tilde U_{N-1} \right) - r^{p-3} |\snabla\tilde U_N|^2 - (p-2) r^{p-3} \snabla\tilde U_N \snabla \tilde U_{N-1},
    \end{aligned}
    \]
    where the second term has a good sign and the third one can be estimated like in \eqref{eqn:est_using_ind_hyp_higher_order_rp}. Here we slightly abuse a notation $\tilde U_{N - 1} = \tilde U_{n_1, n_2, n_3 - 1}$ for simplicity.
    All the other terms can be estimated using either the induction hypothesis (for those with subscript $< N$) or treated as in the proof of Proposition~\ref{prop_rp_for_U} for error terms $\calR_j$'s (for those with subscript $N$). Therefore, one can conclude the proof.
\end{proof}

\subsubsection{Higher order $r^p$-weighted estimates for $\p_\tau \tilde U_k$ and $\p_\tau^2 \tilde U_k$}
In preceding discussions, we treat all $\p_\tau$, $\p_\theta$, $r\p_r$ in the same way. For future use, we derive separately for those with at least one or two $\p_\tau$ falls on the function.

Without reproducing the definitions of 
\begin{proposition}
    Let $U$ be a solution to $\calP_\graph U = \calF$. With the same assumptions in Proposition~\ref{prop_higher_order_rp_for_U}, we have \begin{equation}\label{eqn:main_higher_order_rp_est_p_tau_U}
        \begin{aligned}
            &\sum_{0 \leq |k| \leq N} \sup_{\tau_1 \leq \tau \leq \tau_2} \calE^p[\p_\tau \tilde U_k](\tau) + \calB^p[\p_\tau \tilde U_k](\tau_1, \tau_2) \\
            \lesssim& \ \sum_{0 \leq |k| \leq N} \left(\calE^p[\p_\tau \tilde U_k](\tau_1) + \int_{\calD_{\tau_1}^{\tau_2}} \chi r^p \left|\p_\tau\tilde\calF_k \left(\p_r \p_\tau\tilde U_k + \calO(r^{-3}) \p_\theta\p_\tau\tilde U_k\right)\right|\, d\tau\, dr\,d\theta \right)\\
            &\ + \sum_{|k| = 0}^{N} \left(\delta \int_{\calD_{\tau_1}^{\tau_2}} \chi r^{-1-\alpha} |\p_\tau^2 \tilde U_k|^2\, d\tau\, dr\, d\theta + \delta \sup_{\tau_1 \leq \tau \leq \tau_2} E[\p_\tau U_k](\tau)\right) \\
            &\ + \int_{\calD_{\tau_1}^{\tau_2}} |\Err_{\chi, N}[\p_\tau\tilde U]|\, d\tau\, dr\, d\theta + \eps^2 \brk{\tau_1}^{-\frac72 + 2\kappa}\sum_{|k| = 0}^{N+1} \sup_{\tau_1 \leq \tau \leq \tau_2} E[\tilde U_k](\tau),
        \end{aligned}
    \end{equation}
    where $\Err_{\chi, N}[\p_\tau\tilde U]$ denotes all terms multiplied by $\chi'$ and squares of derivatives of $\p_\tau\tilde U$ up to order $N+1$. 
\end{proposition}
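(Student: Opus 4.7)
The plan is to commute $\p_\tau$ through the equation $\calP_\graph U = \calF$ and then apply the already-established Proposition~\ref{prop_higher_order_rp_for_U} to $\p_\tau U$ in place of $U$. Starting from \eqref{eqn:eqn_for_rp_for_U} (or, more generally, from the commuted equation \eqref{eqn:tilde_U_N_calP}), applying $\p_\tau$ to both sides gives
\[
r^{3/2}\,\tilde\calP_\graph(\p_\tau U) \;=\; \p_\tau \tilde\calF \;+\; [\p_\tau,\, r^{3/2}\tilde\calP_\graph]\, U,
\]
so that $\p_\tau U$ solves an equation of exactly the same structural form as $U$ but with an additional source coming from the commutator. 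By the convention recalled in Section~\ref{Section:notation}, every coefficient of the schematic type $\calO(\wp\, r^{-k})$ or $\calO(r^{-k})$ whose $\wp$-dependence enters through $\ell,\,\eta,\,\eta'$ satisfies $\p_\tau \calO(\wp r^{-k}) = \calO(\dot\wp\, r^{-k})$, and coefficients already carrying $\dot\wp$ become $\calO(\ddot\wp\, r^{-k})$ upon differentiation. Thus the commutator $[\p_\tau, r^{3/2}\tilde\calP_\graph]$ is schematically a second-order operator with coefficients $\calO(\dot\wp\, r^{-k})$ (plus $\calO(\ddot\wp\,r^{-k})$ first-order pieces), acting on derivatives of $\tilde U$ of order $\leq 2$.

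Next, I would apply Proposition~\ref{prop_higher_order_rp_for_U} with $U$ replaced by $\p_\tau U$ (and, after commuting with $(r\p_r)^{n_3}\p_\theta^{n_2}\p_\tau^{n_1}$, with the family $\{\p_\tau\tilde U_k\}_{|k|\leq N}$). The left-hand side and the first two groups of terms on the right-hand side of \eqref{eqn:main_higher_order_rp_est_p_tau_U} arise directly from this substitution; the localized $\Err_{\chi,N}$ term absorbs everything produced by the cutoff $\chi_R$, and the $\delta$-small terms $\int \chi\, r^{-1-\alpha}|\p_\tau^2\tilde U_k|^2$ and $\delta\, E[\p_\tau U_k]$ arise for exactly the same reasons as in the proof of Proposition~\ref{prop_rp_for_U} (treatment of $\calR_3$ and $\calR_6$ with one extra $\p_\tau$). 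The genuinely new ingredient is the contribution from $[\p_\tau, r^{3/2}\tilde\calP_\graph]\, U$, which after multiplication by $\chi\, r^p\, \p_r(\p_\tau\tilde U_k)$ and integration by parts gives terms schematically of the form
\[
\int_{\calD_{\tau_1}^{\tau_2}} \chi\, r^{p-s}\, \dot\wp\, (\p^{\leq 2}\tilde U_{k'})\,\p_r(\p_\tau\tilde U_k)\, d\tau\,dr\,d\theta, \quad |k'|\leq |k|+1,\;\; s\in\{0,1,2\},
\]
together with analogous boundary/flux contributions. Applying Cauchy--Schwarz in $(r,\theta)$ and then $L^\infty_\tau L^2_x\times L^1_\tau$ in $\tau$, using the bootstrap assumption \eqref{BA-dotwp} giving $|\dot\wp|\lesssim \eps\brk{\tau}^{-9/4+\kappa}$, one controls these integrals by
\[
\bigl(\textstyle\int_{\tau_1}^{\tau_2}|\dot\wp(\tau)|^2\,d\tau\bigr)\cdot \sum_{|k'|\leq |k|+1}\sup_{\tau_1\leq \tau\leq \tau_2} E[\tilde U_{k'}](\tau) \;\lesssim\; \eps^2 \brk{\tau_1}^{-7/2+2\kappa} \sum_{|k|\leq N+1}\sup_\tau E[\tilde U_k](\tau),
\]
after absorbing half of each product into the coercive bulk term $\calB^p[\p_\tau\tilde U_k]$ on the left. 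This is where the last term on the right-hand side of \eqref{eqn:main_higher_order_rp_est_p_tau_U} comes from, and the arithmetic $2\cdot(-9/4+\kappa)+1 = -7/2+2\kappa$ matches exactly.

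The main obstacle, as in Proposition~\ref{prop_higher_order_rp_for_U}, is the commutator terms with the ``wrong sign'' analogous to \eqref{eqn:2_terms_in_higher_order_rp_consideration}: the $r^{-2}$ pieces of $[\p_\tau,\tilde\calP_\graph]U$ that couple $\p_\tau\tilde U_N$ to $\tilde U_{k'}$ with $|k'|<|N|$, and the unfavorable bulk $\int \chi r^{-1-\alpha}|\p_\tau^2\tilde U_k|^2$ generated by the $\calO(r^{-2})\p_r\p_\tau$ term in \eqref{eqn:eqn_for_rp_for_U} when one more $\p_\tau$ is present. The first of these is dealt with inductively exactly as in the proof of Proposition~\ref{prop_higher_order_rp_for_U}; the second is simply retained on the right-hand side and will later be fed back, after closing the hierarchy in $p$, via the standard trick of trading one power of $r$ for one power of $\tau$. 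A final technical point: one must verify that $\p_\tau$ genuinely commutes through the reduction $U\mapsto \tilde U = r^{3/2}U$ (it does, trivially, since $r$ is $\tau$-independent) and through the multiplier $(1-\Theta\cdot\eta')$ that appears on both sides of \eqref{eqn:eqn_for_rp_for_U}; the latter produces an additional $\dot\eta'$ factor falling on $U$ which is of the same schematic form as the commutator terms above and is estimated identically.
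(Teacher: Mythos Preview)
Your proposal is correct and follows essentially the same approach as the paper: differentiate the commuted equation \eqref{eqn:tilde_U_N_calP} by $\p_\tau$, repeat the multiplier argument of Proposition~\ref{prop_higher_order_rp_for_U} for $\p_\tau\tilde U_N$, and handle the new commutator terms $\calO(\dot\wp\,r^{-k})\,\p^{\leq 2}\tilde U_N$ via Cauchy--Schwarz and the bootstrap bound $|\dot\wp|^2\lesssim \eps^2\brk{\tau}^{-9/2+2\kappa}$, which after integration in $\tau$ gives the $\eps^2\brk{\tau_1}^{-7/2+2\kappa}\sum\sup E[\tilde U_k]$ contribution. The paper's proof is terser (it exhibits one typical commutator term and the pointwise Cauchy--Schwarz split \eqref{eqn:typical_term_after_take_derivative_in_tau}), but the mechanism is the same as yours; your final paragraph about ``main obstacles'' is more elaborate than necessary, since the wrong-sign and $r^{-1-\alpha}|\p_\tau^2\tilde U_k|^2$ terms are simply inherited from Proposition~\ref{prop_higher_order_rp_for_U} under the substitution $U\mapsto\p_\tau U$.
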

\begin{proof}
    By differentiating \eqref{eqn:tilde_U_N_calP} by $\p_\tau$, the main contribution stays in the same form with $\tilde U_N$ replaced by $\p_\tau \tilde U_N$. Then we multiply the equation by $\chi r^p \p_r \p_\tau \tilde U_N$ and perform the same kind of analysis. The only slight difference happens when $\p_\tau$ falls on the error terms and we present the treatment for a typical term among the errors as an example \[
        \p_\tau \left(\calO(\wp r^{-2} + r^{-3}) \p_a\p_b\tilde U_N\right)
        = \calO(\wp r^{-2} + r^{-3}) \p_a\p_b (\p_\tau \tilde U_N) + \calO(\dot\wp r^{-2}) \p_a\p_b\tilde U_N.
    \]
    The first summand can be taken care in the exact same way as before. For the second summand, by observing that \begin{equation}\label{eqn:typical_term_after_take_derivative_in_tau}
        \begin{aligned}
        |r^{p-2}\dot\wp \p_a\p_b \tilde U_N \p_r\p_\tau \tilde U_N| &\lesssim \veps r^{p-1} |\p_r \p_\tau \tilde U_N|^2 + r^{p-3}|\dot\wp|^2 |\p_a\p_b\tilde U_N|^2 \\
        &\lesssim \veps r^{p-1} |\p_r \p_\tau \tilde U_N|^2 + \eps^2 \brk{\uptau}^{-\frac{9}{2} + 2\kappa} \calO(r^{-6}) r^{p-3}|\p_a\p_b\tilde U_N|^2,
    \end{aligned}
    \end{equation} 
    we integrate over $\calD_{\tau_1}^{\tau_2} \cap \supp \chi$ and then absorb the first term to the left hand side while the last term can be bounded by $\eps^2 \brk{\tau_1}^{-\frac{7}{2} + 2\kappa} \sup_{\tau_1 \leq \tau \leq \tau_2} \sum_{|k| \leq N+1}E[\tilde U_k](\tau)$. This estimate is very loose and one can easily upgrade this to adapt to their needs. 
\end{proof}

\subsection{Ingredients for extracting pointwise decay and an overview of proof}

\subsubsection{Boundedness of energy and integrated local energy decay estimates}\label{Section:ILED}
In this part, we record the integrated local energy decay estimates that would be used to conclude the pointwise decay in the next section. 
More specifically, these will be applied to deal with the error terms present in \eqref{eqn:main_rp_est} and \eqref{eqn:main_higher_order_rp_est}.
We only highlight some aspects in the proof and refer the details to \cite{LOS22}. 

We work in the global coordinate $(\uptau, \uprho, \uptheta)$ and denote the spacetime region $\calD_{\uptau_1}^{\uptau_2} := \cup_\uptau \Sigma_\uptau$.
Recall the definition of the standard energy in \eqref{eqn:standard_eng_defn} and the notations therein, then the local energy norm and its dual on any spacetime region $\calD$ is given by \[\begin{aligned}
    \|U\|_{LE(\calD)}^2 &:= \int_{\calD} \left(\chi_{\leq R}\left(|\uprho \p U|^2 + |\uprho U|^2 + |\p_\uprho U|^2 \right) + \chi_R \left(\uprho^{-3-\alpha} |U|^2 + \uprho^{-1-\alpha}|\p U|^2\right)\right) \sqrt{|h|}\, d\uprho\, d\uptheta\, d\uptau,\\
    \|g\|_{LE^*(\calD)}^2 &:= \int_{\calD} \left(\chi_{\leq R} |f|^2 + \chi_R \uprho^{1+\alpha} |f|^2 \right)\sqrt{|h|}\, d\uprho\, d\uptheta\, d\uptau,\\
\end{aligned}
\]
where one can refer to Section~\ref{Section:notation} for the meaning of $\p$ and here $0 < \alpha \ll 1$ is a small constant. We also introduce \[
\|U\|_{L^pL^q(\calD_{\uptau_1}^{\uptau_2})} := \left(\int_{\uptau_1}^{\uptau_2} \|U\|_{L^q(\Sigma_\uptau)}^p\, d\uptau \right)^{\frac1{p}}.
\]

For the purpose of linear estimates, we introduce linear proxies of $\bfOmg_k$ : 
\begin{align*}
\begin{split}
\bfUpomg_k(U)(c) &= -\int_{\{\uptau= c\}}Z_k \p_\alpha U n^\alpha \sqrt{|h|}\, d\uprho\, d\uptheta, \quad k = 1,2,3,4, \\
\bfUpomg_{4+k}(U)(c) &= \int_{\{\uptau= c\}} U Z_k (\p_\alpha \uptau) n^\alpha \sqrt{|h|}\, d\uprho\, d\uptheta, \quad k=1,2,3,4,\\
\bfUpomg_{\pm\mu}(U)(c) &= \int_{\{\uptau= c\}}(\pm\mu U Z_\mu \p_\alpha \uptau - Z_\mu \p_\alpha U)n^\alpha \sqrt{|h|}\, d\uprho\, d\uptheta.
\end{split}
\end{align*}
Here $n = (d\uptau)^\sharp$ denotes the spacetime normal to $\Sigma_c$ with respect to $h$.
Note that the difference caused by introducing proxies of $\bfOmg_k$'s are bounded by a small multiple of standard energy norms. These linear proxies are used to bound $\brk{U, Z_k}_{L^2(\Sigma_\tau)}$'s.

We now record the integrated local energy decay estimates (ILED) obtained in \cite{LOS22}.

\begin{proposition}\label{prop_ILED}
    Suppose $U$ satisfies $\calP U = g$, where $\calP$ is given in \eqref{eqn:form_of_P_P_0_P_pert} and \eqref{eqn:calP_pert_in_C_hyp}. For any $\uptau_1 < \uptau_2$, $U$ satisfies the estimates \[\begin{aligned}
        \sup_{\uptau_1 \leq \uptau \leq \uptau_2}\|U\|_{E(\Sigma_\uptau)} + \|U\|_{LE(\calD_{\uptau_1}^{\uptau_2})} \lesssim \|U\|_{E(\Sigma_{\uptau_1})} &+ \|g\|_{L^1L^2(\calD_{\uptau_1}^{\uptau_2})} + \sum_{k \in \{\pm\mu, 1, \cdots, 8\}} \|\bfUpomg_k(U)\|_{L^2([\uptau_1, \uptau_2])}, \\
        \sup_{\uptau_1 \leq \uptau \leq \uptau_2}\|U\|_{E(\Sigma_\uptau)} + \|U\|_{LE(\calD_{\uptau_1}^{\uptau_2})} \lesssim \|U\|_{E(\Sigma_{\uptau_1})} &+ \|g\|_{LE^*(\calD_{\uptau_1}^{\uptau_2})} + \|\p_\uptau g\|_{LE^*(\calD_{\uptau_1}^{\uptau_2})} \\ &+ \|g\|_{L^\infty L^2(\calD_{\uptau_1}^{\uptau_2})} + \sum_{k \in \{\pm\mu, 1, \cdots, 8\}} \|\bfUpomg_k(U)\|_{L^2([\uptau_1, \uptau_2])}. \\
    \end{aligned}
    \]
\end{proposition}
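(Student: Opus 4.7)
The statement is a ``black box'' linear bound for the operator $\calP = \calP_h + \calP_\pert$ which must accommodate the full spectral structure of the stability operator $L$ on $\barcalC$: one positive eigenvalue $\mu^2$ and $n=4$ zero modes (see Section~\ref{Section:spectral_analysis_of_stab_op_L}). The proxies $\bfUpomg_{\pm\mu}, \bfUpomg_{1,\ldots,8}$ on the right hand side precisely quantify the obstruction coming from the projection of $U$ onto these linearly unstable/non-decaying directions, and the smallness of $\calP_\pert$ is what allows us to treat it perturbatively.

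The plan is to proceed in four steps. \emph{Step 1 (frozen operator energy estimate).} Freeze the parameters at $\wp(\tau_f)$ so that $\calP_h$ becomes the linearization around a fixed boosted, translated catenoid and $\p_\uptau$ is an (almost) Killing vector field in the flat region (purely Killing if $\ell \equiv \ell(\tau_f)$ in $\calC_\flatt$). Pairing $\calP_h U = g$ with a suitable timelike multiplier $T U$ built from $\p_\uptau$ in the near region and from the hyperboloidal frame in $\calC_\hyp$ yields a coercive energy identity \emph{modulo} the obstruction coming from the indefinite part of $-\int U L U$. The indefinite contribution is exactly spanned by $\varphi_\mu$ and $\ebar_1,\ldots,\ebar_4$; these contributions are controlled by $\sum_k \|\bfUpomg_k(U)\|_{L^2_\uptau}^2$ using the explicit formulas for $\vec Z_\pm$ and $\vec Z_k$ in \eqref{eqn:defn_Z_pm} and \eqref{eqn:defn_of_Z_i_s} together with $\bfOmg(\vec Z_+,\vec Z_-)=1$. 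This gives energy boundedness and an $L^2$ control on $TU$ away from the unstable directions.

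\emph{Step 2 (Morawetz/ILED).} Apply a radial multiplier of the form $f(\uprho)\p_\uprho U + \tfrac{1}{2} f'(\uprho) U$, where $f$ is designed to transition smoothly from a Morawetz-type function adapted to the catenoid geometry in $\calC_\flatt$ (so that the positive bulk controls the weighted $LE$ norm and survives the neck) to $f(\uprho)\simeq 1$ or a $r^p$-type weight in $\calC_\hyp$ to match the $r^p$-hierarchy developed in Proposition~\ref{prop_rp_for_U}. The key algebraic point is that the ``unstable trapping'' at the neck is generated purely by $\varphi_\mu$ and the zero modes, so after subtracting these finitely many modes the quadratic form produced by the multiplier becomes coercive by a Hardy/IMS-type argument; the cost of the subtraction is again $\sum_k \|\bfUpomg_k\|_{L^2_\uptau}^2$. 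This is where the non-existence of threshold resonances (Section~\ref{Section:spectral_analysis_of_stab_op_L}, using \cite{OS24}) enters critically.

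\emph{Step 3 (perturbative closure).} The frozen operator $\calP_h|_{\wp=\wp(\tau_f)}$ differs from $\calP_h$ by terms of the type encoded in Remark~\ref{rmk:explanation_between_our_form_of_the_discrepancy_of_decay_-gmm-1}, and $\calP - \calP_h = \calP_\pert$ has the structure \eqref{eqn:calP_pert_in_C_hyp} with $\calO(\dot\wp)$ coefficients and the $r$-weights listed there. Moving these terms to the right hand side of the identities from Steps 1--2 and bounding them by Cauchy--Schwarz against $\|U\|_{LE}$ using the bootstrap assumption \eqref{BA-dotwp} produces a factor $O(\eps)$, which for $\eps$ small enough can be absorbed into the LHS. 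The residual $L^1L^2$ (resp.\ $LE^*$) coupling of $g$ to $TU$ and the multiplier yields the first estimate in the proposition. \emph{Step 4 (upgrade to the $LE^*$ form).} Commute the equation with $\p_\uptau$: $\p_\uptau U$ satisfies $\calP(\p_\uptau U) = \p_\uptau g + [\p_\uptau,\calP]U$, where $[\p_\uptau,\calP]$ is again of $\calO(\dot\wp)$-type and harmless. Apply Step 3 to $\p_\uptau U$, then use a standard Hardy-in-$\uprho$ argument in the near region and the fact that $\calP U = g$ is elliptic-in-space modulo $\p_\uptau$ derivatives (cf.~\eqref{eqn:op_P_elliptic}) to trade $\|g\|_{L^1L^2}$ for the combination $\|g\|_{LE^*} + \|\p_\uptau g\|_{LE^*} + \|g\|_{L^\infty L^2}$, producing the second estimate.

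The principal obstacle is Step 2: one must construct a single Morawetz multiplier that yields a positive bulk on the catenoid away from the neck, is compatible with the boosted/translated geometry (so that no $\wp$-dependent degeneracy appears at leading order), and glues to an $r^p$-weight in $\calC_\hyp$ with matching signs at the transition $\brk{r}\sim\gmm R_f$. This is exactly the analogue of the ILED construction carried out for $n\geq 5$ in \cite{LOS22}; for $n=4$ the construction is structurally identical because the ILED mechanism is dimension-insensitive, only the resulting decay rates (and hence the $p$-range, extended to $p<5/2$ via the commutator $K=r^{3/2}\p_r$) change.
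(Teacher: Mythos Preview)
Your Steps 1--3 follow the same architecture as \cite{LOS22} (which the paper simply cites for this proposition): freeze at $\wp(\tau_f)$, run a $\p_\uptau$-based energy estimate with coercivity restored after projecting away from $\varphi_\mu$ and the zero modes, then a Morawetz multiplier for the LE bulk, and finally absorb $\calP_\pert$ and the unfreezing errors using the $\calO(\dot\wp)$ smallness from \eqref{BA-dotwp}. One minor imprecision in Step 2: the Morawetz multiplier in $\calC_\hyp$ should produce the $r^{-1-\alpha}$-weighted bulk that defines the $LE$ norm, not an $r^p$ weight; the $r^p$ hierarchy (Proposition~\ref{prop_rp_for_U}) is a separate estimate added on top of ILED afterwards (cf.\ \eqref{eqn:after_adding_a_multiple_LED_P}), not built into the ILED multiplier itself.

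Step 4, however, is not how the second estimate is obtained, and your route does not work. Commuting with $\p_\uptau$ and applying the first estimate to $\p_\uptau U$ yields control of $\|\p_\uptau U\|_E + \|\p_\uptau U\|_{LE}$ in terms of $\|\p_\uptau g\|_{L^1L^2}$, which neither bounds $\|U\|_{LE}$ nor produces the $LE^*$ norms on $g$ and $\p_\uptau g$; the elliptic-in-space step you invoke does not convert $L^1L^2$ source norms into $LE^*$ norms. The second estimate comes instead from the \emph{same} multiplier identities as the first, but with a different bookkeeping of the source--multiplier pairing: the Morawetz part $\int g \cdot (f\p_\uprho U + \tfrac12 f' U)$ is bounded by $\|g\|_{LE^*}\|U\|_{LE}$ via the $LE$--$LE^*$ duality, while the energy part $\int g\cdot \p_\uptau U$ is integrated by parts in $\uptau$ to yield $-\int \p_\uptau g\cdot U$ plus boundary terms on $\Sigma_{\uptau_1},\Sigma_{\uptau_2}$, which are then bounded by $\|\p_\uptau g\|_{LE^*}\|U\|_{LE}$ and $\|g\|_{L^\infty L^2}\|U\|_{L^\infty L^2}$ respectively. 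This is precisely why $\p_\uptau g$ appears and why Remark~\ref{rmk_smoothing_S_p} flags a loss of derivative on $g$ (necessitating the smoothing $S_p$ on $g_2$).
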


\begin{remark}\label{rmk_smoothing_S_p}
    Note that the appearance of $\p_\uptau g$ on the right hand side is the reason why we need to make a smoothing $S_p$ for $g_2$ as there is a loss of derivative if the smoothing were not involved. 
\end{remark}
\begin{remark}
    For the energy boundedness statement, we can apply the product manifold case where the coercivity of energy is justified and it is no harm to treat other terms as perturbations. However, to show the top order energy boundedness, we do not need to worry about coercivity of energy anymore as the part that might cause trouble is of lower order. For top order estimates, we need to use multiplier argument to ensure there is no loss of derivatives as we always need one estimate without loss to estimate the nonlinearity later for quasilinear equations.
\end{remark}

Specifying ILED to the case $U = \bbP$ and $U = \mathbb\Phi$, respectively, we could obtain the following ILED estimates.

\begin{proposition}
    Assuming the bootstrap assumptions in Section~\ref{Section:BA}, for all $\uptau_1 < \uptau_2$, \begin{align}
         \sup_{\uptau_1 \leq \uptau \leq \uptau_2}\|\bbP_k\|^2_{E(\Sigma_\uptau)} + \|\bbP_k\|^2_{LE(\calD_{\uptau_1}^{\uptau_2})} &\lesssim \sum_{j \leq k} \|\bbP_j\|^2_{E(\Sigma_{\uptau_1})} + \eps^2 \brk{\uptau_1}^{-\frac{5}{2} + 2\kappa},  \quad |k| \leq M, \tag{ILED-$\bbP$}\label{ILED-P}\\
         \sup_{\uptau_1 \leq \uptau \leq \uptau_2}\|\p_\uptau\bbP_k\|^2_{E(\Sigma_\uptau)} + \|\p_\uptau\bbP_k\|^2_{LE(\calD_{\uptau_1}^{\uptau_2})} &\lesssim \sum_{j \leq k} \|\p_\uptau\bbP_j\|^2_{E(\Sigma_{\uptau_1})} + \eps^2 \brk{\uptau_1}^{-\frac{9}{2} + 2\kappa}, \quad |k| \leq M - 1. \tag{ILED-$\p_\uptau\bbP$} \label{ILED-dP} \\
         \sup_{\uptau_1 \leq \uptau \leq \uptau_2}\|\mathbb\Phi_k\|^2_{E(\Sigma_\uptau)} + \|\mathbb\Phi_k\|^2_{LE(\calD_{\uptau_1}^{\uptau_2})} &\lesssim \sum_{j \leq k} \|\mathbb\Phi_j\|^2_{E(\Sigma_{\uptau_1})} + \eps^2 \brk{\uptau_1}^{-\frac{5}{2} + 2\kappa}, \quad |k| \leq M - 20, \tag{ILED-$\mathbb\Phi$}\label{ILED-Phi}\\
         \sup_{\uptau_1 \leq \uptau \leq \uptau_2}\|\p_\uptau\mathbb\Phi_k\|^2_{E(\Sigma_\uptau)} + \|\p_\uptau\mathbb\Phi_k\|^2_{LE(\calD_{\uptau_1}^{\uptau_2})} &\lesssim \sum_{j \leq k} \|\p_\uptau\mathbb\Phi_j\|^2_{E(\Sigma_{\uptau_1})} + \eps^2 \brk{\uptau_1}^{-\frac{9}{2} + 2\kappa}, \quad |k| \leq M - 21. \tag{ILED-$\p_\uptau\mathbb\Phi$} \label{ILED-dPhi}
    \end{align}
\end{proposition}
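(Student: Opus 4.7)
\medskip

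The plan is to apply the general ILED estimate of Proposition~\ref{prop_ILED} to each of the four quantities in turn, after commuting the operator $\calP$ with an appropriate set of vector fields to derive an equation of the form $\calP U_k = g_k$. The four estimates will be proved by induction on $|k|$, with the distinguishing feature at each level being the structure of the source $g_k$ and the decay rate one extracts. For $\bbP_k$, one starts from $\calP_h p = f_c + g_1 + \p_\tau g_2$ in \eqref{eqn:calP_h_eqn_for_mod_profile}, rewrites this as $\calP p = (f_c + g_1 + \p_\tau g_2) - \calP_\pert p$, and commutes with the coordinate vector fields underlying the global chart $(\uptau,\uprho,\uptheta)$. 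Commutator errors of the form $[\calP_h, V]$ are strictly lower order and handled by the inductive hypothesis, while $[\calP_\pert, V]$ inherits the $\calO(\dot\wp)$ smallness from \eqref{eqn:form_of_P_P_0_P_pert}--\eqref{eqn:calP_pert_in_C_hyp}. For $\mathbb\Phi_k$, the starting equation is \eqref{eqn:calP_q_source_term} combined with the decomposition $\vec q = \vec \phi + a_+\vec Z_+ + a_-\vec Z_-$, so that $\phi$ satisfies a linear equation with source built from the nonlinearity $s^{-1}(\calF_2+\calF_3)$, the localized $\chi_{\leq \uprho_0} s^{-1}\calF_0$, the modulation correction $-(g_1 + \p_\tau g_2) - \calP_\pert p$, and the $a_\pm$--contributions that carry the extra decay granted by \eqref{BA-a_+}--\eqref{BA-a_-}.

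Next, I would apply Proposition~\ref{prop_ILED} with the following choices: for the undifferentiated estimates I use the $L^1L^2$ version so that top-order derivatives survive intact, whereas for the $\p_\uptau$-versions I use the $LE^* + \p_\uptau LE^*$ form. The latter is exactly why the parameter smoothing $S_p$ in the definition of the $g_2$-constraint (see \eqref{eqn:constraint_g12_2_Sp} and Remark~\ref{rmk_smoothing_S_p}) is essential: it removes the extra $\p_\uptau$ that would otherwise land on the source and produce a loss of derivative. Bounding the source for $p$ then uses the pointwise decay $c_{im}(\uptau) = \calO(\delta_\wp\eps\brk{\uptau}^{-9/4+\kappa})$ from \eqref{eqn:all_decay_of_c_im} together with $\calO(\dot\wp r^{-3})$ from Lemma~\ref{lemma_F_0_decay}, integrating in time to give the factor $\brk{\uptau_1}^{-5/2+2\kappa}$ in \eqref{ILED-P}; the sharper $\brk{\uptau_1}^{-9/2+2\kappa}$ in \eqref{ILED-dP} comes from the $\p_\uptau$-improved part of \eqref{eqn:all_decay_of_c_im} and from \eqref{BA-ddotwp-L2}. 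For $\phi$, the nonlinearity is controlled via the pointwise bounds \eqref{BA-p}--\eqref{BA-q}, their weighted extensions \eqref{BA-p-ext1}--\eqref{BA-q-ext2}, and the modulation smallness \eqref{BA-dotwp}, again with one extra power of $\brk{\uptau}$ of decay for the $\p_\uptau$-estimate.

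The projection terms $\bfUpomg_j(U_k)$ appearing on the right-hand side of Proposition~\ref{prop_ILED} will be handled by the three families of orthogonality conditions developed in Section~\ref{Section:Derivation_of_op_modified_profile}. For $\bbP_k$ this is the modified orthogonality \eqref{eqn:mod_ortho_for_p}; differentiating it in $\uptau$ and using that the smoothing kernel $\tilde{\tilde S}_p$ produces a $\p_\uptau$ on the right-hand side, one gets the desired smallness at each order. For $\mathbb\Phi_k$, the projections onto $Z_1,\dots,Z_{2n}$ are given by \eqref{eqn:ortho_cond_for_vec_bfOmg} and bounded via Lemma~\ref{lemma_decay_of_bfOmg_phi}, while those onto $Z_\pm$ are given by \eqref{eqn:ortho_cond_on_phi_for_Z_mu} and bounded via Lemma~\ref{lemma_decay_of_F_pm}. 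In every case the resulting $L^2_\uptau$ bound on $\bfUpomg_j$ is at least as good as $\delta_\wp^{1/2}\eps\brk{\uptau_1}^{-9/4+\kappa}$, which is compatible with the statement.

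The main obstacle will be the top-order instance of \eqref{ILED-dP} at $|k| = M-1$: here one cannot tolerate any loss of derivative, and the argument relies simultaneously on (i) the smoothing operator $S_p$ keeping $\p_\uptau g_2$ in $LE^*$, (ii) the $\p_\uptau$-improved decay of $c_{1m}, c_{2m}$ in \eqref{eqn:all_decay_of_c_im} and the $L^2_\uptau$ bound \eqref{eqn:L2_est_of_c_im}, and (iii) the fact that the problematic error $\tilde m_1^{\tau\tau} \p_\tau^2 \tilde U$ (cf.\ Remark~\ref{rmk_on_eqn_of_P_graph_coeff}) does not appear in $\calP$ itself but only in the graph formulation, so does not affect the ILED argument at this level. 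A secondary difficulty is the non-coercivity of the energy flux due to the unstable eigenfunction $\varphi_\mu$ of $L$, but this is absorbed into the $\bfUpomg_{\pm\mu}$ terms in Proposition~\ref{prop_ILED} and controlled by the analysis of $a_\pm$ carried out in Proposition~\ref{prop_closing_BA_1}.
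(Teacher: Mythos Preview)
Your overall plan is sound and matches the paper's approach in its use of Proposition~\ref{prop_ILED}, the orthogonality conditions \eqref{eqn:mod_ortho_for_p}, Lemma~\ref{lemma_decay_of_bfOmg_phi}, Lemma~\ref{lemma_decay_of_F_pm} for the $\bfUpomg_j$ terms, and the smoothing $S_p$ to control $\p_\uptau g_2$. Two points deserve correction, however.

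First, the paper does \emph{not} obtain the higher-order estimates by commuting $\calP$ with all coordinate vector fields as you propose. Instead it first proves the case where all $k$ derivatives are $\p_\uptau$'s by commutation plus ILED, and then recovers the spatial derivatives by applying standard (unweighted) elliptic estimates for $\calP_\elliptic$ on each leaf, using the decomposition $\calP = \calP_\elliptic + \calP_\uptau$ from \eqref{eqn:form_of_calP_suit_for_ell_est}--\eqref{eqn:op_P_elliptic}. This sidesteps the commutator bookkeeping entirely for spatial directions; your direct commutation scheme would work but is heavier and you would need to explain why the second-order commutators $[\calP,\p_\uprho]$, $[\calP,\p_\uptheta]$ do not lose derivatives at top order.

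Second, you have misidentified the main obstacle. The top-order issue is \emph{not} \eqref{ILED-dP} for $\bbP$ (the forcing of $\bbP$ is linear, so the smoothing $S_p$ already handles everything), but rather the quasilinear nonlinearity in $\mathbb\Phi$: when commuting $\p_\uptau^k$ into $\calF_2+\calF_3$, two derivatives can land on $\phi$ and a naive $L^1L^2$ estimate loses a derivative. The paper addresses this by revisiting the multiplier proof of Proposition~\ref{prop_ILED} and integrating by parts those quasilinear error terms directly. Your point (iii) about $\tilde m_1^{\tau\tau}\p_\tau^2\tilde U$ is a red herring here: that term is relevant to the $r^p$-hierarchy for $Y$ in Section~\ref{section_Y_vf_K}, not to the ILED argument.
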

\begin{proof}
    The details are very alike compared to those in the proof of  \cite[Proposition 9.1]{OS24} and \cite[Proposition 8.8]{LOS22}. We only sketch an outline here and point out the differences. 

    The $k = 0$ case directly follows from Proposition~\ref{prop_ILED}. We estimate the part generated from the original source term $\calF_0$ and the extra part $\p_\uptau g_2$  via bootstrap assumptions, or more specifically, \eqref{BA-p}, \eqref{BA-ddotwp-L2} and \eqref{eqn:L2_est_of_c_im}. For $g_1$, in view of \eqref{eqn:constraint_g12_1} and \eqref{eqn:coeff_c_im}, we estimate the part $p \calP_h Y_m$ by the LE norm of $p$ view H\"older inequality and the rest by bootstrap assumptions. For  $\bfUpomg_j(U)$, it's dealt by orthogonality conditions \eqref{eqn:mod_ortho_for_p}, Lemma~\ref{lemma_decay_of_bfOmg_phi} and Lemma~\ref{lemma_decay_of_F_pm}.
       
    To prove the case that $k \neq 0$ where all $k$ derivatives are $\p_\uptau$'s, the only difference is that one needs to examine the proof of Proposition~\ref{prop_ILED} to make some extra integration by parts to avoid loss of derivatives caused by the errors generated when commuting $\p_\uptau^k$. This only requires for the estimates of $\mathbb\Phi$ since the forcing term of $\bbP$ does not contain a nonlinearity. Also, we mention that due to the smoothing of $c_2$, we are allowed to absorb excessive derivatives. 
    
    Afterwards, standard (non-weighted) elliptic estimates for $\calP_\elliptic$ can be used to conclude the proof for $U_k$ $(k \geq 1)$ when at least one derivative is not $\p_\uptau$. Indeed, this is done thanks to the decomposition \eqref{eqn:form_of_calP_suit_for_ell_est} and the energy estimates for $\p_\uptau^k U$. In particular, the local energy norm is estimated by applying elliptic estimates in each annulus separately and summing them up.
\end{proof}
\begin{remark}
    We note that the source term of $\bbP$ can be put into $LE^*$ type of norms just barely and hence this allows us to use \eqref{BA-ddotwp-L2} throughout the proof above without experiencing the same situation in the $r^p$-estimates for $\bbP$ below. See \eqref{eqn:range_of_p_making_F_p_rp_term_integrable}.
\end{remark}

\subsubsection{Weighted elliptic estimates for $\calP_\elliptic$ on each leaf $\Sigma_\uptau$ of our foliation}
Note that we already set up the weighted elliptic estimates for the operator $L$ in Theorem~\ref{thm_ell_est_for_L_w_lot}. Since $\calP_\elliptic$ (see \eqref{eqn:op_P_elliptic}) can be viewed as a perturbation of $L$, it is expected that a similar estimate can be worked out.

\begin{lemma}[Weighted elliptic estimates for $\calP_\elliptic$]\label{lemma_weight_ell_est_for_P_ell_1}
    Suppose $\calP_\elliptic U = g$ on $\Sigma_\uptau$ and the bootstrap assumptions in Section~\ref{Section:BA} hold, then \[
    \|U\|_{H^{2, \delta}(\Sigma_\uptau)} \lesssim \|g\|_{H^{0, \delta + 2}(\Sigma_\uptau)} + \sum_j \left| \brk{U, Z_j}_{L^2(\Sigma_\uptau)}\right|,
    \]
    where the weighted Sobolev spaces on $\Sigma_\uptau$ are defined via Definition~\ref{defn_weighted_Sobolev} by replacing the $(\rbar, \thetabar)$ coordinates in \eqref{eqn:metric_C_in_r} with $(\uprho, \uptheta)$ global polar coordinates defined in Section~\ref{Section:global_coord}.
\end{lemma}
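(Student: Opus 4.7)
The plan is to treat $\calP_\elliptic = L + \calP_\elliptic^\pert$ as a perturbation of the stability operator on the Riemannian catenoid, and reduce the claim to Theorem~\ref{thm_ell_est_for_L_w_lot} applied leafwise. A preliminary point is that the weighted Sobolev structure on $\Sigma_\uptau$ in the global polar chart $(\uprho,\uptheta)$ matches that on $\barcalC$ in the $(\rho,\omg)$ coordinates of \eqref{eqn:metric_C_in_rho} up to a multiplicative factor $1+o_{\wp,R_f}(1)$ on the induced volume element (compare \eqref{eqn:1st_order_formulation_h_det_inv} in the flat region and the computation of $h^{\mu\nu}$ in Section~\ref{Section:identify_P_and_P_graph} in the hyperboloidal region). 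Under the bootstrap assumptions of Section~\ref{Section:BA} this gives equivalence of the two norm scales with an absolute constant, so I may freely apply the estimate of Theorem~\ref{thm_ell_est_for_L_w_lot} on each leaf.

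Rewriting $\calP_\elliptic U = g$ as $LU = g - \calP_\elliptic^\pert U$ and applying Theorem~\ref{thm_ell_est_for_L_w_lot}, the result becomes
\[
    \|U\|_{H^{2,\delta}(\Sigma_\uptau)} \lesssim \|g\|_{H^{0,\delta+2}(\Sigma_\uptau)} + \|\calP_\elliptic^\pert U\|_{H^{0,\delta+2}(\Sigma_\uptau)} + \sum_{j}\bigl|\langle U,\ebar_j\rangle\bigr|.
\]
By \eqref{eqn:op_P_elliptic}, the perturbative operator is $o_{\wp,R_f}(1)\bigl(\p_\Sigma^2 + \brk{\uprho}^{-1}\p_\Sigma + \brk{\uprho}^{-2}\bigr) + \calO(\dot\wp)\p_\Sigma$. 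The first three summands have exactly the $\brk{\uprho}$-profile of the principal part of $L$, so each maps $H^{2,\delta}$ into $H^{0,\delta+2}$ with norm $o_{\wp,R_f}(1)$ term by term. For the first-order piece $\calO(\dot\wp)\p_\Sigma$, reading the more refined spatial decay of the corresponding coefficients $\ringb^{y}=\calO(\dot\wp\,r^{-1})$ from \eqref{eqn:calP_pert_in_C_hyp} recovers the missing $\brk{\uprho}^{-1}$, so that this term is also bounded by $\calO(\dot\wp)\|U\|_{H^{2,\delta}}$. Choosing $\eps$ small and $R_f$ large absorbs $\|\calP_\elliptic^\pert U\|_{H^{0,\delta+2}}$ into the left hand side.

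It remains to replace the inner products $\langle U,\ebar_j\rangle$ by $\langle U,Z_j\rangle$. Since $Z_j = \chi\varphi_j = \chi\bigl(\ebar_j + \calO(|\ell|^2)\bigr)$ (after fixed normalization) by \eqref{eqn:zero_modes_of_L_ej}, Lemma~\ref{lemma_zero_modes} and \eqref{eqn:defn_of_Z_i_s}, we have
\[
    \langle U,\ebar_j\rangle = c_j\,\langle U,Z_j\rangle + \bigl\langle U,(1-\chi)\ebar_j\bigr\rangle + \calO(|\ell|^2)\bigl\langle U,\chi\ebar_j\bigr\rangle.
\]
The first error is supported in $\{|\uprho|\gtrsim R_f\}$, where the pointwise decay $\ebar_j\sim\brk{\uprho}^{-(n-1)}$ and Cauchy--Schwarz against the weight $\brk{\uprho}^{-\delta}$ yields a bound $\lesssim R_f^{-\sigma}\|U\|_{H^{0,\delta}}$ for some $\sigma>0$ (here we use $\delta<\tfrac{n}{2}-2$); this can be absorbed. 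The $\calO(|\ell|^2)$ error is absorbed by smallness in $\eps$. The main technical obstacle of the argument is precisely the bookkeeping for $\calP_\elliptic^\pert$: one must confirm that the apparently borderline first-order term $\calO(\dot\wp)\p_\Sigma$ in \eqref{eqn:op_P_elliptic} does carry the additional $\brk{\uprho}^{-1}$ from the $(\tau,r,\theta)$-expansion of $\calP_\pert$, which is what tips this piece into the perturbative regime.
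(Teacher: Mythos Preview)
Your approach is essentially the paper's: rewrite $LU=g-\calP_\elliptic^\pert U$, invoke Theorem~\ref{thm_ell_est_for_L_w_lot}, absorb $\calP_\elliptic^\pert$, and swap $\ebar_j$ for $Z_j$; the paper carries out exactly these steps in the same order of ideas.

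One slip worth flagging: in your tail estimate for $\langle U,(1-\chi)\ebar_j\rangle$, the Cauchy--Schwarz with weight $\brk{\uprho}^{-\delta}$ requires the integral $\int_{R_f}^\infty \brk{\uprho}^{-2\delta-(n-1)}\,d\uprho$ to converge and be $O(R_f^{-\sigma})$, which needs $\delta>1-\tfrac{n}{2}$, not $\delta<\tfrac{n}{2}-2$ as you write. For $n=4$ this restricts your argument to $\delta\in(-1,0)$, whereas the lemma (and its application in Section~\ref{Section:decay_for_P} with $\delta=\tfrac{\kappa}{2}-2$) needs the full range $(-2,0)$. The paper's proof bounds the discrepancy between $\langle U,Z_j\rangle$ and $\langle U,\ebar_j\rangle$ by $\calO(|\ell|^2+R_f^{-3})\|\chi U\|_{L^2}$ and then absorbs via $\|U\|_{H^{0,\delta}}$; you should either adopt that bookkeeping or note that the tail contribution can be handled by the cut-off trick referenced after Theorem~\ref{thm_ell_est_for_L_w_lot}. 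Also, minor point: the coefficients $\ringb^y$ you cite from \eqref{eqn:calP_pert_in_C_hyp} belong to $\calP_\pert$, not $\calP_\elliptic^\pert$; the extra $\brk{\uprho}^{-1}$ you need does hold, but it comes from the analogous expansion of $\calP_\elliptic$ rather than directly from that display.
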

\begin{proof}
    Examining the expansion of the truncated generalized eigenfunctions of $M$ in Lemma~\ref{lemma_zero_modes} and comparing these with \eqref{eqn:zero_modes_of_L_ej}, one realizes that for $1 \leq j \leq n$, \[
        \brk{U, Z_j} = \brk{U, \ebar_j} + \calO(|\ell|^2 + R_f^{-3}) \|\chi U\|_{L^2(\Sigma_\tau)} 
    \]
    with $\chi$ defined after \eqref{eqn:defn_of_Z_i_s}.
    Therefore, one could conclude \[
        \|U\|_{H^{2, \delta}(\Sigma_\uptau)} \lesssim \|g\|_{H^{0, \delta + 2}(\Sigma_\uptau)} + \sum_j \left|\brk{U, Z_j}_{L^2(\Sigma_\uptau)}\right|
        + \calO(|\ell|^2 + R_f^{-3}) \|U\|_{H^{0, \delta}} + \|\calP_\elliptic^\pert U\|_{H^{0, \delta + 2}},
    \]
    where $\calP_\elliptic^\pert$ is given in \eqref{eqn:op_P_elliptic}.
    Inserting \[\begin{aligned}
        \|\calP_\elliptic^\pert U\|_{H^{0, \delta + 2}} \lesssim o_{\wp, R_f}(1)\|U\|_{H^{2, \delta}} + \|\calO(\dot\wp) \p_\Sigma U\|_{H^{0, \delta + 2}}
        \lesssim o_{\wp, R_f}(1)\|U\|_{H^{2, \delta}} + \eps R_f^{-2}\brk{\uptau}^{-\frac{9}{4} + \kappa} \|U\|_{H^{1, \delta}}
    \end{aligned}
    \]
    into the preceding estimate and absorb terms with smallness to the left hand side, the desired estimate follows.
\end{proof}

Instead of applying the commutation argument in the proof of Theorem~\ref{thm_ell_high_reg}, we prove by directly applying Theorem~\ref{thm_ell_high_reg} to $\Delta_\barcalC$ as it has no zero modes. This can be seen by a simple integration by parts by testing with itself.

\begin{lemma}[Higher order weighted elliptic estimates for $\calP_\elliptic$]\label{lemma_weight_ell_est_for_P_ell_2}
    Suppose $\calP_\elliptic U = g$ on $\Sigma_\uptau$ and the bootstrap assumptions in Section~\ref{Section:BA} hold, then \[
    \|U\|_{H^{s, \delta}(\Sigma_\uptau)} \lesssim \|g\|_{H^{s-2, \delta + 2}(\Sigma_\uptau)} + \sum_j \left| \brk{U, Z_j}_{L^2(\Sigma_\uptau)}\right|, \quad s \geq 2.
    \]
\end{lemma}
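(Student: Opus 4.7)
The plan is to reduce to Theorem~\ref{thm_ell_high_reg} applied to $\Delta_\barcalC$ rather than to $L$, and then to induct on $s$ using Lemma~\ref{lemma_weight_ell_est_for_P_ell_1} as the base case. The key structural observation is that $\Delta_\barcalC$ has no zero modes in the weighted spaces $H^{0,\delta}$ with $-\frac{n}{2} < \delta < \frac{n}{2} - 2$: any $U$ with $\Delta_\barcalC U = 0$ in this range can be paired with itself (after spatial cutoff and elliptic regularity) to give $\int_\barcalC |\nabla U|^2 = 0$, forcing $U$ to be constant, and the only constant in $H^{0,\delta}$ with this range of $\delta$ is zero. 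Consequently, the analogue of Theorem~\ref{thm_ell_est_for_L_w_lot} for $\Delta_\barcalC$ carries no orthogonality obstruction, and Theorem~\ref{thm_ell_high_reg} (whose proof was reduced to the flat Laplacian anyway) yields
\[
\|U\|_{H^{s,\delta}(\Sigma_\uptau)} \lesssim \|\Delta_\barcalC U\|_{H^{s-2,\delta+2}(\Sigma_\uptau)} + \|U\|_{H^{s-1,\delta'}(\Sigma_\uptau)}, \qquad s \geq 2,
\]
for any $\delta' \in \bbR$.

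First I would rewrite the equation as $\Delta_\barcalC U = g - |\II|^2 U - \calP_\elliptic^\pert U$ and apply the displayed estimate. The potential term $|\II|^2 U$ has spatial decay $\brk{\uprho}^{-2n}$, so $\||\II|^2 U\|_{H^{s-2,\delta+2}} \lesssim \|U\|_{H^{s-2,\delta}}$, which is controlled by $\|U\|_{H^{s-1,\delta}}$. For the perturbative part, the form \eqref{eqn:op_P_elliptic} gives
\[
\|\calP_\elliptic^\pert U\|_{H^{s-2,\delta+2}(\Sigma_\uptau)} \lesssim o_{\wp,R_f}(1)\|U\|_{H^{s,\delta}(\Sigma_\uptau)} + \eps R_f^{-2}\brk{\uptau}^{-\frac{9}{4}+\kappa}\|U\|_{H^{s-1,\delta}(\Sigma_\uptau)},
\]
exactly as in the proof of Lemma~\ref{lemma_weight_ell_est_for_P_ell_1}, where the first term is absorbed into the left-hand side by taking $R_f$ large and $\eps$ small (using \eqref{BA-dotwp}), while the second term is of order $s-1$.

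This reduces the estimate to controlling $\|U\|_{H^{s-1,\delta'}(\Sigma_\uptau)}$ for some $\delta'$ of our choosing, and then iteratively to controlling $\|U\|_{H^{2,\delta''}(\Sigma_\uptau)}$. The base case $s=2$ is furnished by Lemma~\ref{lemma_weight_ell_est_for_P_ell_1}, which produces the orthogonality terms $\sum_j |\brk{U,Z_j}_{L^2(\Sigma_\uptau)}|$ on the right-hand side. Propagating these through the induction gives the stated bound. The only mildly delicate point I foresee is that at each step of the induction the weight $\delta'$ for the lower order term must be chosen so that the intermediate $H^{s-1,\delta'}$ norm is indeed dominated by the norms appearing in the base case; since Theorem~\ref{thm_ell_high_reg} allows $\delta' \in \bbR$ to be arbitrary, one can simply run the induction at the fixed weight $\delta$, and use the previous step's estimate to replace $\|U\|_{H^{s-1,\delta}}$ by $\|g\|_{H^{s-3,\delta+2}} + \sum_j|\brk{U,Z_j}|$, closing the loop.
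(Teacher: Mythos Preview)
Your proposal is correct and follows essentially the same route as the paper: rewrite $\calP_\elliptic U = g$ as $\Delta_\barcalC U = g - |\II|^2 U - \calP_\elliptic^\pert U$, apply Theorem~\ref{thm_ell_high_reg} to $\Delta_\barcalC$ (which has no zero modes by the integration-by-parts argument you give), absorb the $o_{\wp,R_f}(1)\|U\|_{H^{s,\delta}}$ contribution from $\calP_\elliptic^\pert$, and induct on $s$ with base case Lemma~\ref{lemma_weight_ell_est_for_P_ell_1}. The paper's proof is exactly this, illustrated for $s=3$.
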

\begin{proof}
    We do induction on $s$ and show how to obtain the case $s = 3$ as an example.
    Thanks to Lemma~\ref{lemma_weight_ell_est_for_P_ell_1}, it suffices to bound $\|U\|_{\dot H^{3, \delta}}$ by $\|U\|_{H^{2, \delta}}$ and $\|g\|_{H^{1, \delta + 2}}$.
    We write \[
        \Delta_\barcalC U = - V U - \calP_\elliptic^\pert U + g 
    \]  
    and apply Theorem~\ref{thm_ell_high_reg}, we obtain
    \[
        \|U\|_{\dot H^{3, \delta}}
        \leq \|VU\|_{H^{1, \delta + 2}} + \|\calP_\elliptic^\pert U\|_{H^{1, \delta + 2}} + \|g\|_{H^{1, \delta + 2}}
        \lesssim \|U\|_{H^{1, \delta}} + o_{R_f, \wp}(1) \|U\|_{H^{3, \delta}} + \|g\|_{H^{1, \delta + 2}},
    \]
    which completes the proof.
\end{proof}

\subsubsection{Overview of the following subsections}\label{Section:overview_of_rp}
We next apply the preceding results with $U$ replaced by $\bbP := sp$ and $\mathbb{\Phi} := s\phi$. One can refer to \eqref{eqn:relation_varphi_psi} for the definition of $s$. The functions $p$ and $\phi$ are defined in Section~\ref{Section:mod_profile}.
In light of \eqref{eqn:relation_calP_and_calP_graph}, we notice that \[
    \calP_\graph \bbP = \calF_\bbP, \quad \calP_\graph \mathbb{\Phi} = \calF_{\mathbb{\Phi}}, 
\]
where \[\begin{aligned}
    \calF_\bbP &= \calF_{0, \bbP} + s(g_1 + \p_\tau g_2) + s \calP_\pert p, \quad \calF_{0, \bbP} = \calO(\dot\wp r^{-3}), \\ 
    \calF_{\mathbb{\Phi}} &= \calF_{0, \mathbb\Phi} + \calF_2 + \calF_3 - s(g_1 + \p_\tau g_2) - s \calP_\pert p, \quad \calF_{0, \mathbb\Phi} = \calO(\dot\wp r^{-4}),
\end{aligned}
\] 
when $\uprho_0 \gg R_f$. Here, $q$ and $\phi$ agree on the hyperboloidal region due to the support condition on $\vec Z_\pm$. We also notice that the linear contribution in the forcing for both equations all comes with $\delta_\wp$ extra smallness so that we do not need to worry about bootstrap constants as they can always be absorbed by this extra smallness. See also Remark~\ref{rmk_on_a_-_0_in_BA}.

Though the source terms are quite different for $\bbP$ and $\mathbb\Phi$, we expect to prove an almost-$\brk{\uptau}^{-2}$ pointwise decay in a similar fashion. However, the technicalities come from different aspects for $\bbP$ and $\mathbb\Phi$, respectively. 
For $\bbP$, one definitely needs to pay special attention to see which norm we are allowed to use due to the slow decay of $\calF_{0, \bbP}$ in the spatial direction. This is discussed in Section~\ref{Section:decay_for_P}.
For $\mathbb\Phi$, one definitely needs to put some effort to deal with the nonlinearity and we refer to Remark~\ref{rmk_handle_nonlinearity}.
 
However, we expect a better pointwise decay for $\mathbb\Phi$ as the usual range $p \in (0, 2)$ in the $r^p$-estimates does not exhaust the possibility of raising powers of $p$. (See \eqref{eqn:range_of_p_making_F_p_rp_term_integrable} for a clearer explanation.) The limitation of $p \in (0, 2)$ just comes from the requirement of the coercivity of $\calE^p$ and $\calB^p$ as already seen in the proof of \eqref{eqn:main_rp_est}.
Heuristically, if the $p$ range could be expanded, then one expects a better decay rate. This idea is realized by introducing the commutation vector field $K = r^{\frac32}\p_r$ in Section~\ref{section_Y_vf_K}.

One will see in the following part that although we need a better decay rate than $\brk{\tau}^{-2}$ for $\mathbb\Phi$, we may want to limit ourselves with the decay rate for $\p_\tau^j\mathbb\Phi$ ($j \geq 1$) since this would be restricted by the contribution $g_1 + \p_\tau g_2$ on the right hand side of the equation. Although $\p_\tau^{j+1} p$ has one more derivative than $\p_\tau^j\phi$, it might have a slower decay rate when $j$ becomes large, as the poor decay of $\calF_\bbP$ could be detrimental to the $r^p$-hierarchy. We should prevent this since $\p_\tau^j(g_1 + \p_\tau g_2)$, as part of $\p_\tau^j \calF_{\mathbb\Phi}$, will be closely related to the decay rate of $\p_\tau^{j+1} p$.

\subsection{Decay estimates for  \texorpdfstring{$\bbP$}{P}}\label{Section:decay_for_P}

We first look into the equation for $U = \bbP$ as it is an almost linear equation and we do not need a sophisticated iteration to deal with the nonlinearity. The dependence on $p$ in $s\calP_\pert p$ has more than enough decay coming from the coefficients of the operator and \eqref{BA-p}. We do not mention explicitly about this term anymore in the subsequent estimates, as they would not be the one which dominates the decay rate. Another term that appears as the forcing term in the equation for $\bbP$ is $s(g_1 + \p_\tau g_2)$, whose decay rate is closely bonded to that of $p$. See \eqref{eqn:all_decay_of_c_im} and \eqref{eqn:L2_est_of_c_im}.

Before we proceed to derive the main $p$-hierarchy estimates, it might be good to recall some notation such as $\tilde U = r^{\frac32} U$ and the ones introduced in \eqref{eqn:notation_tilde_U_N_triple}.
Starting with Proposition~\ref{prop_rp_for_U} and 
focusing on the term 
\[
    I_{\bbP, 1} := \int_{\calD_{\tau_1}^{\tau_2}} \chi r^p \left|\tilde \calF \p_r \tilde U \right|\, d\tau\, dr\, d\theta, \quad I_{\bbP, 2} := \int_{\calD_{\tau_1}^{\tau_2}} \chi r^p \left|\tilde \calF \calO(r^{-3}) \p_\theta \tilde U \right|\, d\tau\, dr\, d\theta.
\]
It is easy to bound $I_{\bbP, 2}$ by the Cauchy-Schwarz inequality via $L^2L^2$ type norms \[
    I_{\bbP, 2} \leq \veps \calB^{p-1}[\tilde \bbP](\tau_1, \tau_2) + C_\veps \int_{\calD_{\tau_1}^{\tau_2}} \chi r^{p+1} |\calO(r^{-3})\tilde \calF_\bbP|^2 \, d\tau\, dr\, d\theta \lesssim \veps \calB^{p-1}[\tilde \bbP](\tau_1, \tau_2) + \eps^2\brk{\tau_1}^{-\frac72 + 2\kappa}
\]
and absorb the first term to the left hand side, where $\veps$ is some arbitrary smallness and $\eps$ is the  size of initial data, appearing in the bootstrap assumptions for $\dot\wp$ in \eqref{BA-dotwp}.
However, it turns out to be a bad idea to use $L^2L^2$ type of estimates for $I_{\bbP, 1}$ due to the non-integrability of \begin{equation}\label{eqn:range_of_p_making_F_p_rp_term_integrable}
    \int_{\calD_{\tau_1}^{\tau_2}} \chi r^{p+1} |\tilde \calF_\bbP|^2 \, d\tau\, dr\, d\theta
\end{equation}
in $r$ variable for $p \geq 1$. Instead, we replace by $L^\infty L^2$ and $L^1 L^2$ type estimates \[
    I_{\bbP_1} \leq \veps \sup_{\tau_1 \leq \tau \leq \tau_2} \calE^p[\tilde\bbP](\tau) + C_\veps \left(\int_{\tau_1}^{\tau_2} \big(\int_{\Sigma_\tau} \chi r^p |\tilde \calF_\bbP|^2 \, dr\, d\theta \big)^{\frac12} \, d\tau\right)^2 \lesssim \veps \sup_{\tau_1 \leq \tau \leq \tau_2} \calE^p[\tilde\bbP](\tau) + \eps^2\brk{\tau_1}^{-\frac52 + 2\kappa}
\]
for $0 < p < 2$. It then follows from Proposition~\ref{prop_rp_for_U} that 
\begin{equation}\label{eqn:preliminary_rp_for_bbP_w_Fp_realized}
    \begin{aligned}
    \sup_{\tau_1 \leq \tau \leq \tau_2} \calE^p[\tilde\bbP](\tau) + \calB^p[\tilde\bbP](\tau_1, \tau_2) 
    \lesssim_p &\ \calE^p[\tilde\bbP](\tau_1) + \int_{\calD_{\tau_1}^{\tau_2}} |\Err_\chi[\tilde\bbP]| \, d\tau\, dr\,d\theta + \eps^2\brk{\tau_1}^{-\frac52 + 2\kappa} \\
    &\ + \delta \int_{\calD_{\tau_1}^{\tau_2}} \chi r^{-1-\alpha} |\p_\tau \tilde\bbP|^2\, d\tau\, dr\, d\theta + \delta \sup_{\tau_1 \leq \tau \leq \tau_2} E[\bbP](\tau),
\end{aligned}
\end{equation}
for $\delta < p < 2 - 2\alpha$. 
To deal with the error terms, we add a suitable multiple of \eqref{ILED-P} with $k = 1$ to \eqref{eqn:preliminary_rp_for_bbP_w_Fp_realized}, switching notations back from $\tilde\bbP$ to $\bbP$, and applying Hardy's inequality \eqref{eqn:Hardy_variant}, we obtain \begin{equation}\label{eqn:after_adding_a_multiple_LED_P}
    \begin{aligned}
    \sup_{\tau_1 \leq \tau \leq \tau_2} \scE^p [\bbP](\tau) + \scB^p [\bbP](\tau_1, \tau_2) &\lesssim \sum_{0 \leq |k| \leq 1} \scE^p [\bbP_k](\tau_1) + \eps^2\brk{\uptau_1}^{-\frac52 + 2\kappa}, \quad \delta < p < 2 - 2\alpha, 
\end{aligned}
\end{equation}
where \[\begin{aligned}
    \scE^p[U](\tau) &:= \int_{\Sigma_\tau} \chi_{\leq R} |\p U|^2\, dV + \chi_R r^p \left((\p_r + \frac{n-1}{2r})U\right)^2 \, dV, \\ 
    \scB^p[U](\tau_1, \tau_2) &:= \int_{\calD_{\tau_1}^{\tau_2}} \chi_{\leq R} |\p U|^2 + \chi_R r^{p-1} \left( \big((\p_r + \frac{n-1}{2r})U\big)^2 + r^{-2} U^2 + r^{-2} |\snabla U|^2\right) \\ & \qquad\qquad + \chi_R r^{-1-\alpha} |\p_\tau U|^2\, dV \, d\tau =: \int_{\tau_1}^{\tau_2} \scB^p[U](\tau)\, d\tau
\end{aligned}
\]
with $n = 4$.
\begin{remark}\label{rmk_on_adding_LED_est}
    We add a few remarks on this estimate here.
    \begin{enumerate}[ wide = 0pt, align = left, label=\arabic*. ]
        \item The last term in the definition of $\scB^p[U](\tau_1, \tau_2)$ is not necessary later when extracting decay. We still keep this specific term in the LE norm but omitting others just to emphasis how we cancel out the contribution of the penultimate term in \eqref{eqn:main_rp_est} as well as \eqref{eqn:main_higher_order_rp_est}.
        \item The contribution of bounded $r$ region of both quantities only involves $\p U$ instead of zeroth order contribution is due to the variant of Hardy's inequality \eqref{eqn:Hardy_variant}.
        \item The one derivative higher appearing on the right hand side of the estimate is due to two things. One is the derivative loss in the definition of LE norm (so we need the local energy estimate with both $k = 0,1$). The additional $\p_\theta$, $\p_\tau$ derivatives in the definition of standard energy $E$ in the far away region compared to $\calE^p$ energy naturally require this $k = 1$ term to be added in as well.
        \item For all estimates involving $\calF_\bbP$, we simply apply the bootstrap assumption \eqref{BA-dotwp}, where the extra $\delta_\wp$ smallness will be the factor that makes all implicit constants in the $r^p$-estimates independent of the bootstrap constants $C_k$'s. Therefore, once we extract pointwise decay estimates from these, we just need to choose $C_k$ large enough to beat these $C_k$-independent implicit constants to close the bootstrap argument.
    \end{enumerate}
\end{remark} 

Following the same lines of proof, higher order estimates could be obtained : \begin{equation}\label{eqn:rp_to_derive_tau_inverse_decay_bbP}
    \sum_{0 \leq |k| \leq N} \sup_{\tau_1 \leq \tau \leq \tau_2} \scE^p [\bbP_k](\tau) + \scB^p [\bbP_k](\tau_1, \tau_2) \lesssim \sum_{0 \leq |k| \leq N+1} \scE^p [\bbP_k](\tau_1) + \eps^2 \brk{\tau_1}^{-\frac52 + 2\kappa},
\end{equation}
where $N + 1 \leq M$ and $\delta < p < 2 - 2\alpha$.

Thanks to the local theory ($\eps$-smallness of initial data and finite speed propagation property, see Appendix~\ref{Section:LWP}), the $p$-energy on the initial slice is bounded by $\eps^2$.
Then it follows from \eqref{eqn:rp_to_derive_tau_inverse_decay_bbP} with $p = 2 - \zeta$ with $\zeta > 2\alpha$, $N = M-1$, $\tau_1 = 0$ that its left hand side is bounded by a bootstrap constant independent multiple of $\eps^2$. Therefore, there exists an increasing sequence of dyadic $\tilde \sigma_m$ such that \begin{equation}\label{eqn:bbP_decay_1}
    \scE^{1-\zeta}[\bbP_k](\tsigma_m) \leq \scB^{2-\zeta}[\bbP_k](\tsigma_m) \lesssim \eps^2 \brk{\tsigma_m}^{-1}, \quad |k| \leq M-1.
\end{equation}
Another application of \eqref{eqn:rp_to_derive_tau_inverse_decay_bbP} with $p = 1 - \zeta$, $N = M-2$, $\tau_1 = \tsigma_{m-1}$, $\tau_2 = \tsigma_m$ would imply that \begin{equation}\label{eqn:bbP_decay_2}
    \scE^{-\zeta}[\bbP_k](\sigma_m) \leq \scB^{1-\zeta}[\bbP_k](\sigma_m) \lesssim \eps^2 \brk{\sigma_m}^{-2}, \quad |k| \leq M-2,
\end{equation}
where $\tilde\sigma_{m-1} \leq \sigma_m \leq \tilde\sigma_m$, i.e., $\sigma_m$ is still an increasing almost dyadic sequence. 
Applying \eqref{eqn:bbP_decay_1} to the right hand side of \eqref{eqn:rp_to_derive_tau_inverse_decay_bbP} would allow us to exchange the dyadic sequence $\{\tsigma_m\}$ for $\{\sigma_m\}$, that is, \[
    \scE^{1-\zeta}[\bbP_k](\sigma_m) \leq \scB^{2-\zeta}[\bbP_k](\sigma_m) \lesssim \eps^2 \brk{\sigma_m}^{-1}, \quad |k| \leq M-2.
\]
Interpolating this with \eqref{eqn:bbP_decay_2} via \eqref{eqn:interpolation_for_rp}, we obtain \[
    \|\bbP_k\|_{E(\Sigma_{\sigma_m})}^2 \lesssim \eps^2 \brk{\sigma_m}^{-2 + \zeta}, \quad |k| \leq M - 3.
\]
Inserting this to \eqref{ILED-P} enables the decay estimates for any arbitrary constant $\uptau$ slice : \begin{equation}\label{eqn:E_bbP_tau_decay}
    \|\bbP_k\|_{E(\Sigma_\tau)}^2 \lesssim \eps^2 \brk{\tau}^{-2 + \zeta}, \quad |k| \leq M - 3.
\end{equation}

For the purpose of proving decay for $\p_\tau \bbP_k$, we interpolate between \eqref{eqn:bbP_decay_1} and \eqref{eqn:bbP_decay_2} again with different ratio to obtain \[
    \calE^{\frac12 + \kappa}[\bbP_k](\sigma_m) \lesssim \eps^2 \brk{\sigma_m}^{-\frac32 + 2\kappa}, \quad |k| \leq M - 2
\]
with a different dyadic sequence $\{\sigma_m\}$.
Putting $p = \frac12 + \kappa$ in \eqref{eqn:rp_to_derive_tau_inverse_decay_bbP}, we obtain \begin{equation}\label{eqn:bbP_decay_3}
    \calE^{-\frac12 + \kappa}[\bbP_k](\sigma_m) \lesssim \eps^2 \brk{\sigma_m}^{-\frac52 + 2\kappa}, \quad |k| \leq M - 3,
\end{equation}
where $\{\sigma_m\}$ is again a new dyadic sequence. Note that we cannot pass to all times for this one since $p = -\frac12 + \kappa$ is negative and hence there is no good p-energy boundedness. We also note that we specify $\zeta = \kappa$ here.

Now we want to mimic the procedure above for $\p_\tau \bbP_k$ as well. We start with the following algebraic relation that if one singles out the first term $-2\p_\tau\p_r \tilde U_N$ in \eqref{eqn:tilde_U_N_calP}, then \begin{equation}\label{eqn:algebraic_relation_between_eng_of_bbP_and_p_tau_bbP}
    \sum_{|j| \leq |k|} \scE^p[\p_\tau \bbP_j](\tau) \lesssim \sum_{|j| \leq |k| + 1} \scE^{p-2}[\bbP_j](\tau) + \eps^2 \brk{\tau}^{-\frac92 + 2\kappa},
\end{equation}
for $p < 2$ and all $\tau$. Here the source term is estimated by \[
    \int_{\Sigma_\tau} \chi r^p |\tilde \calF_\bbP|^2\, dr\, d\theta \lesssim \eps^2 \brk{\uptau}^{-\frac92 + 2\kappa} \int_{\Sigma_\tau} \chi r^{p-3}\, dr\, d\theta \lesssim_{p < 2} \eps^2 \brk{\uptau}^{-\frac92 + 2\kappa}.
\]

Putting $p = \frac32 + \kappa$ and $\tau = \sigma_m$, this leads to \begin{equation}\label{eqn:decay_scE_2-zeta_p_tau_P}
    \sum_{j \leq k} \scE^{\frac32 + \kappa}[\p_\tau \bbP_j](\sigma_m) \lesssim \eps^2 \brk{\sigma_m}^{-\frac52 + 2\kappa}, \quad |k| \leq M - 4.
\end{equation}

From \eqref{BA-dotwp}, we know that $\|\ddot\wp\|_{L^1_\uptau} \lesssim \delta_\wp \eps \brk{\uptau}^{-\frac54 + \kappa}$. We remark that this is obtained from directly integrating the pointwise bound. One could in fact obtain a better bound with decay rate $-\frac{7}{4} + \kappa$ by estimating in dyadic $\tau$-region via \eqref{BA-ddotwp-L2}. However, this improved information is not necessary in the subsequent computation.
Next, by putting \begin{equation}\label{eqn:frac72_decay_rate_for_L1L2_p_tau_calFP}
\|r^{\frac{p}2}\p_\tau\calF_\bbP\|_{L^1L^2}^2 \lesssim \eps^2 \brk{\tau_1}^{-\frac52 + 2\kappa}
\end{equation}
with $p = \frac32 + \kappa$ in \eqref{eqn:main_higher_order_rp_est_p_tau_U} with $\p_\tau U$ replaced by $\p_\tau \bbP$, after a suitable ILED of $\p_\tau \bbP_k$ \eqref{ILED-dP} being added to : \begin{equation}\label{eqn:decay_scE_1-zeta_p_tau_P}
    \scE^{\frac12 + \kappa}[\p_\tau \bbP_k](\sigma_m) \lesssim \eps^2 \brk{\sigma_m}^{-\frac72 + 2\kappa}, \quad |k| \leq M - 5
\end{equation}
with a different dyadic sequence $\{\sigma_m\}$, denoted by the same notation.
Now, putting $p = \frac12 + \kappa$ in \eqref{eqn:main_higher_order_rp_est_p_tau_U}, with $\calF_\bbP$ put into $L^2L^2$-type norm $\|r^{\frac{p+1}2}\p_\tau\calF_\bbP\|_{L^2L^2}^2 \lesssim \brk{\tau}^{-\frac92 + 2\kappa}$ thanks to \eqref{BA-ddotwp-L2}. This would lead to \begin{equation}\label{eqn:decay_scE_-zeta_p_tau_P}
    \scE^{-\frac12 + \kappa}[\p_\tau \bbP_j](\sigma_m) \lesssim \eps^2 \brk{\sigma_m}^{-\frac92 + 2\kappa}, \quad |k| \leq M - 6,
\end{equation}
with a different dyadic sequence. We remark that the reason why we need to switch the norms we use is due to poor decay if we use $L^1L^2$ type of norms, as one can see in \eqref{eqn:frac72_decay_rate_for_L1L2_p_tau_calFP}, restricting how much we can achieve.
Interpolating between \eqref{eqn:decay_scE_1-zeta_p_tau_P} and \eqref{eqn:decay_scE_-zeta_p_tau_P} after synchronizing the sequence, we finally achieve that \[
    \|\p_\tau \bbP_k\|_{E(\Sigma_\tau)}^2 \lesssim \eps^2 \brk{\tau}^{-4 + \kappa}, \quad |k| \leq M - 7
\]
and \begin{equation}\label{eqn:p_tau_bbP_k_almost_eng_decay}
    \|\brk{\uprho}^{\frac{\kappa}{2}-1}\p_\tau \bbP_k\|_{L^2(\Sigma_\tau, dV)}^2 \lesssim \eps^2 \brk{\tau}^{-4 + 2\kappa}, \quad |k| \leq M - 6.
\end{equation}
However, from \eqref{eqn:decay_scE_-zeta_p_tau_P}, the same type of algebraic relation in \eqref{eqn:algebraic_relation_between_eng_of_bbP_and_p_tau_bbP}, we would know that \[
    \calE^{\frac32 + \kappa}[\p_\tau^2 \bbP_k](\sigma_m) \lesssim \eps^2 \brk{\sigma_m}^{-\frac92 + 2\kappa}, \quad |k| \leq M - 6.
\]
Note that we could not pass to all times at the level of $p = \frac32 + \kappa$, which is due to the fact that for $p = \frac32 + \kappa$, the source term cannot be put into $L^2L^2$-type norm. Instead, we could pass to all times by choosing $p < 1$ and applying \eqref{BA-ddotwp-L2} to $\p_\tau^2 \calF_\bbP$ : 
\begin{equation}\label{eqn:p_tau_2_P_eng_decay}
    \calE^{1-}[\p_\tau^2 \bbP_k](\tau) \lesssim \eps^2 \brk{\tau}^{-\frac92 + 2\kappa}, \quad |k| \leq M - 7.
\end{equation}
Though it might seems feasible to run the hierarchy once more for $\p_\tau^2 \bbP_k$, it would cost lost in decay due to the restrictive $p$-range between $0$ and $1$, which is in turn because of the inaccessibility of $L^1L^2$ norm. In particular, this leads to \begin{equation}\label{eqn:LE_norm_bound_for_dtau2_P}
    \|\p_\tau^2 \bbP_k\|^2_{LE(\calD_\tau^{\tau'})} \lesssim \eps^2 \brk{\tau}^{-\frac92 + 2\kappa}, \quad |k| \leq M - 7,  
\end{equation}
which establishes \eqref{IBA-p-LE}.

Finally, we are ready to prove pointwise decay. Writing \[
    \calP_\elliptic \bbP_k = - P_\uptau \bbP_k + s^{-1}\calF_{\bbP_k},
\]
we derive from the elliptic estimates Lemma~\ref{lemma_weight_ell_est_for_P_ell_2} and Lemma~\ref{lemma_Sobolev_embedding} that \[
    \|\bbP_k\|_{L^\infty} \lesssim \|\bbP_k\|_{H^{3, \delta}} \lesssim \|P_\uptau \bbP_k\|_{H^{1, \delta + 2}} + \|\calF_{\bbP_k}\|_{H^{1, \delta + 2}}
\]
for any $-2 < \delta < 0$ and any $k \in \bbN$.
Choosing $\delta + 2 = \frac{\kappa}2$ and applying \eqref{eqn:p_tau_bbP_k_almost_eng_decay}, 
\[
    \|P_\uptau \bbP_k\|_{H^{1, \delta + 2}} \lesssim \sum_{|j| \leq |k| + 1}\|\brk{\rho}^{-1}\p_\uptau\bbP_{j} \|_{H^{1, \delta + 2}} \lesssim \eps \brk{\tau}^{-2 + \kappa}, \quad |k| \leq M - 7.
\]
Combining the inequalities above, we proved
\eqref{IBA-p}. Then \eqref{IBA-p-ext1} and \eqref{IBA-p-ext2} both follow from trace inequality and \eqref{eqn:E_bbP_tau_decay} as in \cite{LOS22}.  
Similarly, applying weighted elliptic estimates to \eqref{eqn:p_tau_2_P_eng_decay} would lead to \begin{equation}\label{eqn:ptwise_decay_of_p_tau_p}
    |\p_\uptau p_k| \lesssim \|\bbP_k\|_{H^{3, \frac{\kappa}2 - 2}} \lesssim \eps \brk{\uptau}^{-\frac94 + \kappa}, \quad |k| \leq M - 8.
\end{equation}
This proves \eqref{IBA-dtau-p}.

\subsection{General \texorpdfstring{$r^p$}{rp}-weighted estimates for \texorpdfstring{$Y = r^{\frac32}\p_r \tilde U$}{Y} in dimension \texorpdfstring{$n = 4$}{n=4}}\label{section_Y_vf_K}

\subsubsection{Equations used to derive \texorpdfstring{$r^p$}{rp}-weighted estimates for \texorpdfstring{$ Y_N = r^{\frac32}\p_r\tilde U_N$}{Y\_N}}

We first provide some commutator computations regarding $K := r^{\frac32}\p_r$.

\begin{lemma}\label{lemma_commutator}
    Here are some basic commutator identities : \[
        \begin{aligned}
        \,[K, r^{-2}] = -2 r^{-3/2},\quad
            [K, \p_r] = -\frac32 r^{-1}K, \quad  
        [K, -2\p_r\p_\tau] = 3r^{-1}\p_\tau K, \\
            \p_r K = \frac32 r^{-1}K + r^{\frac32}\p_r^2, \quad 
            [K, \p_r^2] = -3r^{-1}\p_r K + \frac{15}{4}r^{-2}K. \quad
        \end{aligned}
    \]
\end{lemma}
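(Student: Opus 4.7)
The plan is to verify each identity by direct computation, using only the product rule and the fact that $\partial_\tau$ commutes with $r^{3/2}\partial_r$. Since each identity is purely algebraic (no integration, no analytic subtlety), there is no real obstacle; the computations simply need to be organized so that later identities are derived from earlier ones.

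First I would record the basic principle that for any scalar function $f(r)$, we have $[K,f]=K(f)=r^{3/2}f'(r)$ as a multiplication operator. Applying this to $f=r^{-2}$ gives $[K,r^{-2}]=r^{3/2}\cdot(-2r^{-3})=-2r^{-3/2}$, settling the first identity. Next I would compute the Leibniz-type identity
\[
\partial_r K=\partial_r(r^{3/2}\partial_r)=\tfrac{3}{2}r^{1/2}\partial_r+r^{3/2}\partial_r^2=\tfrac{3}{2}r^{-1}K+r^{3/2}\partial_r^2,
\]
which both proves the fourth identity and yields the second via
\[
[K,\partial_r]=K\partial_r-\partial_r K=r^{3/2}\partial_r^2-\bigl(\tfrac{3}{2}r^{-1}K+r^{3/2}\partial_r^2\bigr)=-\tfrac{3}{2}r^{-1}K.
\]

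Next I would derive the third identity by using that $\partial_\tau$ commutes with $r$ and $\partial_r$, so $[K,\partial_\tau]=0$ and
\[
[K,-2\partial_r\partial_\tau]=-2\partial_\tau[K,\partial_r]=-2\partial_\tau\bigl(-\tfrac{3}{2}r^{-1}K\bigr)=3r^{-1}\partial_\tau K.
\]
Finally, for the fifth identity I would use $[K,\partial_r^2]=[K,\partial_r]\partial_r+\partial_r[K,\partial_r]$. Substituting $[K,\partial_r]=-\tfrac{3}{2}r^{-1}K$ and using the already-established relation $K\partial_r=[K,\partial_r]+\partial_r K=-\tfrac{3}{2}r^{-1}K+\partial_r K$, one computes
\[
-\tfrac{3}{2}r^{-1}K\partial_r=-\tfrac{3}{2}r^{-1}\bigl(-\tfrac{3}{2}r^{-1}K+\partial_r K\bigr)=\tfrac{9}{4}r^{-2}K-\tfrac{3}{2}r^{-1}\partial_r K,
\]
and
\[
\partial_r\bigl(-\tfrac{3}{2}r^{-1}K\bigr)=\tfrac{3}{2}r^{-2}K-\tfrac{3}{2}r^{-1}\partial_r K.
\]
Adding these two expressions yields $[K,\partial_r^2]=\tfrac{15}{4}r^{-2}K-3r^{-1}\partial_r K$, completing the lemma. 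The entire argument is mechanical; there is no genuinely hard step, only the mild bookkeeping of combining the coefficients $\tfrac{9}{4}+\tfrac{3}{2}=\tfrac{15}{4}$ correctly at the end.
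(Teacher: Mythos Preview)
Your proof is correct and follows essentially the same approach as the paper: direct computation using the Leibniz rule for commutators, deriving $[K,\partial_r]$ first and building the remaining identities from it. The paper's proof is more terse (it omits the trivial $[K,r^{-2}]$ computation and writes the fourth identity as $\partial_r K = \tfrac{3}{2}r^{-1}K + K\partial_r$, equivalent to your $r^{3/2}\partial_r^2$ form), but the logical structure is identical.
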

\begin{proof}
    We compute \[
        [K, \p_r] = [r^{3/2}\p_r, \p_r] = -\frac32 r^{1/2}\p_r = -\frac32 r^{-1} K, 
    \]
    and hence \[\begin{aligned}
        &[K, -2\p_r\p_\tau] = [K, -2\p_r]\p_\tau = 3r^{-1}\p_\tau K, \quad \p_r K = \frac32 r^{-1} K + K\p_r, 
        \\
        &[K, \p_r^2] = [K, \p_r]\p_r + \p_r[K, \p_r] = -\frac32 r^{-1} K\p_r - \frac32 \p_r(r^{-1} K)
        = -3r^{-1}\p_r K + \frac{15}{4} r^{-2}K.
    \end{aligned}
    \]
\end{proof}
\begin{remark}\label{remark_exact_cancel}
    In the computation above, we should keep in mind that all the commutators can be expressed in terms of some functions/vector fields applied to $K$ except the commutator $[K, r^{-2}]$. However, one can rewrite this as \begin{equation}\label{eqn:crucial_observation_cancel}
        [K, V] = -2r^{\frac12} V, \quad \text{ where } V = A(\theta, \tau)r^{-2}\p_\theta^\alpha\p_\tau^\beta \text{ or } A(\theta, \tau)r^{-1}\p_r \p_\theta^\alpha \p_\tau^\beta, 
    \end{equation}
    where $A(\theta, \tau)$ denotes a function independent of $r$. 
    \begin{enumerate}[ wide = 0pt, align = left, label=\arabic*. ]
        \item This leads to the choice of $2r^{\frac12}$ in the following lemma and the absence of inverse square potentials in the new operator $Q_1$ defined in \eqref{eqn:Q_1_defn}. This significant exact cancellation would be the key to improving the decay rate later.
        \item More generally, for any differential operator of the form $V = \calO(r^{-2})\p_\theta^\alpha\p_\tau^\beta$,  \begin{equation}\label{eqn:K_comm_general_comp}
        [K, V] = -2r^{\frac12}V + \calO(r^{-\frac52}) \p_\theta^\alpha\p_\tau^\beta.
    \end{equation}
    To show a sketch of the proof, we first make a simple expansion 
        $\calO(r^{-2}) = \calO_{\theta, \tau}(1) r^{-2} + \calO(r^{-3})$ in $r$
    and apply \eqref{eqn:crucial_observation_cancel}. It turns out to be crucial that we could single out the main term $-2 r^{\frac12} V$ of the commutator in $r$ and leave the error terms one order lower in $r$. In particular, for $V' = r^{-1}\p_r \p_\theta^\alpha \p_\tau^\beta$, $[K, V'] = -2r^{\frac12}V' + \calO(r^{-\frac12})\p_r \p_\theta^\alpha \p_\tau^\beta$, where we do not see this improvement.
    
    See the derivation of \eqref{eqn:calG_2N_defn} and the discussion in Remark~\ref{rmk_calG_2_treatment_precise_decay_crucial}.
    \end{enumerate}
\end{remark}

\begin{lemma}
    In $(\tau,r,\theta)$ coordinates, we define the operator $Q_1$ as \begin{equation}\label{eqn:Q_1_defn}
        Q_1 := Q_0 - r^{-1}\p_\tau - r^{-1}\p_r + \frac34 r^{-2} = - 2\p_r\p_\tau + \p_r^2 - r^{-1}\p_\tau - r^{-1}\p_r + r^{-2} \sDelta_{\bbS^{n-1}}.
    \end{equation}
    Then $Q_1$ satisfies the commutator identity \begin{equation}\label{eqn:Q_1_K}
        (K + 2r^{\frac12})Q_0 = Q_1 K
    \end{equation}
    where $Q_0$ is defined in \eqref{eqn:Q_0_defn}.
\end{lemma}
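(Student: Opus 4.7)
The identity is purely algebraic, so the strategy is direct verification: compute $[K,Q_0]$ termwise, add $2r^{1/2}Q_0$, and check that the result equals $(Q_1-Q_0)K$, that is,
\[
[K,Q_0] + 2r^{1/2}Q_0 = (Q_1-Q_0)K = -r^{-1}\partial_\tau K - r^{-1}\partial_r K + \tfrac34 r^{-2}K.
\]
Decomposing $Q_0 = -\tfrac34 r^{-2} - 2\partial_r\partial_\tau + \partial_r^2 + r^{-2}\slashed{\Delta}_{\mathbb{S}^{n-1}}$, I will apply Lemma~\ref{lemma_commutator} to each piece. The commutators $[K,-2\partial_r\partial_\tau]=3r^{-1}\partial_\tau K$ and $[K,\partial_r^2]=-3r^{-1}\partial_r K+\tfrac{15}{4}r^{-2}K$ are immediate, and the scalar commutators give $[K,-\tfrac34 r^{-2}]=\tfrac32 r^{-3/2}$ and $[K,r^{-2}\slashed{\Delta}]=-2r^{-3/2}\slashed{\Delta}$ (since $K$ commutes with $\slashed{\Delta}_{\mathbb{S}^{n-1}}$).

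Next I will expand $2r^{1/2}Q_0 = -\tfrac32 r^{-3/2} - 4r^{1/2}\partial_r\partial_\tau + 2r^{1/2}\partial_r^2 + 2r^{-3/2}\slashed{\Delta}$. Summing with $[K,Q_0]$, the crucial cancellations occur precisely in the two zeroth-order-in-$\partial$ terms: the $\pm\tfrac32 r^{-3/2}$ pair and the $\pm 2r^{-3/2}\slashed{\Delta}$ pair kill each other. This is the manifestation of the exact cancellation highlighted in Remark~\ref{remark_exact_cancel}, and it is the only reason the coefficient $2r^{1/2}$ is chosen.

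The remaining work is to rewrite the surviving first- and second-order derivative terms in terms of $K$. Using $K = r^{3/2}\partial_r$ one has $r^{1/2}\partial_\tau\partial_r = r^{-1}\partial_\tau K$, and from $\partial_r K = \tfrac32 r^{-1}K + r^{3/2}\partial_r^2$ one deduces $2r^{1/2}\partial_r^2 = 2r^{-1}\partial_r K - 3r^{-1/2}\partial_r = 2r^{-1}\partial_r K - 3r^{-2}K$. Substituting these gives
\[
[K,Q_0]+2r^{1/2}Q_0 = (3-4)r^{-1}\partial_\tau K + (-3+2)r^{-1}\partial_r K + (\tfrac{15}{4}-3)r^{-2}K,
\]
which matches $(Q_1-Q_0)K$ on the nose and completes the verification.

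There is no real obstacle here; the entire content of the lemma is the discovery that the first-order correction $-r^{-1}\partial_\tau - r^{-1}\partial_r + \tfrac34 r^{-2}$ in the definition of $Q_1$ is exactly what is produced by this calculation, together with the disappearance of the inverse-square potential. The only point requiring care is bookkeeping: keeping track of which terms act on which variable, and ensuring the cancellations involving $r^{-3/2}$ (scalar and spherical) are checked separately.
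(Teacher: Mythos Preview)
Your proof is correct and follows essentially the same approach as the paper: both compute $[K,Q_0]$ termwise using Lemma~\ref{lemma_commutator}, exploit the cancellation of the $r^{-3/2}$ and $r^{-3/2}\sDelta$ terms against $2r^{1/2}Q_0$, and rewrite the remaining derivative terms via $r^{1/2}\p_r = r^{-1}K$ and $\p_r K = \tfrac32 r^{-1}K + r^{3/2}\p_r^2$. The only cosmetic difference is that the paper packages the cancellation by writing $\tfrac32 r^{-3/2} - 2r^{-3/2}\sDelta = -2r^{1/2}(Q_0 + 2\p_r\p_\tau - \p_r^2)$ before expanding, whereas you expand $2r^{1/2}Q_0$ first and cancel; the arithmetic is identical.
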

\begin{proof}
    Thanks to Lemma~\ref{lemma_commutator}, we compute \[
    \begin{aligned}
    \left[K, Q_0\right] &= 3r^{-1}\p_\tau K - 3r^{-1}\p_r K + \frac{15}{4}r^{-2}K + \frac32 r^{-3/2} -2r^{-3/2}\sDelta_{\bbS^{n-1}} \\ 
    &= 3r^{-1}\p_\tau K - 3r^{-1}\p_r K + \frac{15}{4}r^{-2}K -2r^{\frac12}(Q_0 + 2\p_r\p_\tau - \p_r^2) \\
        &= 3r^{-1}\p_\tau K - 3r^{-1}\p_r K + \frac{15}{4}r^{-2}K -2r^{\frac12}Q_0 - 4r^{-1}\p_\tau K + 2r^{-1}(\p_r K - \frac32 r^{-1}K)\\
        &= -2r^{\frac12} Q_0 + ( - r^{-1} \p_\tau - r^{-1}\p_r + \frac34 r^{-2})K,
    \end{aligned}
    \]
    which gives the desired identity.
\end{proof}

We now have two routes when commuting $K$ : \begin{enumerate}[ wide = 0pt, align = left, label=\arabic*. ]
    \item deriving the equation satisfied by $K\tilde U$ first and then for higher order derivatives $(K\tilde U)_N$ following induction;
    \item taking advantage of the equation for $\tilde U_N$ in \eqref{eqn:tilde_U_N_calP} and commute it with $K$.
\end{enumerate}
These two will be almost equivalent for our purpose in view of the Hardy's inequality. Moreover, the analysis for error terms are identical. Though the first option might be more pedagogical, we choose the second one for simplicity. 

\begin{lemma}\label{lemma_computation_for_R_c}
    Suppose $\calP_\graph U = \calF$. We denote the operator applied to $\tilde U_k$ in the last three lines by $R_{3, k}$.
    Then for $\tilde U := r^{\frac32} U$, any given triple $N$, the function $Y_N := K\tilde U_N$ (see \eqref{eqn:notation_tilde_U_N_triple}) satisfies \begin{equation}\label{eqn:eqn_for_rp_for_Y_N}
    \begin{aligned}
        &\quad (Q_1 + \frac{(n_3 + 1)n_3}{2r^2} - n_3 r^{-1}\p_r) Y_N - \frac{n_3}{r^2}\sDelta_S Y_{n_1, n_2, n_3 - 1} \\ 
        &+ n_3 \sum_{|k|=0}^{|N|-1} O_\infty(r^{-2}) (Y_k + \calO(r^{-\frac12}) \tilde U_k) + n_3(n_3-1) \sum_{|k|=0}^{|N|-2}O_\infty(r^{-2}) \sDelta_S (Y_k + \calO(r^{-\frac12}) \tilde U_k) \\
        &+ \sum_{|k| = 0}^{|N|} \left(R_{3,k} + \calO(r^{-2})\p_\tau + \calO(\wp + r^{-1}) r^{-1}\p_r\right) Y_k
        = \calG_{1, N} + \calG_{2, N},
    \end{aligned}
    \end{equation}
    where $\calG_{1, N} := (K + 2r^{\frac12})\tilde\calF_N$ and \begin{equation}\label{eqn:calG_2N_defn}
        \begin{aligned}
        \calG_{2, N} &:= \sum_{|k| = 0}^{|N|} \left( \calO(\wp r^{-\frac52} + r^{-\frac72}) \tilde U_k
        + \calO(r^{-\frac52}) \p_a\p_b \tilde U_k \right.\\
        & \qquad\qquad + \calO(r^{-\frac52}) \p_\tau^2 \tilde U_k +
         \calO(\wp r^{-\frac12} + r^{-\frac52})\p_r\p_\theta \tilde U_k 
         + \calO(r^{-\frac52})\p_\tau\p_\theta \tilde U_k \\
        & \qquad\qquad\left. + \calO(\wp r^{-\frac12} + r^{-\frac32}) \p_r \tilde U_k
        + \calO(r^{-\frac52}) \p_a \tilde U_k + \calO(r^{-\frac52}) \p_\tau \tilde U_k\right).
    \end{aligned}
    \end{equation}
    In particular, when $N = 0$, we record the equation and denote the left hand side by $R_c Y$ with $Y = Y_N$ : \begin{equation}\label{eqn:operator_Rc_defn}
        R_c = - 2\p_r\p_\tau + \p_r^2 - r^{-1}\p_\tau - r^{-1}\p_r + r^{-2} \sDelta_{\bbS^{n-1}} + \calO(r^{-2})\p_\tau + \calO(\wp r^{-1} + r^{-2})\p_r + R_{3,0},
    \end{equation}
    where $R_{3,0}$ is given by \[\begin{aligned}
        R_{3, 0} &:= \calO(\wp r^{-2} + r^{-4}) + \calO(\wp + r^{-2})\big(\p_r\p_\tau + \p_r^2\big)
    + \calO(\wp r^{-2} + r^{-3}) \p_a\p_b  \\
    &\ \quad + \tilde m_1^{\tau\tau} \p_\tau^2  +
     \calO(\wp r^{-1} + r^{-3})\p_r\p_\theta 
     + \calO(r^{-3})\p_\tau\p_\theta \\
    &\ \quad + \calO(\wp r^{-1} + r^{-2}) \p_r + \calO(\wp r^{-2} + r^{-3}) \p_a + \calO(\dot\wp r^{-2} + r^{-3}) \p_\tau.
    \end{aligned}
    \]
    One can see \eqref{eqn:defn_m1_tt} for the formula of $\tilde m_1^{\tau\tau}$.
\end{lemma}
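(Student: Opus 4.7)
The plan is to start from the equation \eqref{eqn:tilde_U_N_calP} that $\tilde U_N$ already satisfies, and then apply the commutator vector field $K = r^{\tfrac32}\partial_r$ to both sides. The key algebraic input is the identity \eqref{eqn:Q_1_K}, namely $(K+2r^{\tfrac12})Q_0 = Q_1 K$, which tells us that $KQ_0 \tilde U_N = Q_1 Y_N - 2r^{\tfrac12} Q_0 \tilde U_N$. Substituting $Q_0\tilde U_N$ from \eqref{eqn:tilde_U_N_calP} into the right-hand side produces the principal operator $Q_1$ acting on $Y_N$, the forcing term $\calG_{1,N} = (K+2r^{\tfrac12})\tilde\calF_N$, and a collection of commutator terms that must be sorted into the stated shape.

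The second step is to commute $K$ through each of the remaining coefficients on the left of \eqref{eqn:tilde_U_N_calP}. For the $n_3$-terms, a direct application of Lemma on commutators gives $[K,\tfrac{(n_3+1)n_3}{2r^2}] = -(n_3+1)n_3 r^{-\tfrac12}$, $[K,-n_3r^{-1}\partial_r] = \tfrac{3n_3}{2}r^{-1}\partial_r\cdot r^{-\tfrac12}(\cdots)$ (more precisely, $-n_3r^{-1}\partial_r Y_N$ plus a harmless $r^{-\tfrac12}$-weighted lower order term absorbed into the $R_{3,k}$ bucket), and $[K,-\tfrac{n_3}{r^2}\sDelta_S]\tilde U_{n_1,n_2,n_3-1}$ produces a $-\tfrac{n_3}{r^2}\sDelta_S Y_{n_1,n_2,n_3-1}$ piece and an $\calO(r^{-\tfrac12})\sDelta_S\tilde U_{n_1,n_2,n_3-1}$ piece, which lands in the $n_3(n_3-1)$-sum with the stated $\calO(r^{-\tfrac12})\tilde U_k$ correction. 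The same mechanism handles the $O_\infty(r^{-2})$ lower-order terms in \eqref{eqn:tilde_U_N_calP}: by \eqref{eqn:K_comm_general_comp} their commutator with $K$ contributes $-2r^{\tfrac12}$ times themselves (which reassembles, through $-2r^{\tfrac12} Q_0 \tilde U_N$, into the lower order terms $R_{3,k}Y_k$ after replacement) plus an $\calO(r^{-\tfrac52})$ tail that combines with the $\calO(r^{-\tfrac12})\tilde U_k$ remainders.

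The third step is to process the full error operator $\Err Q_0$ appearing in \eqref{eqn:eqn_for_rp_for_U}. For each term of the form $A(\tau,\theta,r)\,\partial_\theta^\alpha\partial_\tau^\beta$ with $A=\calO(r^{-k})$, the commutator $[K,A\partial_\theta^\alpha\partial_\tau^\beta]$ equals $-2r^{\tfrac12}\cdot A\partial_\theta^\alpha\partial_\tau^\beta + \calO(r^{-k+\tfrac32-2})\partial_\theta^\alpha\partial_\tau^\beta$ by the key observation in Remark~\ref{remark_exact_cancel}; the leading piece cancels precisely against the corresponding piece of $-2r^{\tfrac12}Q_0\tilde U_N$ after the substitution from \eqref{eqn:tilde_U_N_calP}, leaving only the $\calO(r^{-\tfrac12})$-improved residues, which collectively form $\calG_{2,N}$. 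The terms of the mixed form $A(\tau,\theta,r) r^{-1}\partial_r \partial_\theta^\alpha\partial_\tau^\beta$ require a tiny variant (as noted in the second bullet of Remark~\ref{remark_exact_cancel}): the commutator does not gain a full order, and this is why $\calG_{2,N}$ contains the $\calO(\wp r^{-\tfrac12}+r^{-\tfrac32})\partial_r \tilde U_k$ and $\calO(\wp r^{-\tfrac12}+r^{-\tfrac52})\partial_r\partial_\theta\tilde U_k$ entries with a slightly weaker $r$-decay than the pure $\partial_\theta,\partial_\tau$ terms. For the special coefficient $\tilde m_1^{\tau\tau}$ in front of $\partial_\tau^2$, the $\calO(r^{-2})$ structure is preserved under $[K,\cdot]$ in exactly the same way, producing the $\calO(r^{-\tfrac52})\partial_\tau^2\tilde U_k$ entry in $\calG_{2,N}$; the surviving $\tilde m_1^{\tau\tau}\partial_\tau^2 Y_k$ piece is gathered into the $R_{3,k}Y_k$ bucket. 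The $N=0$ case is just the readout of the above with $n_3 = 0$ removing the first two sums.

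The main obstacle is purely bookkeeping: one must verify that for every term in $\Err Q_0$ the commutator with $K$ splits cleanly into a $-2r^{\tfrac12}\cdot(\text{same term})$ piece (which is then annihilated by the $-2r^{\tfrac12}Q_0\tilde U_N$ generated by the key identity) plus a genuine $r^{-\tfrac12}$-improved remainder landing in $\calG_{2,N}$. The only borderline places are the $r^{-1}\partial_r$ and $r^{-1}\partial_r\partial_\theta$ coefficients, where one gains only half an order instead of a full order of $r$-decay; it is precisely these coefficients that determine the weakest entries of $\calG_{2,N}$, and it is essential to verify that no $r^{\tfrac12}$-growing remainder survives after the cancellation, which is ensured by writing each $\calO(r^{-k})$ coefficient as $\calO_{\theta,\tau}(1)r^{-k}+\calO(r^{-k-1})$ and applying \eqref{eqn:K_comm_general_comp} separately to each piece.
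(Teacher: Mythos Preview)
Your proposal is correct and follows essentially the same route as the paper. The only difference is packaging: the paper applies the operator $(K+2r^{\frac12})$ directly to both sides of \eqref{eqn:tilde_U_N_calP}, whereas you apply $K$ alone and then substitute $Q_0\tilde U_N$ back via the equation; since $(K+2r^{\frac12})Q_0=Q_1K$ and since the substitution adds precisely $2r^{\frac12}$ times the equation, the two procedures are algebraically identical. The paper's version is slightly cleaner because it bypasses the substitution step: for every term $V\tilde U_k$ on the left of \eqref{eqn:tilde_U_N_calP} one simply records $(K+2r^{\frac12})V\tilde U_k = VY_k + ([K,V]+2r^{\frac12}V)\tilde U_k$, and the residue $[K,V]+2r^{\frac12}V$ is read off directly. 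Two small corrections to your write-up: (i) $[K,\tfrac{(n_3+1)n_3}{2r^2}]=-(n_3+1)n_3\,r^{-3/2}$, not $r^{-1/2}$, and this residue cancels \emph{exactly} against $2r^{\frac12}\cdot\tfrac{(n_3+1)n_3}{2r^2}$ (so there is no leftover going into any bucket); the same exact cancellation holds for $-\tfrac{n_3}{r^2}\sDelta_S$, so no $\calO(r^{-\frac12})\sDelta_S\tilde U_{n_1,n_2,n_3-1}$ term is produced. (ii) Your statement that ``the leading piece cancels precisely'' is only literally true when the coefficient has leading order exactly $r^{-2}$; for coefficients of order $r^{-3}$ or lower the residue $[K,A]+2r^{\frac12}A$ is already $\calO(r^{-\frac52})$ without any cancellation, which is equally acceptable for $\calG_{2,N}$.
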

\begin{proof}
    We start with \eqref{eqn:tilde_U_N_calP}. 
    Rewriting the first line as \[
        R_{1,N}\tilde U_N + R_{1,N-1} \tilde U_{N-1} = \left(Q_0 + \frac{(n_3 + 1)n_3}{2r^2} - n_3r^{-1}\p_r\right)\tilde U_N - \frac{n_3}{r^2}\sDelta_S \tilde U_{N-1}.
    \]
    It then follows from \eqref{eqn:Q_1_K} and \eqref{eqn:crucial_observation_cancel},
    that \[
        [K, R_{1,N}] = -2r^{\frac12}R_{1,N} + (-r^{-1}\p_\tau - r^{-1}\p_r + \frac34 r^{-2})K, \qquad\quad [K, R_{1, N-1}] = -2r^{\frac12} R_{1, N-1}.
    \]
    Then we know that \[
        (K + 2r^{\frac12}) (R_{1,N}\tilde U_N + R_{1, N-1} \tilde U_{N-1}) = (Q_1 + \frac{(n_3 + 1)n_3}{2r^2} - n_3r^{-1}\p_r) K\tilde U_N + R_{1, N-1} K \tilde U_{N-1}.
    \]
    For the second line of \eqref{eqn:tilde_U_N_calP}, we use the observation \eqref{eqn:K_comm_general_comp} so that \[\begin{aligned}
        &(K + 2r^{\frac12}) \left(n_3 \sum_{|k|=0}^{|N|-1} O_\infty(r^{-2}) \tilde U_k + n_3(n_3-1) \sum_{|k|=0}^{|N|-2}O_\infty(r^{-2}) \sDelta_S \tilde U_k\right) \\
        =&\ n_3 \sum_{|k|=0}^{|N|-1} O_\infty(r^{-2}) (K\tilde U_k + \calO(r^{-\frac12}) \tilde U_k) + n_3(n_3-1) \sum_{|k|=0}^{|N|-2}O_\infty(r^{-2}) \sDelta_S (K\tilde U_k + \calO(r^{-\frac12}) \tilde U_k).
    \end{aligned}
    \]

    The main error parts occupy the last three lines of \eqref{eqn:tilde_U_N_calP}. Among these, the first line consists of the small perturbation of the main contribution in $Q_0$ and hence they are still small perturbations of main contribution in $Q_1$ except for the zeroth order term due to the absence of inverse square potential in $Q_1$. We rewrite the last three lines as $\sum_{|k| = 0}^{|N|} R_{3,k} \tilde U_k$ by denoting each operator by $R_{3, k}$.
    Bearing this heuristic in mind and keeping track of \eqref{eqn:K_comm_general_comp}, it follows that \begin{equation}\label{eqn:improved_decay_for_error_terms_last_three_lines_in_tilde_U_N_calP}
        \begin{aligned}
        (K + 2 r^{\frac12}) R_{3,k} \tilde U_k
        &= \left(R_{3,k} + \calO(r^{-2})\p_\tau + \calO(\wp + r^{-1})r^{-1}\p_r\right) K\tilde U_k \\
        & \quad + 
        \calO(\wp r^{-\frac52} + r^{-\frac72}) \tilde U_k
        + \calO(r^{-\frac52}) \p_a\p_b \tilde U_k \\
        & \quad + \calO(r^{-\frac52}) \p_\tau^2 \tilde U_k +
         \calO(\wp r^{-\frac12} + r^{-\frac52})\p_r\p_\theta \tilde U_k 
         + \calO(r^{-\frac52})\p_\tau\p_\theta \tilde U_k \\
        & \quad + \calO(\wp r^{-\frac12} + r^{-\frac32}) \p_r \tilde U_k
        + \calO(r^{-\frac52}) \p_a \tilde U_k + \calO(r^{-\frac52}) \p_\tau \tilde U_k.
    \end{aligned}
    \end{equation}
    Note that \begin{equation}\label{eqn:terms_w_worst_decay_in_calG_2}
        \calO(\wp r^{-\frac12} + r^{-\frac52})\p_r\p_\theta \tilde U_k, \quad \calO(\wp r^{-\frac12} + r^{-\frac32}) \p_r \tilde U_k
    \end{equation}
    are now the terms with worse decay due to the comment after \eqref{eqn:K_comm_general_comp}.
    All the other terms in the last three lines \eqref{eqn:improved_decay_for_error_terms_last_three_lines_in_tilde_U_N_calP} picks up an additional order of decay due to \eqref{eqn:K_comm_general_comp}.
\end{proof}

\subsubsection{$r^p$-weighted estimates for $Y$}
In this part, we develop the Dafermos–Rodnianski $r^p$-hierarchy for $Y = K \tilde U$, where $U$ solves the equation $\calP_\graph U = \calF$. Then one knows from Lemma~\ref{lemma_computation_for_R_c} the equation solved by $Y$ and we will derive the $r^p$-weighted estimates for $Y$. Higher order estimates for $Y_N$ can be obtained without difficulty by dealing with the extra terms following exactly the same method in the proof of \eqref{eqn:main_higher_order_rp_est}. 
Let $\chi = \chi_R = \chi_{\geq R}$ be a smooth cut-off function with support in the hyperboloidal region and it also satisfies $\chi \equiv 1$ when $r \gtrsim R$, $\supp\chi \subset \{r \gtrsim R\}$.

Before proceeding, we modify the notations introduced in \eqref{eqn:shorthand_notation_Ep_Bp} : \[
    \tilde\calE^p[Y](\tau) := \int_{\Sigma_\tau} \chi_R r^p \big((\p_r Y)^2 + r^{-2} Y^2 + r^{-4} |\snabla Y|^2\big)\, dr\, d\theta.
\]

\begin{proposition}\label{prop_rp_est_for_Y_w_G12}
    Let $U$ be a solution to $\calP_\graph U = \calF$ and $Y = K\tilde U$. We assume that the bootstrap assumption \eqref{BA-dotwp} and the smallness assumption of $\calO(\wp)$ are satisfied. With the same assumptions on the smallness as in Proposition~\ref{prop_rp_for_U}, for all $\delta < p < \frac32 - 2\alpha$,
\begin{equation}\label{eqn:main_rp_est_for_Y_w_G12}
        \begin{aligned}
            \sup_{\tau_1 \leq \tau \leq \tau_2} \tilde\calE^p[ Y](\tau) &+ \calB^p[ Y](\tau_1, \tau_2) 
            \lesssim \ \tilde\calE^p[ Y](\tau_1) + \sum_{0 \leq |k| \leq 1} 
            \left(\calB^{p}[\tilde U_k](\tau_1, \tau_2) + \sup_{\tau_1 \leq \tau \leq \tau_2}  \calE^{p-1}[\tilde U_k](\tau)\right) \\ 
            &\quad + \int_{\calD_{\tau_1}^{\tau_2}} |\Err_\chi[ Y]| + \chi r^p \left|(\calG_1 + \calG_2) \left(\p_r Y + r^{-1} Y + r^{-3} \p_\theta Y + r^{-2}\p_\tau Y\right)\right|\, d\tau\, dr\,d\theta.
        \end{aligned}
    \end{equation}
    where $\delta$ is a small constant with $|\delta| \lesssim \eps + R^{-\alpha}$  and $\Err_\chi[ Y]$ denotes all terms multiplied by $\chi'$ and squares of derivatives of $ Y$ up to order $1$. Here, $\calG_1 := \calG_{1,0}$ and $\calG_2 := \calG_{2,0}$ are given in Lemma~\ref{lemma_computation_for_R_c}.
\end{proposition}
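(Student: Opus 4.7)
The plan is to mirror the heuristic computation in the introduction, but now working with the equation $R_c Y = \calG_{1,0} + \calG_{2,0} - (\text{lower-order terms})$ derived in Lemma~\ref{lemma_computation_for_R_c}. The starting point is to exploit the crucial algebraic fact, encoded in identity~\eqref{eqn:Q_1_K}, that conjugation by $K = r^{3/2}\p_r$ removes the inverse-square potential and replaces it by first-order terms $-r^{-1}\p_\tau - r^{-1}\p_r$ together with a now-harmless $+\tfrac{3}{4}r^{-2}$. This is what allows the $r^p$-energy for $Y$ to remain coercive on the extended range $0 < p < 3/2$.

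The proof splits into three steps. First, I would multiply the equation for $Y$ by $\chi r^p \p_r Y$ and integrate by parts in $\calD_{\tau_1}^{\tau_2}$, exactly as in the derivation of \eqref{eqn:main_rp_est}, to obtain a preliminary identity controlling $\tilde\calE^p_{\p_r}[Y](\tau) := \int_{\Sigma_\tau}\chi r^p (\p_r Y)^2$ and the bulk terms $\chi r^{p-1}(\p_r Y)^2$ and $\chi r^{p-3}|\snabla Y|^2$, at the cost of a single unsigned contribution $-\int\chi r^{p-1}(\p_u - \tfrac12\p_r)Y\,\p_r Y$ coming from the new first-order terms in $Q_1$. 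Second, following the second displayed computation in the introduction, I would integrate the latter contribution by parts once more to produce the additional coercive term $\tfrac{3-2p}{4}r^{p-2}Y^2$ on the $\Sigma_\tau$-slices and the additional bulk terms $\tfrac{p+3}{2}r^{p-1}(\p_r Y)^2$, $\tfrac{3-p}{2}r^{p-3}|\snabla Y|^2$, $\tfrac{p(2-p)}{4}r^{p-3}Y^2$. This gives the two $r$-weighted quantities $\tilde\calE^p[Y]$ and $\calB^p[Y]$ in the desired form, coercive on $0 \le p \le 3/2$ (and, upon invoking the sharp Hardy inequality~\eqref{eqn:Hardy} as in Step~3 of the proof of Theorem~\ref{thm_ell_reg}, essentially coercive on $p \notin \{1,2\}$). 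Third, I would turn the remaining source and error terms into the two pieces on the right-hand side of~\eqref{eqn:main_rp_est_for_Y_w_G12}: pairing $\calG_{1,0} + \calG_{2,0}$ with $\p_r Y + r^{-1}Y + r^{-3}\p_\theta Y + r^{-2}\p_\tau Y$ (the combination arising from the chosen multiplier and from integration by parts of the $\p_\tau$-flux, exactly as in the treatment of $\calR_3$ and $\calR_5$ in Proposition~\ref{prop_rp_for_U}), while all other errors coming from $R_{3,0}$, the $\calO(r^{-2})\p_\tau$ and $\calO(\wp r^{-1}+r^{-2})\p_r$ contributions, are absorbed either on the left using the smallness of $\wp$ and $R^{-1}$ (gaining a factor $\delta$) or into $\Err_\chi[Y]$ after integration by parts in $r$ or $\theta$.

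The main obstacle, as flagged in Remark~\ref{rmk_calR_12_not_error_term}, is the term $\tilde m_1^{\tau\tau}\p_\tau^2 \tilde U$ from the original equation for $\tilde U$. After commutation with $K$ this produces a contribution of the form $\tilde m_1^{\tau\tau}\p_\tau^2 Y + r^{-5/2}\calO(1)\p_\tau^2 \tilde U$ inside $R_c Y$ and inside $\calG_{2,0}$, and neither factor carries any $\wp$-smallness. For the $\p_\tau^2 Y$ piece, the trick is to use that $\tilde m_1^{\tau\tau} = \calO(r^{-2})$ so that the multiplier $\chi r^p \p_r Y$ converts it, after one integration by parts in $\tau$, into a flux bounded by $\sup_\tau \tilde\calE^p[Y](\tau)$ with an $R^{-\alpha}$ small prefactor (as $p<3/2<2-2\alpha$), plus a bulk $\chi r^{p-3}\p_\tau^2 Y\,\p_r Y$ that is handled by Cauchy--Schwarz and absorbed in $\calB^p[Y]$. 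For the $r^{-5/2}\p_\tau^2 \tilde U$ piece, the key observation is that it integrates against $\p_r Y$ to yield exactly a term bounded by $\calB^{p}[\tilde U_k] + \sup_\tau\calE^{p-1}[\tilde U_k]$ after a Cauchy--Schwarz and a further integration by parts in $\tau$; this is the origin of the lower-order $\tilde U$-terms appearing on the right-hand side of~\eqref{eqn:main_rp_est_for_Y_w_G12} for $|k|\le 1$. The worst-decay terms in $\calG_{2,0}$ identified in~\eqref{eqn:terms_w_worst_decay_in_calG_2}, namely $\calO(\wp r^{-1/2}+r^{-5/2})\p_r\p_\theta\tilde U_k$ and $\calO(\wp r^{-1/2}+r^{-3/2})\p_r\tilde U_k$, are handled by the same mechanism: they are paired with $\p_r Y$ via Cauchy--Schwarz and absorbed into $\calB^p[\tilde U_k]$ on the right-hand side, using that $p<3/2$ precisely ensures that $\int \chi r^{p+1}|\calO(r^{-3/2})\p_r\tilde U_k|^2 \lesssim \calB^p[\tilde U_k]$. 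Once all of these book-keeping items are in place the estimate~\eqref{eqn:main_rp_est_for_Y_w_G12} follows.
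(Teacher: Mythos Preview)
Your first two steps are on target and match the paper's Step~1: multiply by $\chi r^p\p_r Y$, then reuse the equation $R_cY=\calG_1+\calG_2$ to convert the unsigned cross term $-\int\chi r^{p-1}\p_\tau Y\,\p_r Y$ into the coercive pieces $\tfrac{3-2p}{4}r^{p-2}Y^2$ and $\tfrac{p(2-p)}{4}r^{p-3}Y^2$. The difficulty is that this second use of the equation generates, among other things, a term $\calR_{22}=\tfrac12\int\chi r^{p-1}\tilde m_1^{\tau\tau}Y\,\p_\tau^2 Y$, and your write-up does not track it.

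The gap is in your treatment of $\calR_{12}=\int\chi r^p\tilde m_1^{\tau\tau}\p_\tau^2 Y\,\p_r Y$. You propose to integrate once in $\tau$ and then absorb the bulk by Cauchy--Schwarz into $\calB^p[Y]$. This cannot work: after the $\tau$-integration the residual bulk is $-\int\chi r^p\tilde m_1^{\tau\tau}\p_\tau Y\,\p_r\p_\tau Y$, and whether you integrate this in $r$ or apply Cauchy--Schwarz you are left with a term of size $\int\chi r^{p-3}(\p_\tau Y)^2$. But $\calB^p[Y]$ contains no $(\p_\tau Y)^2$ contribution, and rewriting $r^{p-3}(\p_\tau Y)^2=r^{p}(\p_r\p_\tau\tilde U)^2$ shows this is one $r$-power too strong to sit inside $\calB^p[\tilde U_k]$. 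This is exactly the failure mode flagged in Remark~\ref{rmk_calR_12_not_error_term}: the $\calR_3$-style argument from Proposition~\ref{prop_rp_for_U} relied on an ILED term $\delta\int\chi r^{-1-\alpha}|\p_\tau\tilde U|^2$ on the right, and there is no ILED for $Y$.

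The paper's resolution is structural rather than perturbative. After the $\tau$-integration on $\calR_{12}$, one replaces $\p_r\p_\tau Y$ \emph{using the equation a third time}; the $-r^{-1}\p_\tau Y$ term in $Q_1$ then produces $+\tfrac12\int\chi r^{p-1}\tilde m_1^{\tau\tau}(\p_\tau Y)^2$, while integrating $\calR_{22}$ by parts in $\tau$ produces $-\tfrac12\int\chi r^{p-1}\tilde m_1^{\tau\tau}(\p_\tau Y)^2$. These cancel exactly, and only after this cancellation do the remaining pieces land in $\calB^p[\tilde U_k]$ and $\sup_\tau\calE^{p-1}[\tilde U_k]$ (this, not $\calG_2$, is the origin of those terms on the right-hand side; $\calG_2$ stays as a source in this proposition). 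The second summand $\tfrac14\int\chi r^{p-2}\tilde m_1^{\tau\tau}\p_\tau|\snabla Y|^2$ arising from the same substitution has the favorable sign of $\tilde m_1^{\tau\tau}<0$ and goes into the flux. Without identifying this cancellation your argument leaves an uncontrolled $r^{p-3}(\p_\tau Y)^2$ bulk.
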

\begin{proof}
    We multiply $R_c Y = \calG_1 + \calG_2$ (see \eqref{eqn:operator_Rc_defn}) by $\chi r^p \p_r Y$ and obtain \begin{equation}\label{eqn:preliminary_rp_IBP_for_Y}
        \begin{aligned}
    &\int_{\calD_{\tau_1}^{\tau_2}} \chi\left(- r^p \p_\tau (\p_r  Y)^2 + \frac12 r^p \p_r (\p_r Y)^2
     - \frac12 r^{p-2} \p_r|\snabla Y|^2 - r^{p-1} \p_\tau Y \p_r Y - r^{p-1}|\p_r Y|^2\right)\, d\tau\, dr\, d\theta \\
    &\quad\quad\quad + \sum_{j=1}^4 \calR_{1j} 
    = \int_{\calD_{\tau_1}^{\tau_2}} \chi r^p (\calG_1 + \calG_2) \p_r  Y\, d\tau\, dr\, d\theta, 
\end{aligned}
    \end{equation}
where the remainder $\calR_{1j}$'s are given by 
\[
    \begin{aligned}
        \calR_{11} &= \int_{\calD_{\tau_1}^{\tau_2}} \chi r^p \left(\calO(r^{-2}) \p_\tau Y + 
        \calO(\wp + r^{-1})r^{-1}\p_r Y + \calO(\wp r^{-2} + r^{-4}) Y \right. \\ 
        & \quad\qquad \qquad\left. + \calO(\wp + r^{-2})\big(\p_r\p_\tau Y + \p_r^2 Y\big) + \calO(\wp r^{-2} + r^{-3}) \p_a\p_b Y \right)\p_r Y\, d\tau\, dr\, d\theta, \\
        \calR_{12} &= \int_{\calD_{\tau_1}^{\tau_2}} \chi r^p \tilde m_1^{\tau\tau} \p_\tau^2 Y \p_r Y\, d\tau\, dr\, d\theta, \\
        \calR_{13} &= \int_{\calD_{\tau_1}^{\tau_2}} \chi r^p \left( \calO(\wp r^{-1} + r^{-3})\p_r\p_\theta Y 
     + \calO(r^{-3})\p_\tau\p_\theta Y \right)\p_r Y\, d\tau\, dr\, d\theta, \\
        \calR_{14} &= \int_{\calD_{\tau_1}^{\tau_2}} \chi r^p \left( \calO(\wp r^{-1} + r^{-2}) \p_r Y + \calO(\wp r^{-2} + r^{-3}) \p_a Y + \calO(\dot\wp r^{-2} + r^{-3}) \p_\tau Y \right) \p_r Y\, d\tau\, dr\, d\theta.
    \end{aligned}
\]
Here, $\calR_{11}$ combines the error in $R_c$ and the first line in $R_{3,0}$, which can be viewed as a small perturbation of the major contribution in $R_c$ when performing $r^p$-estimates, like what we did for $\calR_1$ and $\calR_2$ in the proof of Proposition~\ref{prop_rp_for_U}. 

We shall use $\delta$ throughout the proof to denote a small constant satisfying \[
        |\delta| \leq C(\eps + R^{-\alpha}),
    \]
where $\alpha$ is a small constant $0 < \alpha \ll 1$.

    \textit{Step 1: Obtain \eqref{eqn:preliminary_rp_est_for_Y}, a preliminary $r^p$-weighted estimates for $ Y$. }
    In \eqref{eqn:preliminary_rp_IBP_for_Y},
    the fourth term is worth a separate discussion. Thanks to divergence theorem, \begin{equation}\label{eqn:preliminary_rp_IBP_for_Y_4th_term}
    \begin{aligned}
        &-\int_{\calD_{\tau_1}^{\tau_2}} \chi r^{p-1}\p_\tau Y \p_r Y\, d\tau\, dr\, d\theta \\
        &\qquad\qquad\qquad = \int_{\calD_{\tau_1}^{\tau_2}} \left(\chi r^{p-1}\p_\tau\p_r Y  + (p-1)\chi r^{p-2}\p_\tau Y  + \chi'(r) r^{p-1}\p_\tau Y\right) Y\, d\tau\, dr\, d\theta,
    \end{aligned}
    \end{equation}
    then we need to make use of the knowledge that $ Y$ solves the equation $R_c  Y = \calG_1 + \calG_2$ to replace the first term so that we have \begin{equation}\label{eqn:p_u_p_r_Phi_term}
        \begin{aligned}
        \int_{\calD_{\tau_1}^{\tau_2}} \chi r^{p-1} Y \p_\tau\p_r Y
        & = \frac12\int_{\calD_{\tau_1}^{\tau_2}} \chi r^{p-1}  Y \left( \p_r^2 Y + r^{-2} \sDelta_{\bbS^{n-1}} Y - r^{-1}\p_\tau Y - r^{-1}\p_r Y\right)\, d\tau\, dr\, d\theta \\
        &\quad + \sum_{j=1}^4\calR_{2j} - \frac12\int_{\calD_{\tau_1}^{\tau_2}} \chi r^{p-1}  Y (\calG_1 + \calG_2)\, d\tau\, dr\, d\theta \\ 
    \end{aligned}
    \end{equation}
    where the error terms are given by \[
    \begin{aligned}
        \calR_{21} &= \frac12\int_{\calD_{\tau_1}^{\tau_2}} \chi r^{p-1} Y \left(\calO(r^{-2}) \p_\tau Y + 
        \calO(\wp + r^{-1})r^{-1}\p_r Y + \calO(\wp r^{-2} + r^{-4}) Y \right. \\ 
        & \qquad\qquad\qquad\qquad\left. + \calO(\wp + r^{-2})\big(\p_r\p_\tau Y + \p_r^2 Y\big) + \calO(\wp r^{-2} + r^{-3}) \p_a\p_b Y \right)\, d\tau\, dr\, d\theta, \\
        \calR_{22} &= \frac12\int_{\calD_{\tau_1}^{\tau_2}} \chi r^{p-1} Y \tilde m_1^{\tau\tau} \p_\tau^2 Y \, d\tau\, dr\, d\theta, \\
        \calR_{23} &= \frac12\int_{\calD_{\tau_1}^{\tau_2}} \chi r^{p-1} Y \left( \calO(\wp r^{-1} + r^{-3})\p_r\p_\theta Y + \calO(r^{-3})\p_\tau\p_\theta Y \right)\, d\tau\, dr\, d\theta, \\
        \calR_{24} &= \frac12 \int_{\calD_{\tau_1}^{\tau_2}} \chi r^{p-1} Y \left( \calO(\wp r^{-1} + r^{-2}) \p_r Y + \calO(\wp r^{-2} + r^{-3}) \p_a Y \right.\\
        & \qquad\qquad\qquad\qquad\qquad\qquad\qquad\qquad\ \left. + \calO(\dot\wp r^{-2} + r^{-3}) \p_\tau Y \right) \, d\tau\, dr\, d\theta.
    \end{aligned}
    \]
    Furthermore, we integrate by parts to manipulate the first line of \eqref{eqn:p_u_p_r_Phi_term} as follows : \begin{equation}\label{eqn:p_u_p_r_Phi_term_first_line}
        \begin{aligned}
        & \frac12\int_{\calD_{\tau_1}^{\tau_2}} \chi r^{p-1}  Y \left( \p_r^2 Y + r^{-2} \sDelta_{\bbS^{n-1}} Y - r^{-1}\p_\tau Y - r^{-1}\p_r Y\right)\, d\tau\, dr\, d\theta \\
        =& \frac12\int_{\calD_{\tau_1}^{\tau_2}} -\chi r^{p-1} (\p_r Y)^2 - \frac{p-1}{2}\chi r^{p-2} \p_r Y^2 - \chi r^{p-3}|\snabla_S  Y|^2 - \frac12 \p_\tau\big(\chi r^{p-2} Y^2\big) - \frac12 \chi r^{p-2} \p_r Y^2\, d\tau\, dr\, d\theta \\
        =& \frac12\int_{\calD_{\tau_1}^{\tau_2}} - \frac12 \p_\tau\big(\chi r^{p-2} Y^2\big) -\chi r^{p-1} (\p_r Y)^2 - \frac{p(2-p)}{2}\chi r^{p-3}  Y^2 - \chi r^{p-3}|\snabla_S  Y|^2 \, d\tau\, dr\, d\theta + \Err_\chi[ Y].
    \end{aligned}
    \end{equation}
    Therefore, combining \eqref{eqn:preliminary_rp_IBP_for_Y_4th_term}, \eqref{eqn:p_u_p_r_Phi_term} and \eqref{eqn:p_u_p_r_Phi_term_first_line},
    we obtain \begin{equation}\label{eqn:anamolous_term_in_Rc_IBP}
        \begin{aligned}
        &-\int_{\calD_{\tau_1}^{\tau_2}} \chi r^{p-1}\p_\tau Y \p_r Y\, d\tau\, dr\, d\theta  \\
        &= \frac12\int_{\calD_{\tau_1}^{\tau_2}} 
        -(\frac32 - p) \p_\tau(\chi r^{p-2} Y^2) - \chi r^{p-1} (\p_r Y)^2 - \frac{p(2-p)}{2}\chi r^{p-3}  Y^2 - \chi r^{p-3}|\snabla_S  Y|^2
        \, d\tau\, dr\, d\theta  \\
        & \quad + \Err_\chi[ Y] + \sum_{j=1}^4\calR_{2j} - \frac12\int_{\calD_{\tau_1}^{\tau_2}} \chi r^{p-1}  Y (\calG_1 + \calG_2)\, d\tau\, dr\, d\theta.
    \end{aligned}
    \end{equation}
    Plugging this computation result into \eqref{eqn:preliminary_rp_IBP_for_Y}, one would reach \begin{equation}\label{eqn:preliminary_rp_est_for_Y}
        \begin{aligned}
    &-\int_{\calD_{\tau_1}^{\tau_2}} \chi \p_\tau \big(r^p(\p_r  Y)^2 + \frac{3-2p}4 r^{p-2} Y^2\big) \, d\tau\, dr\, d\theta \\
    &- \int_{\calD_{\tau_1}^{\tau_2}} \big(\frac{p+3}2 r^{p-1} (\p_r Y)^2
     + \frac{3-p}2 r^{p-3} |\snabla Y|^2  + \frac{p(2-p)}{4} r^{p-3} Y^2\big)\, d\tau\, dr\, d\theta 
    + \sum_{j=1}^4 \calR_{1j} + \sum_{j=1}^4\calR_{2j} \\
    &+ \Err_\chi[Y] - \frac12\int_{\calD_{\tau_1}^{\tau_2}} \chi r^{p-1}  Y (\calG_1 + \calG_2)\, d\tau\, dr\, d\theta 
    = \int_{\calD_{\tau_1}^{\tau_2}} \chi r^p (\calG_1 + \calG_2) \p_r  Y\, d\tau\, dr\, d\theta.
    \end{aligned}
    \end{equation}

    \textit{Step 2 : Estimates regarding $\calR_{12}$.}
    As mentioned in Remark~\ref{rmk_calR_12_not_error_term}, producing any error term like \begin{equation}\label{eqn:bad_terms_from_R_12}
        \int_{\calD_{\tau_1}^{\tau_2}} r^{p - 3} |\p_\tau Y|^2\, d\tau\, dr\, d\theta \leq \int_{\calD_{\tau_1}^{\tau_2}} r^{-\frac32-\alpha} |\p_\tau Y|^2\, d\tau\, dr\, d\theta
    \end{equation}
    would be bad as there is no tool for estimating them. 
    For $\calR_{12}$, we need to exploit the nature of $\tilde m_1^{\tau\tau}$. We compute \begin{equation}\label{eqn:DBP_for_calR12}
        \chi r^p \tilde m_1^{\tau\tau} \p_\tau^2 Y \p_r Y
        = \p_\tau\left(\chi r^p \tilde m_1^{\tau\tau} \p_\tau Y \p_r Y\right) - \chi r^p \calO(\dot\wp r^{-2} + r^{-3}) \p_\tau Y \p_r Y - \chi r^p \tilde m_1^{\tau\tau} \p_\tau Y \p_r\p_\tau Y.
    \end{equation}
    After integrating over $\calD_{\tau_1}^{\tau_2}$, 
    the first term can be estimated by \begin{equation}\label{eqn:IBP_for_flux_term_in_calR12}
        \sup_{\tau_1 \leq \tau \leq \tau_2 }\left(\veps\int_{\Sigma_\tau} r^p (\p_r Y)^2\, dr\, d\theta + C_\veps \int_{\Sigma_\tau} r^{p-4} (\p_\tau Y)^2\, dr\, d\theta\right),
    \end{equation}
    where the first term can be absorbed and the second one can be estimated by $\sum_{|k| = 1}\calE^{p-1}[\tilde U_k](\tau)$. The second term in \eqref{eqn:DBP_for_calR12} can be estimated by integration by parts as in \eqref{eqn:preliminary_rp_IBP_for_Y_4th_term} with one power less in $r$. Hence, they would produce terms $\calR_{1j}$ ($1 \leq j \leq 4$) with $r$ exponent lowered by one. Then we would deal with the similar term as $\calR_{12}$, the following estimate will be applied to replace the techniques in \eqref{eqn:preliminary_rp_IBP_for_Y_4th_term} to avoid endless replacements : 
    \begin{equation}\label{eqn:p_tau_Y_p_r_Y_est}
        \begin{aligned}
        \int_{\calD_{\tau_1}^{\tau_2}} \chi r^{p-1} \calO(\dot\wp r^{-2} + r^{-3}) \p_\tau Y \p_r Y\, d\tau\, dr\, d\theta &\lesssim \delta \int_{\calD_{\tau_1}^{\tau_2}} \chi\left(r^{p-1}|\p_r Y|^2 + r^{p-5}|\p_\tau Y|^2\right)\, d\tau\, dr\,d\theta \\ &\lesssim \delta \left(\calB^p[Y] + \sum_{0\leq |k|\leq 1} \calB^{p-1}[\tilde U_k]\right).
    \end{aligned}
    \end{equation}
    For the third term in \eqref{eqn:DBP_for_calR12}, the obvious integration by parts will lead to the wrong sign (see Remark~\ref{rmk_calR_12_not_error_term}). Instead, we replace $\p_r\p_\tau Y$ by using $R_c Y = \calG_1 + \calG_2$. This leads to the following \begin{equation}\label{eqn:third_term_in_calR12}
        \begin{aligned}
        &- \int_{\calD_{\tau_1}^{\tau_2}} \chi r^p \tilde m_1^{\tau\tau} \p_\tau Y \p_r\p_\tau Y\, d\tau\, dr\, d\theta \\ 
        =& - \frac12 \int_{\calD_{\tau_1}^{\tau_2}} \chi r^p \tilde m_1^{\tau\tau} \p_\tau Y \left( \p_r^2 Y - r^{-1}\p_\tau Y - r^{-1} \p_r Y + r^{-2} \sDelta_{\bbS^{n-1}}Y \right. \\ &\quad\qquad\left. +  \calO(r^{-2})\p_\tau Y + \calO(\wp + r^{-1})r^{-1}\p_r Y) + R_{3,0} Y - \calG_1 - \calG_2\right)\, d\tau\, dr\, d\theta \\
        =&\ \int_{\calD_{\tau_1}^{\tau_2}} \frac12 \chi r^{p-1} \tilde m_1^{\tau\tau} (\p_\tau Y)^2 + \frac14 \chi r^{p-2} \tilde m_1^{\tau\tau} \p_\tau |\snabla Y|^2 + \frac12 \chi \calO(r^{p-2}) \p_\tau Y (\calG_1 + \calG_2)\, d\tau\, dr\, d\theta \\
        &- \int_{\calD_{\tau_1}^{\tau_2}} \chi r^p\left( \calO(r^{-2})\p_\tau Y \p_r^2 Y + \calO(r^{-3})\p_\tau Y \p_r Y + \calO(r^{-4})|\p_\tau Y|^2 + \calO(r^{-2})\p_\tau Y R_{3,0}Y \right)\, d\tau\, dr\, d\theta.
    \end{aligned}
    \end{equation}
    Here, the first term (with the bad decay rate mentioned in \eqref{eqn:bad_terms_from_R_12}) will have exact cancellation with \eqref{eqn:calR22_IBP} and the second term has favorable sign ($\tilde m_1^{\tau\tau} \sim -r^{-2}$) for putting into the flux term. We simply put the third term on the right hand side. For the last line, we integrate by parts and estimate like in \eqref{eqn:p_tau_Y_p_r_Y_est} for similar terms. The third term in the last line is worth a separate remark that it is bounded by $\sum_{0\leq |k|\leq 1} \calB^{p}[\tilde U_k]$, which is due to the absence of $\calO(\wp)r^{-1}\p_\tau Y$ term in $R_c$, mentioned in Remark~\ref{rmk_on_eqn_of_P_graph_coeff}.

    \textit{Step 3 : Estimates regarding $\calR_{22}$.}
    Analogous to $\calR_{12}$, it will produce bad terms like \eqref{eqn:bad_terms_from_R_12} by integration by parts : \begin{equation}\label{eqn:calR22_IBP}
        \calR_{22} = -\frac12 \int_{\calD_{\tau_1}^{\tau_2}} \chi r^{p-1} \tilde m_1^{\tau\tau} (\p_\tau Y)^2 \, d\tau\, dr\, d\theta - \int_{\calD_{\tau_1}^{\tau_2}} \chi \calO(\dot\wp r^{p-3}) \p_\tau Y^2 + \p_\tau (\chi \calO(r^{p-3}) Y \p_\tau Y),
    \end{equation}
    where the first term causes exact cancellation with the first term in \eqref{eqn:third_term_in_calR12} produced by $\calR_{12}$ (see \eqref{eqn:DBP_for_calR12}). The following two terms in \eqref{eqn:calR22_IBP} can be easily handled by Cauchy-Schwarz after converting to flux terms like in \eqref{eqn:IBP_for_flux_term_in_calR12}.
    
    \textit{Step 4 : Estimate $\calR_{1j}$ and $\calR_{2j}$ ($j = 1, 3, 4$).}
   
    For $\calR_{11}$, we would like to handle it using divergence theorem and exploiting the smallness of $\wp$ and $R^{-1}$ to absorb to the left hand side of \eqref{eqn:preliminary_rp_est_for_Y}. This works for all the terms in $\calR_{11}$ except the first one. However, we could still mimic the computation in \eqref{eqn:anamolous_term_in_Rc_IBP} with one power less in the exponent of $r$ for all terms. The significance of the improvement on decay in $r$ by $1$ will give us the freedom to treat the similar term of $\calR_{22}$ in this process as an error term $\sum_{0\leq |k| \leq 1} \calB^p[\tilde U_k]$ in the final estimate.

    A similar discussion would suffice to treat the last term in $\calR_{14}$ as well.
    
    For other terms we haven't mentioned, all could be bounded in a similar spirit of the estimates for $\calR_3, \calR_4, \calR_5, \calR_6$.
\end{proof}

\begin{remark}\label{rmk_calR_12_not_error_term}
    It might be tempting to skip the discussions of $\calR_{12}$, $\calR_{13}$ and $\calR_{14}$ as they look quite similar to $\calR_3$, $\calR_4 + \calR_5$ and $\calR_6$, respectively. However, this may lead to the following estimate \[
        \begin{aligned}
            \sup_{\tau_1 \leq \tau \leq \tau_2} \tilde\calE^p[ Y](\tau) &+ \calB^p[ Y](\tau_1, \tau_2) 
            \lesssim \ \tilde\calE^p[ Y](\tau_1) + \int_{\calD_{\tau_1}^{\tau_2}} \chi r^p \left|(\calG_1 + \calG_2) \left(\p_r  Y + \calO(r^{-3}) \p_\theta Y\right)\right|\, d\tau\, dr\,d\theta \\
            &\quad + \int_{\calD_{\tau_1}^{\tau_2}} |\Err_\chi[ Y]|\, d\tau\, dr\, d\theta + \delta \int_{\calD_{\tau_1}^{\tau_2}} \chi r^{-\frac32-\alpha} |\p_\tau  Y|^2\, d\tau\, dr\, d\theta + \delta \sum_{0 \leq |k| \leq 1}  \sup_{\tau_1 \leq \tau \leq \tau_2} \calE^{\frac32}[U_k](\tau).
        \end{aligned}
    \]
    The change in $r$ weights in the penultimate term is because the manipulation for $\calR_3$ through $\calR_6$ is highly dependent on the $p$ range. Though it is improved at the level of $Y$, it turns out to be problematic 
    \[
        \int_{\calD_{\tau_1}^{\tau_2}} \chi r^{-\frac32-\alpha} |\p_\tau  Y|^2\, d\tau\, dr\, d\theta \sim \int_{\calD_{\tau_1}^{\tau_2}} \chi r^{-\frac12-\alpha} |\p_\tau \tilde U|^2\, d\tau\, dr\, d\theta
    \]
    in view of Hardy's estimates and the absence of ILED for the equation satisfied by $Y$.
    Instead, we need to perform a more delicate analysis and exploit the negative sign of $\tilde m_1^{\tau\tau}$ in \eqref{eqn:third_term_in_calR12} so that such problematic terms do not appear.
\end{remark}

Now we state the higher order $r^p$-weighted estimates for $Y$. The proof is omitted and will follow from a similar derivation as in Proposition~\ref{eqn:main_higher_order_rp_est}.

\begin{proposition}
    Let $U$ be a solution to $\calP_\graph U = \calF$ and $Y_k = K\tilde U_k$ with $\tilde U_k$ defined in \eqref{eqn:notation_tilde_U_N_triple} for any triple $k$. We assume that the bootstrap assumption \eqref{BA-dotwp} and the smallness assumption of $\calO(\wp)$ are satisfied. With the same assumptions on the smallness as in Proposition~\ref{prop_rp_for_U}, for all $\delta < p < \frac32 - 2\alpha$, for any $N \in \bbN$,
\begin{equation}
        \begin{aligned}
            &\sum_{0 \leq |k| \leq N} \sup_{\tau_1 \leq \tau \leq \tau_2} \tilde\calE^p[ Y_k](\tau) + \calB^p[ Y_k](\tau_1, \tau_2) \\
            \lesssim& \ \sum_{0 \leq |k| \leq N} \tilde\calE^p[ Y_k](\tau_1) + \sum_{0 \leq |k| \leq N+1} 
            \left(\calB^{p}[\tilde U_k](\tau_1, \tau_2) + \sup_{\tau_1 \leq \tau \leq \tau_2}  \calE^{p-1}[\tilde U_k](\tau)\right) \\ 
            +& \int_{\calD_{\tau_1}^{\tau_2}} |\Err_\chi[ Y_N]| + \sum_{0 \leq |k| \leq N} \chi r^p \left|  (\calG_{1,k} + \calG_{2,k}) \left(\p_r Y_k + r^{-1} Y_k + r^{-3} \p_\theta Y_k + r^{-2}\p_\tau Y_k\right)\right|\, d\tau\, dr\,d\theta,
        \end{aligned}
    \end{equation}
    where $\delta$ is a small constant with $|\delta| \lesssim \eps + R^{-\alpha}$  and $\Err_\chi[ Y]$ denotes all terms multiplied by $\chi'$ and squares of derivatives of $ Y$ up to order $1$. Here, $\calG_{1,k}$ and $\calG_{2,k}$ are defined in Lemma~\ref{lemma_computation_for_R_c}.
\end{proposition}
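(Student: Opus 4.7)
The plan is to induct on $N \in \bbN$, combining the base-case techniques from Proposition~\ref{prop_rp_est_for_Y_w_G12} with the higher-order induction scheme from Proposition~\ref{prop_higher_order_rp_for_U}. Fix a triple $N$ of order $|N|$; I would multiply the equation \eqref{eqn:eqn_for_rp_for_Y_N} satisfied by $Y_N$ by $\chi r^p \p_r Y_N$ and integrate over $\calD_{\tau_1}^{\tau_2}$. The principal part involving $R_c Y_N$ (i.e.\ the terms without the $n_3$-factors) is handled identically to the base case: one performs the integration by parts leading to the coercive bulk $\tfrac{p+3}{2}r^{p-1}(\p_r Y_N)^2 + \tfrac{3-p}{2}r^{p-3}|\snabla Y_N|^2 + \tfrac{p(2-p)}{4}r^{p-3}Y_N^2$ and the flux $r^p(\p_r Y_N)^2 + \tfrac{3-2p}{4}r^{p-2}Y_N^2$, while the anomalous $-r^{p-1}\p_\tau Y_N \p_r Y_N$ term is treated by the same manipulation as in \eqref{eqn:anamolous_term_in_Rc_IBP}, and the pair $\calR_{12},\calR_{22}$ is handled by the exact cancellation exploiting the sign of $\tilde m_1^{\tau\tau}$.

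Second, I would handle the two new terms $\tfrac{(n_3+1)n_3}{2r^2}Y_N$ and $-\tfrac{n_3}{r^2}\sDelta_S Y_{n_1,n_2,n_3-1}$, which carry the wrong sign when $n_3 \geq 1$. Following the trick in Proposition~\ref{prop_higher_order_rp_for_U}, I estimate $r^{p-2}Y_N \p_r Y_N$ by Cauchy-Schwarz to absorb $\delta r^{p-1}|\p_r Y_N|^2$ to the left and to produce $r^{p-3}Y_N^2 \lesssim r^{p-1}|\p_r Y_{n_1,n_2,n_3-1}|^2$ (via Hardy) which is controlled by the induction hypothesis applied at order $|N|-1$. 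For the second term I use differentiation by parts in $r$ to rewrite $-r^{p-2}\p_r Y_N \sDelta_S Y_{n_1,n_2,n_3-1}$ as a total $\p_r$-derivative (becoming an $\Err_\chi$ contribution) plus $-r^{p-3}|\snabla Y_N|^2$ (favorable sign) plus a cross term $r^{p-3}\snabla Y_N \cdot \snabla Y_{n_1,n_2,n_3-1}$ controlled via Cauchy-Schwarz and the induction hypothesis.

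Third, the lower-order sums $\sum_{|k|\leq |N|-1}O_\infty(r^{-2})(Y_k + \calO(r^{-1/2})\tilde U_k)$ and the analogous ones involving $\sDelta_S$ are absorbed using the induction hypothesis; the contributions involving $\tilde U_k$ and its derivatives coming from $\calG_{2,k}$ as well as the lower-order error terms are precisely what the right-hand side terms $\calB^p[\tilde U_k]$ and $\sup_\tau \calE^{p-1}[\tilde U_k]$ are designed to control, via Hardy's inequality to convert between $r$-weighted energies of $Y_k = r^{3/2}\p_r\tilde U_k$ and weighted energies of $\tilde U_k$ itself. The remaining error terms $R_{3,k}Y_k$ and the $\calO(r^{-2})\p_\tau$, $\calO(\wp + r^{-1})r^{-1}\p_r$ contributions in \eqref{eqn:eqn_for_rp_for_Y_N} are handled exactly as $\calR_{11},\calR_{12},\calR_{13},\calR_{14}$ in the base case, with the smallness of $\wp$ and $R^{-1}$ allowing absorption to the left.

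The main obstacle I anticipate is the bookkeeping in the induction step: the $\calR_{12}$-type integration by parts producing $-\chi r^p \tilde m_1^{\tau\tau}\p_\tau Y_N \p_r \p_\tau Y_N$ requires replacing $\p_r\p_\tau Y_N$ via the equation, which at order $N$ introduces the new lower-order terms with factors $\tfrac{(n_3+1)n_3}{2r^2}$ and $\tfrac{n_3}{r^2}\sDelta_S$ that were not present in the base case. Tracking these through the scheme requires verifying that they produce acceptable contributions (either directly absorbable, or bounded by $\sum_{|k|\leq N+1}\calB^p[\tilde U_k] + \sup_\tau \calE^{p-1}[\tilde U_k]$) — the crucial algebraic check being that the powers of $r$ work out so that Hardy's inequality suffices at each step, which is the same mechanism that enforced the restriction $p < 3/2 - 2\alpha$ in the base case and should propagate unchanged to higher orders.
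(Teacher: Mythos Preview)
Your proposal is correct and follows exactly the approach the paper indicates: the paper omits the proof, stating only that it ``will follow from a similar derivation as in Proposition~\ref{prop_higher_order_rp_for_U},'' which is precisely the induction-on-$N$ scheme you outline (combining the base case from Proposition~\ref{prop_rp_est_for_Y_w_G12} with the $n_3$-term treatment from Proposition~\ref{prop_higher_order_rp_for_U}). Your detailed write-up of the induction step, including the handling of the new $\tfrac{(n_3+1)n_3}{2r^2}Y_N$ and $-\tfrac{n_3}{r^2}\sDelta_S Y_{n_1,n_2,n_3-1}$ terms and the bookkeeping for the $\calR_{12}$-type cancellation at higher order, fills in exactly what the paper leaves implicit.
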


\subsection{Decay estimates for \texorpdfstring{$\mathbb{\Phi}$}{Phi}}\label{Section:decay_for_Phi}

Now we focus on the equation satisfied by $U = \mathbb\Phi$. Due to the presence of nonlinearity and the coupling between $\bbP$ and $\mathbb\Phi$ in the nonlinearity, we would expect to restrict the highest derivatives to start with by $|k| \leq M - 20$ so that we could apply any bootstrap assumptions regarding $p$ in the nonlinearity. On the other hand, following the same arguments as in Section~\ref{Section:decay_for_P}, without resorting to the $L^1 L^2$ type norm, one could prove the same kind of decay estimates with different $k$ bounds easily. In fact, we will stick to $L^2L^2$ type of norms in this subsection.
To this end, we give a holistic discussion of how we deal with the nonlinearity.
\begin{remark}\label{rmk_handle_nonlinearity}
    \textbf{A summary of how the source term and the nonlinearity are handled: }
\begin{enumerate}[ wide = 0pt, align = left, label=\arabic*. ]
    \item Since we are using $L^2 L^2$ type of norms, for $\|\calF_{0, \mathbb\Phi_k}\|^2_{L^2_\uptau}$ and $\|\p_\uptau^j \calF_{0, \mathbb\Phi_k}\|^2_{L^2_\uptau}$ ($j \geq 1$), \eqref{BA-dotwp},  \eqref{BA-ddotwp-L2} allow us to bound them by $\eps^2\brk{\uptau}^{-\frac72 + 2\kappa}$ and $\eps^2\brk{\uptau}^{-\frac{9}2 + 2\kappa}$ with suitable decay in $r$, respectively. If one examine \eqref{eqn:calP_q_source_term} and the relation between $q$ and $\phi$ in \eqref{eqn:q_decomp}, the terms $g_1 + \p_\uptau g_2$ and $\calP_\pert p$ (see \eqref{eqn:calP_pert_in_C_hyp}) also appear as the source term. Depending whether there is $\p_\uptau$ falling on them or not, they can be bounded by the same bounds stated above thanks to \eqref{eqn:all_decay_of_c_im},  \eqref{eqn:L2_est_of_c_im} and \eqref{BA-dotwp},  \eqref{BA-ddotwp-L2}.
    \item For the $L^2L^2$ type of norms of $\calF_{2, k} := (\calF_2)_k$ with $|k| \leq M - 20$, we examine the exact formula in \eqref{eq:calF2_1} and classify quadratic terms into the following two types (expanding $\varphi = \bbP + \mathbb\Phi$) : 
    \begin{enumerate}
        \item if the two slots for $\varphi$ both have contribution from $\bbP$, that is, for terms like $\nabla^\mu Q \nabla^\nu \bbP \nabla_{\mu\nu}^2 \bbP$, we apply \eqref{BA-p} and \eqref{BA-p-ext1} and it is of the form $\calO(r^{-4}) \brk{\uptau}^{-3 + \frac{3\zeta}2}$, which is even better than $\calF_{0, \mathbb\Phi_k} = \calO(\dot\wp r^{-4})$; (we can find an even better bound $\calO(r^{-5}) \brk{\uptau}^{-4 + 2\zeta}$ thanks to the null condition)
        \item all other terms with $\mathbb\Phi$ involved, we single out one copy and estimate whatever left by bootstrap assumptions, then this can be seen as a perturbation (an error term) of the operator as in \eqref{eqn:tilde_U_N_calP}, i.e., they are handled by integration by parts of \[
            \int_{\calD_{\tau_1}^{\tau_2}} \chi r^p \p_r \mathbb{\Phi} \tilde\calF_2\, d\tau\, dr\, d\theta
        \]
        without putting anything into $L^2L^2$-type norm. The necessity of applying integration by parts is due to the loss of derivative issue caused by the quasilinearity so that we could not apply \eqref{BA-q} if the top order derivative falls on $\mathbb\Phi$.
    \end{enumerate}
    \item For $\calF_{3, k}$, one needs to exploit the null condition by examining the closed form of $\calF_3$
    (see \eqref{eq:calF3_1}) and the inverse metric $m^{-1} = m_0^{-1} + \tilde m_1$ (see \eqref{eqn:m_0_inv_formula} and \eqref{eqn:tilde_m_1_formula}). It turns out that $\nabla^\mu$ either contains good derivative $\p_r$ or one will gain $r^{-2}$ decay and as a matter of fact, one knows that a typical term \[
        \nabla^\mu \bbP \nabla^\nu \bbP \nabla_{\mu\nu} \bbP = \calO(r^{-2})(r^{-1}\brk{\uptau}^{-1 + \frac{\zeta}2})^2 \brk{\uptau}^{-2+\zeta} = \calO(r^{-4}) \brk{\uptau}^{-4 + 2\zeta},
    \]
    which is much more better than $\calF_{0, \mathbb\Phi}$.
    On the other hand, if there is $\mathbb\Phi$ involved, one will still single out this term and estimate the others using the null condition and bootstrap assumptions, with the term finally dealt by integration by parts as other error terms.
\end{enumerate}
\end{remark}

Therefore, with these in mind, like in \eqref{eqn:preliminary_rp_for_bbP_w_Fp_realized}, one can show that 
\[
    \begin{aligned}
     \sup_{\tau_1 \leq \tau \leq \tau_2} \calE^p[\tilde{\mathbb{\Phi}}](\tau) + \calB^p[\tilde{\mathbb{\Phi}}](\tau_1, \tau_2) \lesssim_p &\ \calE^p[\tilde{\mathbb{\Phi}}](\tau_1) + \int_{\calD_{\tau_1}^{\tau_2}} |\Err_\chi[\tilde{\mathbb{\Phi}}]| \, d\tau\, dr\,d\theta + \eps^2 \brk{\tau_1}^{-\frac72 + 2\kappa} \\
     &\ + \delta \int_{\calD_{\tau_1}^{\tau_2}} \chi r^{-1-\alpha} |\p_\tau \tilde{\mathbb{\Phi}}|^2\, d\tau\, dr\, d\theta + \delta \sup_{\tau_1 \leq \tau \leq \tau_2} E[\mathbb{\Phi}](\tau).
    \end{aligned}
\]
Then adding a suitable multiple of \eqref{ILED-Phi} with one more order, 
we obtain a similar estimate as in \eqref{eqn:after_adding_a_multiple_LED_P} : \begin{equation}\label{eqn:rp_to_derive_tau_inverse_decay_Phi}
    \begin{aligned}
    \sum_{0 \leq |k| \leq N} \sup_{\tau_1 \leq \tau \leq \tau_2} \scE^p [\mathbb{\Phi}_k](\tau) + \scB^p [\mathbb{\Phi}_k](\tau_1, \tau_2) &\lesssim \sum_{0 \leq |k| \leq N+1} \scE^p [\mathbb{\Phi}_k](\tau_1) + \eps^2 \brk{\tau_1}^{-\frac72 + 2\kappa},
\end{aligned}
\end{equation}
where $N + 1 \leq M - 20$ and $\delta < p < 2 - 2\alpha$. 

Instead of pursuing almost $\brk{\tau}^{-2}$ decay of the standard energy like in Section~\ref{Section:decay_for_P}, we start with an $r^p$-weighted estimates for $\Phi := K \mathbb{\Phi}$.

We still start with the $N = 0$ case for simplicity.
For $0 < p < \frac32$, it follows from Proposition~\ref{prop_rp_est_for_Y_w_G12} that \begin{equation}\label{eqn:preliminary_rp_for_Y_Phi}
    \begin{aligned}
     \sup_{\tau_1 \leq \tau \leq \tau_2} \tilde\calE^p[\Phi](\tau) + \calB^p[\Phi](\tau_1, \tau_2) 
            \lesssim &\ \tilde\calE^p[\Phi](\tau_1) + \int_{\calD_{\tau_1}^{\tau_2}} \chi r^{p+1} (|\calG_{\Phi,1}|^2 + |\calG_{\Phi,2}|^2) + |\Err_\chi[\Phi]| \, d\tau\, dr\,d\theta \\
            &\quad + \sum_{0 \leq |k| \leq 1} 
            \left(\calB^{p}[\tilde{\mathbb\Phi}_k](\tau_1, \tau_2) + \sup_{\tau_1 \leq \tau \leq \tau_2} \calE^{p-1}[\tilde{\mathbb\Phi}_k](\tau)\right).  \\
    \end{aligned}
\end{equation}

Compared to the estimates in Section~\ref{Section:decay_for_P}, the most tricky part is the $\calG_2$ portion in the source terms \begin{equation}\label{eqn:est_G_2_terms_by_U_ests}
    \int_{\calD_{\tau_1}^{\tau_2}} \chi r^{p+1} |\calG_{\Phi,2}|^2 \, d\tau\, dr\,d\theta, \quad 0 < p < \frac32.
\end{equation}

\begin{remark}\label{rmk_calG_2_treatment_precise_decay_crucial}
    To treat the term \eqref{eqn:est_G_2_terms_by_U_ests} involving $\calG_2$, we make use of the precise decay rate in \eqref{eqn:calG_2N_defn}. To illustrate, we mention that $\calO(r^{-\frac52})\p_\tau \tilde U_k$ is harmless while $\calO(r^{-\frac32})\p_\tau \tilde U_k$ is problematic : \begin{equation}\label{eqn:bdd_difficult_term_in_G_2}
    \begin{aligned}
    &\int_{\calD_{\tau_1}^{\tau_2}} \chi_R r^{p+1} |\calO(r^{-\frac52}) \p_\tau \tilde U|^2 \, d\tau\, dr\,d\theta 
    \simeq \int_{\calD_{\tau_1}^{\tau_2}} \chi_R r^{p-4} |\p_\tau \tilde U|^2 \, d\tau\, dr\,d\theta \lesssim \calB^{p-1}[\tilde U].
\end{aligned} 
\end{equation}
\end{remark}

It is then easy to estimate all the terms in $\calG_2$ in a similar fashion (see \eqref{eqn:bdd_difficult_term_in_G_2}) except the two terms in  \eqref{eqn:terms_w_worst_decay_in_calG_2}. For these two, we rewrite $\p_r \mathbb\Phi$ into $r^{-\frac32} \Phi$ and then absorb it to the left hand side due to smallness of size $\wp$ or $R^{-1}$.
Then adding a multiple of \eqref{ILED-Phi} to deal with the $\Err_\chi$ term in \eqref{eqn:preliminary_rp_for_Y_Phi} and bounding $\calB^p$ by $\calE^p$ in terms of \eqref{eqn:rp_to_derive_tau_inverse_decay_Phi}, we obtain \[
\begin{aligned}
    &\sup_{\tau_1 \leq \tau \leq \tau_2} \uscE^p [\mathbb{\Phi}](\tau) + \uscB^p [\mathbb{\Phi}](\tau_1, \tau_2) \\
    \lesssim &\ \uscE^p [\mathbb{\Phi}](\tau_1) + \int_{\calD_{\tau_1}^{\tau_2}} \chi_{R} r^{p+1} (|\calG_{\Phi, 1}|^2 + \sum_{|k| = 0}^2 |\calF_{0, \mathbb\Phi_k}|^2)\, dV\, d\tau + \sum_{0 \leq |k| \leq 2} \sup_{\tau_1 \leq \tau \leq \tau_2} \calE^{p}[\tilde{\mathbb\Phi}_k](\tau) + \eps^2 \brk{\tau_1}^{-\frac72 + 2\kappa} \,
\end{aligned}
\]
where \[
    \begin{aligned}
    \uscE^p[U](\tau) &:= \int_{\Sigma_\tau} \chi_{\leq R} |\p U|^2 + \chi_R r^p \left((\p_r + \frac{3}{2r})\p_r \tilde U\right)^2 + \chi_R r^{p-4}|\snabla \p_r \tilde U|^2 \, dV, \\ 
    \uscB^p[U](\tau_1, \tau_2) &:= \int_{\calD_{\tau_1}^{\tau_2}} \chi_{\leq R} |\p U|^2 + \chi_R r^{p-1} \left( \big((\p_r + \frac{3}{2r})\p_r \tilde U\big)^2 + r^{-2} (\p_r \tilde U)^2 + r^{-2} |\snabla \p_r \tilde U|^2\right) \\ & \qquad\qquad + \chi_R r^{-1-\alpha} |\p_\tau U|^2\, dV \, d\tau =: \int_{\tau_1}^{\tau_2} \uscB^p[U](\tau)\, d\tau.
\end{aligned}
\]
One can see the discussion in Remark~\ref{rmk_on_adding_LED_est} to facilitate the understanding of the details. The following higher order estimates can be proved in the same spirit : \begin{equation}\label{eqn:higher_order_rp_for_Y_in_Phi}
    \begin{aligned}
    &\sum_{0 \leq k \leq N} \sup_{\tau_1 \leq \tau \leq \tau_2} \uscE^p [\mathbb{\Phi}_k](\tau) + \uscB^p [\mathbb{\Phi}_k](\tau_1, \tau_2) \\
    \lesssim &\sum_{0 \leq k \leq N} \uscE^p [\mathbb{\Phi}_k](\tau_1) + \int_{\calD_{\tau_1}^{\tau_2}} \chi_{R} r^{p+1} (|\calG_{\Phi_k, 1}|^2 + \sum_{|k| = 0}^{N+2} |\calF_{0, \mathbb\Phi_k}|^2)\, dV\, d\tau \\
    &\ + \sum_{|k| = 0}^{N+2} \sup_{\tau_1 \leq \tau \leq \tau_2} \calE^p[\tilde{\mathbb\Phi}_k](\tau) + \eps^2 \brk{\tau_1}^{-\frac72 + 2\kappa},
\end{aligned}
\end{equation}
where $N \leq M - 23$ and $\alpha < p < \frac32$. 

\begin{remark}
    The method in this subsection does not work for $P := K\tilde\bbP$ due to the non-integrability of \[
    \int_{\calD_{\tau_1}^{\tau_2}} \chi_{R} r^{p+1} |\calG_{P_k, 1}|^2\, dV\, d\tau \lesssim \int_{\calD_{\tau_1}^{\tau_2}} \chi_{R} r^{p+1} |\calO(r^{\frac12}\tau^{-2}r^{-3})|^2\, dV\, d\tau
    \]
    for any $p > 0$.
\end{remark}

Choose $p = \frac32 - \kappa$, $N = M - 23$, $\tau_1 = 0$ in \eqref{eqn:higher_order_rp_for_Y_in_Phi} and applying the uniform boundedness of $\calE^p$ obtained through \eqref{eqn:rp_to_derive_tau_inverse_decay_Phi}, we know that the left hand side of \eqref{eqn:higher_order_rp_for_Y_in_Phi} is then bounded by a bootstrap constant independent multiple of $\eps^2$. Therefore, there exists an increasing sequence of dyadic $\{\usigma_m\}$ such that \[
    \uscE^{\frac12 - \kappa}[\mathbb\Phi_k](\usigma_m) \leq \uscB^{\frac32 - \kappa}[\mathbb\Phi_k](\usigma_m) \lesssim \eps^2 \brk{\usigma_m}^{-1}, \quad |k| \leq M - 23.
\]
Furthermore, a direct computation by Hardy's inequality reveals that \[\begin{aligned}
    \uscE^{\frac12 - \kappa}[U](\tau) \gtrsim& \int_{\Sigma_\tau} \chi_{\leq R} |\p U|^2 \, dV + \chi_R r^{-\frac32-\kappa}(\p_r \tilde U)^2\, dV \\
    \gtrsim& \int_{\Sigma_\tau} \chi_{\leq R} |\p U|^2 \, dV + \chi_R r^{\frac32-\kappa}\big((\p_r + \frac3{2r})U\big)^2\, dV = \scE^{\frac32 - \kappa}[U](\tau),
\end{aligned}
\]
which shows that \begin{equation}\label{eqn:p_frac32_eng_decay_Phi}
    \scE^{\frac32 - \kappa}[\mathbb\Phi_k](\usigma_m) \lesssim \eps^2 \brk{\usigma_m}^{-1}, \qquad |k| \leq M - 23.
\end{equation}
Now we plug this result back into \eqref{eqn:rp_to_derive_tau_inverse_decay_Phi} with $p = \frac32 - \kappa$, $N = M - 24$, $\tau_1 = \usigma_{m-1}$, $\tau_2 = \usigma_m$ would imply that the left hand side of \eqref{eqn:rp_to_derive_tau_inverse_decay_Phi} is bounded by $\eps^2 \brk{\usigma_{m-1}}^{-1}$. This further tells us that there exists a slightly different increasing dyadic sequence $\{\tsigma_m\}$ with $\tsigma_m \simeq \usigma_m$ such that \begin{equation}\label{eqn:sigma_decay_Phi_2}
    \scE^{\frac12 - \kappa}[\mathbb\Phi_k](\tsigma_m) \lesssim \scB^{\frac32 - \kappa}[\mathbb\Phi_k](\tsigma_m) \lesssim \eps^2 \brk{\tsigma_m}^{-2}, \qquad |k| \leq M - 24.
\end{equation}
Repeat the former step again with $p = \frac12 - \kappa$, $N = M - 25$, $\tau_1 = \tsigma_{m-1}$ and $\tau_2 = \tsigma_m$, we obtain \begin{equation}\label{eqn:sigma_decay_Phi_3}
    \scE^{-\frac12 - \kappa}[\mathbb\Phi_k](\sigma_m) \lesssim \scB^{\frac12 - \kappa}[\mathbb\Phi_k](\sigma_m) \lesssim \eps^2 \brk{\sigma_m}^{-3}, \qquad |k| \leq M - 25,
\end{equation}
where $\{\sigma_m\}$ is an increasing dyadic sequence with $\tsigma_{m-1} \leq \sigma_m \leq  \tsigma_m$. In this step, it is crucial that the decay rate in \eqref{eqn:rp_to_derive_tau_inverse_decay_Phi} satisfies $\frac72 - 2\kappa > 3$.
With another application of \eqref{eqn:rp_to_derive_tau_inverse_decay_Phi} with $p = \frac12 - \kappa$, $N = M - 25$, $\tau_1 = \tsigma_{m-1}$, $\tau_2 = \sigma_m$, we could synchronize the slight different dyadic sequence in \eqref{eqn:sigma_decay_Phi_2} with \eqref{eqn:sigma_decay_Phi_3} : \begin{equation}\label{eqn:sigma_decay_Phi_2_sync}
    \scE^{\frac12 - \kappa}[\mathbb\Phi_k](\sigma_m) \lesssim \eps^2 \brk{\tsigma_{m-1}}^{-2} \lesssim \eps^2 \brk{\sigma_m}^{-2}, \qquad |k| \leq M - 25.
\end{equation}
Interpolating \eqref{eqn:sigma_decay_Phi_3} and \eqref{eqn:sigma_decay_Phi_2_sync} via \eqref{eqn:interpolation_for_rp} with $p = 0$, $\veps = \frac12 + \kappa$, $s = 1$ and $q = 3$ , we obtain \[
    \sum_{|j| \leq |k|} E[\mathbb\Phi_j](\sigma_m) \lesssim \sum_{|j| \leq |k| + 1} \scE^0[\mathbb\Phi_j](\sigma_m) \lesssim \eps^2 \brk{\sigma_m}^{-\frac52 + \kappa}, \quad |k| \leq M - 26.
\]
Then by \eqref{ILED-Phi}, it has the same decay rate at all times and hence \eqref{IBA-q-ext1} and \eqref{IBA-q-ext2} follow.
Though we might not need, we remark that once we pass to $\calE^p$ norm with nonnegative $p$, we could upgrade from a dyadic sequence to all times.
The algebraic structure observed in \eqref{eqn:algebraic_relation_between_eng_of_bbP_and_p_tau_bbP} can be adapted here, applying \eqref{eqn:sigma_decay_Phi_3}, one shows that \[
    \sum_{|j| \leq |k|} \scE^{\frac32 - \kappa}[\p_\tau \mathbb\Phi_j](\sigma_m) \lesssim \sum_{|j| \leq |k| + 1} \scE^{-\frac12-\kappa}[\bbP_j](\sigma_m) + \eps^2 \brk{\sigma_m}^{-\frac92 + 2\kappa} \lesssim \eps^2 \brk{\sigma_m}^{-3}, \quad |k| \leq M - 26.
\]
One could run the procedure with $p = \frac32 - \kappa$ and interpolate to obtain decay rate for $\calE^{1 + \kappa}[\p_\tau \mathbb\Phi_k]$, then run the procedure with $p = 1 + \kappa$ to conclude that \[
    \calE^\kappa[\p_\uptau \mathbb\Phi_k] \lesssim \eps^2 \brk{\tau}^{-\frac92 + 2\kappa}, \quad |k| \leq M - 28,
\]
which is of the same decay rate as that of the contribution of source term. This again explains why we need to interpolate first before we assign the final value $p = 1 + \kappa$ to avoid loss of decay. Besides, it is also worthwhile to make sure that the square of the $L^2L^2$ type contribution from both the nonlinearity won't restrict the decay rate at this stage. In particular, we would see at least $\brk{\tau}^{-\frac92 + 2\kappa}$ decay could be achieved based on bootstrap assumptions and all previous decay estimates.
Recall the discussion in Remark~\ref{rmk_handle_nonlinearity}, we note that when $\p_\tau$ hits the nonlinearity, after distributing it by product rule, we could still follow the procedure mentioned before except for the terms not containing $(\p_\tau \mathbb\Phi)_k$ as $\p_\tau$ might falls on other slots. For this, we do not try to absorb it to left hand side again but for this kind of terms, we could bound the quadratic ones by \begin{equation}\label{eqn:how_to_handle_quadratic_after_p_tau}
    \begin{aligned}
    &\int_{\calD_{\tau_1}^{\tau_2}} \chi r^{p+1} \sum_{|i| +|j| \leq |k|} |\calO(r^{-3}) \nabla \p_\tau \bbP_i \nabla^2 \tilde{\mathbb\Phi}_j|^2 + |\calO(\dot\wp r^{-3}) \nabla \bbP_i \nabla^2 \tilde{\mathbb\Phi}_j|^2\, d\tau\, dr\, d\theta \\
    \lesssim&\ \eps^2 \brk{\tau_1}^{-\frac72 + 2\kappa} \sup_{\tau_1 \leq \tau \leq \tau_2} E[\mathbb\Phi_k](\tau) \lesssim \eps^2\brk{\tau_1}^{-5}, \quad 0 < p < \frac32 - \kappa, \quad |k| \leq M  - 28, 
\end{aligned}
\end{equation}
thanks to \eqref{BA-dtau-p} and \eqref{BA-dotwp} while the cubic ones are bounded by \begin{equation}\label{eqn:bound_for_cubic_after_p_tau}
     \lesssim \eps^2 \brk{\tau_1}^{-\frac72 + 2\kappa} \sup_{\tau_1 \leq \tau \leq \tau_2} \calB^{\frac32 - \kappa}[\tilde{\mathbb\Phi}](\tau) \lesssim \eps^2 \brk{\tau_1}^{-\frac92 + 2\kappa}, \quad 0 < p \leq \frac32 - \kappa, \quad |k| \leq M  - 28,
\end{equation}
where the first estimate is due to \eqref{BA-dtau-p} and the null condition while the last one follows from \eqref{eqn:p_frac32_eng_decay_Phi} after passing to all times via \eqref{eqn:rp_to_derive_tau_inverse_decay_Phi}. In this estimate, one of the three entries is simply bounded by $\eps$ without any decay, which makes the extra spatial decay exploited from the null condition to be important. The way we bound cubic terms in \eqref{eqn:bound_for_cubic_after_p_tau} is in the same spirit of how we deal with the last term in \eqref{eqn:typical_term_after_take_derivative_in_tau}. Note that the $k$ range for nonlinearity matches with the range in \eqref{eqn:almost_frac92_decay_for_p_tau_Phi}.
This completes the discussion for the decay rate of nonlinearity.

Therefore, we obtain the following decay for energy
\[
    \|\p_\tau \mathbb\Phi_k\|^2_{LE(\calD_{\tau}^{\tau'})} + E[\p_\tau\mathbb\Phi_k] \lesssim \eps^2 \brk{\tau}^{-\frac92 + 2\kappa}, \quad |k| \leq M - 29,
\]
which proves \eqref{IBA-q-LE}.
Moreover, we also obtain \begin{equation}\label{eqn:almost_frac92_decay_for_p_tau_Phi}
    \|\brk{\uprho}^{\frac{\kappa}{2} - 1}\p_\tau\mathbb\Phi_k\|_{L^2(\Sigma_\tau, dV)}^2 \lesssim \eps^2 \brk{\tau}^{-\frac92 + 2\kappa}, \quad |k| \leq M - 28
\end{equation}
and could conclude the pointwise decay of $\mathbb\Phi_k$ \eqref{IBA-q} by the weighted elliptic estimate Lemma~\ref{lemma_weight_ell_est_for_P_ell_2}.

Note that we would not go one step further to the $p$-energy of $\p_\tau^2 \mathbb\Phi_k$ anymore since the decay rate in \eqref{eqn:almost_frac92_decay_for_p_tau_Phi} already matches \eqref{eqn:ptwise_decay_of_p_tau_p}.

\appendix
\section*{Appendices}
\renewcommand{\thesubsection}{\Alph{subsection}}
\numberwithin{theorem}{subsection}
\numberwithin{equation}{subsection}

\subsection{Local existence result}\label{Section:LWP}
Since the HVMC equation \eqref{eqn:HVMC} is a wave equation, it is natural to study it as a Cauchy problem. In particular, we are interested in the case when our initial data is close to that of the catenoid $\calC$. Set $g = \Phi^* \eta$ and hence $g_{\alpha\beta} = \frac{\p \Phi^\mu}{\p s^\alpha} \frac{\p \Phi_\mu}{\p s^\beta}$. Following the ideas of \cite{AC79}, we summarize the local existence result as follows.

Recall that the catenoid solution $\calC = \R \times \barcalC$, there is a natural parametrization \[
    \Phi(s^0, s') = \Phi(s^0, s^1, \cdots, s^n) = (s^0, F(s')),
\]
where $s'$ denotes $\rrho,\omg$ in \eqref{eqn:defn_F_for_Rm_catenoid}. It then has the Cauchy data $\Phi(s^0 = 0) = (0, F(s')) \in \{0\} \times \R^{1+n}$, $\p_{s^0} \Phi (s^0 = 0) = (1, 0) \in R^{1+(n+1)}$. 
Now for any arbitrary solution $\calM$ to \eqref{eqn:HVMC} with $(V, (s^0, s^1, \cdots, s^n))$ being any coordinate chart in $\calM$. Let $V_0 \subset \{(s^0, s') \in V : s^0 = 0\}$ be an open set in $\R^n$. Given data $\Phi^\mu(0, s')$ and $\p_{s^0} \Phi^\mu(0, s')$, we could use the idea of \cite[Theorem 14.2]{Ring09} to show the local existence of \begin{equation}\label{eqn:HVMC_wave_coord}
    g^{\alpha\beta} \frac{\p^2 \Phi^\mu}{\p s^\alpha \p s^\beta} = 0
\end{equation}
by replacing $g^{\alpha\beta}$ by another Lorentz matrix-valued function $A^{\alpha\beta}$ depending smoothly on the components of $g$. Let $A_{00} = g_{00}$ for all $-\frac32 \leq g_{00} \leq -\frac12$ and have the property that the range of $A_{00}$ is contained in $[-2, -\frac14]$. Let $A_{0i} = g_{0i}$ for all $-1 \leq g_{0i} \leq 1$ and have the property that the range of $A_{0i}$ is contained in $[-2, 2]$. With localized initial data on $V_0$ by applying a cutoff, the standard local existence argument shows the existence of a solution to the modified equation. Then due to smoothness of the solution, there exists an open neighborhood $W$ of $V_0$ such that $A_{\mu\nu} = g_{\mu\nu}$ on $W$ and $\pi(W) \subset V_0$, where $\pi$ is the projection to $\{s^0 = 0\}$. Therefore, we have a development of \eqref{eqn:HVMC_wave_coord}. In particular, when $V$ is compact, it shows local existence in a short time $s^0 \in I$ in $W$.

Now, note that \eqref{eqn:HVMC_wave_coord} is equivalent to \eqref{eqn:HVMC} if one chooses wave coordinate $\Box_g s^\alpha = 0$, which is in turn equivalent to $\Gmm^\alpha = g^{\mu\nu} \Gmm^{\alpha}_{\mu\nu} = 0$. By applying \cite[B. Lemma 1]{AC79}, which proves the existence of wave coordinates and the invariance of Cauchy data under a coordinate transform, we could conclude the local existence in $W_0$. Now, if one considers the following particular initial data \begin{equation}\label{eqn:data_on_const_t_slice}
    \Phi|_{X^0= 0} = F + (\tilde \psi_0 \circ F) N \circ F, \quad \p_{X^0} \Phi|_{X^0 = 0} = (1, 0) + (\tilde \psi_1 \circ F) N \circ F,
\end{equation}
where $N$ is the gauge we choose in \eqref{eqn:defn_for_N}, which is a vector with vanishing $0$-component. Even though $N$ is defined with respect to hyperboloidal foliations instead of constant $X^0$-foliations, the preceding initial data is well-defined according to the support condition of $\tilde\psi_0, \tilde\psi_1$ : \[
    \tilde \psi_0,\ \tilde \psi_1 \in C^\infty_c(\barcalC), \qquad \supp \tilde \psi_0,\ \supp \tilde \psi_1 \subset \{|\Xbar| \leq R_f/2 \}.
\]
We note that $X^0$ shall play the role of $s^0$ in each local chart we choose. Also, we also mention that $N$ defined in \eqref{eqn:defn_for_N} is a vector on the unringed foliation, such $N$ on our simplified foliation still exists thanks to implicit function theorem, which is applicable due to smallness of $\ell, \xi$. In fact, we could simply choose $\xi(0) = \ell(0) = 0$ so that on initial slice, the ringed foliation and the unringed one coincide. See Remark~\ref{rmk_ambiguity_of_N} as well.

Recall that $\barcalC$ has two ends and there are only compactly supported perturbations involved, we could patch all local existence results above in any compact coordinate chart and the coordinate charts for the two ends together thanks to the finite speed of propagation phenomenon. 

Note that $\barbsUpsigma_\tau$ is a spacelike hypersurface (and hence $\bsUpsigma_\tau$ as well) by computing from \eqref{eqn:m_0_inv_formula} and \eqref{eqn:tilde_m_1_formula} : \[
    (d\tau)^\sharp = - \frac{\brk{r}^{-2}}{(1 - \Theta \cdot \eta')^2}\p_\tau - \frac1{1 - \Theta \cdot \eta'}\p_r + \calO(r^{-2})\p_r + \calO(r^{-3}) \p_\theta
\]
and \[
    \brk{(d\tau)^\sharp, (d\tau)^\sharp} = - \frac{\brk{r}^{-2}}{(1 - \Theta \cdot \eta')^2} < 0,
\]
which is timelike but asymptotically null.
Therefore, with the initial data \eqref{eqn:data_on_const_t_slice} and finite speed of propagation, the solution is determined between $\Sigma_{\tau = 0}$ and $\{X^0 = 0\}$, which is identical to $\calC$. Hence, we can now start with Cauchy data on $\bsUpsigma_\tau$ type hyperboloidal hypersurfaces to construct hyperboloidal foliations designed in \eqref{eqn:defn_calQ}. 
Instead of letting $\ell, \xi$ to be time-dependent, it suffices to fix them for the purpose of a local existence result since for each constant $\uptau$, $\ell$ and $\xi$ are constant. Moreover, the variance of parameters shall be small enough for an inverse function argument to work in a local neighborhood sufficiently close to $\Sigma_\uptau$. 

Given fixed $\ell_0, \xi_0 \in \R^n \subset \R^{n+1}$, let $\ringcalC_\uptau(\ell_0, \xi_0)$ and $\ringbsUpsigma_\uptau(\ell_0, \xi_0)$ be the submanifolds defined in \eqref{eqn:calC_sigma} and \eqref{eqn:parametrize_bsupsigma_in_tau_r_theta} with \begin{equation}\label{eqn:prescribe_fixed_ell_xi}
    \ell(\uptau) \equiv \ell_0, \quad \xi(\uptau) = \xi_0 + \uptau \ell_0
\end{equation}
In other words, we treat $\dot\wp$ as vanishing in this local existence part. The corresponding choice of $N$ in \eqref{eqn:defn_for_N} is denoted by $\ringN_{\ell_0, \xi_0}$. The region we are concerning about is given by \[
\calD_0^\uptau(\ell_0, \xi_0) := \cup_{\tau \in [0, \uptau)} \ringSigma_\tau(\ell_0, \xi_0) := \cup_{\tau \in [0, \uptau)} \left(\ringcalC_\tau(\ell_0, \xi_0) \cap \ringbsUpsigma_\tau(\ell_0, \xi_0)\right).
\]
Here, we simply let the shift and translation independent of $\uptau$ but all the discussions in the setup work with $\dot\wp \equiv 0$ in mind and we use notations with a ring on top of it to distinguish. Note that if we prove the existence of solution to \eqref{eqn:HVMC} forward in $\tau$ starting from $\ringSigma_\tau(\ell_0, \xi_0)$ locally via the gauge $\ringN_{\ell_0, \xi_0}$, we could parametrize this local solution via the gauge $N$ (recall \eqref{eqn:defn_for_N}) adapted to the moving and boosted foliations with now $\ell(\tau) = \ell_0$, $\xi(\tau) = \xi_0 + \tau \ell$ we use throughout the paper. This tells us the local existence of $\ringpsi$ as a function over $\ringSigma_\tau(\ell_0, \xi_0)$ implies local existence of $\psi$ as a function over $\Sigma_\tau$. See \cite[Lemma 2.2]{LOS22}.

We will denote $\mathbf{T} = \Lambda_{-\ell_0}(\p_{X^0}) = \p_{X^0} + \ell_0 \p_{X^j}$. This is compatible with the boost we applied to construct our foliations and it makes sure that the solution we obtain is the corresponding catenoid if perturbation becomes trivial. 
Now we have necessary notions to set up the local existence lemma. First, we reiterate how we transform a vector value problem to a scalar one by the gauge choice $\ringN_{\ell_0, \xi_0}$ in the following lemma. 

\begin{lemma}
    Consider $\ringPhi_0(p) = p + \ringeps \ringpsi_0 \ringN_{\ell_0, \xi_0}$ and $\ringPhi_1(p) = (1, \ell_0) + \ringeps \ringpsi_1 \ringN_{\ell_0, \xi_0}$. Then the Cauchy problem for an embedding $\ringPhi : \calD_0^\uptau(\ell_0, \xi_0) \to \R^{1 + (n+1)}$ satisfying \eqref{eqn:HVMC} with data $\ringPhi|_{\uptau = c} = \ringPhi_0, \p_{X^0}\ringPhi|_{\uptau = c} = \ringPhi_1$ is equivalent to consider the Cauchy problem of $\ringpsi : \calD_0^\uptau(\ell_0, \xi_0) \to \R$ satisfying \begin{equation}\label{eqn:scalar_reduction_for_LWP}
        (\Box_h + |\II|^2) \ringpsi = \ringf := \calO((\ringpsi^2, \p \ringpsi^2)), \quad \ringpsi|_{\uptau = c} = \ringpsi_0, \quad \mathbf{T}\ringpsi|_{\uptau = c} = \ringpsi_1
    \end{equation}
    with $h$ is the metric defined in  \eqref{eqn:defn_h_mu_nu} (see also \eqref{eqn:metric_h_in_Cflat}) with $\ell$, $\xi$ set to be \eqref{eqn:prescribe_fixed_ell_xi} and $\II$ is the second fundamental form of $\ringSigma_\sigma(\ell_0, \xi_0)$. In the far away region, $\ringf = \rings^{-1}(\ringcalF_2 + \ringcalF_3)$ with $\rings$, $\ringcalF_2$ and $\ringcalF_3$ all defined in the same way as \eqref{eq:calF2_1} and \eqref{eq:calF3_1} with the prescribed information \eqref{eqn:prescribe_fixed_ell_xi}.
\end{lemma}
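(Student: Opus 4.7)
The plan is to specialize the derivations of Sections~\ref{Section:eqn_derivation_for_varphi_in_Chyp} and \ref{Section:2nd_order_formulation_whole_region} to the present frozen-parameter setting ($\dot\ell \equiv 0$, $\dot\xi \equiv \ell_0$). Under this prescription $\ringPsi_\wp$ parametrizes the boosted translated catenoid $\ringcalC_\uptau(\ell_0,\xi_0)$, which is itself a stationary HVMC solution; hence $\Box_h \ringPsi_\wp = 0$ identically, all $\calP_\pert$ terms in \eqref{eqn:form_of_P_P_0_P_pert}--\eqref{eqn:calP_pert_in_C_hyp} drop out, and the source $\calF_0$ of \eqref{eqn:calF_0} vanishes (the ODE \eqref{eqn:Q_tilde_r_equation} produces exact cancellation in Lemma~\ref{lemma_F_0_decay}). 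The conceptual observation I would rely on in both directions is that the mean curvature vector $\Box_g \ringPhi = n\vec H$ is intrinsically normal to $\ringPhi(\calD)$, so the vector equation $\Box_g \ringPhi = 0$ is equivalent to the single scalar identity $\eta(\Box_g \ringPhi, n_\wp) = 0$, since $n_\wp$ is transverse and $\eta(\hat n, n_\wp)\neq 0$ for small $\ringpsi$.

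For the vector-to-scalar direction, given $\ringPhi$ solving \eqref{eqn:HVMC} with the prescribed Cauchy data, I would invoke transversality of $\ringN_{\ell_0,\xi_0}$ and the implicit function theorem to produce a unique scalar $\ringpsi$ on a short-time neighborhood of $\ringSigma_c(\ell_0,\xi_0)$ with $\ringPhi = \ringPsi_\wp + \ringpsi\,\ringN_{\ell_0,\xi_0}$. The matching of Cauchy data is then a direct reading: $\ringPhi|_{\uptau=c} = \ringPsi_\wp + \ringeps\ringpsi_0 \ringN$ identifies $\ringpsi|_{\uptau=c}$ (up to the $\ringeps$ rescaling absorbed into $\ringpsi_0$), while the explicit identity $\p_{X^0}\ringPsi_\wp = (1,\ell_0)$ (a direct computation from \eqref{eqn:parametrize_calQ_in_flatt} and \eqref{eqn:parametrize_bsupsigma_in_tau_r_theta} with constant parameters) turns the prescription of $\p_{X^0}\ringPhi|_{\uptau=c}$ into $\mathbf{T}\ringpsi|_{\uptau=c} = \ringeps\ringpsi_1$, after using that $\mathbf{T}$ and $\p_{X^0}$ agree on scalar functions constant along the orbit of $(1,\ell_0)$. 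The scalar equation will then come by retracing \eqref{eqn:Box_g_Phi_expansion}--\eqref{eqn:Box_g_Phi_preliminary5}: in the fixed-parameter setup those identities hold verbatim (with $h = k$, cf.~Remark~\ref{rmk:convention_fixing_parameters}), yielding $(\Box_h + |\II|^2)\ringpsi + \calO((\ringpsi,\p\ringpsi)^2) = 0$, which is \eqref{eqn:scalar_reduction_for_LWP}.

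For the scalar-to-vector direction, I would define $\ringPhi := \ringPsi_\wp + \ringpsi\,\ringN_{\ell_0,\xi_0}$, which is an embedding for small $\ringpsi$ by transversality. Running the computation of Section~\ref{Section:2nd_order_formulation_whole_region} in reverse produces $\eta(\Box_g\ringPhi, n_\wp) = (\Box_h + |\II|^2)\ringpsi + \calO((\ringpsi,\p\ringpsi)^2) = 0$, and invoking the normality statement noted above upgrades this to the full vector equation $\Box_g\ringPhi = 0$. For the closed form of $\ringf$ in $\calC_\hyp$, I would switch to the graph formulation of Section~\ref{Section:parametrize_far_away}: $\ringPhi(\calD)$ is then a graph $u = Q_{\ell_0,\xi_0} + \varphi$ with $\varphi = \rings\,\ringpsi$ by \eqref{eqn:relation_varphi_psi}, and the identity \eqref{eqn:relation_calP_and_calP_graph}, combined with $\calF_0 \equiv 0$ and the vanishing of $\calP_\pert$, reduces the scalar source to $\ringf = \rings^{-1}(\ringcalF_2 + \ringcalF_3)$, where $\ringcalF_2, \ringcalF_3$ are obtained from \eqref{eq:calF2_1}--\eqref{eq:calF3_1} by freezing $Q = Q_{\ell_0,\xi_0}$ and substituting $\varphi = \rings\ringpsi$.

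The hard part is essentially only the normality claim $\eta(\Box_g\ringPhi, \p_\gamma\ringPhi) = 0$ used in the last step of the reverse direction, but I would verify it in one line by differentiating $g_{\alpha\beta} = \eta(\p_\alpha\ringPhi, \p_\beta\ringPhi)$ in $s^\gamma$, symmetrizing, and tracing against $g^{\alpha\beta}$ (a standard computation, see~\cite[Section 2.5]{DHS10}). Everything else is a verbatim specialization of the intrinsic computations already carried out in the body of the paper, so no genuinely new estimate is required.
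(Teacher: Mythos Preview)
Your proposal is correct and follows essentially the same route as the paper: both arguments reduce to specializing the intrinsic derivation of Section~\ref{Section:2nd_order_formulation_whole_region} (and, for the far-away form of $\ringf$, Section~\ref{Section:identify_P_and_P_graph}) to the frozen-parameter regime $\dot\wp \equiv 0$, where $h = k$, $\calP_\pert \equiv 0$, and $\calF_0 \equiv 0$. The paper's proof is a two-sentence pointer to those sections; you supply considerably more detail, in particular making the bidirectional equivalence explicit and invoking the normality of $\Box_g\ringPhi$ to upgrade the scalar identity $\eta(\Box_g\ringPhi, n_\wp)=0$ back to the vector equation---a point the paper leaves implicit.
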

\begin{proof}
    The derivation of the scalar equation is exactly the same as Section~\ref{Section:2nd_order_formulation_whole_region}. 
    The form of nonlinearity is exactly the same as Section~\ref{Section:2nd_order_formulation_whole_region} with $\dot\wp \equiv 0$ prescribed and more specifically, $\ell(\tau) \equiv \ell_0, \xi(\tau) = \xi_0 + \tau \ell_0$. In particular, the equation and the source term are of the same form as in \eqref{eqn:form_of_P_P_0_P_pert} and \eqref{eqn:relation_calP_and_calP_graph}.
\end{proof}

In what follows, we will use $\p_{\ringSigma}$ to denote any order $1$ derivative tangent to the leaf $\ringSigma_\uptau$, i.e., $\p_\uprho, \brk{\uprho}^{-1}\p_\uptheta$ in the $(\uptau, \uprho, \uptheta)$ coordinates. Even if the foliation is different and simplified a bit from the one used in the main part, such coordinate still exists and a slight abuse notation occurs here. 

\begin{remark}\label{rmk_decay_assumption_on_id}
    Note that the smallness assumption in the setting of the lemma comes from the smallness in the Cauchy data on $\{X^0 = 0\}$ as we introduced earlier. We want to justify the following : if we start with data \eqref{eqn:data_on_const_t_slice} and $\tilde\psi_0, \tilde\psi_1$ having $\eps$-smallness, then the new Cauchy data on $\Sigma_{\uptau = 0}$ also has $\eps$-smallness in suitable norms.
    In the far away region, from \eqref{eqn:length_tilde_r} and \eqref{eqn:Q'_tilde_r}, we note that $\left.\left(Q'(r) - Q'(\tilde r)\right)\right|_{\uptau = 0} = \calO(|\ell(0)|r^{-3})$ by Taylor expansion. Other derivatives share similar estimates with $\ell$-smallness. Recall that $Q$ represents the solution by parametrizing them with respect to a graph over the hypersurface $\{X^{n+1} = S\}$, we know that $\ringpsi|_{\ringSigma_{\uptau = 0}} \sim \left(Q(r) - Q(\tilde r)\right)|_{\ringSigma_{\uptau = 0}}$ and hence $\|\chi_{r > R_f}  (\brk{\uprho}\p_{\ringSigma})^k \ringpsi\|_{L^2(\ringSigma_0(\ell(0),\xi(0)))}$ and $\|\chi_{r > R_f} (\brk{\uprho}\p_{\ringSigma})^{k-1} \mathbf{T}\ringpsi\|_{L^2(\ringSigma_0(\ell(0),\xi(0)))}$ are of order $\eps$. We will later fix $|k| \leq M$ in the local existence statement and also our main theorem (Theorem~\ref{main_thm}). 

    Note that we not only need $\eps$-smallness at the level of standard energy, we require $\eps$-smallness after inserting suitable $\brk{\uprho}$-weights into the norm for our purpose of $r^p$-weighted estimates. To this end, we require $\ell(0) = 0$ such that $\left.\left(Q'(r) - Q'(\tilde r)\right)\right|_{\uptau = 0} = 0$, which implies that the initial data on $\ringSigma_0(\ell(0), \xi(0))$ is compactly supported and hence satisfies $\eps$-smallness with proper $\brk{\uprho}$-weights.
    
    Finally, we shall mention that even if the initial data is rapid decaying but not compactly supported, the $\eps$-smallness from $X^0 = 0$ still carries over to these norms on $\ringSigma_0(\ell(0), \xi(0))$ as long as the spatial decay of Cauchy data on $X^0 = 0$ is sufficient. One can easily adapt the proof of \cite[Theorem 2.27, exterior stability theorem]{LO24} to establish this claim.
\end{remark}

\begin{remark}\label{rmk_ambiguity_of_N}
    In the near region, the $\eps$-smallness of the new Cauchy data on $\Sigma_0$ is trivial as the constant $\uptau$ is equivalent to constant $X^0$ due to our choice of foliation. In particular, we could choose $\ell(0) = \xi(0) = 0$ to avoid any ambiguity of the $N$ used in \eqref{eqn:data_on_const_t_slice}.
\end{remark}

To prepare ourselves to state the local existence result, we introduce some notations below. We shall use $\bfGmm^k$ to denote a string of $k$ vector fields consisting of $\{\p_{X^\mu}, S, \Omg, L\}$, where \[
    S = \tau\p_\tau + r \p_r, \quad \Omg_{ij} = (X^i - \eta) \p_{X^j} - (X^j - \eta) \p_{X^i}, \quad L_j = X^0 \p_{X^j} + (X^j - \eta) \p_{X^0}.
\]
Since $\eta$ is constant and independent of $\tau$ in this local wellposedness setting, it is easy to read off the following \[
    \p_{X^0} = \p_\tau, \quad \p_{X^j} = - \frac{r\Theta^j}{\brk{r}}\p_\tau + \Theta^j \p_r + \frac{\ringsg^{ab}\Theta^j_b}{r}\p_a.
\]
See \cite[Section 4.2]{LOS22} for a detailed computation. One could see that $\brk{r}\p_r$ can be reconstructed from $\Gmm$'s by observing that \[
    \frac{(X^j - \eta)L_j}{r} = r\p_\tau + \frac{\tau + \brk{r}}{r}(\p_r - \frac{r}{\brk{r}}\p_\tau) = (\tau + \brk{r})\p_r - \tau \frac{r}{\brk{r}}\p_\tau.
\]
Therefore, \begin{equation}\label{eqn:representation_of_rpr_by_bfGmm}
    \brk{r}\p_r = \frac{\brk{r}}{\tau + 2\brk{r}} \left(\frac{\brk{r}}{r}S + \frac{(X^j - \eta)}{r}L_j\right) = \calO_\Gmm^\infty(1)S + \sum_j \calO_\Gmm^\infty(1)L_j,
\end{equation}
where we use $\ringpsi = \calO_\Gmm^N(m(\tau, r))$ to denote \[
    |(\p_\tau)^{k_1}(\brk{r}\p_r)^{k_2}\Omg^{k_3}\ringpsi| \lesssim m(\tau, r), \quad \forall k \text{ such that } |(k_1, k_2, k_3)| \leq N.
\]

Moreover, a simple computation reveals that \begin{equation}\label{eqn:commutation_relation_of_Box_w_bfGmm}
    |[\Box_m, \bfGmm^k] \ringpsi| \lesssim |\Box_m \bfGmm^{k-1} \ringpsi|.
\end{equation}

Now we could formulate the local existence result, which will be used in the proof of our main theorem. The proof is adapted from that of \cite[Proposition 12.16]{LO24}.

\begin{lemma}[Local existence theorem]
    Given smooth functions $\ringpsi_0, \ringpsi_1$ on $\ringSigma_{\uptau_1}(\ell_0, \xi_0)$ such that \[
    \|\p_{\ringSigma}^k \ringpsi\|_{L^2(\ringSigma_{\uptau_1}(\ell_0,\xi_0))}, \ \  \|\p_{\ringSigma}^{k-1} \mathbf{T}\ringpsi\|_{L^2(\ringSigma_{\uptau_1}(\ell_0,\xi_0))} \lesssim 1, \quad k \leq M
    \]
    with $M$ sufficiently large, 
    the Cauchy problem \eqref{eqn:scalar_reduction_for_LWP} has a regular solution $\ringpsi$ on $\calD_{\uptau_1}^{\uptau_2}(\ell_0, \xi_0)$. In the region $\calS_{\uptau_1}^{\uptau_2}(\infty)$ to be defined below, the following pointwise bound holds for $M$ sufficiently large $M$ ($M > 9$ when $n = 4$) : \begin{equation}\label{eqn:ptwise_bound_LWP}
        \ringpsi(\tau, r, \theta) = \calO_{\Gmm}^{M - 5}(\ringeps r^{-\frac{3}{2}}).
    \end{equation}
    We also mention that $\uptau_2 - \uptau_1$ depends solely on suitable norms of initial data.
\end{lemma}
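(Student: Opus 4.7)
The plan is to establish local existence via a Picard iteration on the hyperboloidal foliation $\{\ringSigma_\uptau(\ell_0,\xi_0)\}$, and then separately derive the pointwise $r^{-3/2}$ decay in the far region by commuting with the Minkowski-type vector fields $\bfGmm$ and invoking a Klainerman--Sobolev estimate.

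For local existence, since $\ell_0,\xi_0$ are fixed, the background metric $h$ is stationary and $\Box_h + |\II|^2$ is a regularly hyperbolic operator with smooth coefficients. I would iterate $(\Box_h + |\II|^2)\ringpsi^{(n+1)} = \ringf(\ringpsi^{(n)},\partial\ringpsi^{(n)})$ with the prescribed Cauchy data, using standard energy estimates with the timelike multiplier $\mathbf{T}$. The leaves $\ringSigma_\uptau(\ell_0,\xi_0)$ are uniformly spacelike (as verified in the main text via the computation of $(d\tau)^\sharp$), so the energy is coercive at all orders commuted by $\p_{\ringSigma}$, and Moser-type product estimates applied to the closed forms \eqref{eq:calF2_1}--\eqref{eq:calF3_1} yield $\|\ringf(\ringpsi^{(n)})\|_{H^{M-1}} \lesssim P(\|\ringpsi^{(n)}\|_{H^M})$ for a polynomial $P$. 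A bootstrap argument closes on an interval $[\uptau_1,\uptau_2]$ whose length depends only on the initial data norm, and contraction at the level of $H^{M-1}$ produces a unique regular solution. No derivative loss occurs because the top-order derivatives in $\ringf$ enter only through the null structure visible in \eqref{eq:calF2_1}--\eqref{eq:calF3_1}, which can be absorbed via integration by parts in the energy identity.

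For the pointwise bound \eqref{eqn:ptwise_bound_LWP} in the far-away region $\calS_{\uptau_1}^{\uptau_2}(\infty)$, I would use Klainerman's commuting vector field method. There, $h = m + \calO(r^{-6})$ via the identity $h^{\mu\nu} = m^{\mu\nu} - s^{-2} Q^\mu Q^\nu$ and the decay \eqref{eqn:derivatives_of_Q_decay}, so $\Box_h$ differs from $\Box_m$ by a fast-decaying perturbation. Commuting the equation with $\bfGmm^k \in \{\p_{X^\mu},S,\Omg,L\}^k$ and using \eqref{eqn:commutation_relation_of_Box_w_bfGmm}, the resulting equation for $\bfGmm^k \ringpsi$ is wave-type with source controlled by $\bfGmm^{\leq k-1}\ringpsi$ and $\ringeps$-small nonlinear contributions, again with null structure. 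Gronwall then propagates the bound $\sum_{|k|\leq M-1}\|\bfGmm^k\ringpsi\|_{L^2(\ringSigma_\tau)} \lesssim \ringeps$ on $[\uptau_1,\uptau_2]$. The Klainerman--Sobolev inequality on the hyperboloidal foliation in $n=4$ spatial dimensions reads
\begin{equation*}
r^{3/2}\,|\bfGmm^{k}\ringpsi(\tau,r,\theta)| \lesssim \sum_{|j|\leq 3} \|\bfGmm^{k+j}\ringpsi\|_{L^2(\ringSigma_\tau)},
\end{equation*}
which gives the desired $\calO_\Gmm^{M-5}(\ringeps\,r^{-3/2})$ bound, since $(M-5)+3 < M$. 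The identity \eqref{eqn:representation_of_rpr_by_bfGmm} then confirms that $\bfGmm$-strings control the $\brk{r}\p_r$-derivatives needed in the statement.

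The principal obstacles are (i) ensuring no derivative loss in the Picard iteration despite the quasilinear nature of $\ringf$, which is resolved by the null structure of \eqref{eq:calF2_1}--\eqref{eq:calF3_1} so that the highest-order terms can be symmetrized in the energy estimate; and (ii) justifying the Klainerman--Sobolev argument uniformly on the hyperboloidal foliation even though $h \neq m$: here the discrepancy $h-m$ decays as $r^{-6}$ so the resulting commutator errors are lower-order perturbations absorbable by Gronwall. Both ends of $\barcalC$ are handled symmetrically, with finite propagation speed confining the analysis to each asymptotically Euclidean end separately; the arguments there are essentially those of \cite[Proposition 12.16]{LO24}.
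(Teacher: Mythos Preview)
Your approach is viable but structured quite differently from the paper's, and contains one conceptual slip worth flagging.

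\textbf{Comparison of routes.} The paper does \emph{not} run a Picard iteration on the full hyperboloidal foliation followed by a Klainerman--Sobolev step. Instead, it truncates the far region at a constant-$X^0$ slice, defining $\calS_{\uptau_1}^{\uptau_2}(T_0) = \{\sigma(\uptau_1) \leq X^0 \leq T_0,\ \uptau_1 \leq \tau \leq \uptau_2\}$, and runs a bootstrap on $T_0$: assuming the pointwise bound $|\ringpsi_k| \leq 2C_k\ringeps r^{-3/2}$ on $\calS_{\uptau_1}^{\uptau_2}(T_0)$, it proves the energy estimate \eqref{eqn:eng_est_LWP} with the multiplier $\p_\tau$ (the boundary of this truncated region has \emph{both} a hyperboloidal portion $\{\tau = \uptau_2'\}$ and a flat portion $\{X^0 = T_0\}$), and then extracts the improved pointwise bound via an ordinary Sobolev inequality on balls of radius $\simeq T_0$ in the slice $\ringSigma_\tau$ parametrized by $X'$, using $T_0 \gtrsim r$ to convert the gain into $r^{-3/2}$. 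Local existence and decay are thus obtained simultaneously by pushing $T_0 \to \infty$. Your Klainerman--Sobolev route is a legitimate alternative and is closer in spirit to \cite[Proposition 12.16]{LO24}; the paper's bootstrap-on-$T_0$ argument is more self-contained and avoids setting up the full hyperboloidal Klainerman--Sobolev inequality.

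\textbf{The coercivity slip.} You assert that the leaves are ``uniformly spacelike'' and hence the $\mathbf{T}$-energy is coercive. But the computation you cite actually gives $\langle (d\tau)^\sharp,(d\tau)^\sharp\rangle = -\brk{r}^{-2}(1-\Theta\cdot\eta')^{-2}$, which tends to zero as $r\to\infty$: the leaves are spacelike but \emph{asymptotically null}. Consequently the flux through $\{\tau = c\}$ only controls $\int \big(r^{-2}(\p_\tau\ringvarphi)^2 + (\p_r\ringvarphi)^2 + r^{-2}|\snabla\ringvarphi|^2\big)r^3\,dr\,d\theta$, with a degenerate $r^{-2}$ weight on $\p_\tau$. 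Your Gronwall step and your Klainerman--Sobolev inequality must therefore be stated for this weighted energy, not the unweighted $L^2$ norm you wrote. This can be done (it is standard in hyperboloidal-foliation literature), but it is precisely the subtlety that the paper's truncation at $\{X^0 = T_0\}$ is designed to circumvent: on that flat cap the energy controls $(\p_\tau\ringvarphi)^2$ without degeneracy, and the local Sobolev on balls then needs only the weighted flux on $\{\tau = \uptau_2'\}$. If you pursue your route, make the weights explicit throughout.
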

\begin{proof}
    Given $C^\infty$ initial data on spacelike asymptotically null leaf $\Sigma_{\uptau = c}$, the standard local existence result gives a unique development of initial data. In the near region of the foliation, the local in $\uptau$ existence follows from standard theory. Therefore, it suffices to deal with the local existence in the hyperboloidal region, where a graph formulation is accessible. In fact, instead of parametrizing and writing the perturbation $\ringpsi$ as a function $\ringvarphi$ over $ \barbsUpsigma_\tau$, we use the unmodified version $\tilde\barbsUpsigma_\tau$. With an abuse of notation, the parametrization is still denoted by $(\tau, r, \theta)$ but with formula \eqref{eqn:parametrize_tilde_bsUpsigma}. The new parametrization is the same for $\calC_\hyp$ but will be different of the old one for $\calC_\tran \cup \calC_\flatfar$.
    
    We consider \[
        \calP_\graph \ringvarphi = \ringcalF_2 + \ringcalF_3
    \]
    in the region \[
        \calS_{\uptau_1}^{\uptau_2}(T_0)  := \{X \in \calD_{\uptau_1}^{\uptau_2}(\ell_0, \xi_0) : \uptau_1 + \gmm(\uptau_1)R_f =: \sigma(\uptau_1) \leq X^0 \leq T_0 := \uptau_1 + \brk{R}, \uptau_1 \leq X^0 - \brk{r} \leq \uptau_2 \}.
    \]
    We will later bootstrap on $T_0$ to show the bound and hence local existence in $\uptau$ at the same time.
    The derivation of the operator is the same as in Section~\ref{Section:derivation_of_calP_graph_op} and $\eta'$ is still nonvanishing but a fixed small constant. Therefore, the metric now becomes stationary (i.e. $\tau$-independent).

    To prove local existence, it suffices to show \eqref{eqn:ptwise_bound_LWP} on $\calS_{\uptau_1}^{\uptau_2}(\infty)$ for some sufficient small $\uptau_2 - \uptau_1$ depending on $\ringeps$. Assuming that \begin{equation}\label{eqn:BA-LWP}
         |\ringpsi_k(\tau, r, \theta)| \leq 2C_k\ringeps r^{-\frac32}, \quad |k| \leq M - 5, \quad (\tau, r, \theta) \in \calS_{\uptau_1}^{\uptau_2}(T_0),
    \end{equation}
    it suffices to show that \begin{equation}\label{eqn:IBA-LWP}
        |\ringpsi_k(\tau, r, \theta)| \leq C_k\ringeps r^{-\frac32}, \quad |k| \leq M - 5, \quad (\tau, r, \theta) \in \calS_{\uptau_1}^{\uptau_2}(T_0).
    \end{equation}
    Here, $\ringpsi_k = (\p_\tau)^{k_1}(\brk{r}\p_r)^{k_2}\Omg^{k_3} \ringpsi$ for $k = (k_1, k_2, k_3)$.
    Indeed, by applying standard local existence result with initial data on $\{X^0 = T_0, \uptau_1 \leq \uptau \leq \uptau_2\} \cup \{\tau = \uptau_1, T_0 \leq X^0 \leq T_0 + \delta\}$ with $\delta$ small enough, one knows that the solution can be extended to $\calS_{\uptau_1}^{\uptau_2}(T_0 + \delta')$ on which the bound \eqref{eqn:BA-LWP} still holds for some small $\delta'$, which completes the proof of \eqref{eqn:ptwise_bound_LWP}.

    We compute \begin{equation}\label{eqn:DBP_identity_for_LWP}
        \begin{aligned}
            \sqrt{|m|}\Box_m \ringvarphi \p_\tau \ringvarphi &= \p_\alpha \left( \sqrt{|m|} m^{\alpha\beta}  \p_\beta\ringvarphi \p_\tau \ringvarphi\right) - \sqrt{|m|} m^{\alpha\beta}  \p_\beta \ringvarphi \p_\alpha \p_\tau \ringvarphi \\ 
            &= \p_\alpha \left( \sqrt{|m|} m^{\alpha\beta}  \p_\beta\ringvarphi \p_\tau \ringvarphi - \frac12 \sqrt{|m|} m^{\gmm\beta}  \p_\beta \ringvarphi T^\alpha\p_\gmm \ringvarphi\right), 
        \end{aligned}
    \end{equation}
    where $T^\gmm \p_\gmm := \p_\tau$.
    Applying divergence theorem to \eqref{eqn:DBP_identity_for_LWP} in the region $\calS^{\uptau_2}_{\uptau_1}(T_0)$ with $\ringvarphi$ replaced by $\ringvarphi_k$, we obtain the following energy estimates \begin{equation}\label{eqn:eng_est_LWP}
        \begin{aligned}
        &\sup_{\uptau_1 \leq \uptau_2' \leq \uptau_2}\int_{X^0 = T_0, \uptau_1 \leq \tau \leq \uptau'_2} \left((\p_\tau \ringvarphi_k)^2 + (\p_r \ringvarphi_k)^2 + r^{-2}(\snabla \ringvarphi_k)^2\right) r^3 \, dX' \\
        +& \sup_{\uptau_1 \leq \uptau_2' \leq \uptau_2}\int_{\tau = \uptau_2', \sigma(\uptau_1) \leq X^0 \leq T_0} \left(r^{-2}(\p_\tau \ringvarphi_k)^2 + (\p_r \ringvarphi_k)^2 + r^{-2}(\snabla \ringvarphi_k)^2\right) r^3 \, dr\, d\theta \\
        \lesssim &\,
        \int_{X^0 = \sigma(\uptau_1), \uptau_1 \leq \tau \leq \uptau_2} \left((\p_\tau \ringvarphi_k)^2 + (\p_r \ringvarphi_k)^2 + r^{-2}(\snabla \ringvarphi_k)^2\right) r^3 \, dX' \\
        +& \int_{\tau = \uptau_1, \sigma(\uptau_1) \leq X^0 \leq T_0} \left(r^{-2}(\p_\tau \ringvarphi_k)^2 + (\p_r \ringvarphi_k)^2 + r^{-2}(\snabla \ringvarphi_k)^2\right) r^3 \, dr\, d\theta + \left| \iint_{\calS^{\uptau_2}_{\uptau_1}(T_0)} \Box_m \ringvarphi_k \p_\tau \ringvarphi_k\, dV \right| \\
        \lesssim&\, \ringeps^2 + \left| \iint_{\calS^{\uptau_2}_{\uptau_1}(T_0)} \Box_m \ringvarphi_k \p_\tau \ringvarphi_k\, dV \right|, \quad |k| \leq M - 2,
    \end{aligned}
    \end{equation}
    where the first two terms on the right hand side of the first inequality in \eqref{eqn:eng_est_LWP} have some overlapping and are both restricted to the initial slice, which then possess $\ringeps$-smallness due to the choice of initial data and our selection of $k$. Note that even if $\mathbf{T}$ is not exactly $\p_\tau$, the $\eps$-smallness is still ensured for the norm we require given the transversality.

    To estimate the bulk term, we first notice that it follows from \eqref{eqn:BA-LWP} that \[
        |\bfGmm^k(\ringcalF_2 + \ringcalF_3)| \lesssim  C_k\ringeps 
        r^{-3} |\p \bfGmm^{\leq k} \ringvarphi|, \quad \bfGmm \in \{\p_{X^\mu}, S, \Omg, L\}, \quad |k| \leq M - 2,
    \]
    when $M \geq 9$ (so that $\frac12(M - 2) < M - 5$). Here, we need to keep in mind that the estimate is not bootstrap constant independent.
    Recall the form of $\calP_\graph$ \eqref{eqn:calP_graph} and the commutator relation \eqref{eqn:commutation_relation_of_Box_w_bfGmm}, we could write \[
        |\Box_m \bfGmm^k \ringvarphi| \lesssim (C + C_k\ringeps) 
        r^{-3} |\p \bfGmm^{\leq k} \ringvarphi|, \quad |k| \leq M - 2,
    \]
    where $C$ is an absolute constant determined from $\calP_\graph - \Box_m$. By choosing $\ringeps$ small, the constant shall be independent of bootstrap constants. Thanks to \eqref{eqn:representation_of_rpr_by_bfGmm}, we would not distinguish $\ringpsi_{\leq k}$ from $\bfGmm^{\leq k} \ringpsi$.
    Then the bulk term in \eqref{eqn:eng_est_LWP} satisfies that \[
    \begin{aligned}
        \left| \iint_{\calS^{\uptau_2}_{\uptau_1}(T_0)} \Box_m \ringvarphi_k \p_\tau \ringvarphi_k\, dV \right| \lesssim&  \int_{\uptau_1}^{\uptau_2} \int_{\tau = \uptau_2', \sigma(\tau_1) \leq X^0 \leq T_0} r^{-2} (\p_\tau \ringvarphi_k)^2\, r^3\, dr\, d\theta\, d\tau_2' \\&+  \sup_{\uptau_1 \leq \uptau_2' \leq \uptau_2} \int_{\tau = \uptau_2', \sigma(\tau_1) \leq X^0 \leq T_0} r^{-4} (\p \bfGmm^{\leq k}\ringvarphi)^2 \, r^3\, dr\, d\theta\, d\tau_2'.
    \end{aligned}
    \]
    We first sum up \eqref{eqn:eng_est_LWP} with order  $\leq k$. Then the second term can be absorbed to the left hand side by taking the transition radius of foliation $R_f$ sufficiently large to gain smallness. On the other hand, the first term can be estimated by sup norm in $\uptau_2'$ and gain smallness from $\uptau_2 - \uptau_1$. This implies that the left hand side of \eqref{eqn:eng_est_LWP} is bounded by $\ringeps^2$ and in particular, \[
        \sup_{\uptau_1 \leq \uptau_2' \leq \uptau_2}\int_{\tau = \uptau_2', \sigma(\uptau_1) \leq X^0 \leq T_0} \left(r^{-2}(\p_\tau \ringvarphi_{\leq k})^2 + (\p_r \ringvarphi_{\leq k})^2 + r^{-2}(\snabla \ringvarphi_{\leq k})^2\right) r^3 \, dr\, d\theta \lesssim \ringeps^2, \quad |k| \leq M - 2.
    \]

    We note that \[\begin{aligned}
        \p_{X^j}\big(u(\sqrt{|X' - \eta|^2 + 1} + \tau, X')\big) = (\frac{X^j - \eta}{\sqrt{|X' - \eta|^2 + 1}} -\frac{r\Theta^j}{\brk{r}})\p_{X^0} u + \p_{X^j} u) = \p_{X^j} u|_{\ringSigma_\tau}.
    \end{aligned}
    \]
    By parametrizing $\ringSigma_\tau$ via $X'$, one can apply the standard Sobolev inequality on $\ringSigma_\tau$ in a ball of radius $\simeq T_0$, one obtains that for any $k$, we have \begin{equation}\label{eqn:ptwise_est_LWP_prelim}
        |\ringvarphi_{\leq k}(\tau_2', r, \theta)| \lesssim T_0^{-2} \left(\int_B \left((\p_\tau \ringvarphi_{\leq k + 3})^2 + (\p_r \ringvarphi_{\leq k + 3})^2 + r^{-2}(\snabla \ringvarphi_{\leq k + 3})^2\right) r^3 \, dr\, d\theta\right)^{\frac12} 
    \end{equation}
    where $B \subset \ringSigma_{\tau} \cap \{\sigma(\uptau_1) \leq X^0 \leq T_0\}$ contains $(\tau, r, \theta)$ and is of radius $O(T_0)$. This estimate is uniform in $\tau \in [\uptau_1, \uptau_2]$. Note that from the standard local existence and corresponding estimates from constant $X^0$-slices, we could choose $\ringeps$ small enough so that \eqref{eqn:ptwise_bound_LWP} is already satisfied on when $T_0 \leq \eta + R_f$. Therefore, we could assume $T_0 > \eta + R_f$ and hence $T_0 > \frac{r}{2}$ thanks to 
    $\eta + R_f \lesssim r \lesssim \eta + R_f + T_0$. Therefore, we know from \eqref{eqn:ptwise_est_LWP_prelim} that 
    \[
    |\ringvarphi_{\leq k}(\tau_2', r, \theta)| \lesssim r^{-\frac32} \left(\int_B \left(r^{-2}(\p_\tau \ringvarphi_{\leq k + 3})^2 + (\p_r \ringvarphi_{\leq k + 3})^2 + r^{-2}(\snabla \ringvarphi_{\leq k + 3})^2\right) r^3 \, dr\, d\theta\right)^{\frac12} \lesssim \ringeps r^{-\frac32}
    \]
    for all $|k| \leq M - 5$ and all $(\tau_2', r, \theta) \in \ringSigma_{\uptau'_2} \cap \{\sigma(\uptau_1) \leq X^0 \leq T_0\}$ and all $\uptau_1 \leq \uptau_2' \leq \uptau_2$. 
    This then completes the proof of \eqref{eqn:IBA-LWP}.
\end{proof}

\begin{remark}
    For our purpose of applying this lemma to prove our main theorem and justify the existence of the parametrization by $\psi$, one can actually view that $\ringpsi$ is also determined in the region in the causal past of $\ringSigma_{\uptau_1}$. Bearing this in mind, one can actually invoke the spacetime version of Sobolev inequality as in \cite{LO24} to get a more delicate decay estimate. This thinking is also helpful when one wants to extend from $\calS_{\uptau_1}^{\uptau_2}(T_0)$ to $\calS_{\uptau_1}^{\uptau_2}(T_0 + \delta')$ so that a classical local existence on constant $X^0$ will suffice to meet the needs.
\end{remark}

\subsection{A summary of the foliations used in \texorpdfstring{\cite{LOS22}, \cite{OS24}}{LOS22, OS24}} \label{Section:different_foliation_LOS22}
We note that the foliation we select in Section~\ref{Section:foliations} is slightly different from the one in \cite{LOS22} and \cite{OS24}, where these two works used different foliations as well. The advantage of the one we adopt here is to have a explicit parametrization even in the transition region.

We now highlight the foliations used in \cite{LOS22} so that one can compare with Section~\ref{Section:foliations}.
Fix $\frakm \in C^\infty(\R \times \R \to \R)$ to be a smoothed out version of the minimum function such that for some small $\delta_1 > 0$, \[
    \frakm(t_1, t_2) = \min(t_1, t_2), \quad \text{when } |t_1 - t_2| > \delta_1.
\]
Define $\sigma, \sigma_\temp : \{-S \leq X^{n+1} \leq S\} \to \R$ by \[\begin{aligned}
    &\sigma_\temp(X) = \sigma' \quad \text{ if } X \in \tilde\bsUpsigma_{\sigma'}, \\
    &\sigma(X) = \frakm(X^0, \sigma_\temp(X)),
\end{aligned}
\]
where the constant $S$ in the domain of $\sigma, \sigma_\temp$ is defined in \eqref{eqn:S_bound_X_n+1}. This range is fixed since our parameters $\ell, \xi$ corresponding to the boost and translation are in $\R^n \hookrightarrow \R^{n+1}$.
Finally, we define $\bsUpsigma_\sigma$ as follows : \[
    \bsUpsigma_{\sigma'} := \{X : \sigma(X) = \sigma'\}.
\]
One would keep in mind that starting at a large radius away from $\xi(\sigma)$ , $\bsUpsigma_\sigma$ coincides with $\tilde\bsUpsigma_\sigma$. 

\subsection{Hardy inequality and an interpolation inequality}
\begin{lemma}[Hardy inequality]
    Let $p \neq 1$ and $\phi : [r_0, r_1] \to \R$ be a $C^1$ function, then \begin{equation}\label{eqn:Hardy}
        \frac{(p-1)^2}{4} \int_{r_0}^{r_1} \phi^2 r^{p-2}\, dr \leq \int_{r_0}^{r_1} (\p_r \phi)^2 r^p\, dr + \frac{p-1}{2} r^{p-1}\phi^2|_{r_0}^{r_1}.
    \end{equation}
    For $q \neq n$, the following variant holds \begin{equation}\label{eqn:Hardy_variant}
        \frac{(q-n)^2}{4} \int_{r_0}^{r_1} \phi^2 r^{q-2}\, dr \leq \int_{r_0}^{r_1} (\p_r \phi + \frac{n-1}{2r}\phi)^2 r^q\, dr + \frac{q-n}{2} r^{q-1}\phi^2|_{r_0}^{r_1}
    \end{equation}
\end{lemma}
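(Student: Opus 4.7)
The plan is to prove both inequalities by the standard ``completion of squares'' trick, which reduces the statement to expanding a manifestly nonnegative integrand and integrating by parts. First I would establish the first inequality, then deduce the second by a weighted substitution.

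For the first inequality, I start from the trivial observation
\[
0 \leq \int_{r_0}^{r_1} \bigl(\p_r \phi + \tfrac{p-1}{2r}\phi\bigr)^2 r^p\, dr.
\]
Expanding the square gives three pieces: $\int (\p_r\phi)^2 r^p\, dr$, the cross term $(p-1)\int r^{p-1} \phi\, \p_r\phi\, dr$, and $\frac{(p-1)^2}{4}\int r^{p-2}\phi^2\, dr$. The cross term is handled by writing $2\phi\, \p_r\phi = \p_r(\phi^2)$ and integrating by parts:
\[
(p-1)\int_{r_0}^{r_1} r^{p-1}\phi\, \p_r\phi\, dr = \tfrac{p-1}{2}\, r^{p-1}\phi^2\Big|_{r_0}^{r_1} - \tfrac{(p-1)^2}{2}\int_{r_0}^{r_1} r^{p-2}\phi^2\, dr.
\]
Combining these three pieces and rearranging yields exactly \eqref{eqn:Hardy}. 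Note that the condition $p\neq 1$ is never explicitly used in the algebra but is implicit in the fact that the boundary formula $\int \p_r(r^{p-1}) = r^{p-1}|_{r_0}^{r_1}$ makes the manipulation meaningful; when $p=1$ the integrated-by-parts form just degenerates.

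For the variant \eqref{eqn:Hardy_variant}, I would reduce it to the first inequality via the substitution $\psi := r^{(n-1)/2}\phi$, which is exactly the ``conjugation'' used throughout Section~\ref{section_rp_methods_hyperboloidal} to pass from $U$ to $\tilde U$. A direct computation gives $\p_r\psi = r^{(n-1)/2}\bigl(\p_r\phi + \tfrac{n-1}{2r}\phi\bigr)$ and $\psi^2 = r^{n-1}\phi^2$. Applying the first inequality to $\psi$ with exponent $p := q - n + 1$ (so that $p-1 = q-n$, which need not vanish by hypothesis), one obtains
\[
\tfrac{(q-n)^2}{4}\int_{r_0}^{r_1} \psi^2 r^{q-n-1}\, dr \leq \int_{r_0}^{r_1} (\p_r\psi)^2 r^{q-n+1}\, dr + \tfrac{q-n}{2}\, r^{q-n}\psi^2\Big|_{r_0}^{r_1}.
\]
Substituting back $\psi = r^{(n-1)/2}\phi$ collects the powers of $r$ into exactly the form of \eqref{eqn:Hardy_variant}.

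There is no real obstacle here: the only subtlety is tracking the sign of the boundary term, which has the correct sign $\frac{p-1}{2}$ (resp.\ $\frac{q-n}{2}$) appearing on the right-hand side regardless of whether $p-1$ (resp.\ $q-n$) is positive or negative, since the completion-of-squares identity is an equality and produces exactly that coefficient. I would remark at the end that in all applications in Section~\ref{section_rp_methods_hyperboloidal} the function $\phi$ either vanishes at one endpoint (so the corresponding boundary term drops) or is multiplied by $\chi_R$ so that only the $r = \infty$ limit is retained, and the $r\to\infty$ boundary term is harmless provided $\phi$ has enough decay, which is exactly what is verified via the energies $\calE^p, \tilde\calE^p$ used in the main proof.
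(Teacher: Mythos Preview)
Your proof is correct and uses the same completion-of-squares idea as the paper. The paper writes the first inequality by expanding $\int r^p(\p_r\phi - \tfrac{c}{r}\phi)^2\,dr \geq 0$ with a free constant $c$ and then optimizing to $c = -\tfrac{p-1}{2}$, which is exactly your choice written out directly; for the variant the paper repeats the same computation on $\int(\p_r\phi + \tfrac{n-1}{2r}\phi)^2 r^q\,dr$ with $c = -\tfrac{q-n}{2}$, whereas you instead reduce to the first inequality via the conjugation $\psi = r^{(n-1)/2}\phi$, $p = q-n+1$. Both routes are equivalent and equally short.
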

\begin{proof}
    We write \[\begin{aligned}
        \int_{r_0}^{r_1} (\p_r \phi)^2 r^p\, dr
        &= \int_{r_0}^{r_1} r^p\left(\p_r \phi - \frac{c}{r}\phi\right)^2 +  2c r^{p-1}\phi\p_r\phi - c^2 r^{p-2}\phi^2\, dr \\
        &\geq \int_{r_0}^{r_1} c r^{p-1}\p_r(\phi)^2 - c^2 r^{p-2}\phi^2\, dr
        = c r^{p-1}\phi^2|_{r_0}^{r_1} - c(p-1+c) \int_{r_0}^{r_1}  r^{p-2}\phi^2\, dr,
    \end{aligned}
    \]
    where the best constant $c$ to choose is $c = -\frac{p-1}{2}$.
    For the second one, we argue similarly \[
        \int_{r_0}^{r_1} (\p_r \phi + \frac{n-1}{2r}\phi)^2 r^q\, dr
        \geq cr^{q-1}\phi^2|_{r_0}^{r_1} - c(q-n+c) \int_{r_0}^{r_1} r^{q-2}\phi^2\, dr.
    \]
    The proof is done by choosing $c = -\frac{q-n}{2}$.
\end{proof}

\begin{lemma}[Interpolation inequality]
    Let $f: 
    \R_+ \times [R, \infty) \rightarrow \R$ be a function such that the following inequalities hold:
$$
\begin{gathered}
\int_R^{\infty} r^{p-\veps} f^2(\tau, r)\, dr \leq D_1(1+\tau)^{-q}, \\
\int_R^{\infty} r^{p+s-\veps} f^2(\tau, r)\, dr \leq D_2(1+\tau)^{-q+1},
\end{gathered}
$$
for some $\tau$-independent constants $D_1, D_2>0, q \in \R$, $s \in (0,1]$  and $\veps \in (0, s)$.
Then \begin{equation}
    \label{eqn:interpolation_for_rp}
    \int_R^\infty r^p f^2(\tau, r)\, dr \lesssim \max{\{D_1, D_2\}} (1 + \tau)^{-q + 1 - s + \veps}.
\end{equation}
\end{lemma}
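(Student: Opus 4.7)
The plan is to prove the interpolation inequality by the standard splitting technique: write $\int_R^{\infty} = \int_R^A + \int_A^{\infty}$ for a threshold $A = A(\tau) \geq R$ chosen below, and estimate the two pieces using the two different hypotheses. Since $\veps \in (0,s)$, on $[R,A]$ one has $r^p = r^{p-\veps}\cdot r^{\veps} \leq A^{\veps} r^{p-\veps}$, while on $[A,\infty)$, using $\veps-s < 0$, one has $r^p = r^{p+s-\veps}\cdot r^{\veps-s} \leq A^{\veps-s} r^{p+s-\veps}$. Combining with the two hypotheses yields
\begin{equation*}
\int_R^{\infty} r^p f^2(\tau,r)\, dr \;\leq\; A^{\veps} D_1 (1+\tau)^{-q} + A^{\veps-s} D_2 (1+\tau)^{-q+1}.
\end{equation*}

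The next step is to balance the two terms by choosing $A = (1+\tau)^{1/s}$ (valid for $\tau$ large enough that $(1+\tau)^{1/s}\geq R$; for small $\tau$ the bound is trivial, up to adjusting the implicit constant). Then
\begin{equation*}
A^{\veps}(1+\tau)^{-q} = (1+\tau)^{-q + \veps/s}, \qquad A^{\veps-s}(1+\tau)^{-q+1} = (1+\tau)^{-q + (\veps-s)/s + 1} = (1+\tau)^{-q + \veps/s},
\end{equation*}
so the sum is bounded by $2\max\{D_1, D_2\}(1+\tau)^{-q+\veps/s}$.

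Finally, to match the exponent claimed in the statement, I verify that $\veps/s \leq 1 - s + \veps$ under the standing assumption $s \in (0,1]$, $\veps \in (0,s)$: this is equivalent to $\veps(1-s) \leq s(1-s)$, which holds because $\veps < s$ and $1-s \geq 0$. Since $1+\tau \geq 1$, this comparison of exponents gives $(1+\tau)^{-q+\veps/s} \leq (1+\tau)^{-q+1-s+\veps}$, completing the estimate. The argument is entirely elementary — the only minor care is the case where $A < R$, handled by noting that then $(1+\tau)^{1/s} \lesssim 1$ and the first hypothesis alone already implies the conclusion.
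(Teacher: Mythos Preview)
Your proof is correct and follows the same splitting strategy as the paper, but with a different threshold. The paper splits at $R+\tau$ (so that $r \lesssim 1+\tau$ on the near piece and $r \gtrsim 1+\tau$ on the far piece), which immediately yields the two bounds $(1+\tau)^{-q+\veps}$ and $(1+\tau)^{-q+1-s+\veps}$; the latter dominates and gives the stated exponent directly. Your choice $A=(1+\tau)^{1/s}$ is the optimal balancing point and produces the sharper exponent $-q+\veps/s$, which you then weaken to $-q+1-s+\veps$ via the elementary inequality $\veps/s \leq 1-s+\veps$. Both routes are equally elementary; the paper's threshold avoids the final comparison step, while yours shows that the lemma as stated is not sharp when $s<1$.
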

\begin{proof}
    We split \[
        \int_R^\infty r^p f^2(\tau, r)\, dr = \int_R^{R + \tau} r^p f^2(\tau, r)\, dr + \int_{R+\tau}^\infty r^p f^2(\tau, r)\, dr.
    \]
    Then we use the two assumptions separately to obtain \[
    \begin{aligned}
        \int_R^{R + \tau} r^p f^2(\tau, r)\, dr
        &= \int_R^{R + \tau} r^\veps r^{p-\veps} f^2(\tau, r)\, dr
        \leq D_1(R + \tau)^\veps(1+\tau)^{-q} \lesssim (1 + \tau)^{-q+\veps}, \\
        \int_{R+\tau}^\infty r^p f^2(\tau, r)\, dr &= \int_{R+\tau}^\infty r^{-s + \veps}r^{p+s-\veps} f^2(\tau, r)\, dr \leq D_2(R+\tau)^{-s + \veps}(1+\tau)^{-q+1}\lesssim (1 + \tau)^{-q + 1 - s + \veps}.
    \end{aligned}
    \]
    Hence, \eqref{eqn:interpolation_for_rp} follows by adding the above two inequalities together.
\end{proof}


\begin{thebibliography}{DKSW16}

\bibitem[AAG18]{AAG18}
Y.~Angelopoulos, S.~Aretakis, and D.~Gajic.
\newblock A vector field approach to almost-sharp decay for the wave equation on spherically symmetric, stationary spacetimes.
\newblock {\em Ann. PDE}, 4(2):Paper No. 15, 120, 2018.
\newblock \href {https://doi.org/10.1007/s40818-018-0051-2} {\path{doi:10.1007/s40818-018-0051-2}}.

\bibitem[Abb20]{Ab20}
Leonardo~E. Abbrescia.
\newblock {\em The Vector Field Method and Its Applications to Nonlinear Evolution Equations}.
\newblock PhD thesis, Michigan State University, 2020.
\newblock URL: \url{https://www.proquest.com/dissertations-theses/vector-field-method-applications-nonlinear/docview/2433214666/se-2}.

\bibitem[AC79]{AC79}
A.~Aurilia and D.~Christodoulou.
\newblock Theory of strings and membranes in an external field. {I}. {G}eneral formulation.
\newblock {\em J. Math. Phys.}, 20(7):1446--1452, 1979.
\newblock \href {https://doi.org/10.1063/1.524228} {\path{doi:10.1063/1.524228}}.

\bibitem[Agm82]{Ag82}
Shmuel Agmon.
\newblock {\em Lectures on exponential decay of solutions of second-order elliptic equations: bounds on eigenfunctions of {$N$}-body {S}chr\"odinger operators}, volume~29 of {\em Mathematical Notes}.
\newblock {Princeton University Press, Princeton, NJ; University of Tokyo Press, Tokyo}, 1982.

\bibitem[AIT24]{AIT24}
Albert Ai, Mihaela Ifrim, and Daniel Tataru.
\newblock The time-like minimal surface equation in {M}inkowski space: low regularity solutions.
\newblock {\em Invent. Math.}, 235(3):745--891, 2024.
\newblock \href {https://doi.org/10.1007/s00222-023-01231-3} {\path{doi:10.1007/s00222-023-01231-3}}.

\bibitem[AKU24]{AKU24}
Yannis Angelopoulos, Christoph Kehle, and Ryan Unger.
\newblock Nonlinear stability of extremal reissner-nordstr\"om black holes in spherical symmetry, 2024.
\newblock URL: \url{https://arxiv.org/abs/2410.16234}, \href {https://arxiv.org/abs/2410.16234} {\path{arXiv:2410.16234}}.

\bibitem[Ali01a]{Alin01-1}
S.~Alinhac.
\newblock The null condition for quasilinear wave equations in two space dimensions {I}.
\newblock {\em Invent. Math.}, 145(3):597--618, 2001.
\newblock \href {https://doi.org/10.1007/s002220100165} {\path{doi:10.1007/s002220100165}}.

\bibitem[Ali01b]{Alin01-2}
S.~Alinhac.
\newblock The null condition for quasilinear wave equations in two space dimensions. {II}.
\newblock {\em Amer. J. Math.}, 123(6):1071--1101, 2001.
\newblock URL: \url{http://muse.jhu.edu/journals/american_journal_of_mathematics/v123/123.6alinhac.pdf}.

\bibitem[AW20]{AW20}
Leonardo Abbrescia and Willie Wai~Yeung Wong.
\newblock Global nearly-plane-symmetric solutions to the membrane equation.
\newblock {\em Forum Math. Pi}, 8:e13, 71, 2020.
\newblock \href {https://doi.org/10.1017/fmp.2020.10} {\path{doi:10.1017/fmp.2020.10}}.

\bibitem[AZ23]{AZ23}
John Anderson and Samuel Zbarsky.
\newblock Stability and instability of traveling wave solutions to nonlinear wave equations.
\newblock {\em Int. Math. Res. Not. IMRN}, (1):95--184, 2023.
\newblock \href {https://doi.org/10.1093/imrn/rnab250} {\path{doi:10.1093/imrn/rnab250}}.

\bibitem[Bar84]{Bart84}
Robert Bartnik.
\newblock Existence of maximal surfaces in asymptotically flat spacetimes.
\newblock {\em Comm. Math. Phys.}, 94(2):155--175, 1984.
\newblock URL: \url{http://projecteuclid.org/euclid.cmp/1103941280}.

\bibitem[BMP21]{BMP21}
Hajer Bahouri, Alaa Marachli, and Galina Perelman.
\newblock Blow up dynamics for the hyperbolic vanishing mean curvature flow of surfaces asymptotic to the {S}imons cone.
\newblock {\em J. Eur. Math. Soc. (JEMS)}, 23(12):3801--3887, 2021.
\newblock \href {https://doi.org/10.4171/jems/1087} {\path{doi:10.4171/jems/1087}}.

\bibitem[Bre02]{Bre02}
Simon Brendle.
\newblock Hypersurfaces in {M}inkowski space with vanishing mean curvature.
\newblock {\em Comm. Pure Appl. Math.}, 55(10):1249--1279, 2002.
\newblock \href {https://doi.org/10.1002/cpa.10044} {\path{doi:10.1002/cpa.10044}}.

\bibitem[CBC81]{CBC81}
Y.~Choquet-Bruhat and D.~Christodoulou.
\newblock Elliptic systems in {$H\sb{s,\delta }$} spaces on manifolds which are {E}uclidean at infinity.
\newblock {\em Acta Math.}, 146(1-2):129--150, 1981.
\newblock \href {https://doi.org/10.1007/BF02392460} {\path{doi:10.1007/BF02392460}}.

\bibitem[Chr86]{Chr86}
Demetrios Christodoulou.
\newblock Global solutions of nonlinear hyperbolic equations for small initial data.
\newblock {\em Comm. Pure Appl. Math.}, 39(2):267--282, 1986.
\newblock \href {https://doi.org/10.1002/cpa.3160390205} {\path{doi:10.1002/cpa.3160390205}}.

\bibitem[CK90]{CK90}
D.~Christodoulou and S.~Klainerman.
\newblock The global nonlinear stability of the {M}inkowski space.
\newblock In {\em S\'eminaire sur les \'Equations aux {D}\'eriv\'ees {P}artielles, 1989--1990}, pages Exp. No. XIII, 31. \'Ecole Polytech., Palaiseau, 1990.

\bibitem[CM11]{CM11}
Tobias~Holck Colding and William~P. Minicozzi, II.
\newblock {\em A course in minimal surfaces}, volume 121 of {\em Graduate Studies in Mathematics}.
\newblock American Mathematical Society, Providence, RI, 2011.
\newblock \href {https://doi.org/10.1090/gsm/121} {\path{doi:10.1090/gsm/121}}.

\bibitem[CY76]{CY76}
Shiu~Yuen Cheng and Shing~Tung Yau.
\newblock Maximal space-like hypersurfaces in the {L}orentz-{M}inkowski spaces.
\newblock {\em Ann. of Math. (2)}, 104(3):407--419, 1976.
\newblock \href {https://doi.org/10.2307/1970963} {\path{doi:10.2307/1970963}}.

\bibitem[DHRT24]{DHRT24}
Mihalis Dafermos, Gustav Holzegel, Igor Rodnianski, and Martin Taylor.
\newblock Quasilinear wave equations on asymptotically flat spacetimes with applications to kerr black holes, 2024.
\newblock URL: \url{https://arxiv.org/abs/2212.14093}, \href {https://arxiv.org/abs/2212.14093} {\path{arXiv:2212.14093}}.

\bibitem[DHS10]{DHS10}
Ulrich Dierkes, Stefan Hildebrandt, and Friedrich Sauvigny.
\newblock {\em Minimal surfaces}, volume 339 of {\em Grundlehren der mathematischen Wissenschaften [Fundamental Principles of Mathematical Sciences]}.
\newblock Springer, Heidelberg, second edition, 2010.
\newblock With assistance and contributions by A. K\"{u}ster and R. Jakob.
\newblock \href {https://doi.org/10.1007/978-3-642-11698-8} {\path{doi:10.1007/978-3-642-11698-8}}.

\bibitem[DKSW16]{DKSW16}
Roland Donninger, Joachim Krieger, J\'er\'emie Szeftel, and Willie Wong.
\newblock Codimension one stability of the catenoid under the vanishing mean curvature flow in {M}inkowski space.
\newblock {\em Duke Math. J.}, 165(4):723--791, 2016.
\newblock \href {https://doi.org/10.1215/00127094-3167383} {\path{doi:10.1215/00127094-3167383}}.

\bibitem[DR10]{DR10}
Mihalis Dafermos and Igor Rodnianski.
\newblock A new physical-space approach to decay for the wave equation with applications to black hole spacetimes.
\newblock In {\em X{VI}th {I}nternational {C}ongress on {M}athematical {P}hysics}, pages 421--432. World Sci. Publ., Hackensack, NJ, 2010.
\newblock URL: \url{https://doi.org/10.1142/9789814304634_0032}, \href {https://doi.org/10.1142/9789814304634\_0032} {\path{doi:10.1142/9789814304634\_0032}}.

\bibitem[EHHS15]{EHHS15}
J.~Eggers, J.~Hoppe, M.~Hynek, and N.~Suramlishvili.
\newblock Singularities of relativistic membranes.
\newblock {\em Geom. Flows}, 1(1):17--33, 2015.
\newblock \href {https://doi.org/10.1515/geofl-2015-0003} {\path{doi:10.1515/geofl-2015-0003}}.

\bibitem[Ett13]{Ett13}
Boris Ettinger.
\newblock {\em Well-posedness of the three-form field equation and the minimal surface equation in {M}inkowski space}.
\newblock ProQuest LLC, Ann Arbor, MI, 2013.
\newblock Thesis (Ph.D.)--University of California, Berkeley.
\newblock URL: \url{http://gateway.proquest.com/openurl?url_ver=Z39.88-2004&rft_val_fmt=info:ofi/fmt:kev:mtx:dissertation&res_dat=xri:pqm&rft_dat=xri:pqdiss:3623605}.

\bibitem[Gaj23]{G23}
Dejan Gajic.
\newblock Late-time asymptotics for geometric wave equations with inverse-square potentials.
\newblock {\em J. Funct. Anal.}, 285(7):Paper No. 110058, 114, 2023.
\newblock \href {https://doi.org/10.1016/j.jfa.2023.110058} {\path{doi:10.1016/j.jfa.2023.110058}}.

\bibitem[Ger24]{Ger24}
Pierre Germain.
\newblock A review on asymptotic stability of solitary waves in nonlinear dispersive problems in dimension one, 2024.
\newblock URL: \url{https://arxiv.org/abs/2410.04508}, \href {https://arxiv.org/abs/2410.04508} {\path{arXiv:2410.04508}}.

\bibitem[Hop94]{Hop94}
J.~Hoppe.
\newblock Some classical solutions of relativistic membrane equations in {$4$}-space-time dimensions.
\newblock {\em Phys. Lett. B}, 329(1):10--14, 1994.
\newblock \href {https://doi.org/10.1016/0370-2693(94)90510-X} {\path{doi:10.1016/0370-2693(94)90510-X}}.

\bibitem[JNO15]{JNO15}
Robert~L. Jerrard, Matteo Novaga, and Giandomenico Orlandi.
\newblock On the regularity of timelike extremal surfaces.
\newblock {\em Commun. Contemp. Math.}, 17(1):1450048, 19, 2015.
\newblock \href {https://doi.org/10.1142/S0219199714500485} {\path{doi:10.1142/S0219199714500485}}.

\bibitem[Joh81]{Jo81}
Fritz John.
\newblock {\em Plane waves and spherical means applied to partial differential equations}.
\newblock Springer-Verlag, New York-Berlin, 1981.
\newblock Reprint of the 1955 original.

\bibitem[Kad24a]{Kadar24b}
Istvan Kadar.
\newblock Construction of multi-soliton solutions for the energy critical wave equation in dimension 3, 2024.
\newblock URL: \url{https://arxiv.org/abs/2409.05267}, \href {https://arxiv.org/abs/2409.05267} {\path{arXiv:2409.05267}}.

\bibitem[Kad24b]{Kadar24a}
Istvan Kadar.
\newblock A scattering theory construction of dynamical solitons in 3d, 2024.
\newblock URL: \url{https://arxiv.org/abs/2403.13891}, \href {https://arxiv.org/abs/2403.13891} {\path{arXiv:2403.13891}}.

\bibitem[KL12]{KL12}
Joachim Krieger and Hans Lindblad.
\newblock On stability of the catenoid under vanishing mean curvature flow on {M}inkowski space.
\newblock {\em Dyn. Partial Differ. Equ.}, 9(2):89--119, 2012.
\newblock \href {https://doi.org/10.4310/DPDE.2012.v9.n2.a1} {\path{doi:10.4310/DPDE.2012.v9.n2.a1}}.

\bibitem[Kla86]{Kl86}
S.~Klainerman.
\newblock The null condition and global existence to nonlinear wave equations.
\newblock In {\em Nonlinear systems of partial differential equations in applied mathematics, {P}art 1 ({S}anta {F}e, {N}.{M}., 1984)}, volume~23 of {\em Lectures in Appl. Math.}, pages 293--326. Amer. Math. Soc., Providence, RI, 1986.

\bibitem[KMM16]{KMM16}
Micha{\l} Kowalczyk, Yvan Martel, and Claudio Mu{\~n}oz.
\newblock On asymptotic stability of nonlinear waves.
\newblock {\em S\'eminaire Laurent Schwartz {\textemdash} EDP et applications}, pages 1--27, 2016.
\newblock talk:18.
\newblock URL: \url{http://www.numdam.org/articles/10.5802/slsedp.111/}, \href {https://doi.org/10.5802/slsedp.111} {\path{doi:10.5802/slsedp.111}}.

\bibitem[Kob83]{Kob83}
Osamu Kobayashi.
\newblock Maximal surfaces in the {$3$}-dimensional {M}inkowski space {$L\sp{3}$}.
\newblock {\em Tokyo J. Math.}, 6(2):297--309, 1983.
\newblock \href {https://doi.org/10.3836/tjm/1270213872} {\path{doi:10.3836/tjm/1270213872}}.

\bibitem[Lin98]{Lind98}
Hans Lindblad.
\newblock Counterexamples to local existence for quasilinear wave equations.
\newblock {\em Math. Res. Lett.}, 5(5):605--622, 1998.
\newblock \href {https://doi.org/10.4310/MRL.1998.v5.n5.a5} {\path{doi:10.4310/MRL.1998.v5.n5.a5}}.

\bibitem[Lin04]{Lind04}
Hans Lindblad.
\newblock A remark on global existence for small initial data of the minimal surface equation in {M}inkowskian space time.
\newblock {\em Proc. Amer. Math. Soc.}, 132(4):1095--1102, 2004.
\newblock \href {https://doi.org/10.1090/S0002-9939-03-07246-0} {\path{doi:10.1090/S0002-9939-03-07246-0}}.

\bibitem[LO24]{LO24}
Jonathan Luk and Sung-Jin Oh.
\newblock Late time tail of waves on dynamic asymptotically flat spacetimes of odd space dimensions, 2024.
\newblock URL: \url{https://arxiv.org/abs/2404.02220}, \href {https://arxiv.org/abs/2404.02220} {\path{arXiv:2404.02220}}.

\bibitem[LOS22]{LOS22}
Jonas Luhrmann, Sung-Jin Oh, and Sohrab Shahshahani.
\newblock Stability of the catenoid for the hyperbolic vanishing mean curvature equation outside symmetry, 2022.
\newblock \href {https://arxiv.org/abs/2212.05620} {\path{arXiv:2212.05620}}.

\bibitem[LZ24]{LZ24}
Jianli Liu and Yi~Zhou.
\newblock Global nonlinear stability of traveling wave solution to time-like extremal hypersurface in {M}inkowski space.
\newblock {\em Int. Math. Res. Not. IMRN}, (8):6966--7000, 2024.
\newblock \href {https://doi.org/10.1093/imrn/rnad309} {\path{doi:10.1093/imrn/rnad309}}.

\bibitem[Mar19]{Mar19}
Alaa Marachli.
\newblock {\em {On the stability of certain minimal surfaces under the vanishing mean curvature flow in Minkowski space}}.
\newblock Theses, {Universit{\'e} Paris-Est}, March 2019.
\newblock URL: \url{https://theses.hal.science/tel-03121368}.

\bibitem[Mel93]{Mel93}
Richard~B. Melrose.
\newblock {\em The {A}tiyah-{P}atodi-{S}inger index theorem}, volume~4 of {\em Research Notes in Mathematics}.
\newblock {A K Peters, Ltd., Wellesley, MA}, 1993.
\newblock \href {https://doi.org/10.1016/0377-0257(93)80040-i} {\path{doi:10.1016/0377-0257(93)80040-i}}.

\bibitem[MN57]{MN57}
C.~B. Morrey, Jr. and L.~Nirenberg.
\newblock On the analyticity of the solutions of linear elliptic systems of partial differential equations.
\newblock {\em Comm. Pure Appl. Math.}, 10:271--290, 1957.
\newblock \href {https://doi.org/10.1002/cpa.3160100204} {\path{doi:10.1002/cpa.3160100204}}.

\bibitem[Mos16]{Mo15}
Georgios Moschidis.
\newblock The {$r^p$}-weighted energy method of {D}afermos and {R}odnianski in general asymptotically flat spacetimes and applications.
\newblock {\em Ann. PDE}, 2(1):Art. 6, 194, 2016.
\newblock \href {https://doi.org/10.1007/s40818-016-0011-7} {\path{doi:10.1007/s40818-016-0011-7}}.

\bibitem[MTT12]{MTT12}
Jason Metcalfe, Daniel Tataru, and Mihai Tohaneanu.
\newblock Price's law on nonstationary space-times.
\newblock {\em Adv. Math.}, 230(3):995--1028, 2012.
\newblock \href {https://doi.org/10.1016/j.aim.2012.03.010} {\path{doi:10.1016/j.aim.2012.03.010}}.

\bibitem[NT13]{NT13}
Luc Nguyen and Gang Tian.
\newblock On smoothness of timelike maximal cylinders in three-dimensional vacuum spacetimes.
\newblock {\em Classical Quantum Gravity}, 30(16):165010, 26, 2013.
\newblock \href {https://doi.org/10.1088/0264-9381/30/16/165010} {\path{doi:10.1088/0264-9381/30/16/165010}}.

\bibitem[NW73]{NW73}
Louis Nirenberg and Homer~F. Walker.
\newblock The null spaces of elliptic partial differential operators in {${\bf R}\sp{n}$}.
\newblock {\em J. Math. Anal. Appl.}, 42:271--301, 1973.
\newblock Collection of articles dedicated to Salomon Bochner.
\newblock \href {https://doi.org/10.1016/0022-247X(73)90138-8} {\path{doi:10.1016/0022-247X(73)90138-8}}.

\bibitem[OP24]{OP24}
Sung-Jin Oh and Federico Pasqualotto.
\newblock Gradient blow-up for dispersive and dissipative perturbations of the {B}urgers equation.
\newblock {\em Arch. Ration. Mech. Anal.}, 248(3):Paper No. 54, 61, 2024.
\newblock \href {https://doi.org/10.1007/s00205-024-01985-x} {\path{doi:10.1007/s00205-024-01985-x}}.

\bibitem[OS24]{OS24}
Sung-Jin Oh and Sohrab Shahshahani.
\newblock Stability of the 3-dimensional catenoid for the hyperbolic vanishing mean curvature equation, 2024.
\newblock URL: \url{https://arxiv.org/abs/2409.05968}, \href {https://arxiv.org/abs/2409.05968} {\path{arXiv:2409.05968}}.

\bibitem[Pat19]{Pat19}
Vittorino Pata.
\newblock {\em Fixed point theorems and applications}, volume 116 of {\em Unitext}.
\newblock Springer, Cham, 2019.
\newblock La Matematica per il 3+2.
\newblock \href {https://doi.org/10.1007/978-3-030-19670-7} {\path{doi:10.1007/978-3-030-19670-7}}.

\bibitem[Rin09]{Ring09}
Hans Ringstr\"om.
\newblock {\em The {C}auchy problem in general relativity}.
\newblock ESI Lectures in Mathematics and Physics. European Mathematical Society (EMS), Z\"urich, 2009.
\newblock \href {https://doi.org/10.4171/053} {\path{doi:10.4171/053}}.

\bibitem[RS80]{RS80}
Michael Reed and Barry Simon.
\newblock {\em Methods of modern mathematical physics. {I}}.
\newblock {Academic Press, Inc. [Harcourt Brace Jovanovich, Publishers], New York}, second edition, 1980.
\newblock Functional analysis.

\bibitem[RV70]{RV70}
Ernst~A. Ruh and Jaak Vilms.
\newblock The tension field of the {G}auss map.
\newblock {\em Trans. Amer. Math. Soc.}, 149:569--573, 1970.
\newblock \href {https://doi.org/10.2307/1995413} {\path{doi:10.2307/1995413}}.

\bibitem[Sch83]{Sch83}
Richard~M. Schoen.
\newblock Uniqueness, symmetry, and embeddedness of minimal surfaces.
\newblock {\em J. Differential Geom.}, 18(4):791--809, 1983.
\newblock URL: \url{http://projecteuclid.org/euclid.jdg/1214438183}.

\bibitem[ST05]{ST05}
Hart~F. Smith and Daniel Tataru.
\newblock Sharp local well-posedness results for the nonlinear wave equation.
\newblock {\em Ann. of Math. (2)}, 162(1):291--366, 2005.
\newblock \href {https://doi.org/10.4007/annals.2005.162.291} {\path{doi:10.4007/annals.2005.162.291}}.

\bibitem[Ste11]{Stef11}
Atanas Stefanov.
\newblock Global regularity for the minimal surface equation in {M}inkowskian geometry.
\newblock {\em Forum Math.}, 23(4):757--789, 2011.
\newblock \href {https://doi.org/10.1515/FORM.2011.027} {\path{doi:10.1515/FORM.2011.027}}.

\bibitem[Stu01]{Stu01}
David M.~A. Stuart.
\newblock Modulational approach to stability of non-topological solitons in semilinear wave equations.
\newblock {\em J. Math. Pures Appl. (9)}, 80(1):51--83, 2001.
\newblock \href {https://doi.org/10.1016/S0021-7824(00)01189-2} {\path{doi:10.1016/S0021-7824(00)01189-2}}.

\bibitem[TZ09]{TZ09}
Luen-Fai Tam and Detang Zhou.
\newblock Stability properties for the higher dimensional catenoid in {$\Bbb R^{n+1}$}.
\newblock {\em Proc. Amer. Math. Soc.}, 137(10):3451--3461, 2009.
\newblock \href {https://doi.org/10.1090/S0002-9939-09-09962-6} {\path{doi:10.1090/S0002-9939-09-09962-6}}.

\bibitem[Wei85]{Wein85}
Michael~I. Weinstein.
\newblock Modulational stability of ground states of nonlinear {S}chr\"odinger equations.
\newblock {\em SIAM J. Math. Anal.}, 16(3):472--491, 1985.
\newblock \href {https://doi.org/10.1137/0516034} {\path{doi:10.1137/0516034}}.

\bibitem[Won11]{Wong11}
Willie Wai-Yeung Wong.
\newblock Regular hyperbolicity, dominant energy condition and causality for {L}agrangian theories of maps.
\newblock {\em Classical Quantum Gravity}, 28(21):215008, 23, 2011.
\newblock \href {https://doi.org/10.1088/0264-9381/28/21/215008} {\path{doi:10.1088/0264-9381/28/21/215008}}.

\bibitem[Won14]{Wong14}
Willie Wai-Yeung Wong.
\newblock Stability and instability of expanding solutions to the lorentzian constant-positive-mean-curvature flow, 2014.
\newblock URL: \url{https://arxiv.org/abs/1404.0223}, \href {https://arxiv.org/abs/1404.0223} {\path{arXiv:1404.0223}}.

\bibitem[Won17]{Wong17}
Willie Wai~Yeung Wong.
\newblock Global existence for the minimal surface equation on {$\Bbb R^{1,1}$}.
\newblock {\em Proc. Amer. Math. Soc. Ser. B}, 4:47--52, 2017.
\newblock \href {https://doi.org/10.1090/bproc/25} {\path{doi:10.1090/bproc/25}}.

\bibitem[Won18]{Wong18}
Willie Wai~Yeung Wong.
\newblock Singularities of axially symmetric time-like minimal submanifolds in {M}inkowski space.
\newblock {\em J. Hyperbolic Differ. Equ.}, 15(1):1--13, 2018.
\newblock \href {https://doi.org/10.1142/S0219891618500017} {\path{doi:10.1142/S0219891618500017}}.

\end{thebibliography}

\end{document}